\titleformat{\subsection}{\it}{\thesubsection.\enspace}{1.5pt}{}
\titleformat{\subsubsection}{\it}{\thesubsubsection.\enspace}{1.5pt}{}
\newtheorem{theorem}{Theorem}[section]
\newtheorem{proposition}[theorem]{Proposition}
\newtheorem{remark}{Remark}[section]
\newtheorem{lemma}[theorem]{Lemma}
\numberwithin{equation}{section}
\def\la{\langle}
\def\ra{\rangle}
\def\ep{\varepsilon}
\def\es{\epsilon}
\def\k{\kappa}
\def\d{\delta}
\def\r{\rho^\epsilon}
\def\vr{\varrho^\epsilon}
\def\u{u^\epsilon}
\def\v{v^\epsilon}
\def\h{h^\epsilon}
\def\g{g^\epsilon}
\def\ps{\psi^\epsilon}
\def\y{\langle y \rangle^{2l}}
\def\yl{\langle y \rangle^{2l-1}}
\def\ya{\langle y \rangle^{2l}}
\def\yb{\langle y \rangle^{2l-1}}
\def\p{\partial}
\def\a{\alpha}
\def\b{\beta}
\def\ga{\gamma}
\def\H{\mathcal{H}}
\def\z{\mathcal{Z}}
\def\C{\mathcal{C}}
\def\O{\Omega}
\def\T{\mathbb{T}}
\def\t{\tau}
\def\e{\mathcal{E}}
\def\D{\mathcal{D}}
\def\n{\mathcal{N}}
\def\ta{\Theta}
\def\orho{\overline{\rho}}
\def\ou{\overline{u}}
\def\ov{\overline{v}}
\def\oh{\overline{h}}
\def\og{\overline{g}}
\def\ophi{\overline{\psi}}
\def\irho{\widehat{\rho}}
\def\iu{\widehat{u}}
\def\ih{\widehat{h}}
\begin{document}
\title{Boundary Layer Problems for the Two-dimensional Inhomogeneous
Incompressible Magnetohydrodynamics Equations\hspace{-4mm}}
\author{Jincheng Gao$^{\dag}$   \quad    Daiwen Huang$^{\ddag}$   \quad    Zheng-an Yao$^{\dag}$\\[10pt]
\small {$^\dag $School of Mathematics, Sun Yat-sen University,}\\
\small {Guangzhou, 510275,  P.R.China}\\[5pt]
\small {$^\ddag $Institute of Applied Physics and Computational Mathematics,}\\
\small {Beijing, 100088, P.R.China}\\[5pt]
}

\footnotetext{Email: \it gaojch5@mail.sysu.edu.cn(J.C. Gao),
                     \it hdw55@tom.com(D.W.Huang)
                     \it mcsyao@mail.sysu.edu.cn(Z.A.Yao).}
\date{}

\maketitle

\begin{abstract}

In this paper, we study the well-posedness of boundary layer problems for the
inhomogeneous incompressible magnetohydrodynamics(MHD) equations,
which are derived from the two-dimensional density-dependent incompressible MHD equations.
Under the assumption that initial tangential magnetic field is not zero
and density is a small perturbation of the outer constant flow in supernorm,
the local-in-time existence and uniqueness of inhomogeneous incompressible MHD boundary layer equation
are established in weighted Conormal Sobolev spaces by energy method.
As a byproduct, the local-in-time well-posedness of homogeneous incompressible MHD boundary layer
equations with any large initial data can be obtained.

%
%

\end{abstract}

\tableofcontents

\section{Introduction and Main Result}

In this paper, we consider the boundary layer problems in the small viscosity and resistivity
limit for the two-dimensional inhomogeneous incompressible Magnetohydrodynamics(MHD) equation
in a period domain $\Omega =: \left\{(x, y): x\in \mathbb{T}, y\in \mathbb{R}^+\right\}$:
\begin{equation}\label{eq1}
\left\{
\begin{aligned}
&\p_t \rho^\ep+{\rm div}(\rho^\ep \mathbf{u}^\ep)=0,\\
&\rho^\ep \p_t \mathbf{u}^\ep+\rho^\ep(\mathbf{u}^\ep \cdot \nabla) \mathbf{u}^\ep
 -\mu \ep \Delta \mathbf{u}^\ep+\nabla p^\ep=(\mathbf{h}^\ep \cdot \nabla)\mathbf{h}^\ep,\\
&\p_t \mathbf{h}^\ep-\nabla \times (\mathbf{u}^\ep \times \mathbf{h}^\ep)
-\k \ep \Delta \mathbf{h}^\ep=0,\\
&{\rm div}\mathbf{u}^\ep=0,\quad {\rm div}\mathbf{h}^\ep=0.
\end{aligned}
\right.
\end{equation}
Here, we assume the viscosity and resistivity coefficients have the same order of a small parameter $\ep$.
The unknown functions $\rho^\ep$ denotes the density of fluid,
$\mathbf{u}^\ep=(u_1^\ep, u_2^\ep)$ denotes the velocity vector,
$\mathbf{h}^\ep=(h_1^\ep, h_2^\ep)$ denotes the magnetic field,
and $p^\ep=\widetilde{p}^\ep+\frac{|\mathbf{h}^\ep|^2}{2}$
represents the total pressure with $\widetilde{p}^\ep$ the pressure of fluid.
This system \eqref{eq1} can be used as model to descritbe a viscous fluid that
is incompressible but has nonconstant density,
and hence, it is much more complex than the classical incompressible MHD equation with constant density.
To complete the system \eqref{eq1}, the boundary conditions are given by
\begin{equation}\label{bc1}
u_1^\ep|_{y=0}=u_2^\ep|_{y=0}=0,\quad \p_y h_1^\ep|_{y=0}=h_2^\ep|_{y=0}=0.
\end{equation}
As the parameter $\varepsilon$ tends to zero in the system \eqref{eq1},
we obtain the following system formally
\begin{equation*}\label{ideal}
\left\{
\begin{aligned}
&\p_t \rho^0+{\rm div}(\rho^0 \mathbf{u}^0)=0,\\
&\rho^0 \p_t \mathbf{u}^0+\rho^0(\mathbf{u}^0 \cdot \nabla) \mathbf{u}^0
 +\nabla p^0=(\mathbf{h}^0 \cdot \nabla)\mathbf{h}^0,\\
&\p_t \mathbf{h}^0-\nabla \times (\mathbf{u}^0 \times \mathbf{h}^0)=0,\\
&{\rm div}\mathbf{u}^0=0,\quad {\rm div}\mathbf{h}^0=0.
\end{aligned}
\right.
\end{equation*}
which is the inhomogeneous incomrpessible ideal MHD system
with the unknown function $(\rho^0, \mathbf{u}^0, \mathbf{h}^0)$.
To find out the terms in \eqref{eq1} whose contributions are essential
for the boundary layer, we use the same scaling as the one used in
\cite{{Oleinik2},{Liu-Xie-Yang}}
$$
t=t,\quad x=x,\quad \widetilde{y}=\varepsilon^{-\frac{1}{2}}y,
$$
and set
$$
\begin{aligned}
&\rho(t, x, \widetilde{y})=\rho^\ep (t, x, y),\quad
  p(t, x, \widetilde{y})=p^\ep (t, x, y),\\
&u_1(t, x, \widetilde{y})=u_1^\ep (t, x, y),\quad
u_2(t, x, \widetilde{y})=\ep^{-\frac{1}{2}}u_2^\ep (t, x, y),\\
&h_1(t, x, \widetilde{y})=h_1^\ep (t, x, y),\quad
 h_2(t, x, \widetilde{y})=\ep^{-\frac{1}{2}}h_2^\ep(t, x, y),
\end{aligned}
$$
then the system \eqref{eq1}, after taking the leading order, is reduced to
\begin{equation}\label{eq2}
\left\{
\begin{aligned}
&\p_t \rho +u_1 \p_x \rho+u_2 \p_y \rho=0,\\
&\rho \p_t u_1+\rho u_1 \p_x u_1+\rho u_2 \p_y u_1-\mu \p_y^2 u_1+\p_x p=h_1 \p_x h_1+h_2 \p_y h_1,\\
&\partial_y p=0,\\
&\p_t h_1+\p_y(u_2 h_1-u_1 h_2)=\k \p_y^2 h_1,\\
&\p_t h_2-\p_x(u_2 h_1-u_1 h_2)=\k \p_y^2 h_2,\\
&\p_x u_1+\p_y u_2=0,\quad \p_x h_1+\p_y h_2=0,
\end{aligned}
\right.
\end{equation}
where $(t, x, y)\in [0, T]\times \Omega$, here we have replaced
$\widetilde{y}$ by $y$ for simplicity of notations.
Indeed, the nonlinear boundary layer system \eqref{eq2}
becomes the classical well-known unsteady boundary layer system
if the density becomes constant and magnetic field vanishes(cf.\cite{Schlitchting}).

The third equation of system \eqref{eq2} implies that the leading order of
boundary layers for the total pressure $p^\varepsilon(t, x, y)$ is
invariant across the boundary layer, and should be matched to the outflow pressure
$P(t, x)$ on top of boundary layer, that is, the trace of pressure of idea MHD flow.
Hence, we obtain
$$
p(t, x, y)\equiv P(t, x).
$$
Furthermore, the density $\rho(t, x, y)$, tangential component $u_1(t, x, y)$ of velocity flied,
$h_1(t, x, y)$ of magnetic field, should match the outflow density $\theta(t, x)$,
tangential velocity $U(t, x)$ and tangential magnetic field $H(t, x)$, on the top of boundary layer, that is
\begin{equation*}
\rho(t, x, y)\rightarrow  \theta(t, x), \quad
u_1(t, x, y)\rightarrow  U(t, x), \quad
h_1(t, x, y) \rightarrow  H(t, x),
~{\rm as}~y \rightarrow +\infty,
\end{equation*}
where $\theta(t, x), U(t, x)$ and $H(t, x)$ are the trace of density,
tangential velocity and tangential magnetic field respectively.
Then, we have the following matching conditions:
\begin{equation}\label{ma-co}
\p_t \theta+U\partial_x \theta=0,\quad
\theta \p_t U+\theta U\p_x U+\p_x P=H\p_x H,\quad
\p_t H+U\partial_x H-H\partial_x U=0.
\end{equation}
Moreover, by virtue of the boundary condition \eqref{bc1}, one attains  the following boundary condition
\begin{equation}\label{bc2}
\left.u_1 \right|_{y=0}=
\left.u_2 \right|_{y=0}=
\left.\partial_y h_1\right|_{y=0}=
\left. h_2\right|_{y=0}=0.
\end{equation}

In this paper, we consider the outer flow $(\theta, U, H)=(1, 1, 1)$,
which implies the pressure $p$ being a constant.
On the other hand, it is noted that the fifth equation of \eqref{eq2}
is a direct consequences of the fourth equation of \eqref{eq2}.
Hence, we only need to study the following initial boundary value problem for
the inhomogeneous incompressible MHD boundary layer equation
\begin{equation}\label{eq3}
\left\{
\begin{aligned}
&\p_t \rho+u_1 \p_x \rho +u_2\p_y \rho=0,\\
&\rho \p_t u_1+\rho u_1\p_x u_1+ \rho u_2 \p_y u_1-\mu \partial_y^2 u_1
=h_1 \p_x h_1+h_2\p_y h_1,\\
&\p_t h_1+\p_y(u_2 h_1-u_1 h_2)-\k \p_y^2 h_1=0,\\
&\p_x u_1+\p_y u_2=0, \quad \p_x h_1+\p_y h_2=0,
\end{aligned}
\right.
\end{equation}
where the density $\rho:=\rho(t, x, y)$, velocity field $(u_1, u_2):=(u_1(t,x,y), u_2(t,x,y))$,
the magnetic field $(h_1, h_2):=(h_1(t,x,y),h_2(t,x,y))$ are unknown functions.
The boundary conditions for equation \eqref{eq3} are given by
\begin{equation}\label{bc3}
\left\{
\begin{aligned}
&\left.u_1\right|_{y=0}=\left. u_2\right|_{y=0}
=\left.\p_y h_1 \right|_{y=0}=\left. h_2\right|_{y=0}=0,\\
&\lim_{y\rightarrow +\infty }\rho(t,x,y)
=\lim_{y\rightarrow +\infty }u_1(t,x,y)
=\lim_{y\rightarrow +\infty }h_1(t,x,y)=1.
\end{aligned}
\right.
\end{equation}

Let us first introduce some weighted Sobolev spaces for later use.
For any $l \in \mathbb{R}$, denote by $L^2_l(\Omega)$ the weighted Lebesgue space
with respect to the spatial variables:
$$
L_l^2(\Omega):= \{f(x, y):\Omega \rightarrow \mathbb{R},\
\|f\|_{L_l^2(\Omega)}^2 := \int_\Omega \la y \ra^{2l} |f(x,y)|^2 dxdy<+\infty, \
\la y \ra\triangleq 1+y\},
$$
and denote the weighted $L^\infty_l(\Omega)$ Lebesgue space by
$$
L^\infty_l(\O):=\{f(x, y):\Omega \rightarrow \mathbb{R},\
\|f\|_{L_l^\infty(\Omega)}:=
\underset{(x, y)\in \O}{\rm esssup}|\la y \ra^{l}f(x, y)|<+\infty,\
\la y \ra\triangleq 1+y\}.
$$
To define the conormal Sobolev spaces, we will use the notation:
$
Z_1=\p_x, Z_2=\varphi(y)\p_y,
$
where the function  $\varphi(y)\triangleq \frac{y}{1+y}$.
Then, we can define the conormal Sobolev spaces as follows:
$$
H^{m,l}_{co}\triangleq \{f\in L^2_l(\Omega)|\
Z^I f \in L_l^2(\Omega),\ |I|\le m\},
$$
where $I=(I_1, I_2)$ and $Z^I=Z_1^{I_1} Z_2^{I_2}$. We also use the notation
\begin{equation*}\label{ndef1}
\|u\|_{m,l}^2=\sum_{|\a|\le m}\|Z^\a u\|_{L^2_l(\O)}^2,\quad
\|u\|_{m,l,\infty}^2=\sum_{|\a|\le m}\|Z^\a u\|_{L_l^\infty(\O)}^2.
\end{equation*}
It is easy to check that
$$
Z_i Z_j=Z_j Z_i, \quad j,k=1,2,
$$
and
$$
\p_y Z_1 =Z_1 \p_y,\quad \p_y Z_2 \neq Z_2 \p_y.
$$
For later use and notational convenience, set
$Z_\tau=(\p_t, Z_1)$ and
$
\z^\a =Z_\tau^{\a_1} Z_2^{\a_2}=\p_t^{\a_{11}}Z_1^{\a_{12}} Z_2^{\a_2},
$
where $\a, \a_1, \a_2$ are the differential multi-indices with
$\a=(\a_1, \a_2), \a_1=(\a_{11}, \a_{12})$,
and we also use the notation
\begin{equation*}\label{ndef2}
\|f(t)\|_{\H^m_l}^2=\sum_{|\a|\le m}\|\z^\a f(t)\|_{L^2_l(\Omega)}^2,
\quad \|f(t)\|_{\H^{m,\infty}_l}^2=\sum_{|\a|\le m}\| \z^{\a} f(t)\|_{L_l^\infty(\O)}^2
\end{equation*}
for smooth space-time function $f(x,t)$. We also use
\begin{equation*}\label{ndef3}
\|f(t)\|_{\H^{m}_{l, tan}}^2
=\sum_{|\a_1|\le m}\| Z_\t^{\a_1}f(t)\|_{L_l^2(\O)}^2, \quad
\|f(t)\|_{\H^{m, \infty}_{l, tan}}^2
=\sum_{|\a_1|\le m}\| Z_\t^{\a_1}f(t)\|_{L_l^\infty(\O)}^2.
\end{equation*}

Finally, we define the functional space $\mathcal{B}_l^m(T)$ for a pair of function
$(\rho, u_1, h_1)=(\rho, u_1, h_1)(x, y, t)$ as follows:
\begin{equation}
\mathcal{B}^m_l(T)=\{(\rho-1, u_1-1, h_1-1)\in L^\infty([0, T]; L^2_l(\O)):
           \underset{0\le t \le T}{\rm esssup}
           \|(\rho, u_1, h_1)(t)\|_{\mathcal{B}^m_l}<+\infty\},
\end{equation}
where the norm $\|\cdot\|_{\mathcal{B}^m_l}=\|\cdot\|_{\overline{\mathcal{B}}^m_l}
+\|\cdot\|_{\widehat{\mathcal{B}}^m_l}$ is given by
\begin{equation}\label{norm-BX}
\|(\rho, u_1, h_1)(t)\|_{\overline{\mathcal{B}}^m_l}:=
\|(\rho-1, u_1-1, h_1-1)(t)\|_{\H^m_l}^2
+\|\p_y(\rho, u_1, h_1)(t)\|_{\H^{m-1}_l}^2
+\| \p_y \rho(t)\|_{\H^{1,\infty}_1}^2,
\end{equation}
and
\begin{equation}\label{norm-BY}
\begin{aligned}
\|(\rho, u_1, h_1)(t)\|_{\widehat{\mathcal{B}}^m_l}:=
&\sum_{i=0}^{m-1} \|\p_t^i (\p_x \rho, \p_y \rho, \p_x u_1, \p_x h_1)(t)\|_{\H^m_l}^2
 +\sum_{i=0}^{m-1} \|\p_t^i \p_y(\p_x^2 \rho, \p_y^2 \rho)(t)\|_{\H^{1,\infty}_0}^2\\
&+\sum_{i=0}^{m-1} \|\p_t^i \p_y(\p_x \rho, \p_y \rho, \p_x u_1, \p_x h_1)(t)\|_{\H^{m-1}_l}^2.
\end{aligned}
\end{equation}
In the present article, we supplement the MHD boundary layer equation \eqref{eq3}
with the initial data
\begin{equation}
(\rho, u_1, h_1)(0, x, y)=(\rho_0, u_{10}, h_{10})(x, y),
\end{equation}
satisfying
\begin{equation}\label{id0}
0<m_0 \le \rho_0 \le M_0 <+\infty,
\end{equation}
and
\begin{equation}\label{id1}
\|(\rho_0, u_{10}, h_{10})\|_{\mathcal{B}^m_l}\le C_0<+\infty,
\end{equation}
where $m_0, M_0, C_0>0$ are positive constants and
\textbf{ the time derivatives of initial data in \eqref{id1}
are defined through the MHD boundary layer equations \eqref{eq3}}.
Hence, we set
\begin{equation}\label{id2}
\mathcal{B}^{m, l}_{BL, ap}=\{(\rho-1, u_1-1, h_1-1)\in H_l^{4m}|\p_t(\rho, u_1,h_1), k=1,...,m
~{\rm are~ defined~ throgh~Eq.}~\eqref{eq3}\}
\end{equation}
and
\begin{equation}\label{id3}
\mathcal{B}^{m,l}_{BL}={\rm the ~closure~of}~\mathcal{B}^{m, l}_{BL, ap}~{\rm in~the~norm~}\|\cdot\|_{\mathcal{B}^m_l}.
\end{equation}

Now, we can state the main results with respect to the well-posedness theory for the inhomogeneous
incompressible MHD boundary layer equations \eqref{eq3}-\eqref{bc3} in this paper as follows.

\begin{theorem}[Main Thoerem]\label{local}
Let $m \ge 5$ be an integer and $l \ge 2$ be a real number.
Assume the initial data $(\rho_0, u_{10}, h_{10})\in \mathcal{B}^{m,l}_{BL}$
given in \eqref{id3} and satisfying \eqref{id0} and \eqref{id1}.
Moreover, there exists a small constant $\delta_0>0$ such that
\begin{equation}\label{condition1}
h_{10}(x,y)\ge 2\delta_0,~{\rm for~all}~(x, y)\in \Omega,
\end{equation}
and
\begin{equation}\label{condition2}
\|\rho_0-1\|_{L^\infty_0(\O)}\le \frac{2l-1}{16}\delta^2_0, \quad
\|\p_y u_{10}\|_{L^\infty_1(\Omega)}\le (2\delta_0)^{-1}.
\end{equation}
Then, there exist a positive time $0<T_0=T_0(\mu, \k,m, l, \d_0,
\|(\rho_0, u_{10}, h_{10})\|_{\overline{\mathcal{B}}^m_l},
\|(\rho_0, u_{10}, h_{10})\|_{\widehat{\mathcal{B}}^m_l})$ and a unique solution
$(\rho, u_1, u_2, h_1, h_2)$ to the initial boundary value problem \eqref{eq3}-\eqref{bc3}, such that
\begin{equation}\label{main-estimate}
\begin{aligned}
&\sup_{0\le t \le T_0}\{\|(\rho-1, u_1-1, h_1-1)(t)\|_{\H^m_l}^2
+\|\p_y(\rho, u_1, h_1)(t)\|_{\H^{m-1}_l}^2
+\| \p_y \rho(t)\|_{\H^{1,\infty}_1}^2\}\\
&\quad +\int_0^{T_0} \|\p_y(\sqrt{\mu} u_1, \sqrt{\k}h_1)(t)\|_{\H^m_l}^2 dt
      +\int_0^{T_0} \|\p_y^2(\sqrt{\mu} u_1, \sqrt{\k}h_1)(t)\|_{\H^{m-1}_l}^2 dt
      \le \widehat{C}_0<+\infty,
\end{aligned}
\end{equation}
where $\widehat{C}_0$ depends only on $l, \d_0$, and $\|(\rho_0, u_{10}, h_{10})\|_{\overline{\mathcal{B}}^m_l}$.
\end{theorem}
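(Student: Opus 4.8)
The plan is to establish Theorem~\ref{local} by a standard but delicate energy-method scheme: first construct approximate solutions via a suitable parabolic regularization (or iteration) scheme, then derive uniform \emph{a priori} estimates in the norm $\|\cdot\|_{\mathcal{B}^m_l}$ on a time interval $[0,T_0]$ independent of the regularization parameter, and finally pass to the limit and prove uniqueness. The central difficulty, as always for Prandtl-type systems, is the loss of one tangential derivative coming from the term $u_2\p_y u_1$ (and likewise $u_2\p_y h_1$, $u_2\p_y\rho$), since $u_2=-\int_0^y\p_x u_1\,dy'$ is one derivative less regular in $x$ than $u_1$. The mechanism that saves us here is the non-degeneracy of the tangential magnetic field: assumption \eqref{condition1} gives $h_{10}\ge 2\delta_0>0$, and a maximum-principle argument on the $h_1$-equation propagates $h_1\ge\delta_0$ for a short time, so that the ``good unknown'' structure of the MHD boundary layer becomes available.

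\medskip

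Concretely, I would proceed as follows. \textbf{Step 1 (Reformulation and good unknowns).} Rewrite \eqref{eq3} in perturbation variables $(\rho-1,u_1-1,h_1-1)$, and — following the Liu--Xie--Yang cancellation for MHD boundary layers — introduce the quantities that cancel the worst $u_2$-terms. Because both $u_1$ and $h_1$ satisfy transport-diffusion equations with the \emph{same} transport field $u_2\p_y$, the combination obtained by differentiating appropriately (morally, working with $\p_y u_1 - (\p_y h_1/h_1)\, (\text{something})$, or directly with $w:=\p_x u_1 \cdot h_1 - \p_x h_1 \cdot u_1$-type quantities) removes the derivative loss. The density equation is pure transport, so $\rho-1$ is controlled by its characteristics once $u$ is controlled; the smallness \eqref{condition2} of $\rho_0-1$ in $L^\infty$ is exactly what is needed to keep the coefficient $\rho$ in the parabolic $u_1$-equation uniformly positive and close to $1$, preserving the sign of the dissipation. \textbf{Step 2 (Conormal energy estimates).} Apply $\z^\a$ for $|\a|\le m$ to the reformulated system, multiply by $\langle y\rangle^{2l}\z^\a(\cdot)$ and integrate over $\Omega$. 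The viscous/resistive terms $-\mu\p_y^2,-\k\p_y^2$ produce the positive dissipation $\int\langle y\rangle^{2l}|\p_y\z^\a u_1|^2$ and similarly for $h_1$, after handling the weight-derivative commutators (here $l\ge 2$ is used, and the $\frac{2l-1}{16}\delta_0^2$ bound in \eqref{condition2} is calibrated so the boundary and weight terms are absorbed). The commutators $[\z^\a, u_2\p_y]$, $[\z^\a,u_1\p_x]$ and the magnetic coupling $[\z^\a, h_2\p_y]$ acting on $h_1$ are the crux; each is estimated by Hardy-type and conormal Sobolev embedding inequalities (which should be among the earlier lemmas), with the top-order worst piece cancelled by the good-unknown structure from Step~1. \textbf{Step 3 (Normal-derivative and $L^\infty$ estimates).} Close the remaining parts of the norm \eqref{norm-BX}--\eqref{norm-BY}: the $\H^{m-1}_l$ bounds on $\p_y(\rho,u_1,h_1)$ follow by differentiating the equations once in $y$ and repeating the energy argument (now the diffusion controls $\p_y^2$), and the weighted $L^\infty$ bounds on $\p_y\rho$ (and $\p_y\p_x^2\rho,\p_y^3\rho$) come from differentiating the transport equation for $\rho$ and using Sobolev embedding in the conormal spaces together with the already-established energy bounds; the second condition in \eqref{condition2}, $\|\p_y u_{10}\|_{L^\infty_1}\le(2\delta_0)^{-1}$, feeds the Gr\"onwall estimate for $\|\p_y u_1\|_{L^\infty_1}$ which in turn appears in the transport coefficient for $\p_y\rho$.

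\medskip

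\textbf{Step 4 (Continuation/closing the estimate).} Collect Steps~2--3 into a single differential inequality of the form $\frac{d}{dt}\mathcal{E}(t)+\mathcal{D}(t)\le C\,P(\mathcal{E}(t))$ for a polynomial $P$, where $\mathcal{E}$ is (equivalent to) $\|(\rho,u_1,h_1)(t)\|_{\mathcal{B}^m_l}$ and $\mathcal{D}$ the left-over dissipation; a standard bootstrap then yields a time $T_0$ depending only on the indicated data and a bound $\widehat C_0$ as in \eqref{main-estimate}. One must also check that the lower bounds $h_1\ge\delta_0$, $\rho\ge m_0/2$, and the $L^\infty$ smallness of $\rho-1$ are \emph{propagated} on $[0,T_0]$ (by maximum principle / transport), so the whole argument is self-consistent. \textbf{Step 5 (Existence, uniqueness, recovery of $u_2,h_2$).} Pass to the limit in the regularization using the uniform bounds plus compactness (Aubin--Lions in conormal spaces), recover $u_2=-\int_0^y\p_x u_1$, $h_2=-\int_0^y\p_x h_1$ and verify the boundary conditions \eqref{bc3} (the condition $\p_y h_1|_{y=0}=0$ must be shown compatible/propagated). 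For uniqueness, take two solutions, write the equation for the difference, and run a \emph{lower-order} energy estimate — this works because the difference equation, after again using the good-unknown cancellation, has no derivative loss. \textbf{Main obstacle.} I expect the hardest single point to be the top-order commutator estimate in Step~2 for the coupled $(u_1,h_1)$ system: making the magnetic cancellation precise in the weighted conormal spaces, while simultaneously controlling the reciprocal $1/h_1$ (which is why $h_1\ge\delta_0$ is indispensable) and the density coefficient $\rho$, and verifying that every resulting error term is either absorbed by $\mathcal{D}(t)$ or bounded by $P(\mathcal{E}(t))$ without creating an uncontrolled power or an uncontrolled dependence on $\mu,\k$.
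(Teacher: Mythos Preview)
Your proposal is essentially correct and follows the paper's approach: vanishing-viscosity regularization, Liu--Xie--Yang stream-function good unknowns (concretely $\varrho_m = Z_\tau^{\alpha_1}\varrho - \tfrac{\partial_y\varrho}{h+1}\,Z_\tau^{\alpha_1}\psi$, and analogously $u_m,h_m$), conormal energy estimates split into the cases $|\alpha_1|\le m-1$ and $|\alpha_1|=m$, separate normal-derivative and $L^\infty$ estimates, a bootstrap, compactness, and uniqueness via the same cancellation at low order. One clarification: your diagnosis of where condition~\eqref{condition2} enters is off. The paper adds artificial tangential viscosity $\epsilon\,\partial_x^2$ to every equation; when forming the good-unknown equation for $u_m$ one multiplies the $\psi$-equation by $\rho\,\eta_u$ and subtracts, leaving a residual $\epsilon(1-\rho)\,\eta_u\,\partial_x^2 Z_\tau^{\alpha_1}\psi$ that cannot be treated perturbatively unless $\|\rho-1\|_{L^\infty_0}\cdot\|\partial_y u_1\|_{L^\infty_1}$ is small enough to make the resulting $\epsilon\,\|\partial_x u_m\|\,\|\partial_x h_m\|$ absorbable into the dissipation---this is exactly what the calibrated constants in \eqref{condition2} achieve, and it disappears entirely when $\rho\equiv 1$. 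It is not about preserving the sign of the $\mu\,\partial_y^2$ dissipation or about weight commutators.
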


\begin{remark}
Note that we choose the initial data with higher regularity and Conormal Sobolev space
as our basic space since we construct the approximation system \eqref{eq4} to
establish the well-posedness for the MHD boundary layer system \eqref{eq3}.
\end{remark}

\begin{remark}
Note that the approach for the well-posedness result in Theorem \ref{local} can be generalized to
study the nonlinear problem \eqref{eq3} with a non-trivial Euler outflow $(1, U, H)$
satisfying the equation \eqref{ma-co}.
\end{remark}

\begin{remark}
We should point out that the initial condition \eqref{condition2} is not required when
the incompressible magnetohydrodynamics flows are the case of homogeneous
(cf. Remark \ref{remark-condition}).
In other words, the local-in-time well-posedness of boundary layer system \eqref{eq3}
with any large initial data can be obtained only under the condition \eqref{condition1}
when the density is constant. This will improve the recent interesting result \cite{Liu-Xie-Yang}.
\end{remark}

We now review some related works to the problem studied in this paper.
The MHD system \eqref{eq1} is a combination of the inhomogeneous incompressible
Navier-Stokes equations of fluid dynamics and Maxwell's equations of electromagnetism.
Since the study for MHD system has been along with that for Navier-Stokes one,
let us recall some results about the inhomogeneous incompressible Navier-Stokes equations.
If the initial density is bounded away from zero, Kazhikov \cite{Kazhikov} proved that:
the inhomogeneous incompressible Navier-Stokes equations have at least one global weak solutions in the energy space.
This result can be generalized to case of initial data with vacuum(cf.\cite{{Simon},{Lions}}).
Choe and Kim \cite{Choe-Kim} proved the existence and uniqueness of local
strong solutions to the initial value problem or the initial boundary
value problem even though the initial vacuum exists.
Recently, the large-time decay and stability to any given global
smooth solutions of the 3D incompressible inhomogeneous Navier-Stokes equations were obtained \cite{Abidi}.
Let's go back to the MHD system \eqref{eq1}, it is known that Gerbeau and Le Bris\cite{GL}
(see also  Desjardins and  Le Bris \cite{BD}) established the global existence of weak solutions
of finite energy in the whole space or in the torus.
The global existence of strong solution with small initial data in some Besov spaces was
considered by Abidi and Paicu \cite{Abidi-Paicu}.
Recently, Gui \cite{Gui}  has shown that the 2D incompressible inhomogeneous magnetohydrodynamics system
with a constant viscosity is globally well-posed for a generic family of the variations of the
initial data and an inhomogeneous electrical conductivity.

When magnetic field vanishes, the MHD system \eqref{eq1} turns to be
the classical well-known incompressible Navier-Stokes equations if the density being constant.
As the viscosity $\varepsilon$ tends to zero, the Navier-Stokes equations
will become the Euler equations.
There are lots of literatures on the uniform bounds and the vanishing viscosity
limit for the Navier-Stokes equations without boundaries \cite{{Constantin}, {Constantin-Foias},{Kato}, {Masmoudi1}}.
The time of existence $T^\varepsilon$ always depend on the viscosity coefficient
when the boundary appears. It is difficult to prove that
the existence of time stays bounded away from zero.
However, for the domain with some special types of Navier-slip boundary conditions,
some uniform $H^3$ (or $W^{2,p}$, with $p$ large enough) estimates and
a uniform time of existence have recently been established \cite{{Beira1},{Beira2},{Xiao-Xin1}}.
This uniform control in some limited regularity Sobolev spaces
can be obtained because these special boundary conditions gives arise to
the main part of the boundary layer vanishes.
For the three dimensional domain with smooth boundary,
Masmoudi and Rousset \cite{Masmoudi-Rousset} recently
obtained conormal uniform estimates for the incompressible
Navier-Stokes equations with Naiver-slip type boundary condition.
Furthermore, they also applied the compact argument to establish
the convergence of the viscous solution to the inviscid ones.
This result was generalized to the compressible flow \cite{Wang-Xin-Yong},
which also shown that the boundary layers for density must be weaker than the one for the velocity.

The vanishing viscosity limit of the incompressible Navier-Stokes equations that,
in a bounded domain with Dirichlet boundary condition, is an important problem in
both physics and mathematics.
This is due to the formation of a boundary layer, where the solution undergoes a sharp transition
from a solution of the Euler system to the zero non-slip boundary condition on boundary of the
Navier-Stokes system. This boundary layer satisfies the Prandtl system formally.
Indeed, Prandtl \cite{Prandtl} derived the Prandtl equations for boundary layer
from the incompressible Navier-Stokes equations with non-slip boundary condition.
The first systematic work in rigorous mathematics was obtained by Oleinik \cite{{Oleinik3},{Oleinik4}},
in which she established the local in time well-posedness of the Prandtl equations
in dimension two by applying the Crocco transformation under the monotonicity condition
on the tangential velocity field in the normal direction to the boundary.
For more extensional mathematical results, the interested readers can refer to
the classical book finished by Oleinik and Samokhin \cite{Oleinik2}.
By taking care of the cancelation in the convection term to overcome the
loss of derivative in the tangential direction of velocity, the researchers
in \cite{Xu-Yang-Xu} and \cite{Masmoudi} independently used the simply
energy method to establish well-posedness theory for the
two-dimensional Prandtl equations in the framework of Sobolev spaces.
For more results in this direction, the interested readers can refer to \cite{{Grenier-Guo-Nguyen1},{Grenier-Guo-Nguyen2},{Gie-Temam}}
and references therein.

Under the influence of electro-magnetic field, the system of
magnetohydrodynamics(denoted by MHD) is a fundamental system
to describe the movement of electrically conducting fluid,
for example plasmas and liquid metals(cf.\cite{Alfven}).
On one hand, G\'{e}rard-Varet and Prestipino \cite{G-V-P}
provided a systematic derivation of boundary layer models in magnetohydrodynamics,
through an asymptotic analysis of the incompressible MHD system.
Furthermore, they also performed some stability analysis for the boundary layer
system, and emphasized the stabilizing effect of the magnetic field.
On the other hand, if both the hydrodynamic Reynolds numbers and magnetic Reynolds numbers
tend to infinity at the same rate, the local in time well-posedness of
the boundary layer system was obtained \cite{Liu-Xie-Yang} if
there exists a small constant $\d_0$ such that
\begin{equation}\label{Yang-C}
|\la y \ra^{l+1}\p_y^i(u_{10}, h_{10})(x, y)|\le (2\d_0)^{-1},~{\rm for~}i=1,2,~(x, y)\in \O,
\end{equation}
and \eqref{condition1} hold on. In other words, the local in time well-posedness of MHD boundary layer
system holds on under the condition on the initial tangential magnetic field is not zero
instead of the monotonicity condition on the tangential velocity field.

Finally, we point out that it is an outstanding open problem to rigorously justify
the validity of expansion in the inviscid limit.
On one hand, Sammartino and Caflisch \cite{{Sammartino-Caflisch1},{Sammartino-Caflisch2}}
obtained the well-posedness in the framework of analytic functions without the
monotonicity condition on the velocity field and justified the boundary layer expansion
for the unsteady incompressible Navier-Stokes equations.
Furthermore, Guo and Nguyen \cite{Guo-Nguyen1} concerned nonlinear ill-posedness of the Prandtl equation
and an invalidity of asymptotic boundary layer expansion of incompressible fluid flow
near a solid boundary. Furthermore, they also shown that the asymptotic boundary layer expansion was not
valid for nonmonotonic shear layer flow in Sobolev spaces and verified that Oleinik's monotonic solutions were well-posed.
For the incompressible steady Navier-Stokes equations, Guo and Nguyen\cite{Guo-Nguyen2} justified the boundary
layer expansion for the flow with a non-slip boundary condition on a moving plate.
As the magnetic field appears, the Prandtl ansatz boundary layer expansion for the unsteady MHD system
was justified \cite{Liu-Xie-Yang2} when no-slip boundary and
perfect conducting boundary conditions are imposed on velocity field and magnetic field respectively.

The rest of this paper is organized as follows.
In section \ref{sa}, we explain the main difficulty and our approach to establish the
local-in-time well-posedness theory for the Prandtl type equation \eqref{eq3}.
In Section \ref{A-P-Est}, one establishes the a priori estimates for the nonlinear problem \eqref{eq5}.
The local-in-time existence and uniqueness of equation \eqref{eq3}
in Weighted Conormal Sobolev space are given in Section \ref{local-in-times}.
Finally, some useful inequalities and important equivalent relations\
will be stated in Appendixs \ref{appendixA} and \ref{appendixB}.

Before we proceed, let us comment on our notation.
Throughout this paper, all constants $C$ may be different in different lines.
Subscript(s) of a constant illustrates the dependence of the constant, for example,
$C_s$ is a constant depending on $s$ only.
Denote by $\p_y^{-1}$ the inverse of the derivative $\p_y$, i.e.,
$(\p_y^{-1}f)(y):=\int_0^y f(z)dz$.
Moreover, we also use the notation $[A, B]=AB-BA$, to denote the commutator between $A$ and $B$.
Finally, $\mathcal{P}_i(\cdot, \cdot)$ stands for a polynomial function independent of $\es$,
and the index $i$ denote it changing from line to line.

\section{Difficulties and Outline of Our Approach}\label{sa}

The main of this section is to explain main difficulties of proving Theorem \ref{local}
as well as our strategies for overcoming them.
In order to solve the Prandtl type equation \eqref{eq3} in certain $H^m$ Sobolev space,
the main difficulty comes from the vertical velocity $u_2=-\p_y^{-1}\p_x u_1$
(and vertical magnetic field $h_2=-\p_y^{-1}\p_x h_1$) creates
a loss of $x-$derivative, so the standard energy estimates can not apply directly.

The main idea of establishing the well-posedness of inhomogeneous incompressible
MHD boundary layer equations \eqref{eq3}-\eqref{bc3} is to apply the so-called
\emph{vanishing viscosity and nonlinear cancelation methods}.
To this end, we consider the following approximate problem:
\begin{equation}\label{eq4}
\left\{
\begin{aligned}
&\p_t \r+u_1^\es \p_x \r+ u_2^\es \p_y \r-\es\p_x^2 \r-\es \p_y^2 \r
  =-\es \p_x r_1-\es \p_y r_2,\\
&\r \p_t u_1^\es+\r u_1^\es \p_x u_1^\es+ \r u_2^\es \p_y u_1^\es
-\es \p_x^2 u_1^\es-\mu \p_y^2 u_1^\es=h_1^\es \p_x h_1^\es+h_2^\es \p_y h_1^\es-\es \p_x r_u,\\
&\p_t h_1^\es+\p_y(u_2^\es h_1^\es-u_1^\es h_2^\es)
-\es \p_x^2 h_1^\es-\k \p_y^2 h_1^\es=-\es \p_x r_h,\\
&\p_x u_1^\es+\p_y u_2^\es=0, \quad \p_x h_1^\es+\p_y h_2^\es=0,
\end{aligned}
\right.
\end{equation}
for any parameter $\es>0$. Here the functions $r_1, r_2, r_u$ and $r_h$ are defined by
\begin{equation}\label{rdef}
(r_1, r_2, r_u, r_h)(t, x, y)=\sum_{i=0}^{m-1} \frac{t^i}{i!}\partial_t^i
(\p_x \rho, \p_y \rho, \p_x u_{1}, \p_x h_{1})(0,x,y),
\end{equation}
which gives that by direct calculation
\begin{equation}\label{ID-def}
\p_t^i(\r, u_1^\es, h_1^\es)(0, x, y)=\p_t^i(\rho, u_1, h_1)(0, x, y),\quad 0\le i \le m.
\end{equation}
To complete the system \eqref{eq4}, the boundary conditions are given by
\begin{equation}\label{bc4}
\left\{
\begin{aligned}
&\left.\p_y \r|_{y=0}=u^{\es}_1\right|_{y=0}=\left. u^{\es}_2\right|_{y=0}
=\left.\p_y h^{\es}_1 \right|_{y=0}=\left. h^{\es}_2\right|_{y=0}=0,\\
&\lim_{y\rightarrow +\infty }\r(t,x,y)
=\lim_{y\rightarrow +\infty }u^{\es}_1(t,x,y)
=\lim_{y\rightarrow +\infty }h^{\es}_1(t,x,y)=1.
\end{aligned}
\right.
\end{equation}
Since the local-in-time existence and uniqueness of regularized Eqs.\eqref{eq4}-\eqref{bc4}
can be obtained easily in $H^m$ Sobolev space for any $\es>0$, we hope that
the solution $(\rho^\es, u_1^{\es}, u_2^\es, h_1^\es, h_2^\es)$ of regularized equation \eqref{eq4}
will converge to the solution $(\rho, u_1, u_2, h_1, h_2)$ of original Prandtl type
equation \eqref{eq3} as $\es$ tends to zero.
To this end, we need to get the uniform \emph{a priori} estimates of solution
$(\rho^\es, u_1^{\es}, u_2^\es, h_1^\es, h_2^\es)$
in an existence time independent of $\es$.
Although the idea of local-in-times well-posedness of MHD boundary layer equation,
which only needs that the background tangential magnetic field has a lower positive bound instead of monotonicity
assumption on the tangential velocity, comes from the recent result \cite{Liu-Xie-Yang},
we have to overcome some essential difficulties when the density of fluid changes from a constant to unknown quantity.

First of all, we should work on the Conormal Sobolev space to obtain some energy estimates
independent of small coefficient $\es$ since there is boundary condition for the
first equation of \eqref{eq4}.
To control the vertical velocity $u_2^\es=-\p_y^{-1} \p_x u_1^\es$ by the horizontal velocity $u_1^\es$,
we need to apply the Hardy type inequality by adding a weight $(1+y)^1$.
Since the conormal derivative $Z_2=\varphi(y) \p_y$ does not communicate with the normal
derivative $\p_y$, we need to choose the Sobolev space with suitable weight(actually taking $l \ge 2$)
to close the energy estimate, which is the first novelty in our paper.

Similar to the Prandtl equation, the main difficulty in the analysis on the system \eqref{eq4}
in the Sobolev framework is the loss of $x-$derivative in the vertical components
$u_2^\es$ and $h_2^\es$ appearing in the terms
$u_2^\es \p_y \r, \r u_2^\es \p_y u_1^\es-h_2^\es \p_y h_1^\es$ and
$u_2^\es \p_y h_1^\es-h_2^\es \p_y u_1^\es$ in the first, second and third equations of \eqref{eq4}, respectively.
Motivated by the recent result \cite{Liu-Xie-Yang}, we construct some quantities $(\vr_m, \u_m, \h_m)$
(see the definitions \eqref{equi-r}, \eqref{eqi-u}, \eqref{equi-h} respectively) to avoid the loss of $x$ derivative
and obtain the estimate for $(\vr_m, \u_m, \h_m)$ in $L^2_l-$norm independent of $\es$ by the energy method.
To establish the relation between the quantities $(\vr_m, \u_m, \h_m)$ and $\p_x^m(\rho^\es, u_1^\es, h_1^\es)$,
we need to control the quantity $\p_y(\rho^\es, u_1^\es, h_1^\es)$ in $L^\infty_1-$norm.
To this end, we apply the low order tangential derivative estimate $\mathcal{E}_{m,l}(t)$(see the definition \eqref{eml})
to control the quantity $\p_y(u_1^\es, h_1^\es)$ in $L^\infty_1-$norm, which can be achieved by the
Sobolev embedding inequality. Then, it is easy to get the almost equivalent relation $X_{m, l}(t) \sim Y_{m,l}(t)$
(see the definitions in \eqref{ydef} and \eqref{xdef} respectively).
This is the second novelty in our paper, and avoid the important condition \eqref{Yang-C} required
in \cite{Liu-Xie-Yang} for the MHD boundary layer equation with constant density .


\section{A Priori Estimates}\label{A-P-Est}

In this section, we will establish a priori estimates(independent of $\es$),
which are crucial to prove the Theorem \ref{local}.
First of all, let us define
\begin{equation}\label{transf}
\vr:=\r-1,\
\u:=u_1^\es-1+e^{-y},\ \v=u_2^\es,\quad
\h:=h_1^\es-1, \ \g=h_2^\es,
\end{equation}
then it follows from equation \eqref{eq4} that
\begin{equation}\label{eq5}
\left\{
\begin{aligned}
&\p_t \vr+(u^\es+1-e^{-y})\p_x \vr +v^\es \p_y \vr
 -\es\p_x^2 \vr-\es \p_y^2 \vr=-\es \p_x r_1-\es \p_y r_2,\\
&\rho^\es \partial_t u^\es+\rho^\es(u^\es+1-e^{-y})\partial_x u^\es+\rho^\es v^\es\partial_y u^\es
+\rho^\es v^\es e^{-y}\\
&\quad =\es \partial_x^2 u^\es+\mu \partial_y^2 u^\es
+(h^\es+1)\partial_x h^\es+g^\es \partial_y h^\es-\es \p_x r_u-\mu e^{-y},\\
&\partial_t h^\es+(u^\es+1-e^{-y})\partial_x h^\es+v^\es \partial_y h^\es
-\es \partial_x^2 h^\es-\k \partial_y^2 h^\es
=(h^\es+1)\partial_x u^\es+g^\es\partial_y(u^\es-e^{-y})-\es \p_x r_h,\\
&\p_x \u+\p_y \v=0, \quad \p_x \h+\p_y \g=0,\\
&(\vr,\u,\h)|_{t=0}:=(\vr_0,\u_0,\h_0),
\end{aligned}
\right.
\end{equation}
with the boundary conditions
\begin{equation}\label{bc5}
\left\{
\begin{aligned}
&\partial_y \varrho^\es|_{y=0}=u^\es|_{y=0}=v^\es|_{y=0}=
 \partial_y h^\es|_{y=0}=g^\es|_{y=0}=0,\\
&\underset{y\rightarrow +\infty}{\lim}\varrho^\es
 =\underset{y\rightarrow +\infty}{\lim}u^\es
 =\underset{y\rightarrow +\infty}{\lim}h^\es=0,\\
\end{aligned}
\right.
\end{equation}
Due to the relation \eqref{transf}, we can get the relation between two initial data as follow:
\begin{equation}\label{id-relation}
\vr_0=\rho_0-1,\quad \u_0=u_{10}-1+e^{-y},\quad \h_0=h_{10}-1,
\end{equation}
and hence, we have the estimates:
\begin{equation*}
\|(\vr_0, \u_0, \h_0)\|_{\H^m_l}^2
+\|\p_y(\vr_0, \u_0, \h_0)\|_{\H^{m-1}_l}^2
+\| \p_y \vr_0\|_{\H^{1,\infty}_1}^2
\le C(1+\|(\rho_0, u_{10}, h_{10})\|_{\overline{{\mathcal{B}}}^m_l}),
\end{equation*}
and
\begin{equation*}
\|(r_1, r_2, r_u, r_h)(t)\|_{\H^m_l}^2
+\|\p_y(r_1, r_2, r_u, r_h)(t)\|_{\H^{m-1}_l}^2
+\|\p_y(\p_x r_1, \p_y r_2)(t)\|_{\H^{1,\infty}_0}^2
\le C\|(\rho_0, u_{10}, h_{10})\|_{\widehat{\mathcal{B}}^m_l}.
\end{equation*}
Here the norms $\|\cdot \|_{\overline{{\mathcal{B}}}^m_l}$ and
$\|\cdot \|_{\widehat{\mathcal{B}}^m_l}$
are defined by \eqref{norm-BX} and \eqref{norm-BY} respectively,
and the time derivatives of initial data
are defined through the MHD boundary layer equation \eqref{eq3}.

Let us define
\begin{equation}
\begin{aligned}
\ta_{m,l}(\vr, \u, \h)(t):=
&\sup_{0\le s \le t}\{1+\|(\vr, \u, \h)(s)\|_{\H^m_l}^2+\|\p_y(\vr, \u, \h)(s)\|_{\H^{m-1}_l}^2
+\| \p_y \vr (s)\|_{\H^{1,\infty}_1}^2\}\\
&+\es \int_0^t\|\p_x(\vr, \u, \h)\|_{\H^m_l}^2d\t
 +\int_0^t \|\p_y(\sqrt{\es}\vr, \sqrt{\mu}\u, \sqrt{\k}\h)\|_{\H^m_l}^2d\t\\
&+\es \int_0^t\|\p_{xy}(\vr, \u, \h)\|_{\H^{m-1}_l}^2d\t
 +\int_0^t \|\p_y^2(\sqrt{\es}\vr, \sqrt{\mu}\u, \sqrt{\k}\h)\|_{\H^{m-1}_l}^2d\t.
\end{aligned}
\end{equation}
Next, we will prove the following a priori estimates independent of $\es$
for the regularized MHD boundary layer equations \eqref{eq5}-\eqref{bc5}.

\begin{theorem}[\emph{a priori estimates}]\label{theo a priori}
Let $m \ge 5$ be an integer, $l \ge 2$ be a real number
and $\epsilon \in (0, 1]$, and $(\vr, \u, \v, \h, \g)$ be sufficiently smooth solution,
defined on $[0, T^\es]$, to the regularized MHD boundary layer equations \eqref{eq5}-\eqref{bc5}.
The initial data $(\vr_0,\u_0,\h_0)$ is defined by $(\rho_0, u_{10}, h_{10})$
given in Theorem \ref{local} through the relation \eqref{id-relation}.
Then, there exists a time $T_a=T_a(\mu, \k,m, l, \d_0,
\|(\rho_0, u_{10}, h_{10})\|_{\overline{\mathcal{B}}^m_l},
\|(\rho_0, u_{10}, h_{10})\|_{\widehat{\mathcal{B}}^m_l})>0$
independent of $\es$ such the following a priori estimates hold
for all $t \in [0, \min(T_a, T^\es)]$:
\begin{equation}\label{3a1}
\ta_{m,l}(\vr, \u, \h)(t) \le 2 C_l \mathcal{P}_0(\d^{-1}_0, \|(\rho_0, u_{10}, h_{10})\|_{\overline{\mathcal{B}}^m_l}),
\end{equation}
and
\begin{equation}\label{3a2}
\|\p_y(\u-e^{-y})(t)\|_{L^\infty_1(\O)} \le \d_0^{-1},\quad
\|\vr(t)\|_{L^\infty_0(\O)}\le \frac{3(2l-1)}{32}\delta^2_0,\quad
\h(t,x, y)+1 \ge \d_0,
\end{equation}
for all $(t, x, y) \in [0, \min(T_a, T^\es)] \times \O$.
\end{theorem}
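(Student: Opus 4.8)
\medskip

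The plan is to run a continuity (bootstrap) argument on the functional $\ta_{m,l}$ together with the pointwise bounds in \eqref{3a2}. First I would fix a provisional time interval on which the bounds \eqref{3a2} are assumed to hold with the slightly enlarged constants (so that $\h+1 \ge \d_0$, $\|\p_y(\u-e^{-y})\|_{L^\infty_1}\le \d_0^{-1}$, $\|\vr\|_{L^\infty_0}\le \tfrac{3(2l-1)}{32}\d_0^2$), and under these assumptions derive a closed differential inequality of the form $\tfrac{d}{dt}\ta_{m,l} \le \mathcal{P}_1(\d_0^{-1}, \ta_{m,l})$, where the polynomial is independent of $\es$. Integrating this and choosing $T_a$ small (depending only on the listed quantities) forces $\ta_{m,l}(t) \le 2C_l\mathcal{P}_0$ on $[0,\min(T_a,T^\es)]$, which is \eqref{3a1}; one then checks that the pointwise bounds \eqref{3a2} are recovered with strict inequality on the same interval, closing the bootstrap. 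The reduction \eqref{transf}–\eqref{eq5} is what makes the homogeneous boundary conditions \eqref{bc5} available for $\u$, $\h$ and the Neumann condition for $\vr$, so that conormal integration by parts produces no boundary terms.

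\medskip

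The heart of the argument is the energy estimate for the top-order conormal derivatives. Applying $\z^\a$ with $|\a|\le m$ to the three evolution equations in \eqref{eq5} and testing against $\la y\ra^{2l}\z^\a(\vr,\u,\h)$ (weighted by $\rho^\es$ in the velocity equation) produces the positive dissipation terms $\es\|\p_x\z^\a(\cdot)\|_{L^2_l}^2$ and $\|\p_y(\sqrt{\es}\z^\a\vr,\sqrt{\mu}\z^\a\u,\sqrt{\k}\z^\a\h)\|_{L^2_l}^2$, plus lower-order commutator terms. The dangerous terms are those containing $\v^\es = -\p_y^{-1}\p_x u_1^\es$ and $\g^\es = -\p_y^{-1}\p_x h_1^\es$: when $\a_2 = 0$ (pure tangential derivatives) differentiating $\z^\a(\v\p_y\vr)$ etc. at top order would cost one too many $x$-derivatives. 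This is where the cancellation structure enters: following the idea from \cite{Liu-Xie-Yang}, one forms the corrected quantities $(\vr_m,\u_m,\h_m)$ (definitions \eqref{equi-r}, \eqref{eqi-u}, \eqref{equi-h}) in which the worst term is cancelled using the divergence-free conditions $\p_x\u+\p_y\v=0$, $\p_x\h+\p_y\g=0$ and the background tangential magnetic field lower bound $\h+1\ge\d_0$ — it is precisely the non-degeneracy of $h_1^\es$ that lets one solve for the offending term. The remaining transport and commutator contributions are then controlled by $\ta_{m,l}$ using the weighted Hardy inequality $\|\p_y^{-1}\p_x u_1^\es\|_{L^2_{l}}\lesssim\|\p_x u_1^\es\|_{L^2_{l-1}}$ (here one needs the weight, hence $l\ge 2$), together with the anisotropic Sobolev/Moser product estimates collected in the appendices, and the fact that $Z_2$ and $\p_y$ fail to commute is absorbed by the extra weight.

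\medskip

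Two ancillary estimates must be folded in. One is the normal-derivative control: testing the equations differentiated by $\z^\a$ with $|\a|\le m-1$ against $\la y\ra^{2l}\z^\a\p_y(\vr,\u,\h)$ (or equivalently reading $\p_y^2$ off the equations and bounding) yields the terms $\|\p_y(\vr,\u,\h)\|_{\H^{m-1}_l}^2$ and their dissipation $\|\p_y^2(\sqrt{\es}\vr,\sqrt{\mu}\u,\sqrt{\k}\h)\|_{\H^{m-1}_l}^2$ in $\ta_{m,l}$. The other is the $L^\infty$ bounds: $\|\p_y\vr\|_{\H^{1,\infty}_1}$ is propagated by differentiating the $\vr$-equation once in $\p_y$, multiplying by the weight, and using the maximum-principle-type structure of the (parabolic, since $\es>0$) density equation, while $\|\p_y(\u-e^{-y})\|_{L^\infty_1}$, $\|\vr\|_{L^\infty_0}$ and $\h+1\ge\d_0$ follow from a low-order energy quantity $\mathcal{E}_{m,l}$ (definition \eqref{eml}) via the one-dimensional-in-$y$ Sobolev embedding, exactly as sketched in Section \ref{sa}; the smallness assumptions \eqref{condition2} on $\rho_0$ and $\p_y u_{10}$ are consumed here to keep the constants in \eqref{3a2} from degrading past the stated thresholds on a short time. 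The main obstacle, as in all Prandtl-type problems, is organizing the top-order estimate so that every occurrence of $\p_x$ hitting $\v^\es$ or $\g^\es$ is either cancelled in $(\vr_m,\u_m,\h_m)$ or paid for by a dissipation term with a matching $\sqrt{\mu}$ or $\sqrt{\k}$; the density being a genuine unknown (rather than constant) adds the coupling $\rho^\es v^\es\p_y u^\es$ and $v^\es\p_y\vr$, which is why the $L^\infty$ smallness of $\vr$ is needed to treat $\rho^\es$ as a harmless perturbation of $1$ in the energy identities.
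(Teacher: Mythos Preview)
Your overall strategy --- bootstrap on $\ta_{m,l}$ together with the pointwise constraints, conormal energy estimates with the modified quantities $(\vr_m,\u_m,\h_m)$ for the pure tangential case, separate normal-derivative estimates, and $L^\infty$ control to close --- matches the paper's architecture (Propositions \ref{Lower-estimate}, \ref{Tanential-estimate}, \ref{Normal-estimate}, \ref{Infinity-estimate} and \S3.5). Two points deserve correction.

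First, the energy estimates do \emph{not} close directly as a differential inequality in $\ta_{m,l}$. The tangential estimate (Proposition \ref{Tanential-estimate}) is naturally phrased for $(\vr_m,\u_m,\h_m)$, and the right-hand sides of all the propositions involve the auxiliary functional $X_{m,l}$ from \eqref{ydef}. The paper therefore first closes an \emph{integral} inequality for the intermediate functional $\n_{m,l}$ (defined in \eqref{3d2}) and only then transfers the bound to $\ta_{m,l}$ via the almost-equivalence Lemma \ref{equi-control} in Appendix \ref{appendixB}. Your sketch suppresses this back-and-forth, but without it you cannot actually eliminate the modified quantities from the final statement.

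Second, and more seriously, your treatment of $\|\p_y\vr\|_{\H^{1,\infty}_1}$ is a genuine gap. The maximum principle handles $\|\p_y\vr\|_{L^\infty_0}$ and $\|Z_\t^{e_i}\p_y\vr\|_{L^\infty_0}$, but the piece $\|Z_2\p_y\vr\|_{L^\infty_1}$ contains $\|y\p_y^2\vr\|_{L^\infty_0}$, and the weight $y$ does \emph{not} commute with $\es\p_y^2$: the commutator produces $2\es\p_y^2\vr$ with no smallness, so a naive maximum-principle argument fails uniformly in $\es$. The paper (Lemma \ref{Lemma311}) resolves this by splitting $\p_y\vr=\chi\p_y\vr+(1-\chi)\p_y\vr$ with a boundary cutoff $\chi$; the interior part is handled by conormal Sobolev embedding, while the boundary part $\varrho^b$ satisfies a heat equation to which the weighted $L^\infty$ estimate of Lemma \ref{A-heat} applies. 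That lemma is exactly designed so that $\|y\p_y\varrho^b\|_{L^\infty}$ is controlled uniformly in $\es$. This step also relies on $\p_y\v|_{y=0}=0$ (from $\u|_{y=0}=0$ and divergence-free) to Taylor-expand $\v$ to second order and absorb $\v\,y\p_y^3\vr$ into conormal norms; your sketch does not account for this mechanism.
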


\begin{remark}
When the parameter $\es=0$, the regularized Prandtl type equations \eqref{eq5} become
the original Prandtl type equation \eqref{eq3}, and hence Theorem \ref{theo a priori}
also provides a priori estimates for the original MHD boundary layer equation \eqref{eq3}.
\end{remark}

Throughout this section, for any small constant $\d$,
we assume that the following a priori assumptions:
\begin{equation}\label{a2}
h^\es(t,x,y)+1\ge \d,
\end{equation}
and
\begin{equation}\label{a1}
\|\vr(t)\|_{L^\infty_0(\O)}\le \frac{2l-1}{2}\d^2,\quad
\|\p_y (\u-e^{-y})(t)\|_{L^\infty_1(\O)}\le \d^{-1},
\end{equation}
hold on for any $(t, x, y) \in [0, T^\es] \times \O$.
Thanks to the smallness of $\d$, we find
\begin{equation}\label{low-bou}
\frac{1}{2}\le \rho^\es(t, x, y)\le \frac{3}{2}
\end{equation}
for $(t, x, y)\in [0, T^\es] \times \O$.

\subsection{Weighted $\H^m_l-$Estimates with Conormal Derivative}

In this subsection, we will derive the weighted estimates for the quantities
$Z_\t^{\a_1} Z_2^{\a_2}(\vr, \u, \h)$ with $|\a_1|+|\a_2|=m, |\a_1|\le m-1$.
This goal is easy to reach by the standard energy method because one order
tangential derivative loss is allowed.
For notational convenience, we denote
\begin{equation}\label{eml}
\mathcal{E}_{m,l}(t):=\sum_{\substack{ |\alpha| \le m \\
    |\a_1| \le m-1}}
    \|\z^\a(\vr, \u, \h)(t)\|_{L^2_l(\O)}^2,
\end{equation}
and
\begin{equation}\label{Q}
\begin{aligned}
Q(t):=\underset{0\le s \le t}{\sup}\{
&\|Z_\t \vr(s)\|_{L^\infty_0(\O)}^2+\|(\u, \h)(s)\|_{\H^{1,\infty}_{0, tan}}^2
+\|(\v, \g)(s)\|_{\H^{1, \infty}_{1, tan}}^2\\
&+\|(\p_y \vr, \p_y \u, \p_y \h, \frac{\v}{\varphi})(s)\|_{\H^{1,\infty}_1}^2\}.
\end{aligned}
\end{equation}

\begin{proposition}\label{Lower-estimate}
Let $(\vr, \u, \v, \h, \g)$ be sufficiently smooth solution, defined on $[0, T^\es]$,
to the equations \eqref{eq5}-\eqref{bc5}. Then, it holds on
\begin{equation*}
\begin{aligned}
&\sup_{0\le \t \le t}\e_{m,l}(\t)
+\sum_{\substack{0 \le |\alpha| \le m \\ |\a_1| \le m-1}}
\es \int_0^t \|\p_x  \z^\a(\vr, \u, \h)\|_{L^2_l(\O)}^2 d\tau\\
& + \sum_{\substack{0 \le |\alpha| \le m \\ |\a_1| \le m-1}}
\int_0^t \|(\sqrt{\es} \p_y  \z^\a \vr, \sqrt{\mu} \p_y  \z^\a \u, \sqrt{\k} \p_y  \z^\a\h)\|_{L^2_l(\O)}^2 d\tau\\
\le
& C\|(\vr_0, \u_0, \h_0)\|_{\H^m_l}^2
+C t \|(\rho_0, u_{10}, h_{10})\|_{\widehat{\mathcal{B}}^m_l}\\
&+C_{\mu, \k, m, l}(1+Q^2(t))\int_0^t (1+\|(\vr, \u, \h)\|_{\H^{m}_l}^2+\|\p_y(\vr, \u, \h)\|_{\H^{m-1}_l}^2)d\tau.
\end{aligned}
\end{equation*}
\end{proposition}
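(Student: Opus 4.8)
The goal is a weighted conormal energy inequality for $\z^\a(\vr,\u,\h)$ with $|\a|\le m$ and at most $m-1$ tangential derivatives, so that one normal conormal factor $Z_2$ is still available. The plan is to apply $\z^\a$ to each of the first three equations of \eqref{eq5}, multiply by $\la y\ra^{2l}\z^\a\vr$, $\rho^\es\la y\ra^{2l}\z^\a\u$, $\la y\ra^{2l}\z^\a\h$ respectively, integrate over $\O$, and sum. The parabolic terms $-\es\p_x^2-\es\p_y^2$ (resp.\ $-\es\p_x^2-\mu\p_y^2$, $-\es\p_x^2-\k\p_y^2$) produce, after integration by parts in $x$ and $y$, the good dissipation terms $\es\|\p_x\z^\a\cdot\|_{L^2_l}^2$ and $\|(\sqrt\es\p_y\z^\a\vr,\sqrt\mu\p_y\z^\a\u,\sqrt\k\p_y\z^\a\h)\|_{L^2_l}^2$ on the left, modulo (i) commutators $[\z^\a,\p_y^2]$ and $[\z^\a,\la y\ra^{2l}]$-type terms, and (ii) boundary terms at $y=0$. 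The boundary terms vanish or are controlled using \eqref{bc5}: $u^\es,v^\es,g^\es,\p_y h^\es$ and $\p_y\vr$ all vanish at $y=0$ (note $\u|_{y=0}=e^{-y}|_{y=0}=1\neq0$, so one must track the inhomogeneous boundary value of $\u$ carefully — this is where the $e^{-y}$ shift and the terms $\rho^\es v^\es e^{-y}$, $-\mu e^{-y}$ in \eqref{eq5} play their role). The time-derivative part of $\z^\a$ commutes with everything, and the $\p_t$-weight bookkeeping contributes the $Ct\,\|(\rho_0,u_{10},h_{10})\|_{\widehat{\mathcal B}^m_l}$ term via the forcing $\es(\p_x r_1,\p_y r_2,\p_x r_u,\p_x r_h)$ together with \eqref{rdef}.

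The heart of the estimate is the transport/nonlinear terms. Write $\z^\a(a\p_x b)=a\p_x\z^\a b+[\z^\a,a\p_x]b$ and similarly for $\z^\a((\text{vertical})\p_y b)$. The top-order pieces $a\p_x\z^\a b$ and $v^\es\p_y\z^\a b$ are handled by the \emph{nonlinear cancellation}: because the equation for $\vr$, $\u$, $\h$ each carries the same transport field $(u^\es+1-e^{-y})\p_x+v^\es\p_y$, and because $\p_x u_1^\es+\p_y u_2^\es=0$, integration by parts moves the derivative off $\z^\a b$ and the divergence-free structure kills the dangerous term (for the $\u$-equation one also uses the mass equation to absorb $\p_t\rho^\es$ and $\p_x(\rho^\es(u^\es+1-e^{-y}))+\p_y(\rho^\es v^\es)$ after multiplying by $\rho^\es$, exactly as in the standard density-dependent energy identity). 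The magnetic coupling terms $(h^\es+1)\p_x h^\es+g^\es\p_y h^\es$ in the $\u$-equation and $(h^\es+1)\p_x u^\es+g^\es\p_y(u^\es-e^{-y})$ in the $\h$-equation are treated symmetrically: the two top-order cross terms $\int\la y\ra^{2l}\z^\a\u\cdot(h^\es+1)\p_x\z^\a\h$ and $\int\la y\ra^{2l}\z^\a\h\cdot(h^\es+1)\p_x\z^\a\u$ cancel after integration by parts in $x$ (using $\p_x(h^\es+1)+\p_y g^\es=0$ for the lower-order remainder), which is precisely the structural cancellation that replaces the monotonicity hypothesis. Everything else is a commutator: each commutator $[\z^\a,a\p_x]b$, $[\z^\a,(\text{vert})\p_y]b$, $[\z^\a,\p_y^2]b$, and the $\la y\ra^{2l}$-weight commutators, expands (via the product rule for $Z_1=\p_x$, $Z_2=\varphi\p_y$, $\p_t$, and the algebra $\p_yZ_1=Z_1\p_y$, $\p_yZ_2\neq Z_2\p_y$ with the bounded correction $\p_yZ_2=Z_2\p_y+\varphi'\p_y$) into a sum of products in which one factor carries $\le m$ derivatives with at most $m-1$ tangential ones and the other is a low-order factor bounded in $L^\infty$ by $Q(t)$ (or $Q(t)^{1/2}$) — using the embedding and Hardy-type inequalities of Appendix~\ref{appendixA}, in particular the weighted Hardy inequality with weight $\la y\ra$ to estimate $v^\es=-\p_y^{-1}\p_x u_1^\es$ and $g^\es=-\p_y^{-1}\p_x h_1^\es$ and the quotient $\v/\varphi$ in terms of $\p_x\u$. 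After Cauchy–Schwarz with a small parameter to absorb any factor that genuinely carries $\p_y\z^\a$ into the dissipation, every such term is bounded by $C_{\mu,\k,m,l}(1+Q^2(t))\big(1+\|(\vr,\u,\h)\|_{\H^m_l}^2+\|\p_y(\vr,\u,\h)\|_{\H^{m-1}_l}^2\big)$, and integrating in time gives the claimed right-hand side; the $C\|(\vr_0,\u_0,\h_0)\|_{\H^m_l}^2$ is the initial value of the energy.

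The main obstacle, and where the constraint $l\ge2$ is forced, is controlling the commutator between $\z^\a$ and the normal second-order operator together with the weight $\la y\ra^{2l}$, because $Z_2=\varphi(y)\p_y$ does not commute with $\p_y$: each time one of the $\a_2$ factors of $Z_2$ is pushed past $\p_y^2$ or past $\la y\ra^{2l}$, one generates a term of the form $\varphi'(y)\la y\ra^{2l}\p_y\z^{\a'}(\cdot)\,\p_y\z^\a(\cdot)$ or $l\,\la y\ra^{2l-1}\cdot$, and the weight mismatch $\la y\ra^{2l-1}$ versus $\la y\ra^{2l}$ must be reabsorbed. Since $\varphi'(y)=\la y\ra^{-2}$ decays, these terms are borderline integrable in $y$ exactly when $2l-2\ge0$ is not enough and one needs the extra room from $l\ge2$ to trade a power of $\la y\ra$ against $\varphi'$ and still close with the $\H^{m-1}_l$-norm of $\p_y(\vr,\u,\h)$ (which is why the proposition's right-hand side carries $\|\p_y(\vr,\u,\h)\|_{\H^{m-1}_l}^2$ and not an $\H^m_l$-norm). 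A secondary delicate point is that the $\vr$-equation has a Neumann-type boundary condition $\p_y\vr|_{y=0}=0$ rather than Dirichlet, so when integrating the $\es\p_y^2\vr$ term by parts one must verify the boundary term $\es\la y\ra^{2l}\z^\a\vr\,\p_y\z^\a\vr|_{y=0}$ vanishes — which requires checking that $\z^\a$ preserves the boundary condition, true for purely tangential and time derivatives but needing the commutator correction $\varphi'\p_y\vr$ (vanishing at $y=0$ since $\varphi(0)=0$) to be accounted for when $Z_2$ factors are present. Once these weight/commutator bookkeeping issues are settled, the rest is the routine energy-method machinery of Appendix~\ref{appendixA}.
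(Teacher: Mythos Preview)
Your plan is essentially the paper's own proof (Lemmas~\ref{lemma31}--\ref{lemma32} plus the claim estimate \eqref{ulc1}): apply $\z^\a$, multiply by $\la y\ra^{2l}\z^\a(\cdot)$, integrate by parts, use the cross-cancellation between the $\u$- and $\h$-equations for the magnetic coupling, and bound all commutators by the Moser-type inequality \eqref{ineq-moser} together with Hardy for $\v,\g$.

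Two corrections, neither fatal. First, you misread the boundary condition: from \eqref{transf} one has $\u=u_1^\es-1+e^{-y}$, and since $u_1^\es|_{y=0}=0$ this gives $\u|_{y=0}=0$, exactly as recorded in \eqref{bc5}. So there is no inhomogeneous boundary value to track; the boundary term $\int_{\T}\z^\a\p_y\u\cdot\z^\a\u|_{y=0}\,dx$ vanishes directly (if $\a_2=0$ because $\z^\a\u|_{y=0}=0$, if $\a_2\neq0$ because $\varphi(0)=0$), which is the paper's \eqref{328}. Second, the constraint $l\ge2$ is not forced by the $[\z^\a,\p_y^2]$--weight commutator in this proposition; those commutators close in $\H^{m-1}_l$ for any $l$ (see \eqref{3210}--\eqref{3216}). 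The place $l\ge2$ actually enters is the $L^\infty$ control of $\v,\g$ in Lemma~\ref{Lemma310} (e.g.\ \eqref{3107}), where Hardy plus Sobolev needs $\e_{m,2}$.
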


The Proposition \ref{Lower-estimate} will be proved in Lemma \ref{lemma32}.
Now, we give the proof for the case $m=0$.

\begin{lemma}\label{lemma31}
For smooth solution $(\vr, \u, \v, \h,\g)$ of the equations \eqref{eq5}-\eqref{bc5}, then it holds on
\begin{equation}\label{31}
\begin{aligned}
&\sup_{\tau\in [0, t]} \|(\vr, \u, \h)(\t)\|_{L^2_l(\Omega)}^2
+\es \int_0^t \|\p_x(\vr, \u, \h)\|_{L^2_l(\Omega)}^2 d\tau
+\int_0^t \|\p_y(\sqrt{\es} \vr, \sqrt{\mu} \u, \sqrt{\k} \h)\|_{L^2_l(\Omega)}^2 d\tau\\
&\le C\|(\vr_0, \u_0, \h_0)\|_{L^2_l(\Omega)}^2
+\!C\!\!\int_0^t\!\! \|(r_1, r_2, r_u, r_h)\|_{L^2_l(\Omega)}^2d\t
+\!C_{\mu, \k, l}(1+Q(t))\!\int_0^t \!\!(1+ \|(\vr,\u,\h)\|_{\H^1_l}^2)d\tau.
\end{aligned}
\end{equation}
\end{lemma}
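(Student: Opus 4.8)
\textbf{Proof plan for Lemma \ref{lemma31}.}
The plan is to run a weighted $L^2_l$ energy estimate on the three evolution equations in \eqref{eq5} for the quantities $\vr$, $\u$, $\h$, treating the $m=0$ case directly and keeping careful track of where the weight $\langle y\rangle^{2l}$ interacts with the operators. First I would multiply the density equation by $\langle y\rangle^{2l}\vr$, the velocity equation by $\langle y\rangle^{2l}\u$, and the magnetic equation by $\langle y\rangle^{2l}\h$, and integrate over $\Omega$. The parabolic terms $-\es\p_x^2$, $-\es\p_y^2$, $-\mu\p_y^2$, $-\k\p_y^2$ are integrated by parts; the $\p_x^2$ terms produce exactly the good dissipation $\es\|\p_x(\vr,\u,\h)\|_{L^2_l}^2$ with no weight-derivative cost since $\langle y\rangle$ depends only on $y$, while the $\p_y^2$ terms produce $\|\p_y(\sqrt\es\vr,\sqrt\mu\u,\sqrt\k\h)\|_{L^2_l}^2$ plus commutator terms of the form $\int \p_y(\langle y\rangle^{2l})\,f\,\p_y f\,dxdy$, which are controlled by the dissipation plus a lower-order $\|f\|_{L^2_l}^2$ via Young's inequality (here one uses $l\ge 2$ so the weight exponent arithmetic closes). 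The boundary terms at $y=0$ vanish because of \eqref{bc5}: $u^\es|_{y=0}=h^\es|_{y=0}$-data and $\p_y\vr|_{y=0}=\p_y h^\es|_{y=0}=0$ kill the boundary contributions; as $y\to+\infty$ the limits are zero, so no boundary term survives there.

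Next I would handle the transport/convection terms. For the density equation, $\int \langle y\rangle^{2l}\vr\big[(u^\es+1-e^{-y})\p_x\vr + v^\es\p_y\vr\big]dxdy$: the $\p_x$ piece integrates to a term proportional to $\int \p_x(u^\es)\langle y\rangle^{2l}\vr^2$ after integration by parts (using $\p_x$ of the constant $1$ and of $e^{-y}$ vanish), bounded by $Q(t)\|\vr\|_{L^2_l}^2$ since $\p_x u^\es = Z_1\u$ is part of $Q$; the $v^\es\p_y\vr$ piece is integrated by parts in $y$, generating $\int \p_y v^\es \langle y\rangle^{2l}\vr^2$ (and $v^\es=-\p_y^{-1}\p_x\u$, with $\p_y v^\es=-\p_x\u$) plus $\int v^\es \p_y(\langle y\rangle^{2l})\vr^2$; for the latter one writes $v^\es = \varphi(y)\cdot(v^\es/\varphi)$ and notes $\p_y(\langle y\rangle^{2l})\lesssim \langle y\rangle^{-1}\langle y\rangle^{2l}$, $\varphi(y)\langle y\rangle^{-1}\le \langle y\rangle^{-1}$, so this is absorbed by $\|(v^\es/\varphi)\|_{\H^{1,\infty}_1}\|\vr\|_{L^2_l}^2 \lesssim Q(t)\|\vr\|_{L^2_l}^2$. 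The velocity and magnetic equations are similar: for the velocity equation, multiply by $\langle y\rangle^{2l}\u$, use $\rho^\es$ bounded above and below by \eqref{low-bou} so that $\tfrac{d}{dt}\int \tfrac{1}{2}\rho^\es\langle y\rangle^{2l}(\u)^2$ differs from $\int \rho^\es\langle y\rangle^{2l}\u\p_t\u$ by a term $\tfrac12\int\p_t\rho^\es\langle y\rangle^{2l}\u^2$ which by the density equation equals a transport expression again absorbed into $Q(t)\|\u\|_{L^2_l}^2$; the convection and the awkward $\rho^\es v^\es e^{-y}$ term are estimated by $Q(t)$ times the energy. The Lorentz-type coupling terms $(h^\es+1)\p_x h^\es$, $g^\es\p_y h^\es$ in the velocity equation and $(h^\es+1)\p_x u^\es$, $g^\es\p_y(u^\es-e^{-y})$ in the magnetic equation are integrated by parts to exploit the antisymmetric cancellation between the velocity and magnetic estimates: the leading $\p_x$-coupling terms cancel when the two estimates are summed (this is the ``nonlinear cancellation''), and the remaining commutators with the weight and the $g^\es\p_y$ terms are bounded by $(1+Q(t))$ times $\|(\u,\h)\|_{\H^1_l}^2$. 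The forcing terms $-\es\p_x r_1$, $-\es\p_y r_2$, $-\es\p_x r_u$, $-\es\p_x r_h$, $-\mu e^{-y}$ are moved to the right: using $\es\le 1$ and integrating by parts in the $\p_x$, $\p_y$ on the $r$'s (throwing the derivative onto $\langle y\rangle^{2l}\vr$ etc.) one obtains $\tfrac12$ of the dissipation plus $C\|(r_1,r_2,r_u,r_h)\|_{L^2_l}^2 + C\|(\vr,\u,\h)\|_{L^2_l}^2$, and $\int\langle y\rangle^{2l}|\mu e^{-y}\u|\lesssim \|\u\|_{L^2_l}^2 + C$, which is absorbed into the $\int_0^t(1+\cdots)$ term.

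Finally I would collect all terms: the time derivative gives $\tfrac{d}{dt}$ of a quantity equivalent (by \eqref{low-bou}) to $\|(\vr,\u,\h)\|_{L^2_l}^2$, the parabolic terms give the stated dissipation integrals minus a fraction absorbed from the forcing and commutators, and everything else is bounded by $C_{\mu,\k,l}(1+Q(t))(1+\|(\vr,\u,\h)\|_{\H^1_l}^2)$ plus the $r$-forcing $L^2_l$ norm. Integrating in time from $0$ to $t$ and taking the supremum yields \eqref{31}. The main obstacle I anticipate is the bookkeeping of the weight–commutator terms arising from $\p_y$ and $\langle y\rangle^{2l}$ together with the vertical transport terms $v^\es\p_y$ and $g^\es\p_y$: one must show every such term carries at least one power of $\langle y\rangle^{-1}$ relative to the naive weight so that, after pulling out a $\varphi(y)$ or using the lower-order $L^\infty_1$ control contained in $Q(t)$, it is genuinely lower order and can be absorbed; this is precisely where the choice $l\ge 2$ is used, and it is the step that requires the most care rather than the most computation.
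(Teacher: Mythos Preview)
Your proposal is correct and follows essentially the same route as the paper: multiply each of the three equations in \eqref{eq5} by $\langle y\rangle^{2l}$ times the corresponding unknown, integrate by parts to extract the dissipation, use the boundary conditions in \eqref{bc5} to kill the boundary contributions, exploit the antisymmetric cancellation between the Lorentz-type coupling terms in the $\u$ and $\h$ equations, estimate the transport terms via the $L^\infty$ quantities in $Q(t)$, and absorb the forcing by integrating the $\p_x,\p_y$ back onto the solution. The only minor deviation is your treatment of the term $\int\rho^\es v^\es e^{-y}\langle y\rangle^{2l}\u\,dxdy$: the paper controls it with the Hardy inequality $\|v^\es\|_{L^2_{l-1}}\le C_l\|\p_x\u\|_{L^2_l}$ (giving a contribution $C_l\|\u\|_{\H^1_l}^2$ without a $Q(t)$ factor), whereas you suggest absorbing it via $Q(t)$; both fit into the stated right-hand side. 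Your remark that $l\ge 2$ is needed for the weight--commutator arithmetic is slightly off at this base level---the $m=0$ estimate closes for any $l\ge 1$, and the paper only records $C_{\mu,\k,l}$ here---but this does not affect correctness.
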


\begin{proof}
First of all, multiplying \eqref{eq5}$_2$ by $\y \u$, integrating over $\O $
and integrating by parts, we find
\begin{equation}\label{311}
\begin{aligned}
&\frac{d}{dt}\frac{1}{2}\int_\O \ya \r |\u |^2 dxdy
+\es \int_\O  \y |\p_x \u|^2dxdy
+\mu \int_\O  \ya |\p_y \u |^2dxdy\\
&=\frac{1}{2} \int_\O  \ya |\u |^2(\p_t \r+(\u+1-e^{-y})\p_x \r
   +\v \p_y \r )dxdy\\
&\quad +l\int_\O  \yb \r  \v |\u |^2 dxdy
-2\l \mu \int_\O  \yb \u \cdot \p_y \u dxdy\\
&\quad
 -\int_\O  \rho^\es v^\es e^{-y}\cdot \la y\ra^{2l}u^\es dxdy
 +\int_\O  [(h^\es+1)\partial_x h^\es+g^\es \partial_y h^\es]\cdot \la y\ra^{2l}u^\es dxdy\\
&\quad +\int_\O (-\es \p_x r_u-\mu e^{-y})\cdot \ya \u dxdy,
\end{aligned}
\end{equation}
where we have used the boundary condition \eqref{bc5} and the divergence free condition \eqref{eq5}$_4$.
By routine checking, we have, after using the definition of $Q(t)$ in \eqref{Q},
\begin{equation*}\label{312}
\begin{aligned}
|\int_\O  \ya |\u |^2(\p_t \r+(\u+1-e^{-y})\p_x \r+\v \p_y \r )dxdy|
\le C(1+Q(t))\|\u\|_{L^2_l(\Omega)}^2,
\end{aligned}
\end{equation*}
By virtue of estimate for density \eqref{low-bou}, we get
\begin{equation*}\label{313}
|\int_\O \yb \r \v |\u|^2 dxdy|\le C\|\la y \ra^{-1} \v\|_{L^\infty_0(\O)}\|\u\|_{L^2_l(\Omega)}^2.
\end{equation*}
Using the H\"{o}lder and Cauchy-Schartz inequalities, it follows
\begin{equation*}\label{314}
|\mu \int_\O \yl \u \cdot \p_y \u dxdy|
\le \frac{\mu}{4}\int_\O \y |\p_y \u|^2 dxdy
+C_\mu \int_\O \la y \ra^{2(l-1)}|\u|^2 dxdy.
\end{equation*}
and
\begin{equation*}\label{315}
|\int_\O (-\es \p_x r_u-\mu e^{-y})\cdot \y \u dxdy|
\le \frac{1}{2}\es \|\p_x \u\|_{L^2_l(\O)}^2+ C(1+\|r_u\|_{L^2_l(\Omega)}^2+\|\u\|_{L^2_l(\Omega)}^2).
\end{equation*}
Applying the divergence-free condition \eqref{eq5}$_4$, H\"{o}lder and Hardy inequalities, we get
\begin{equation*}\label{316}
|\int_\O \r \v e^{-y}\cdot \y \u dxdy|
\le C\|\v\|_{L^2_{l-1}(\Omega)}\|\u\|_{L^2_{l}(\O )}
\le C_l\|\p_y \v\|_{L^2_{l}(\O )}\|\u\|_{L^2_{l}(\O )}
\le C_l\|\u\|_{\H^1_l}^2.
\end{equation*}
Integrating by part and applying the divergence-free condition \eqref{eq5}$_4$, we find
\begin{equation*}\label{317}
\begin{aligned}
&\int_\O [(\h+1)\p_x \h+\g \p_y \h]\cdot \y \u dxdy\\
=
&-\int_\O \y \p_x \h \u \h dxdy-\int \y \h (\h+1) \p_x \u dxdy\\
&-\int_\O \y \p_y \g \u \h dxdy-2l\int \yl \g  \u \h dxdy\\
&-\int_\O \y \h \g  \p_y \u dxdy\\
\le
&-\int_\O \y \h (\h+1) \p_x \u dxdy-\int \y \h \g  \p_y \u dxdy\\
&+C\|\la y \ra^{-1} \g\|_{L^\infty_0(\O)}\|\u\|_{L^2_l(\Omega)}\|\h\|_{L^2_l(\Omega)}.
\end{aligned}
\end{equation*}
Substituting the above estimates into \eqref{311},
and integrating the resulting inequality over $[0, t]$, we obtain
\begin{equation*}
\begin{aligned}
&\|\sqrt{\r} \u(t)\|_{L^2_l(\Omega)}^2
+\es \int_0^t\|\p_x \u\|_{L^2_l(\Omega)}^2 d\tau
+\mu \int_0^t\|\p_y \u\|_{L^2_l(\Omega)}^2 d\tau\\
&+\int_0^t \int_\O \y \h\cdot ( (\h+1) \p_x \u + \g  \p_y \u )dxdyd\tau\\
&\le \|\sqrt{\r_0} \u_0\|_{L^2_l(\Omega)}^2+C\int_0^t\|r_u\|_{L^2_l(\Omega)}^2d\t
+C_{\mu, l}(1+Q(t))\int_0^t (1+\|\u\|_{\H^1_l}^2+\|\h\|_{L^2_l(\Omega)}^2) d\tau.
\end{aligned}
\end{equation*}
Similarly, based on the equations \eqref{eq5}$_3$ and \eqref{eq5}$_1$, it follows directly
\begin{equation*}
\begin{aligned}
&\|\h(t)\|_{L^2_l(\Omega)}^2
+\es \int_0^t \|\p_x \h\|_{L^2_l(\O)}^2 d\tau
+\k  \int_0^t \|\p_y \h\|_{L^2_l(\O)}^2 d\tau\\
&-\int_0^t \int_\O \y \h \cdot [(\h+1) \p_x \u + \g  \p_y \u ]dxdyd\tau\\
&\le \|\h_0\|_{L^2_l(\Omega)}^2+C\int_0^t \|r_h\|_{L^2_l(\O)}^2d\t
      +C_{\k, l}(1+Q(t))\int_0^t (1+\|\h\|_{\H^1_l}^2) d\tau.
\end{aligned}
\end{equation*}
and
\begin{equation*}
\|\vr\|_{L^2_l(\Omega)}^2
\!+\es \!\int_0^t\! \|(\p_x \vr, \p_y \vr)\|_{L^2_l(\Omega)}^2 d\tau
\le \|\vr_0\|_{L^2_l(\Omega)}^2+C\!\int_0^t \!\|(r_1, r_2)\|_{L^2_l(\O)}^2d\t
\!+C(1+Q(t))\!\int_0^t\! \|\vr\|_{L^2_l(\Omega)}^2 d\tau.
\end{equation*}
Therefore, we collect above estimates to complete the proof of Lemma \ref{lemma31}.
\end{proof}

Now, we establish the following estiamte:

\begin{lemma}\label{lemma32}
For smooth solution $(\vr, \u, \v, \h,\g)$ of the equations \eqref{eq5}-\eqref{bc5}, then it holds on
\begin{equation}\label{321}
\begin{aligned}
&\sup_{0\le \t \le t}\e_{m,l}(\t)
+\sum_{\substack{|\alpha| \le m \\ |\a_1| \le m-1}}
\es \int_0^t \|\p_x  \z^\a(\vr, \u, \h)\|_{L^2_l(\O)}^2 d\tau\\
& + \sum_{\substack{|\alpha| \le m \\ |\a_1| \le m-1}}
\int_0^t \|(\sqrt{\es} \p_y  \z^\a \vr, \sqrt{\mu} \p_y  \z^\a \u, \sqrt{\k} \p_y  \z^\a\h)\|_{L^2_l(\O)}^2 d\tau\\
\le
& C\|(\vr_0, \u_0, \h_0)\|_{\H^m_l}^2
+C\int_0^t \|(r_1, r_2, r_u, r_h)\|_{\H^m_l}^2 d\t\\
&+C_{\mu, \k, m, l}(1+Q^2(t))\int_0^t (1+\|(\vr, \u, \h)\|_{\H^{m}_l}^2+\|\p_y(\vr, \u, \h)\|_{\H^{m-1}_l}^2)d\tau.
\end{aligned}
\end{equation}
\end{lemma}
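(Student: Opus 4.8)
The plan is to prove Lemma~\ref{lemma32} by induction on the order of the conormal derivative, applying $\z^\a = Z_\tau^{\a_1} Z_2^{\a_2}$ with $|\a| \le m$, $|\a_1| \le m-1$ to the three evolution equations in \eqref{eq5}, and performing the same weighted $L^2_l$ energy estimates carried out for $m=0$ in Lemma~\ref{lemma31}. First I would apply $\z^\a$ to \eqref{eq5}$_2$, multiply by $\y \z^\a \u$, integrate over $\O$ and integrate by parts in the same way as \eqref{311}: the viscous term $-\mu\p_y^2\u$ produces the good term $\mu\|\p_y\z^\a\u\|_{L^2_l}^2$ together with the weight-commutator term $-2l\mu\int \yb\, \z^\a\u\,\p_y\z^\a\u$ (absorbed by Cauchy--Schwarz exactly as in Lemma~\ref{lemma31}), the $\es\p_x^2\u$ term gives $\es\|\p_x\z^\a\u\|_{L^2_l}^2$, and the time-derivative-of-$\r$ weighting gives $\frac12\int\ya|\z^\a\u|^2(\p_t\r+\cdots)$ which is controlled by $C(1+Q(t))\|\z^\a\u\|_{L^2_l}^2$ using \eqref{low-bou} and the definition \eqref{Q}. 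The analogous manipulations on $\z^\a$ applied to \eqref{eq5}$_3$ and \eqref{eq5}$_1$ produce the $\k$- and $\es$-dissipation terms, and — crucially — the magnetic coupling terms $\int_0^t\int_\O \y\,\z^\a\h\cdot((\h+1)\p_x\z^\a\u + \g\,\p_y\z^\a\u)\,dxdyd\t$ from the $u$-equation cancel against the corresponding terms from the $h$-equation, just as the top-order terms cancelled in the $m=0$ case. Summing over all admissible $\a$ and using $\|\z^\a(r_1,r_2,r_u,r_h)\|_{L^2_l}\le\|(r_1,r_2,r_u,r_h)\|_{\H^m_l}$ yields the stated right-hand side, modulo the commutator terms.

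The core of the argument is the treatment of the commutators $[\z^\a, \cdot]$ acting on the transport/convection terms $(\u+1-e^{-y})\p_x$ and $\v\p_y$ in all three equations, and on the nonlinear terms $\r u^\es\p_x u^\es$, $\r v^\es\p_y u^\es$, $g^\es\p_y h^\es$, etc. The key structural point — the one that makes the energy method work with only ``one order of tangential derivative loss'' — is that since $|\a_1|\le m-1$, whenever $\z^\a$ hits $\p_x\u$ with a full $m$ tangential derivatives the convective coefficient $(\u+1-e^{-y})$ is not differentiated too many times, and the genuinely dangerous term $\v\,\p_y\z^\a(\cdot)$ (which would need $\p_x\z^\a\u$, i.e.\ $m{+}1$ derivatives in $x$ on $u$ through $\v=-\p_y^{-1}\p_x\u$) never arises here because $Z_2=\varphi\p_y$ and $|\a_2|\ge 1$ forces at least one normal conormal factor, reducing the count. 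Concretely I would split each commutator into terms where at most $m-1$ derivatives land on the ``rough'' factor, estimate the low-derivative factor in $L^\infty$ (via the Sobolev embeddings recorded in Appendix~\ref{appendixA} and absorbed into $Q(t)$ or $\|\p_y(\vr,\u,\h)\|_{\H^{m-1}_l}$), and the high-derivative factor in $L^2_l$; each such product is bounded by $C_{\mu,\k,m,l}(1+Q^2(t))(1+\|(\vr,\u,\h)\|_{\H^m_l}^2+\|\p_y(\vr,\u,\h)\|_{\H^{m-1}_l}^2)$, which is exactly the form appearing on the right of \eqref{321}. The Hardy inequality with weight $\la y\ra$ (used as in the bound $\|\v\|_{L^2_{l-1}}\le C_l\|\p_y\v\|_{L^2_l}$ in Lemma~\ref{lemma31}) is invoked repeatedly to convert the vertical velocity/magnetic field $\v,\g$ back into $\p_x\u,\p_x\h$ at the cost of one power of the weight, which is why $l\ge 2$ is needed.

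The main obstacle I expect is bookkeeping the commutator $[\z^\a,\varphi\p_y]$ together with $[\z^\a, \text{coefficient}]$ in the convection terms so that the normal-derivative count never exceeds $m-1$ on $\p_y(\vr,\u,\h)$ while the tangential count never exceeds $m$; this is where the restriction $|\a_1|\le m-1$ is essential and where one must be careful that terms like $\z^\a(\v\p_y\h)$ do not secretly require $\p_y\z^\a\h$ at order $m$ in a way not controlled by the dissipation. A secondary technical point is the boundary terms generated by integration by parts in $y$: the Neumann-type conditions $\p_y\vr|_{y=0}=\p_y h^\es|_{y=0}=0$ and Dirichlet conditions $\u|_{y=0}=\v|_{y=0}=\g|_{y=0}=0$ in \eqref{bc5} must be checked to kill all boundary contributions after $\z^\a$ is applied — this works because $Z_2=\varphi\p_y$ vanishes at $y=0$, so $Z_2^{\a_2}$ applied to any of these fields preserves the relevant trace, and $Z_\tau$ commutes with the trace. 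Once these points are handled, collecting all estimates and summing over $|\a|\le m$, $|\a_1|\le m-1$ gives \eqref{321} and hence Proposition~\ref{Lower-estimate}.
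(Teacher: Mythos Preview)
Your proposal is correct and follows essentially the same route as the paper: apply $\z^\a$ to each of the three equations in \eqref{eq5}, multiply by $\ya\z^\a(\cdot)$, integrate by parts, exploit the cancellation of the magnetic coupling terms between the $u$- and $h$-estimates, and bound all commutators $\C^\a_{ij}$ via the Moser-type inequality \eqref{ineq-moser} and Hardy's inequality. The paper does not actually set this up as an induction but simply treats all $1\le |\a|\le m$, $|\a_1|\le m-1$ at once after invoking Lemma~\ref{lemma31} for the base case; your identification of the key mechanism---that $|\a_1|\le m-1$ forces $\a_2\ge 1$ at top order so that the $\z^\b\v$ factor in $[\z^\a,\v\p_y]$ can be traded back to $\p_x\u$ via Hardy without losing a derivative---is exactly the structural point the paper uses when estimating $\C_{13}^\a$, and your handling of the boundary terms and of the $[\z^\a,\p_y]$ commutators in the dissipative terms matches \eqref{327}--\eqref{3216}.
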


\begin{proof}
The case $m=0$ has been proved in the Lemma \ref{lemma31}. Then, we give the proof for the case $m \ge 1$.

Step 1: Applying the differential operator $\z^\alpha(1 \le |\a|\le m, |\a_{1}| \le m-1)$
to the equation \eqref{eq5}$_2$, we can obtain the evolution equation for $\z^\a u^\es$:
\begin{equation}\label{322}
\begin{aligned}
&\r \p_t \z^\a u^\es+\r(\u+1-e^{-y})\p_x \z^\a \u
+\r \v \p_y \z^\a \u-\es \p_x^2 \z^\a \u-\mu \z^\a \p_y^2 \u\\
=&(\h+1)\p_x \z^\a \h+\g \p_y \z^\a \h
+\z^\a(-\es \p_x r_u-\mu e^{-y})+C_{11}^\a+\C_{12}^\a+\C_{13}^\a+\C_{14}^\a+\C_{15}^\a+\C_{16}^\a.
\end{aligned}
\end{equation}
where $\C_{1i}^\a(i=1,...,6)$ are defined by
\begin{equation*}\label{323}
\begin{aligned}
&\C_{11}^\a=-[\z^\a, \r \p_t]\u,\quad
\C_{12}^\a=-[\z^\a, \r(\u+1-e^{-y})\p_x]\u, \quad
\C_{13}^\a=-[\z^\a, \r \v \p_y]\u,\\
&\C_{14}^\a=[\z^\a, (\h+1)\p_x]\h,\quad
\C_{15}^\a=[\z^\a, \g \p_y]\h,\quad
\C_{16}^\a=-\z^\a(\r \v e^{-y}).
\end{aligned}
\end{equation*}
Multiplying the equation \eqref{322} by $\ya \z^\a \u$, integrating over $\Omega$ and
applying the boundary condition \eqref{bc5}, we have
\begin{equation}\label{324}
\begin{aligned}
&\frac{d}{dt}\frac{1}{2}\int_\O \ya \r |\z^\a \u|^2 dxdy
+\es \int_\O \ya |\p_x \z^\a \u|^2 dxdy\\
=
&\frac{1}{2}\int_\O \ya |\z^\a \u|^2(\partial_t \rho^\es+(u^\es+1-e^{-y})\partial_x \rho^\es+v^\es\partial_y \rho^\es)dxdy\\
&+l\int_\O \yb \r \v |\z^\a \u|^2 dxdy+\mu \int_\O \z^\a \p_y^2 \u \cdot \ya \z^\a \u dxdy\\
&+\int_\O \{(\h+1)\p_x \z^\a \h+\g \p_y \z^\a \h\}\cdot \ya \z^\a \u dxdy\\
&+\int_\O \z^\a(-\es \p_x r_u-\mu e^{-y})\cdot \ya \z^\a \u dxdy
 +\sum_{i=1}^6\int_\O \C_{1i}^\a \cdot \ya \z^\a \u dxdy.
\end{aligned}
\end{equation}
By routine checking, it follows that
\begin{equation}\label{325}
|\int_\O \ya |\z^\a \u|^2(\partial_t \rho^\es+(u^\es+1-e^{-y})\partial_x \rho^\es+v^\es\partial_y \rho^\es)dxdy|
\le C(1+Q(t))\|\z^\a \u\|_{L^2_{l }(\Omega)}^2,
\end{equation}
and
\begin{equation}\label{326}
|\int_\O \yb \r \v |\z^\a \u|^2 dxdy|
\le C\|\la y \ra^{-1} \v \|_{L^\infty_0(\Omega)}\|\z^\a \u\|_{L^2_{l }(\Omega)}^2.
\end{equation}
The integration by part with respect to $y$ variable yields directly
\begin{equation}\label{327}
\begin{aligned}
&\int_\O \z^\a \p_y^2 \u \cdot \ya \z^\a \u dxdy\\
=
&\int_\T \z^\a  \p_y \u \cdot  \z^\a \u|_{y=0} dx
-2l\int_\O  \z^\a  \p_y \u  \cdot \yb \z^\a  \u dxdy\\
&-\int_\O  \z^\a  \p_y \u  \cdot \ya  \p_y \z^\a \u dxdy
+\int_\O [\z^\a, \p_y] \p_y \u \cdot \ya \z^\a \u dxdy\\
=
&\int_\T \z^\a  \p_y \u \cdot  \z^\a \u|_{y=0} dx
-2l\int_\O  \z^\a  \p_y \u  \cdot \yb \z^\a  \u dxdy\\
&-\int_\O \ya  |\p_y \z^\a \u|^2 dxdy
-\int_\O  [\z^\a,  \p_y] \u  \cdot \ya  \p_y \z^\a \u dxdy\\
&
+\int_\O [\z^\a, \p_y] \p_y \u \cdot \ya \z^\a \u dxdy.
\end{aligned}
\end{equation}
If $\a_2=0$, the boundary condition \eqref{bc5} implies $\z^\a \u|_{y=0}=0$.
If $\a_2\neq 0$, we apply the property of $\varphi$, which vanishes on the boundary,
to get $\z^\a \u|_{y=0}$, and hence
\begin{equation}\label{328}
\int_\T \z^\a  \p_y \u \cdot  \z^\a \u|_{y=0} dx=0.
\end{equation}
Next, we deal with term involving $[\z^\a, \p_y]$.
It is worth noting that the operator $Z_\tau=(\p_t, \p_x)$ communicates with $\p_y$,
we obtain $[\z^\a, \p_y]\u=0$ for $\a_2=0$.
By direct computation, we find for $\a_2 \neq 0$
\begin{equation*}\label{329}
[Z_2^{\a_2}, \p_y]\u
=-\sum_{1\le k \le \a_2}C_{\a_2, k}\p_y Z_2^{k-1} \varphi \cdot Z_2^{\a_2-k}\p_y \u.
\end{equation*}
This and the H\"{o}lder inequality yield directly
\begin{equation}\label{3210}
|\int  [\z^\a,  \p_y] \u  \cdot \ya  \p_y \z^\a \u dxdy|
\le
\frac{1}{4}\|\p_y \z^\a \u\|_{L^2_{l }(\Omega)}^2
 +C_m \|\p_y \u\|_{\H^{m-1}_{l}}^2.
\end{equation}
Using the relation \eqref{329} and Cauchy-Schwartz inequality, we obtain
\begin{equation}\label{3211}
\begin{aligned}
&2l|\int_\O  \z^\a  \p_y \u  \cdot \yb \z^\a  \u dxdy|\\
=
& 2l|\int_\O  ([\z^\a, \p_y]\u+\p_y \z^\a \u) \cdot \yb \z^\a  \u dxdy|\\
\le
&\frac{1}{4}\|\p_y \z^\a \u\|_{L^2_{l}(\Omega)}^2
+C_{m,l}(\|[\z^\a, \p_y]\u\|_{L^2_{l}(\Omega)}^2
   +\|\z^\a \u\|_{L^2_{l}(\Omega)}^2)\\
\le
&
\frac{1}{4}\|\p_y \z^\a \u\|_{L^2_{l}(\Omega)}^2
 +C_{m,l}(\| \u\|_{\H^{m}_{l}}^2+\|\p_y \u\|_{\H^{m-1}_{l}}^2).
\end{aligned}
\end{equation}

Next, we deal with the term $\int_\O [\z^\a, \p_y] \p_y \u \cdot \ya \z^\a \u dxdy$.
For a smooth function $f$, it follows
\begin{equation}\label{3212}
[\z^\a, \p_y]f=\sum_{\b_2 \neq 0, \b_2+\ga_2=\a_2}
C_{\b_2, \ga_2}\varphi Z_2^{\b_2}(\frac{1}{\varphi})\p_y Z_2^{\ga_2}Z_\tau^{\a_1}f.
\end{equation}
By virtue of the definition of $\varphi$, it holds on by computating directly
\begin{equation}\label{3213}
|\varphi Z_2^{\b_2}(\frac{1}{\varphi})|\le C, \quad
|\p_y\{\varphi Z_2^{\b_2}(\frac{1}{\varphi})\}| \le C.
\end{equation}
For $\b_2 \neq 0$ and $\b_2+\ga_2=\a_2$, the integration by part yields immediately
\begin{equation}\label{3214}
\begin{aligned}
&\int_\O \varphi Z_2^{\b_2}(\frac{1}{\varphi})\p_y Z_2^{\ga_2}Z_\tau^{\a_1} \p_y \u \cdot \ya \z^\a \u dxdy\\
=
&
-\int_\O \varphi Z_2^{\b_2}(\frac{1}{\varphi}) Z_2^{\ga_2}Z_\tau^{\a_1} \p_y \u \cdot \ya \p_y  \z^\a \u dxdy\\
&
-2l\int_\O \varphi Z_2^{\b_2}(\frac{1}{\varphi}) Z_2^{\ga_2}Z_\tau^{\a_1} \p_y \u \cdot \yb \z^\a \u dxdy\\
&
-\int_\O \p_y\{\varphi Z_2^{\b_2}(\frac{1}{\varphi})\}  Z_2^{\ga_2}Z_\tau^{\a_1}  \p_y \u \cdot \ya \z^\a \u dxdy,
\end{aligned}
\end{equation}
where the boundary term in the above equality vanishes since the quantity $\z^{\a} \u|_{y=0}=0$.
Then, applying the relation \eqref{3212}, estimate \eqref{3213}, H\"{o}lder and Cauchy inequalities, we obtain
\begin{equation*}
\begin{aligned}
\int_\O \varphi Z_2^{\b_2}(\frac{1}{\varphi})\p_y Z_2^{\ga_2}Z_\tau^{\a_1} \p_y \u \cdot \ya \z^\a \u dxdy
\le
\frac{1}{4}\int \ya |\p_y  \z^\a \u|^2 dxdy+C_{m,l}(\|\u \|_{\H^{m}_l}^2+\|\p_y \u\|_{\H^{m-1}_l}^2),
\end{aligned}
\end{equation*}
and hence
\begin{equation}\label{3215}
\begin{aligned}
|\int_\O [\z^\a, \p_y] \p_y \u \cdot \ya \z^\a \u dxdy|
\le \frac{1}{4}\int \ya |\p_y  \z^\a \u|^2 dxdy+
C_{m, l}(\| \u\|_{\H^{m}_{l}}^2+\|\p_y\u\|_{\H^{m-1}_{l-1}}^2).
\end{aligned}
\end{equation}
Plugging the estimates  \eqref{328}, \eqref{3210},
\eqref{3211} and \eqref{3215} into \eqref{327}, we conclude
\begin{equation}\label{3216}
\mu\int_\O \z^\a \p_y^2 \u \cdot \ya \z^\a \u dxdy
\le -\frac{1}{2}\mu\|\p_y \z^\a \u\|_{L^2_{l}(\Omega)}^2
+C_{\mu, m, l}(\| \u\|_{\H^{m}_{l}}^2+\|\p_y \u\|_{\H^{m-1}_{l}}^2).
\end{equation}
Integrating by part, applying the boundary condition \eqref{bc5}
and divergence-free condition \eqref{eq5}$_4$, we find
\begin{equation}\label{3217}
\begin{aligned}
&\int_\O \{(\h+1)\p_x \z^\a \h+\g \p_y \z^\a \h\}\cdot \ya \z^\a \u dxdy\\
=
&-\int_\O \ya (\p_x \h+\p_y \g)\z^\a \u \cdot \z^\a \h dxdy\\
&-\int_\O ((\h+1)\p_x \z^\a \u+\g \p_y \z^\a \u)\cdot \ya \z^\a \h dxdy\\
&-2l\int_\O \yb \g \z^\a \u \cdot \z^\a \h dxdy\\
\le
&-\int_\O ((\h+1)\p_x \z^\a \u+\g \p_y \z^\a \u)\cdot \ya \z^\a \h dxdy\\
&+C_l \|\la y \ra^{-1}\g\|_{L^\infty_0(\O)}
\|\z^\a \u\|_{L^2_{l}(\Omega)}\|\z^\a \h\|_{L^2_{l}(\Omega)}.
\end{aligned}
\end{equation}
Using the H\"{o}lder and Cauchy inequalities, it follows
\begin{equation}\label{3218}
\begin{aligned}
&|\int_\O \z^\a(-\es \p_x r_u-\mu e^{-y})\cdot \ya \z^\a \u dxdy|\\
&\le \frac{1}{2}\es \|\p_x \z^\a \u\|_{L^2_l(\O)}^2
    +C(1+\|\z^\a r_u\|_{L^2_{l}(\Omega)}^2+\|\z^\a \u\|_{L^2_{l}(\Omega)}^2).
\end{aligned}
\end{equation}
Substituting the estimates \eqref{325}, \eqref{326}, \eqref{327},
\eqref{3216}, \eqref{3217} and \eqref{3218} into the equality \eqref{324},
and integrating over $[0, t]$, we conclude
\begin{equation}\label{3219}
\begin{aligned}
&\int_\O \ya \r |\z^\a \u|^2 dxdy
+\int_0^t \int_\O \ya (\es |\p_x \z^\a \u|^2+\mu|\p_y \z^\a \u|^2) dxdy\\
&+\int_0^t \int_\O ((\h+1)\p_x \z^\a \u+\g \p_y \z^\a \u)\cdot \ya \z^\a \h dxdyd\tau\\
\le
& \int_\O \ya \r_0 |\z^\a \u_0|^2 dxdy
  +C\int_0^t \|\z^{\a} r_u\|_{L^2_l(\Omega)}^2 d\t
  +\sum_{i=1}^6\int_0^t \|\C_{1i}^\a\|_{L^2_l(\Omega)}^2d\tau\\
&  +C_{\mu, m, l}(1+Q(t))\int_0^t (1+\|(\u, \h) \|_{\H^m_l}^2+\|\p_y \u \|_{\H^m_l}^2)d\tau.
\end{aligned}
\end{equation}

Now, we claim the following estimate, which will be shown later:
\begin{equation}\label{ulc1}
\sum_{i=1}^6 \int_0^t \|\C_{1i}^\a\|_{L^2_{l}(\Omega)}^2 d \tau
\le C_{m,l}(1+Q^2(t))\int_0^t (\|(\vr, \u, \h)\|_{\H^m_l}^2+\|(\p_y \u, \p_y \h)\|_{\H^{m-1}_l}^2) d\tau.
\end{equation}
For the moment we can substitute the estimate \eqref{ulc1} into inequality \eqref{3219}, and hence
\begin{equation}\label{3220}
\begin{aligned}
&\int_\O \ya \r |\z^\a \u|^2 dxdy
+\int_0^t \int_\O \ya (\es |\p_x \z^\a \u|^2+\mu|\p_y \z^\a \u|^2) dxdy\\
&+\int_0^t \int_\O ((\h+1)\p_x \z^\a \u+\g \p_y \z^\a \u)\cdot \ya \z^\a \h dxdyd\tau\\
\le
& \int_\O \ya \r_0 |\z^\a \u_0|^2 dxdy
   +C\int_0^t \|\z^{\a} r_u\|_{L^2_l(\Omega)}^2 d\t\\
& +C_{\mu, m, l}(1+Q^2(t))\int_0^t (1+\|(\vr, \u, \h)\|_{\H^m_l}^2+\|(\p_y \u, \p_y \h)\|_{\H^{m-1}_l}^2) d\tau.
\end{aligned}
\end{equation}

Step 2: Applying operator $\z^\a$ to the first equation of \eqref{eq5},
multiplying by $\ya \z^\a \vr$ and integrating over $\Omega$, we find
\begin{equation}\label{3221}
\begin{aligned}
&\frac{1}{2}\frac{d}{dt}\int_\O \ya |\z^\a \vr|^2 dxdy
+\es \int_\O \ya |\p_x \z^\a \vr|^2 dxdy\\
=
&\es\int_\O \z^\a \p_y^2 \vr \cdot \ya \z^\a \vr dxdy
+l \int_\O \yb \v |\z^\a \vr|^2 dxdy\\
&+\int_\O (\C_{21}^\a+\C_{22}^\a)\cdot \ya \z^\a \vr dxdy
-\es \int_\O \z^\a (\p_x r_1+\p_y r_2) \cdot \ya \z^\a \vr dxdy,
\end{aligned}
\end{equation}
where $\C_{21}^\a$ and $\C_{22}^\a$ are defined by
\begin{equation*}\label{3222}
\C_{21}^\a=-[\z^\a, (\u+1-e^{-y})\p_x]\vr, \quad
\C_{22}^\a=-[\z^\a, \v \p_y]\vr.
\end{equation*}
Similar to the equality \eqref{327}, the integration by part gives
\begin{equation}\label{3223}
\begin{aligned}
&\es\int_\O \z^\a \p_y^2 \vr \cdot \ya \z^\a \vr dxdy\\
=
&\es\int_{\mathbb{T}} \z^\a \p_y \vr \cdot \z^\a \vr|_{y=0} dx
-\es\int_\O \ya [\z^\a, \p_y] \vr \cdot \p_y  \z^\a \vr dxdy\\
&-\es\int_\O \ya |\p_y  \z^\a \vr|^2 dxdy
-2l \es\int_\O \yb \z^\a \p_y \vr \cdot  \z^\a \vr dxdy\\
&
+\es\int_\O [\z^\a, \p_y] \p_y \vr \cdot \ya \z^\a \vr dxdy.
\end{aligned}
\end{equation}
If $\a_2=0$, the boundary condition $\p_y \vr|_{y=0}=0$
implies $\z^\a \p_y \vr|_{y=0}=0$. If $\a_2\neq 0$, we get from the
definition of $Z_2=\varphi(y)\p_y=\frac{y}{y+1}\p_y$ that $\z^\a \p_y \vr|_{y=0}=0$,
and hence
\begin{equation*}\label{3224}
\int_{\mathbb{T}} \z^\a \p_y \vr \cdot \z^\a \vr|_{y=0} dx=0.
\end{equation*}
The other terms on the right hand side of \eqref{3223}
can take the idea as estimate \eqref{3215}, we conclude
\begin{equation}\label{3225}
\es\int_\O \z^\a \p_y^2 \vr \cdot \ya \z^\a \vr dxdy
\le
-\frac{3}{4}\es\int_\O \ya |\p_y  \z^\a \vr|^2 dxdy+C_{m, l}(\|\vr\|_{\H^m_l}^2+\|\p_y \vr\|_{\H^{m-1}_l}^2).
\end{equation}
By routine checking, it follows
\begin{equation}\label{3226}
\begin{aligned}
| \int_{\O} \yb \v |\z^\a \vr|^2 dxdy|
\le C \|\la y \ra^{-1} \v\|_{L^\infty_0(\O)}\|\z^\a \vr\|_{L^2_l(\Omega)}^2.
\end{aligned}
\end{equation}
Applying the integration by part and Cauchy inequality, we obtain
\begin{equation}\label{3226a}
\begin{aligned}
&|\es\int_{\O} \z^\a(\p_x r_1+\p_y r_2)\cdot \ya \z^\a \vr dxdy|\\
\le
&\frac{\es}{2}\|(\p_x \z^\a \vr, \p_y \z^\a \vr)\|_{L^2_l(\Omega)}^2
+C(\|\z^\a r_1\|_{L^2_l(\Omega)}^2+\|r_2\|_{\H^m_l}^2)+C_l\|\z^\a \vr\|_{L^2_l(\Omega)}^2.
\end{aligned}
\end{equation}
Substituting estimates \eqref{3225} and \eqref{3226} into  \eqref{3221},
integrating the resulting inequality over $[0, t]$, we get
\begin{equation}\label{3227}
\begin{aligned}
&\int_\O \ya |\z^\a \vr|^2 dxdy
+\es \int_0^t \int_\O \ya (|\p_x \z^\a \vr|^2+|\p_y \z^\a \vr|^2 )dxdy d\tau\\
\le
&\int_\O \ya |\z^\a \vr_0|^2 dxdy
+C\int_0^t \|(r_1, r_2)\|_{\H^m_l}^2 d\t
+\int_0^t \|(\C_{21}^\a, \C_{22}^\a)\|_{L^2_l(\Omega)}^2d\tau\\
&+C_{m, l}(1+\|\la y \ra^{-1} \v\|_{L^\infty_0(\O)})
  \int_0^t(\|\vr\|_{\H^m_l}^2+\|\p_y \vr\|_{\H^{m-1}_l}^2)d\tau.
\end{aligned}
\end{equation}
Similar to the claim estimates \eqref{ulc1}, we can justify the estimate
\begin{equation*}\label{3228}
|\int_0^t \|(\C_{21}^\a, \C_{22}^\a)\|_{L^2_l(\Omega)}^2d\tau|
\le C_{m, l}(1+Q(t))\int_0^t (\|(\vr, \u)\|_{\H^m_l}^2+\|\p_y \vr\|_{\H^{m-1}_l}^2)d\tau.
\end{equation*}
This and inequality \eqref{3227} yield directly
\begin{equation}\label{3229}
\begin{aligned}
&\int_\O \ya |\z^\a \vr|^2 dxdy
+\es \int_0^t \int_\O \ya (|\p_x \z^\a \vr|^2+|\p_y \z^\a \vr|^2 )dxdy d\tau\\
\le
&\!\int_\O \!\ya |\z^\a \vr_0|^2 dxdy
+C\!\int_0^t\!\! \| (r_1, r_2)\|_{\H^m_l}^2 d\t
+\!C_{m, l}(1+Q(t))\!\int_0^t\!\! (\|(\vr, \u)\|_{\H^m_l}^2+\|\p_y \vr\|_{\H^{m-1}_l}^2)d\tau.
\end{aligned}
\end{equation}

Step 3: Applying operator $\z^\a$ to the third equation of \eqref{eq5}
multiplying by $\ya \z^\a \h$ and integrating over $\Omega$, it follows
\begin{equation*}
\begin{aligned}
&\frac{1}{2}\frac{d}{dt}\int_\O \ya |\z^\a \h|^2 dxdy
+\es \int \ya |\p_x \z^\a \h|^2 dxdy
-\k \int_\O \z^\a \p_y^2 \h \cdot \ya \z^\a \h dxdy\\
=
&l \int_\O \yb \v |\z^\a \h|^2 dxdy
+\int_\O \{(\h+1)\p_x \z^\a \u+\g \p_y \z^\a \u\} \cdot \ya \z^\a \h dxdy\\
&+\int_\O \z^\a(\g e^{-y}-\es \p_x r_h)\cdot  \ya \z^\a \h dxdy
 +\int_\O (\C_{31}^\a+\C_{32}^\a+\C_{33}^\a+\C_{34}^\a)\cdot  \ya \z^\a \h dxdy,
\end{aligned}
\end{equation*}
where $\C_{3i}^\a(i=1,2,3,4)$ are defined by
$$
\begin{aligned}
&\C_{31}^\a=-[\z^\a, (\u+1-e^{-y})\p_x]\h, \quad
\C_{32}^\a=-[\z^\a, \v \p_y]\h,\\
&\C_{33}^\a=[\z^\a, (\h+1)\p_x]\u,\quad
\C_{34}^\a=[\z^\a, \g \p_y](\u-e^{-y}).
\end{aligned}
$$
Following the idea as the estimate \eqref{3229}, we can verify the following estimate
\begin{equation*}\label{3230}
\begin{aligned}
&\int_\O \ya |\z^\a \h|^2 dxdy
+\int_0^t \int_\O \ya (\es|\p_x \z^\a \h|^2+\k |\p_y \z^\a \h|^2) dxdyd\tau\\
&-\int_0^t \int_\O \{(\h+1)\p_x \z^\a \u+\g \p_y \z^\a \u\}
\cdot \ya \z^\a \h dxdyd\tau\\
\le
&\!\int_\O \!\ya |\z^\a \h_0|^2 dxdy+C\!\!\int_0^t\! \!\|\z^\a r_h\|_{L^2_l(\Omega)}^2 d\t
+C_{\k, m, l}(1+Q(t))\!\!\int_0^t \!(\|(\u, \h)\|_{\H^{m}_l}^2+\|(\p_y \u, \p_y \h)\|_{\H^{m}_l}^2)d\tau,
\end{aligned}
\end{equation*}
which, together with the estimates \eqref{3220} and \eqref{3229} completes the proof of lemma
after taking the summation over all $|\a|\le m$ and $|\a_1|\le m-1$ .
\end{proof}

\underline{\textbf{Proof of claim estimate \eqref{ulc1}}}
Now we give the estimate for $\int_0^t \|\C_{1i}^\a\|_{L^2_l(\Omega)}^2d\tau(i=1,...,6)$.
By virtue of the Moser type inequality \eqref{ineq-moser}, we find
\begin{equation*}
\begin{aligned}
\int_0^t \|\C_{11}^\a\|_{L^2_{l}(\Omega)}^2 d \tau
&\le C_m \sum_{|\b|\ge 1, \b +\ga =\a} \int_0^t \|\z^\b \vr \cdot \z^{\ga} \p_t \u \|_{L^2_l(\Omega)}^2 d\tau\\
&\le  C_m\|\z^{E_i} \vr \|_{L^\infty_0(\O)}^2 \int_0^t \|\p_t \u \|_{\H^{m-1}_{l}}^2 d \tau
     + C_m\|\p_t \u\|_{L^\infty_0(\O)}^2 \int_0^t \|\z^{E_i} \vr\|_{\H^{m-1}_{l}}^2 d \tau\\
&\le  C_m\|(\z^{E_i}\vr, \p_t \u)\|_{L^\infty_0(\O)}^2 \int_0^t \|(\vr, \u)\|_{\H^{m}_{l}}^2 d \tau.
\end{aligned}
\end{equation*}
Similarly, we conclude the following estimate
\begin{equation*}
\int_0^t \|\C_{12}^\a\|_{L^2_{l}(\Omega)}^2 d \tau
\le C_m(1+\|(\u, \z^{E_i} \vr, \z^{E_i} \u)\|_{L^\infty_0(\O)}^4)\int_0^t(1+\|(\vr, \u)\|_{\H^m_l}^2)d\tau,
\end{equation*}
and
\begin{equation*}
\int_0^t \|\C_{14}^\a\|_{L^2_{l}(\Omega)}^2 d \tau
\le  C_m \|\z^{E_i} \h\|_{L^\infty_0(\O)}^2 \int_0^t \|\h\|_{\H^m_l}^2 d\tau.
\end{equation*}

Finally, we deal with the term $\int_0^t \|\C_{13}^\a\|_{L^2_{l}(\Omega)}^2 d \tau$.
By direct computation, it is easy to check that
\begin{equation*}
\C_{13}^\a=\underset{|\b|\ge1,\ \b+\ga=\a}{\sum}C_{\b,\ga}
\z^\b(\r \v)\z^\ga \p_y \u+\r \v [\z^\a, \p_y]\u.
\end{equation*}
Using the estimate \eqref{low-bou}, we get
\begin{equation}\label{u3}
\int_0^t \|\r \v [\z^\a, \p_y]\u\|_{L^2_l(\Omega)}^2 d\tau
\le C_m\| \v\|_{L^\infty_0(\O)}^2 \int_0^t \|\p_y \u\|_{\H^{m-1}_l}^2 d\tau.
\end{equation}
In order to control the velocity $\v$, the idea is to apply the Hardy inequality
and divergence-free condition to transform into the velocity $\p_x \u$ in some weighted Sobolev norm.
By virtue of $|\a_1|\le m-1$ and $|\a_1|+\a_2=m$, it follows $\a_2 \ge 1$, and hence
we can get $\ga_2 \ge 1$ if $\b_2=0$. Thus it follows from the Moser type inequality \eqref{ineq-moser} that
\begin{equation}\label{u1}
\begin{aligned}
&\underset{|\b|\ge1,\ \b+\ga=\a}{\sum}\int_0^t
\|\z^\b(\r \v)\z^\ga \p_y \u\|_{L^2_l(\Omega)}^2 d\tau\\
\le
& C_m\|Z_\tau^{e_i} (\r \v)\|_{L^\infty_1(\O)}^2 \int_0^t \|Z_2 \p_y \u\|_{\H^{m-2}_{l-1}}^2 d\tau
+ C_m\|Z_2 \p_y \u\|_{L^\infty_1(\O)}^2 \int_0^t \| Z_\tau^{e_i} (\r \v)\|_{\H^{m-2}_{l-1}}^2 d\tau.
\end{aligned}
\end{equation}
Using the divergence-free condition, Hardy and Moser type inequalities \eqref{ineq-moser}, we get
\begin{equation*}
\begin{aligned}
\int_0^t \| Z_\tau^{e_i} (\r \v)\|_{\H^{m-2}_{l-1}}^2 d\tau
\le
&C_m\|\r\|_{L^\infty_0(\O)}^2 \int_0^t \|  \v\|_{\H^{m-1}_{l-1}}^2 d\tau
+C_m\|\v\|_{L^\infty_0(\O)}^2 \int_0^t \| \vr \|_{\H^{m-1}_{l-1}}^2 d\tau\\
\le
&C_{m,l}\int_0^t \| \p_y \v\|_{\H^{m-1}_{l}}^2 d\tau
+C_m\|\v\|_{L^\infty_0(\O)}^2 \int_0^t \| \vr \|_{\H^{m-1}_{l-1}}^2 d\tau\\
\le
&C_{m,l}(1+\|\v\|_{L^\infty_0(\O)}^2)\int_0^t  \|(\vr, \u)\|_{\H^{m}_{l}}^2 d\tau.
\end{aligned}
\end{equation*}
This and the inequality \eqref{u1} yield directly
\begin{equation}\label{u4}
\begin{aligned}
\underset{|\b|\ge1,\ \b+\ga=\a}{\sum}\int_0^t
\|\z^\b(\r \v)\z^\ga \p_y \u\|_{L^2_l(\Omega)}^2 d\t
\le C_{m,l}(1+Q^2(t))\int_0^t (\|(\vr, \u)\|_{\H^{m}_{l}}^2+\|\p_y \u\|_{\H^{m-1}_{l}}^2) d\t.
\end{aligned}
\end{equation}
If $\b_2 \ge 1$, we get after using the Moser type inequality \eqref{ineq-moser} that
\begin{equation}\label{u2}
\begin{aligned}
&\underset{|\b|\ge1,\ \b+\ga=\a}{\sum}\int_0^t
\|\z^\b(\r \v)\z^\ga \p_y \u\|_{L^2_l(\Omega)}^2 d\t\\
\le
&C_m \|Z_2 (\r \v)\|_{L^\infty_1(\O)}^2 \int_0^t \| \p_y \u\|_{\H^{m-1}_{l-1}}^2 d\t
+C_m \|\p_y \u\|_{L^\infty_1(\O)}^2 \int_0^t \| Z_2 (\r \v)\|_{\H^{m-1}_{l-1}}^2 d\t.
\end{aligned}
\end{equation}
In view of the fact $Z_2 (\r \v)=Z_2 \r \v+\r Z_2 \v$, we apply divergence-free condition,
Hardy and Moser type inequalities to get
\begin{equation*}
\begin{aligned}
\int_0^t \| Z_2 (\r \v)\|_{\H^{m-1}_{l-1}}^2 d\tau
\le
&C_m\|(Z_2 \r , \v)\|_{L^\infty_0(\O)}^2
 \int_0^t (\| \p_y \v\|_{\H^{m-1}_{l}}^2+\| \vr \|_{\H^{m}_{l-1}}^2) d\tau\\
&+C_m(1+\|Z_2 \v\|_{L^\infty_0(\O)}^2)
\int_0^t (\| \u \|_{\H^{m}_{l-1}}^2+ \| \vr \|_{\H^{m-1}_{l-1}}^2)d\tau\\
\le
&C_{m,l}(1+\|(\v, \p_x \u, Z_2 \r)\|_{L^\infty_0(\O)}^2)\int_0^t \|(\vr, \u)\|_{\H^{m}_{l}}^2 d\tau.
\end{aligned}
\end{equation*}
which, along with \eqref{u2}, gives directly
\begin{equation}\label{u5}
\underset{|\b|\ge1,\ \b+\ga=\a}{\sum}\int_0^t
\|\z^\b(\r \v)\z^\ga \p_y \u\|_{L^2_l(\Omega)}^2 d\tau
\le C_{m,l}(1+Q^2(t))\int_0^t (\|(\vr, \u)\|_{\H^{m}_{l}}^2+\|\p_y \u\|_{\H^{m-1}_{l}}^2) d\tau.
\end{equation}
The combination of the estimates \eqref{u3}, \eqref{u4} and \eqref{u5} yields directly
\begin{equation*}
\int_0^t \|\C_{13}^\a\|_{L^2_{l}(\Omega)}^2 d \tau
\le C_{m,l}(1+Q^2(t))\int_0^t (\|(\vr, \u)\|_{\H^m_l}^2+\|\p_y \u\|_{\H^{m-1}_l}^2)d\tau.
\end{equation*}
Similarly, by routine checking, we may conclude that
\begin{equation*}
\int_0^t (\|\C_{14}^\a\|_{L^2_{l}(\Omega)}^2+\|\C_{15}^\a\|_{L^2_{l}(\Omega)}^2+ \|\C_{16}^\a\|_{L^2_{l}(\Omega)}^2) d \tau
\le C_{m,l}(1+Q(t))\int_0^t (\|(\vr, \u, \h)\|_{\H^m_l}^2+\|\p_y \h\|_{\H^{m-1}_l}^2)d\tau.
\end{equation*}
Therefore, we complete the proof of the claim estimate \eqref{ulc1}.

\subsection{Weighted $\H^m_l-$Estimates only on Tangential Derivative}

In this subsection, we hope to establish the estimate for the quantity
$Z_\t^{\a_1}(\vr, \u, \h)$ with $|\a_1|=m$.
However, there is an essential difficulty to achieve this goal since
the terms $\v \p_y \vr$, $\r \v \p_y(\u-e^{-y})-\g \p_y \h$ and
$\v \p_y \h-\g \p_y (\u-e^{-y})$ will create the loss of one derivative in the tangential variable $x$.
In other words, $\v=-\p_y^{-1}\p_x u$ and $\g=-\p_y^{-1}\p_x \h$, by the divergence-free condition \eqref{eq5}$_4$,
create a loss of $x-$derivative that prevents us to apply the standard energy estimates.
To overcome this essential difficulty, we take the strategy of the recent interesting result
\cite{Liu-Xie-Yang} that only needs that the background tangential magnetic field
has a lower positive bound instead of monotonicity assumption on the tangential velocity.
However, due to the density being a unknown function instead of a constant, we need to take some
new ideas to deal with the terms $\v \p_y \vr$ and $\r \v \p_y(\u-e^{-y})$.

First of all, applying $Z_\t^{\a_1}(|\a_1|= m)$ differential operator to the equation \eqref{eq5}$_3$,
we find
\begin{equation}\label{3b1}
\begin{aligned}
&\{\p_t+(\u+1-e^{-y})\p_x +\v \p_y-\es \p_x^2 -\k \p_y^2\}(Z_\t^{\a_1} \h)
+Z_\t^{\a_1} \v \p_y \h\\
&=(\h+1)\p_x Z_\t^{\a_1} \u+\g \p_y Z_\t^{\a_1} \u
+Z_\t^{\a_1} \g \p_y(\u-e^{-y})-\es Z_\t^{\a_1} \p_x r_h+f_h,
\end{aligned}
\end{equation}
where the function $f_h$ is defined by
\begin{equation*}\label{3b2}
\begin{aligned}
f_h=
&-[Z_\t^{\a_1}, (\u+1-e^{-y})\p_x]\h
  +[Z_\t^{\a_1}, (\h+1)\p_x]\u\\
&-\sum_{\substack{\b_1+\ga_1=\a_1 \\ \b_1 \neq 0, \b_1 \neq \a_1}}
C_{\b_1, \ga_1}Z_\t^{\b_1} \v Z_\t^{\ga_1} \p_y \h
+\sum_{\substack{\b_1+\ga_1=\a_1 \\ \b_1 \neq 0, \b_1 \neq \a_1}}
C_{\b_1, \ga_1} Z_\t^{\b_1} \g Z_\t^{\ga_1} \p_y \u.
\end{aligned}
\end{equation*}

To eliminate to difficult term $Z_\t^{\a_1} \v \p_y \h$, following the idea as in \cite{Liu-Xie-Yang},
we introduce the stream function $\ps$  satisfying
\begin{equation*}\label{3b3}
\p_y \ps=\h, \quad  \p_x \ps = -\g, \quad \ps|_{y=0}=0.
\end{equation*}
Then, we can deduce from the equation \eqref{eq5}$_3$
and boundary condition \eqref{bc5} that
\begin{equation*}\label{3b4}
\p_t \ps+(\u+1-e^{-y})\p_x \ps+\v (\p_y \ps+1)
-\es \p_x^2 \ps-\k \p_y^2 \ps=-\es \p_y^{-1}\p_x r_h.
\end{equation*}
Applying $Z_\t^{\a_1}(|\a_1|= m)$ differential operator to above equation, it follows
\begin{equation}\label{3b5}
\{\p_t+(\u+1-e^{-y})\p_x+\v \p_y -\es \p_x^2 -\k \p_y^2 \}Z_\t^{\a_1}\ps
+Z_\t^{\a_1} \v(\h+1)=-\es \p_y^{-1} Z_\t^{\a_1}\p_x r_h+f_{\psi},
\end{equation}
where the function $f_{\psi}$ is defined by
\begin{equation*}\label{3b6}
f_{\psi}=
-[Z_\t^{\a_1}, (\u+1-e^{-y})\p_x] \ps
-\sum_{\substack{\b_1+\ga_1=\a_1 \\ \b_1 \neq 0, \b_1 \neq \a_1}}
C_{\b_1, \ga_1} Z_\t^{\b_1} \v Z_\t^{\ga_1} \p_y \ps.
\end{equation*}

Set $\eta_h :=\frac{\p_y \h}{\h+1}$ and define the quantity
\begin{equation}\label{equi-h}
\h_m :=Z_\t^{\a_1} \h-\eta_h Z_\t^{\a_1} \ps,
\end{equation}
then multiplying the equation \eqref{3b5}  by $\eta_h$ and substituting the equation \eqref{3b1},
the difficult term $Z_\t^{\a_1} \v \p_y \h$ in \eqref{3b1} can be eliminated.
Hence, we get the evolution equation for the quantity $\h_m$ as
\begin{equation}\label{eqhm}
\begin{aligned}
&\p_t \h_m+(\u+1-e^{-y})\p_x \h_m +\v \p_y \h_m-\es \p_x^2 \h_m-\k \p_y^2 \h_m\\
&-(\h+1)\p_x Z_\t^{\a_1}\u-\g \p_y Z_\t^{\a_1} \u-Z_\t^{\a_1} \g \p_y(\u-e^{-y})\\
=
& -\es Z_\t^{\a_1} \p_x r_h+\es \eta_h \p_y^{-1} Z_\t^{\a_1} \p_x r_h
  +f_h-\eta_h f_\psi+2 \es \p_x \eta_h \p_x Z_\t^{\a_1} \ps
  +2\k\p_y \eta_h \p_y Z_\t^{\a_1} \ps\\
&+Z_\t^{\a_1} \ps (\p_t+(\u+1-e^{-y})\p_x+\v \p_y-\es \p_x^2 -\k \p_y^2)\eta_h.
\end{aligned}
\end{equation}

Similarly, after applying $Z_\t^{\a_1}(|\a_1|= m)$ operator to the first equation of \eqref{eq5}, we get
\begin{equation}\label{3b7}
\{\p_t+(\u+1-e^{-y})\p_x +\v \p_y-\es \p_x^2 -\es \p_y^2\}(Z_\t^{\a_1} \vr)
+Z_\t^{\a_1} \v \p_y \vr
=-\es Z_\t^{\a_1}\p_x r_1-\es Z_\t^{\a_1}\p_y r_2+f_\rho,
\end{equation}
where the function $f_\rho$ is defined by
\begin{equation*}\label{3b8}
\begin{aligned}
f_\rho=
-[Z_\t^{\a_1}, (\u+1-e^{-y})\p_x]\vr
-\sum_{\substack{\b_1+\ga_1=\a_1 \\ \b_1 \neq 0, \b_1 \neq \a_1}}
C_{\b_1, \ga_1}Z_\t^{\b_1} \v Z_\t^{\ga_1} \p_y \vr.
\end{aligned}
\end{equation*}

Set $\eta_\rho=\frac{\p_y \vr}{\h+1}$ and define
\begin{equation}\label{equi-r}
\vr_m:=Z_\t^{\a_1} \vr-\eta_\rho Z_\t^{\a_1} \ps,
\end{equation}
we multiply the equation \eqref{3b5}
by $\eta_\rho$ and substitute to the equation \eqref{3b7},
and hence, the evolution for the quantity $\vr_m$ as follows
\begin{equation}\label{eqrm}
\begin{aligned}
&\p_t \vr_m+(\u+1-e^{-y})\p_x \vr_m +\v \p_y \vr_m-\es \p_x^2 \vr_m-\es \p_y^2 \vr_m\\
=
&f_\rho-\eta_\rho f_\psi+2 \es \p_x \eta_\rho \p_x Z_\t^{\a_1} \ps
  -\k \eta_\rho \p_y^2 Z_\t^{\a_1} \ps+\es \p_y^2 (\eta_\rho Z_\t^{\a_1} \ps)\\
&+Z_\t^{\a_1} \ps (\p_t+(\u+1-e^{-y})\p_x+\v \p_y-\es \p_x^2 )\eta_\rho
-\es Z_\t^{\a_1} (\p_x r_1+\p_y r_2)+\es \eta_\rho \p_y^{-1} Z_\t^{\a_1} \p_x r_h.
\end{aligned}
\end{equation}

Finally, applying $Z_\t^{\a_1}(|\a_1|= m)$ differential operator to the equation \eqref{eq5}$_2$, we get
\begin{equation}\label{3b9}
\begin{aligned}
&\{\r \p_t+\r (\u+1-e^{-y})\p_x +\r \v \p_y-\es \p_x^2 -\mu \p_y^2\}(Z_\t^{\a_1} \u)
+\r Z_\t^{\a_1} \v \p_y (\u-e^{-y})\\
&=(\h+1)\p_x Z_\t^{\a_1} \h+\g \p_y Z_\t^{\a_1} \h
+Z_\t^{\a_1} \g \p_y \h-\es Z_\t^{\a_1}\p_x r_u+f_u,
\end{aligned}
\end{equation}
where the function $f_u$ is defined by
\begin{equation*}\label{3b10}
\begin{aligned}
f_u=
&-[Z_\t^{\a_1}, \r \p_t]\u-[Z_\t^{\a_1}, \r(\u+1-e^{-y})\p_x]\u+[Z_\t^{\a_1}, (\h+1)\p_x]\h\\
&-\sum_{\substack{\b_1+\ga_1=\a_1 \\ \b_1 \neq 0 }}
C_{\b_1, \ga_1}Z_\t^{\b_1} \r Z_\t^{\ga_1} \v \p_y (\u-e^{-y})
-\sum_{\substack{\b_1+\ga_1=\a_1 \\ \b_1 \neq 0, \b_1 \neq \a_1}}
C_{\b_1, \ga_1}Z_\t^{\b_1} (\r \v) Z_\t^{\ga_1} \p_y \u\\
&+\sum_{\substack{\b_1+\ga_1=\a_1 \\ \b_1 \neq 0, \b_1 \neq \a_1}}
C_{\b_1, \ga_1} Z_\t^{\b_1} \g Z_\t^{\ga_1} \p_y \h.
\end{aligned}
\end{equation*}
Set $\eta_u=\frac{\p_y(\u-e^{-y})}{\h+1}$, multiplying the equation \eqref{3b5}
by $\r \eta_u$ and substituting to the equation \eqref{3b9}, we find for the quantity
\begin{equation}\label{eqi-u}
\u_m:=Z_\t^{\a_1} \u-\eta_u Z_\t^{\a_1} \ps
\end{equation}
satisfying the evolution as follows
\begin{equation}\label{equm}
\begin{aligned}
&\r \p_t \u_m+\r(\u+1-e^{-y})\p_x \u_m +\r \v \p_y \u_m-\es \p_x^2 \u_m-\mu \p_y^2 \u_m\\
&-(\h+1)\p_x Z_\t^{\a_1}\h-\g \p_y Z_\t^{\a_1} \h-Z_\t^{\a_1} \g \p_y \h\\
=\
&f_u-\r \eta_u f_\psi-Z_\t^{\a_1} \ps (\r \p_t+\r(\u+1-e^{-y})\p_x+\r\v \p_y)\eta_u\\
&-\es(\r-1)\eta_u \p_x^2 Z_\t^{\a_1}\ps-\k \r \eta_u \p_y^2 Z_\t^{\a_1} \ps
+2\es \p_x \eta_u \p_x Z_\t^{\a_1} \ps\\
&+\es \p_x^2 \eta_u Z_\t^{\a_1} \ps+\mu \p_y^2(  \eta_u Z_\t^{\a_1} \ps )
-\es Z_\t^{\a_1} \p_x r_u+\es \rho^\es \eta_u \p_y^{-1} Z_\t^{\a_1} \p_x r_h.
\end{aligned}
\end{equation}

Let us define the functional:
\begin{equation}\label{ydef}
X_{m,l}(t):=1+\e_{m,l}(t)+\|(\vr_m,\u_m, \h_m)(t)\|_{L^2_l(\O)}^2+\|\p_y(\vr, \u, \h)(t)\|_{\H^{m-1}_l}^2
+\| \p_y \vr (t)\|_{\H^{1,\infty}_1}^2,
\end{equation}
where $\e_{m,l}(\t)$ is defined by \eqref{eml}.
Then, we will establish the following estimate in this subsection.
\begin{proposition}\label{Tanential-estimate}
Let $(\vr, \u, \v, \h, \g)$ be sufficiently smooth solution, defined on $[0, T^\es]$,
to the equations \eqref{eq5}-\eqref{bc5}.
Under the assumptions of conditions \eqref{a2} and \eqref{a1}, it holds on
\begin{equation*}
\begin{aligned}
&\sup_{\t \in [0, t]}\|(\vr_m,\u_m, \h_m)(\t)\|_{L^2_l(\O)}^2
+\es\int_0^t \|(\p_x \vr_m, \p_x \u_m, \p_x \h_m)(\t)\|_{L^2_l(\O)}^2 d\t\\
&+\int_0^t(\es\|\p_y \vr_m(\t)\|_{L^2_l(\O)}^2+\mu\|\p_y \u_m(\t)\|_{L^2_l(\O)}^2+\k\|\p_y \h_m(\t)\|_{L^2_l(\O)}^2) d\t\\
\le
&C\|(\vr_m, \u_m , \h_m)(0)\|_{L^2_l(\O)}^2
+C t \|(\rho_0, u_{10}, h_{10})\|_{\widehat{\mathcal{B}}^m_l}^2
+C_{\mu, \k, m, l} \d^{-6}(1+Q^3(t))\int_0^t X_{m,l}(\t)d\t.
\end{aligned}
\end{equation*}
\end{proposition}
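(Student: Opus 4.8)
The plan is to perform a weighted $L^2_l$ energy estimate simultaneously on the three evolution equations \eqref{eqrm}, \eqref{equm}, \eqref{eqhm} for the good unknowns $(\vr_m, \u_m, \h_m)$, exploiting the cancellation built into their construction. First I would multiply \eqref{equm} by $\y\,\u_m$ and integrate over $\O$; the transport terms $\r\p_t\u_m + \r(\u+1-e^{-y})\p_x\u_m + \r\v\p_y\u_m$ produce, after integration by parts and use of the divergence-free condition \eqref{eq5}$_4$ together with \eqref{bc5}, the time derivative $\frac{d}{dt}\frac12\int_\O\y\,\r|\u_m|^2$ plus lower-order terms controlled by $Q(t)$ and the density bounds \eqref{low-bou}. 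The dissipative term $-\mu\p_y^2\u_m$ yields, after integrating by parts in $y$ (the boundary term vanishes because $\z^\a\u|_{y=0}=0$ is inherited, as in \eqref{327}--\eqref{328}), a good term $\mu\|\p_y\u_m\|_{L^2_l}^2$ minus commutator errors involving $[\cdot,\p_y]$ that are absorbed as in \eqref{3210}--\eqref{3215}; similarly $-\es\p_x^2\u_m$ gives $\es\|\p_x\u_m\|_{L^2_l}^2$. The crucial point is the coupling terms $-(\h+1)\p_x Z_\t^{\a_1}\h - \g\p_y Z_\t^{\a_1}\h - Z_\t^{\a_1}\g\,\p_y\h$ on the left of \eqref{equm}: I would rewrite $Z_\t^{\a_1}\h = \h_m + \eta_h Z_\t^{\a_1}\ps$ and $Z_\t^{\a_1}\g = -\p_x Z_\t^{\a_1}\ps$, so that the principal part pairs against the corresponding coupling terms coming from the $\h_m$-equation \eqref{eqhm} (tested with $\y\,\h_m$) and cancels after integration by parts, exactly the mechanism of \eqref{3217}--\eqref{3220}; the leftover pieces carry a $Z_\t^{\a_1}\ps$ factor, which must be estimated.

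Next I would carry out the same energy estimate on \eqref{eqhm} with weight $\y\,\h_m$ and on \eqref{eqrm} with weight $\y\,\vr_m$, adding the three resulting identities so that the coupling between $\u_m$ and $\h_m$ cancels in the sum. What remains on the right-hand side are: (i) the forcing terms $-\es Z_\t^{\a_1}\p_x r_u$, $-\es Z_\t^{\a_1}\p_x r_h$, $-\es Z_\t^{\a_1}(\p_x r_1 + \p_y r_2)$ and their $\eta\,\p_y^{-1}$-counterparts, which after Cauchy--Schwarz and absorption of $\frac\es2$-type terms contribute $Ct\|(\rho_0,u_{10},h_{10})\|_{\widehat{\mathcal{B}}^m_l}^2$ via the bound on $(r_1,r_2,r_u,r_h)$ stated after \eqref{id-relation}; (ii) the commutators $f_\rho, f_u, f_h, f_\psi$, estimated by Moser-type inequality \eqref{ineq-moser} in terms of $Q(t)$ and $\e_{m,l}(t) + \|\p_y(\vr,\u,\h)\|_{\H^{m-1}_l}^2 \le X_{m,l}(t)$, in the spirit of the claim \eqref{ulc1}; (iii) the "$\eta$-transport" terms $Z_\t^{\a_1}\ps\,(\p_t + \cdots)\eta$, the mixed $\es$- and $\k$-terms such as $2\k\p_y\eta_h\,\p_y Z_\t^{\a_1}\ps$, $-\k\r\eta_u\p_y^2 Z_\t^{\a_1}\ps$, $\mu\p_y^2(\eta_u Z_\t^{\a_1}\ps)$, and the $\es$-order remainders. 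The key structural observation is that every one of these leftover terms is linear in $Z_\t^{\a_1}\ps$ or its derivatives; since $\p_y\ps = \h$, $\p_x\ps = -\g$ and $\ps|_{y=0}=0$, one has $Z_\t^{\a_1}\p_y\ps = Z_\t^{\a_1}\h$ and, by Hardy's inequality plus the divergence-free relation $\g = -\p_y^{-1}\p_x\h$, the quantities $Z_\t^{\a_1}\ps$, $\p_x Z_\t^{\a_1}\ps$, $\p_y Z_\t^{\a_1}\ps$ are all controlled in the appropriate weighted norms by $\e_{m,l}(t)$ and $\|\p_y\h\|_{\H^{m-1}_l}$, hence by $X_{m,l}(t)$. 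The factors $\eta_h = \frac{\p_y\h}{\h+1}$, $\eta_\rho = \frac{\p_y\vr}{\h+1}$, $\eta_u = \frac{\p_y(\u-e^{-y})}{\h+1}$ and their derivatives are bounded in $L^\infty$ (and in Sobolev norms) using the lower bound $\h+1\ge\d$ from \eqref{a2} — this is where the negative powers $\d^{-6}$ appear, each differentiation of $\frac{1}{\h+1}$ costing one more power of $\d^{-1}$ — together with $Q(t)$ and the a priori assumption \eqref{a1}.

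Assembling all the above, one obtains
$$
\sup_{[0,t]}\|(\vr_m,\u_m,\h_m)\|_{L^2_l}^2 + (\text{dissipation}) \le C\|(\vr_m,\u_m,\h_m)(0)\|_{L^2_l}^2 + Ct\|(\rho_0,u_{10},h_{10})\|_{\widehat{\mathcal{B}}^m_l}^2 + C_{\mu,\k,m,l}\d^{-6}(1+Q^3(t))\int_0^t X_{m,l}(\t)\,d\t,
$$
which is the claimed estimate. I expect the main obstacle to be the careful bookkeeping of the $\k$- and $\mu$-dissipation terms acting on the product $\eta\,Z_\t^{\a_1}\ps$ inside \eqref{eqrm}, \eqref{equm}, \eqref{eqhm}: one must integrate by parts to move a $\p_y$ off $Z_\t^{\a_1}\ps$ onto the test function without generating an uncontrolled boundary contribution at $y=0$ (the vanishing of $\ps|_{y=0}$ and of $Z_\t^{\a_1}\ps|_{y=0}$, which holds since $Z_\t$ preserves the boundary condition, is essential here) and without losing a normal derivative that the right-hand side space $X_{m,l}$ cannot afford — the term $\mu\p_y^2(\eta_u Z_\t^{\a_1}\ps)$ is the delicate one, since expanding it produces $\mu\eta_u\p_y^2 Z_\t^{\a_1}\ps = \mu\eta_u\p_y Z_\t^{\a_1}\h$, and the $\p_y Z_\t^{\a_1}\h$ factor must be paired with the good dissipation term $\k\|\p_y\h_m\|_{L^2_l}^2$ (after re-expressing $\p_y\h_m$) and absorbed via Young's inequality, balancing the constants $\mu$ and $\k$. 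The second, more routine, difficulty is verifying that all commutator and $\eta$-type terms genuinely close at the level of $X_{m,l}$ rather than requiring $\|\p_y(\vr,\u,\h)\|_{\H^m_l}$, which relies on the fact that $|\a_1|=m$ forces at least one conormal $Z_2$ or one $\p_y$ to fall on a lower-order factor in each product, exactly as in the treatment of $\C_{13}^\a$ in \eqref{u1}--\eqref{u5}.
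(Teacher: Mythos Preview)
Your overall architecture---energy estimates on \eqref{eqrm}, \eqref{equm}, \eqref{eqhm} tested against $\y\vr_m$, $\y\u_m$, $\y\h_m$, with the $\u_m$--$\h_m$ coupling cancelling upon summation---is exactly the paper's approach (Lemmas \ref{lemma33}--\ref{lemma34}). But two points in your sketch are genuine gaps, not just imprecision.

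\textbf{Control of the stream function.} You claim that $Z_\t^{\a_1}\ps$, $\p_x Z_\t^{\a_1}\ps$, $\p_y Z_\t^{\a_1}\ps$ are controlled by $\e_{m,l}(t)$ and $\|\p_y\h\|_{\H^{m-1}_l}$. This is false: by definition \eqref{eml}, $\e_{m,l}$ contains only $|\a_1|\le m-1$, so $\|Z_\t^{\a_1}\h\|_{L^2_l}$ with $|\a_1|=m$ is \emph{not} in it, and your naive Hardy bound $\|Z_\t^{\a_1}\ps\|_{L^2_{l-1}}\le C\|Z_\t^{\a_1}\h\|_{L^2_l}$ leaves you with an uncontrolled top-order term. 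The paper instead exploits the algebraic identity $\h_m=(\h+1)\p_y\bigl(\frac{Z_\t^{\a_1}\ps}{\h+1}\bigr)$, which after integrating and applying Hardy gives $\|\frac{Z_\t^{\a_1}\ps}{\h+1}\|_{L^2_{l-1}}\le C_l\d^{-1}\|\h_m\|_{L^2_l}$ (this is \eqref{b11} in Lemma \ref{equivalent}); similarly \eqref{b12}--\eqref{b14} control $Z_\t^{\a_1}\h$, $\p_x Z_\t^{\a_1}\ps$, $\p_y Z_\t^{\a_1}\h$ in terms of $\h_m$, $\p_x\h_m$, $\p_y\h_m$. The last two are \emph{dissipation} quantities, not energy quantities: the $\es$-remainders such as $2\es\p_x\eta_\rho\,\p_x Z_\t^{\a_1}\ps$ and the $\k$-term $-\k\r\eta_u\p_y^2 Z_\t^{\a_1}\ps$ each produce a small multiple $\d_1\es\|\p_x\h_m\|_{L^2_l}^2$ or $\d_2\k\|\p_y\h_m\|_{L^2_l}^2$ that must be absorbed into the left-hand side, and the $\vr_m$-estimate therefore does not close on its own---it must be added to the $\h_m$-estimate with $\d_1$ chosen small (this is why the paper proves Lemmas \ref{lemma33} and \ref{lemma34} separately and then combines them).

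\textbf{The term requiring \eqref{a1}.} You invoke assumption \eqref{a1} generically, but the specific place where it is indispensable is the term $-\es(\r-1)\eta_u\,\p_x^2 Z_\t^{\a_1}\ps$ in \eqref{equm} (this is $I_8$ in \eqref{342}). After integrating by parts in $x$, its worst piece is $\es\int_\O\y\,\vr\,\eta_u\,\p_x\u_m\cdot\p_x Z_\t^{\a_1}\ps$, which via \eqref{b13} is bounded by $\frac{2\d^{-1}}{2l-1}\|\vr\|_{L^\infty_0}\|\p_y(\u-e^{-y})\|_{L^\infty_1}\cdot\es\|\p_x\u_m\|_{L^2_l}\|\p_x\h_m\|_{L^2_l}$. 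There is no smallness available to absorb both $\es\|\p_x\u_m\|^2$ and $\es\|\p_x\h_m\|^2$ unless the prefactor is at most $1$; this is exactly what the quantitative bounds $\|\vr\|_{L^\infty_0}\le\frac{2l-1}{2}\d^2$ and $\|\p_y(\u-e^{-y})\|_{L^\infty_1}\le\d^{-1}$ in \eqref{a1} are calibrated to deliver (see \eqref{3421} and Remark \ref{remark-condition}). Your plan as written would not close this term.
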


First of all, we establish the weighted $L^2-$estimate for the quantity $\vr_m$.

\begin{lemma}\label{lemma33}
Let $(\vr, \u, \v, \h, \g)$ be sufficiently smooth solution, defined on $[0, T^\es]$,
to the equations \eqref{eq5}.
Under the assumption of condition \eqref{a2}, then we have the following estimate for $0<\d_1<1$:
\begin{equation*}\label{331}
\begin{aligned}
&\sup_{\t \in [0, t]}\|\vr_m(\t)\|_{L^2_l(\O)}^2
+\es \int_0^t (\|\p_x \vr_m\|_{L^2_l(\O)}^2+\|\p_y \vr_m\|_{L^2_l(\O)}^2)d\tau\\
\le
&C\|\vr_{m}(0)\|_{L^2_l(\O)}^2
+C\!\int_0^t \|(r_1, r_2, r_h)\|_{\H^{m}_{l, tan}}^4 d\t
+C\d_1 \int_0^t ( \es \|\p_x  \h_m\|_{L^2_l(\O)}^2+\!\k \|\p_y  \h_m\|_{L^2_l(\O)}^2)d\t\\
&+C_{\k, m, l}\d^{-4}(1+Q^2(t))\int_0^t (\mathcal{E}_{m,l}(\t)
+\|(\vr_m, \u_m, \h_m)(\t)\|_{L^2_l(\O)}^2+\|\p_y \vr(\t)\|_{\H^{m-1}_l}^2)d\t.
\end{aligned}
\end{equation*}
\end{lemma}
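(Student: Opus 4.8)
The plan is to run a weighted $L^2_l(\O)$ energy estimate directly on the evolution equation \eqref{eqrm} for $\vr_m$. First I would multiply \eqref{eqrm} by $\y\,\vr_m$ and integrate over $\O$. The boundary term produced by the diffusion $-\es\p_y^2\vr_m$ vanishes because $\vr_m|_{y=0}=0$: indeed $\ps|_{y=0}=0$ forces $Z_\t^{\a_1}\ps|_{y=0}=0$ (as $Z_\t$ involves only $\p_t$ and $\p_x$), while $\p_y\vr|_{y=0}=0$ from \eqref{bc5} gives $\eta_\rho|_{y=0}=0$. After integration by parts the left side produces $\frac12\frac{d}{dt}\|\vr_m\|_{L^2_l(\O)}^2+\es\|\p_x\vr_m\|_{L^2_l(\O)}^2+\es\|\p_y\vr_m\|_{L^2_l(\O)}^2$, together with (i) the transport contributions $\int_\O\y(\u+1-e^{-y})\p_x\vr_m\,\vr_m$ and $\int_\O\y\,\v\,\p_y\vr_m\,\vr_m$, which after one further integration by parts and the divergence-free condition \eqref{eq5}$_4$ reduce to lower-order terms absorbed by $C(1+Q(t))\|\vr_m\|_{L^2_l(\O)}^2$ (the factor $\la y\ra^{-1}\v$ being controlled by $Q(t)$ via \eqref{Q}), and (ii) the weight commutator $2l\,\es\int_\O\yb\,\p_y\vr_m\,\vr_m$ coming from the normal diffusion, which is absorbed by a small fraction of $\es\|\p_y\vr_m\|_{L^2_l(\O)}^2$ plus $C\es\|\vr_m\|_{L^2_l(\O)}^2$.

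Next I would estimate the right side of \eqref{eqrm} term by term. The commutators $f_\rho$ and $\eta_\rho f_\psi$ are expanded and bounded with the Moser-type inequality \eqref{ineq-moser} and the Hardy inequality; the coefficient $\eta_\rho=\p_y\vr/(\h+1)$ is put into $L^\infty$ using the lower bound $\h+1\ge\d$ from \eqref{a2} (producing negative powers of $\d$, up to $\d^{-4}$ after squaring) together with $\|\p_y\vr\|_{\H^{1,\infty}_1}\le Q(t)$, while the stream function factor $Z_\t^{\a_1}\ps=\p_y^{-1}Z_\t^{\a_1}\h$ is estimated in a weighted $L^2$-norm via Hardy by $\|Z_\t^{\a_1}\h\|_{L^2_l(\O)}$, which through $\h_m=Z_\t^{\a_1}\h-\eta_h Z_\t^{\a_1}\ps$ is comparable to $\|\h_m\|_{L^2_l(\O)}$ modulo lower-order pieces lying in $\e_{m,l}$; this is where the weight $l\ge2$ is used, so that every Hardy estimate closes. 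The drift term $Z_\t^{\a_1}\ps\,(\p_t+(\u+1-e^{-y})\p_x+\v\p_y-\es\p_x^2)\eta_\rho$ is handled the same way, since transporting $\eta_\rho$ only generates products of lower-order quantities (with extra powers of $\d^{-1}$) against $Z_\t^{\a_1}\ps$. The genuinely $\es$-small terms $2\es\,\p_x\eta_\rho\,\p_x Z_\t^{\a_1}\ps$, $-\es Z_\t^{\a_1}(\p_x r_1+\p_y r_2)$ and $\es\eta_\rho\p_y^{-1}Z_\t^{\a_1}\p_x r_h$ are treated by Cauchy--Schwarz, absorbing a small multiple of $\es\|\p_x\vr_m\|_{L^2_l(\O)}^2$ on the left; for the first I would write $\p_x Z_\t^{\a_1}\ps=\p_y^{-1}\p_x Z_\t^{\a_1}\h$ and use the identity for $\h_m$ and Hardy so that, up to lower order, it is controlled by $\d_1$ times $\es\|\p_x\h_m\|_{L^2_l(\O)}^2$ --- precisely the $\d_1$-term in the statement. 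The data functions $r_1,r_2,r_h$, being polynomials in $t$ with coefficients fixed by the initial data, are bounded crudely by $\int_0^t\|(r_1,r_2,r_h)\|_{\H^m_{l,tan}}^4\,d\t$.

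The main obstacle is the pair of second-order-in-$y$ terms $-\k\,\eta_\rho\,\p_y^2 Z_\t^{\a_1}\ps+\es\,\p_y^2(\eta_\rho Z_\t^{\a_1}\ps)$, which naively requires control of $\p_y^2 Z_\t^{\a_1}\ps=\p_y Z_\t^{\a_1}\h$ --- one normal derivative beyond what the energy provides. The resolution is the cancellation engineered in the definition \eqref{equi-r}: expanding $\es\,\p_y^2(\eta_\rho Z_\t^{\a_1}\ps)=\es\,\eta_\rho\,\p_y^2 Z_\t^{\a_1}\ps+2\es\,\p_y\eta_\rho\,\p_y Z_\t^{\a_1}\ps+\es\,\p_y^2\eta_\rho\,Z_\t^{\a_1}\ps$, the leading piece combines with $-\k\,\eta_\rho\,\p_y^2 Z_\t^{\a_1}\ps$ into $(\es-\k)\,\eta_\rho\,\p_y Z_\t^{\a_1}\h$, and then substituting $Z_\t^{\a_1}\h=\h_m+\eta_h Z_\t^{\a_1}\ps$ from \eqref{equi-h} replaces $\p_y Z_\t^{\a_1}\h$ by $\p_y\h_m$ plus terms involving only $\p_y\eta_h$, $\eta_h$, $\h_m$ and $Z_\t^{\a_1}\ps$. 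Pairing $(\es-\k)\,\eta_\rho\,\p_y\h_m$ with $\y\,\vr_m$ and applying Cauchy--Schwarz gives $\le C\d_1\k\|\p_y\h_m\|_{L^2_l(\O)}^2+C_{\k,m,l}\d_1^{-1}\d^{-2}Q(t)\|\vr_m\|_{L^2_l(\O)}^2$, the first of which is exactly the $\k$-part of the $\d_1$-term in the statement, to be absorbed into the $\h_m$-dissipation once the $\vr_m$, $\u_m$, $\h_m$ estimates are combined in Proposition \ref{Tanential-estimate}; the remaining $\es$-small pieces are absorbed into the dissipation, using that $\p_y\eta_\rho$ and $\p_y^2\eta_\rho$ are controlled (with powers of $\d^{-1}$) by the $\H^{1,\infty}_1$-norm of $\p_y\vr$ inside $Q(t)$ and that $Z_\t^{\a_1}\ps$ is controlled by Hardy. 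Finally I would integrate the resulting differential inequality over $[0,t]$ and use $\|\vr_m(0)\|_{L^2_l(\O)}$ on the right to obtain the claimed bound.
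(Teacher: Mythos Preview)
Your overall plan matches the paper's, but two of your claims fail as stated.

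First, a minor one: the boundary claim $\vr_m|_{y=0}=0$ is false. The boundary condition on the density in \eqref{bc5} is Neumann, $\p_y\vr|_{y=0}=0$, not Dirichlet; hence $Z_\t^{\a_1}\vr|_{y=0}$ need not vanish, and from $\eta_\rho|_{y=0}=0$, $Z_\t^{\a_1}\ps|_{y=0}=0$ you only get $\vr_m|_{y=0}=Z_\t^{\a_1}\vr|_{y=0}$. The boundary term from $-\es\p_y^2\vr_m$ still vanishes, but for the reason that $\p_y\vr_m|_{y=0}=0$: indeed $\p_y\vr_m=Z_\t^{\a_1}\p_y\vr-\p_y\eta_\rho\,Z_\t^{\a_1}\ps-\eta_\rho\,Z_\t^{\a_1}\h$, and at $y=0$ each summand is zero.

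Second, and this is the real gap: your Leibniz expansion of $\es\,\p_y^2(\eta_\rho Z_\t^{\a_1}\ps)$ generates the term $\es\,\p_y^2\eta_\rho\,Z_\t^{\a_1}\ps$, and you assert that $\p_y^2\eta_\rho$ is controlled by $\|\p_y\vr\|_{\H^{1,\infty}_1}$ inside $Q(t)$. That is false: $\p_y^2\eta_\rho$ contains $\p_y^3\vr/(\h+1)$, whereas $\|\p_y\vr\|_{\H^{1,\infty}_1}$ only carries $\p_y\vr$, $\p_t\p_y\vr$, $\p_x\p_y\vr$ and $Z_2\p_y\vr=\varphi\p_y^2\vr$ in $L^\infty_1$; no third normal derivative of $\vr$ is available anywhere in the scheme, and even the $1/\varphi$ trade with $Z_\t^{\a_1}\ps$ recovers only one factor of $\varphi$, not two. (The ``cancellation'' you describe between $\es\eta_\rho\p_y^2Z_\t^{\a_1}\ps$ and $-\k\eta_\rho\p_y^2Z_\t^{\a_1}\ps$ is illusory: the coefficient $\es-\k$ is of order one.) The paper avoids this by \emph{not} expanding: it integrates $\es\int_\O\p_y^2(\eta_\rho Z_\t^{\a_1}\ps)\cdot\ya\vr_m\,dxdy$ by parts once in $y$, using $\p_y(\eta_\rho Z_\t^{\a_1}\ps)|_{y=0}=0$, so that only $\p_y\eta_\rho\,Z_\t^{\a_1}\ps$ and $\eta_\rho\,Z_\t^{\a_1}\h$ appear against $\p_y(\ya\vr_m)$; these are then absorbed into $\es\|\p_y\vr_m\|_{L^2_l(\O)}^2$ via \eqref{3311}--\eqref{3315}. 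The $-\k\,\eta_\rho\,\p_y^2Z_\t^{\a_1}\ps$ term is treated separately by Cauchy--Schwarz and the almost-equivalence \eqref{b14}, producing the $\d_1\k\|\p_y\h_m\|_{L^2_l(\O)}^2$ contribution. The same difficulty arises with $\es\,Z_\t^{\a_1}\ps\,\p_x^2\eta_\rho$: putting $\p_x^2\eta_\rho$ in $L^\infty$ would require $\p_x^2\p_y\vr\in L^\infty$, again unavailable in $Q(t)$; the paper integrates by parts in $x$ there (see \eqref{3320}).
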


\begin{proof}
Due to the fact $\p_y \vr|_{y=0}$ and $\ps|_{y=0}=0$, it follows $\p_y \eta_\rho|_{y=0}=0$.
Then, multiplying the equation \eqref{eqrm} by $\ya \vr_m$, integrating over $[0, t]\times \O$
and integrating by part, we get
\begin{equation}\label{333}
\begin{aligned}
&\frac{d}{dt}\int_\O \ya |\vr_m|^2 dxdy+\es \int_\O \ya |\p_x \vr_m|^2 dxdyd\t
+\es \int_\O \ya |\p_y \vr_m|^2 dxdyd\t \\
=
&l\int_\O \yb \v |\vr_m|^2 dxdy-2l\es \int_\O \ya \p_y \vr_m \cdot \vr_m dxdy
+2 \es \int_\O \p_x \eta_\rho \p_x Z_\t^{\a_1} \ps \cdot \ya \vr_m dxdy\\
&+\int_\O[-\k \eta_\rho \p_y^2 Z_\t^{\a_1} \ps+\es \p_y^2 (\eta_\rho Z_\t^{\a_1} \ps)
-\es    \p_x^2  \eta_\rho Z_\t^{\a_1} \ps]\cdot \ya \vr_m dxdy \\
&+\int_\O Z_\t^{\a_1} \ps (\p_t+(\u+1-e^{-y})\p_x+\v \p_y )\eta_\rho
  \cdot \ya \vr_m dxdy\\
&+\int_\O(f_\rho-\eta_\rho f_\psi-\es Z_\t^{\a_1} (\p_x r_1+\p_y r_2)+\es \eta_\rho \p_y^{-1} Z_\t^{\a_1} \p_x r_h)\cdot \ya \vr_m dxdy.
\end{aligned}
\end{equation}
Using the H\"{o}lder and Cauchy inequalities, it follows
\begin{equation*}\label{334}
\begin{aligned}
&|l\int_\O \yb \v |\vr_m|^2 dxdy-2l\es \int_\O \ya \p_y \vr_m \cdot \vr_m dxdy|\\
\le
&C_l \|\v\|_{L^\infty_{-1}(\O)}\|\vr_m\|_{L^2_l(\O)}^2+2l \es \|\p_y \vr_m\|_{L^2_l(\O)}\|\vr_m\|_{L^2_l(\O)}\\
\le
&\frac{1}{8} \es \|\p_y \vr_m\|_{L^2_l(\O)}^2+C_l(1+\|\v\|_{L^\infty_{-1}(\O)}^2)\|\vr_m\|_{L^2_l(\O)}^2,
\end{aligned}
\end{equation*}
and
\begin{equation}\label{335}
|\es \int_\O \p_x \eta_\rho \p_x Z_\t^{\a_1} \ps\cdot \ya \vr_m dxdy|
\le \es \|\p_x \eta_\rho \p_x Z_\t^{\a_1} \ps\|_{L^2_l(\O)}\|\vr_m\|_{L^2_l(\O)}.
\end{equation}
By virtue of the fact $\p_x \eta_\rho=\frac{1}{\h+1}\{{\p_{xy}^2 \vr} -\frac{\p_y \vr \p_x \h}{\h+1 }\}$, we get
\begin{equation*}\label{336}
\|\p_x \eta_\rho \p_x Z_\t^{\a_1} \ps\|_{L^2_l(\O)}
\le (\|\p_{xy}^2 \vr\|_{L^\infty_1(\O)}+\d^{-1}\| \p_{y} \vr\|_{L^\infty_1}\| \p_{x} \h\|_{L^\infty_0(\O)})
     \|\frac{\p_x Z_\t^{\a_1} \ps}{\h+1}\|_{L^2_{l-1}(\O)},
\end{equation*}
where we have used the fact $\h+1\ge \d$.
This and inequalities \eqref{b13} and \eqref{335} give
\begin{equation*}\label{337}
|\es \int_\O \p_x \eta_\rho \p_x Z_\t^{\a_1} \ps\cdot \ya \vr_m dxdy|
\le \d_1 \es \|\p_x \h_m\|_{L^2_l(\O)}^2+C_l \d^{-4}(1+Q^2(t))\|(\vr_m, \h_m)\|_{L^2_l(\O)}^2.
\end{equation*}
Using the H\"{o}lder and Cauchy inequalities, it follows
\begin{equation*}
\begin{aligned}
|\k \int_\O \eta_\rho \p_y^2 Z_\t^{\a_1} \ps \cdot \ya \vr_m dxdy|
\le \d_1 \k \|\p_y  Z_\t^{\a_1} \h\|_{L^2_l(\O)}^2+C_\k \|\eta_\rho\|_{L^\infty_0(\O)}^2 \|\vr_m\|_{L^2_l(\O)}^2,
\end{aligned}
\end{equation*}
which, together with the estimate \eqref{b14}, yields directly
\begin{equation*}\label{338}
|\k \int_\O \eta_\rho \p_y^2 Z_\t^{\a_1} \ps \cdot \ya \vr_m dxdy|
\le \d_1 \k \|\p_y  \h_m\|_{L^2_l(\O)}^2+C_{\k, l} \d^{-2} Q(t)\|(\vr_m, \h_m)\|_{L^2_l(\O)}^2.
\end{equation*}
In view of $\p_y \vr|_{y=0}=0$ and $\ps|_{y=0}=0$, it is easy
to justify the fact $\p_y (\eta_\rho Z_\t^{\a_1} \ps)|_{y=0}=0$,
and hence, we integrating by part to get directly
\begin{equation*}\label{339}
\begin{aligned}
&\es \int_\O \p_y^2 (\eta_\rho Z_\t^{\a_1} \ps)\cdot \ya \vr_m dxdy\\
=
&-\es \int_\O \p_y \eta_\rho Z_\t^{\a_1} \ps \cdot (2l \yb \vr_m+\ya \p_y \vr_m) dxdy\\
&-\es \int_\O   \eta_\rho Z_\t^{\a_1} \h \cdot(2l \yb \vr_m+\ya \p_y \vr_m) dxdy.
\end{aligned}
\end{equation*}
It follows from the H\"{o}lder inequality that
\begin{equation}\label{3310}
\begin{aligned}
&|\es \int_\O \p_y \eta_\rho Z_\t^{\a_1} \ps \cdot (2l \yb \vr_m+\ya \p_y \vr_m) dxdy|\\
\le
&\|\p_y \eta_\rho Z_\t^{\a_1} \ps\|_{L^2_{l-1}(\O)}\| \vr_m\|_{L^2_{l}(\O)}
+\|\p_y \eta_\rho Z_\t^{\a_1} \ps\|_{L^2_{l}(\O)}\|\p_y \vr_m\|_{L^2_{l}(\O)}.
\end{aligned}
\end{equation}
Thanks to the relation $\p_y \eta_\rho=\frac{1}{\h+1}\{\p_y^2 \vr-\frac{\p_y \vr \p_y \h}{\h+1}\}$, we get
\begin{equation}\label{3311}
\begin{aligned}
\|\p_y \eta_\rho Z_\t^{\a_1} \ps\|_{L^2_{l}(\O)}
\le
& (\|Z_2 \p_y  \vr\|_{L^\infty_1(\O)}+\d^{-1}\|Z_2 \vr\|_{L^\infty_1(\O)}\| \p_y \h \|_{L^\infty_0(\O) })
\|\frac{1}{\varphi(y)} \frac{Z_\t^{\a_1} \ps}{\h+1}\|_{L^2_{l-1}(\O)} \\
\le
& C_l \d^{-2}(\|Z_2 \p_y  \vr\|_{L^\infty_1(\O)}+\|Z_2 \vr\|_{L^\infty_1(\O)}\| \p_y \h \|_{L^\infty_0(\O)})
\|\h_m\|_{L^2_{l}(\O)},
\end{aligned}
\end{equation}
where, in the last inequality, we have used the  following estimate
\begin{equation*}\label{3312}
\|\frac{1}{\varphi(y)} \frac{Z_\t^{\a_1} \ps}{\h+1}\|_{L^2_{l-1}(\O)}
\le C_l \d^{-1}\|\h_m\|_{L^2_l(\O)}.
\end{equation*}
Combining \eqref{3310} with \eqref{3311}, we conclude that
\begin{equation}\label{3313}
|\es \int_\O \p_y \eta_\rho Z_\t^{\a_1} \ps \cdot (2l \yb \vr_m+\ya \p_y \vr_m) dxdy|
\le \frac{1}{8} \es \|\p_y \vr_m\|_{L^2_l(\O)}^2
  +C_l \d^{-4}Q^2(t) \|(\vr_m, \h_m)\|_{L^2_{l}(\O)}^2.
\end{equation}
Using the Cauchy inequality and the estimate \eqref{b12}, we show
\begin{equation*}\label{3314}
\begin{aligned}
&|\es \int_\O  \eta_\rho Z_\t^{\a_1} \h \cdot(2l \yb \vr_m+\ya \p_y \vr_m) dxdy\\
\le
& \frac{1}{8} \es \|\p_y \vr_m\|_{L^2_l(\O)}^2
  +C_l \d^{-4}(1+\|\p_y \vr \|_{L^\infty_0(\O)}^4
            +\|\p_y \h \|_{L^\infty_1(\O)}^4)\|(\vr_m, \h_m)\|_{L^2_{l}(\O)}^2.
\end{aligned}
\end{equation*}
This and the inequality \eqref{3313} give
\begin{equation}\label{3315}
|\es \int_\O \p_y^2 (\eta_\rho Z_\t^{\a_1} \ps)\cdot \ya \vr_m dxdy|
\le
  \frac{1}{4} \es \|\p_y \vr_m\|_{L^2_l(\O)}^2
  +C_l \d^{-4}(1+Q^2(t))\|(\vr_m,\h_m)\|_{L^2_l(\O)}^2.
\end{equation}
The integration by part with respect to $x$ variable yields immediately
\begin{equation*}\label{3316}
\es \int_\O Z_\t^{\a_1} \ps   \p_x^2  \eta_\rho \cdot \ya \vr_m dxdy
=
\es \int_\O \ya  \p_x   \eta_\rho (Z_\t^{\a_1} \ps \p_x \vr_m+\p_x  Z_\t^{\a_1} \ps \vr_m)dxdy.
\end{equation*}
By virtue of the Holder inequality, we get
\begin{equation}\label{3317}
|\es \int_\O \ya  \p_x   \eta_\rho  Z_\t^{\a_1} \ps \p_x \vr_m dxdy|
\le \es \|\p_x   \eta_\rho  Z_\t^{\a_1} \ps\|_{L^2_{l}(\O)}\|\p_x \vr_m\|_{L^2_{l}(\O)}.
\end{equation}
Due to the fact $\p_x \eta_\rho=\frac{1}{\h+1}\{\p_{xy}^2 \vr-\frac{\p_y \vr \p_x \h}{\h+1}\}$, it follows
\begin{equation*}\label{3318}
\begin{aligned}
\|\p_x   \eta_\rho  Z_\t^{\a_1} \ps\|_{L^2_{l}(\O)}
&\le(\|\p_{xy}^2 \vr\|_{L^\infty_1(\O)}+\d^{-1}\|\p_y \vr\|_{L^\infty_1(\O)}\|\p_x \h\|_{L^\infty_0(\O)})
\|\frac{Z_\t^{\a_1} \ps}{\h+1}\|_{L^2_{l-1}(\O)}\\
&\le C_l \d^{-2}(\|\p_{xy}^2 \vr\|_{L^\infty_1(\O)}+\|\p_y \vr\|_{L^\infty_1(\O)}\|\p_x \h\|_{L^\infty_0(\O)})
\|\h_m\|_{L^2_{l}(\O)},
\end{aligned}
\end{equation*}
where we have used the estimate \eqref{b11} in the last inequality.
This and inequality \eqref{3317} yield directly
\begin{equation}\label{3319}
\begin{aligned}
|\es \int_\O \ya  \p_x   \eta_\rho  Z_\t^{\a_1} \ps \p_x \vr_m dxdy|
\le
 \frac{1}{8}\es \|\p_x \vr_m\|_{L^2_{l}(\O)}^2+C_l \d^{-4}(1+Q^2(t))\| \h_m\|_{L^2_{l}(\O)}^2.
\end{aligned}
\end{equation}
Similarly, it is easy to justify that
\begin{equation*}\label{3319a}
|\es \int_\O \ya  \p_x   \eta_\rho \p_x  Z_\t^{\a_1} \ps \vr_m dxdy|
\le \d_1 \es \|\p_x \h_m\|_{L^2_{l}(\O)}^2+C_l \d^{-4}(1+Q^2(t))\|(\vr_m, \h_m)\|_{L^2_{l}(\O)}^2.
\end{equation*}
which, along with \eqref{3319}, yields directly
\begin{equation}\label{3320}
\begin{aligned}
&|\es \int_\O Z_\t^{\a_1} \ps   \p_x^2  \eta_\rho \cdot \ya \vr_m dxdy|\\
&\le
  \es (\frac{1}{8}\|\p_x \vr_m\|_{L^2_l(\O)}^2+\d_1 \|\p_x \h_m\|_{L^2_l(\O)}^2)
  +C_l \d^{-4}(1+Q^2(t))\|(\vr_m, \h_m)\|_{L^2_l(\O)}^2.
\end{aligned}
\end{equation}
Using the H\"{o}lder inequality and estimate \eqref{b11}, we get
\begin{equation*}\label{3321}
|\int_\O Z_\t^{\a_1} \ps (\p_t+(\u+1-e^{-y})\p_x+\v \p_y)\eta_\rho \cdot \ya \vr_m dxdy|
\le  C_l \d^{-2}(1+Q(t)) \|(\vr_m, \h_m)\|_{L^2_l(\O)}^2.
\end{equation*}
The application of H\"{o}lder and Cauchy inequalities gives directly
\begin{equation*}\label{3322}
|\int_\O (f_\rho-\eta_\rho f_\psi) \cdot \ya \vr_m dxdy|
\le C(\|f_\rho\|_{L^2_l(\O)}^2+ \| f_\psi\|_{L^2_{l-1}(\O)}^2)
+C(1+\|\eta_\rho\|_{L^\infty_1(\O)}^2)\|\vr_m\|_{L^2_l(\O)}^2.
\end{equation*}
Integrating by part and applying the Cauchy inequality, it follows
\begin{equation*}
\begin{aligned}
&|\int_\O(-\es Z_\t^{\a_1} (\p_x r_1+\p_y r_2)+\es \eta_\rho \p_y^{-1} Z_\t^{\a_1} \p_x r_h)\cdot \ya \vr_m dxdy|\\
\le
&\frac{\es}{8}\|(\p_x \vr_m, \p_y \vr_m)\|_{L^2_l(\O)}^2
  +\|Z_\t^{\a_1}(r_1, r_2, r_h)\|_{L^2_l(\O)}^4
  +C_l\d^{-4}(1+Q^2(t))(1+\|\vr_m\|_{L^2_l(\O)}^2).
\end{aligned}
\end{equation*}
Combining the above estimates of terms for the righthand side of \eqref{333}, and
integrating the resulting inequality over $[0, t]$, we get
\begin{equation*}\label{3323}
\begin{aligned}
&\|\vr_m(t)\|_{L^2_l(\O)}^2+\frac{1}{2}\es \int_0^t (\|\p_x \vr_m\|_{L^2_l(\O)}^2+\|\p_y \vr_m\|_{L^2_l(\O)}^2)d\tau\\
\le
&\|\vr_m(0)\|_{L^2_l(\O)}^2+\d_1 \int_0^t(\es \|\p_x  \h_m\|_{L^2_l(\O)}^2+\k \|\p_y \h_m\|_{L^2_l(\O)}^2)d\t
 +\int_0^t \|Z_\t^{\a_1}(r_1, r_2, r_h)\|_{L^2_l(\O)}^4 d\d\\
&+C\int_0^t (\|f_\rho\|_{L^2_l(\O)}^2+\|f_\psi\|_{L^2_{l-1}(\O)}^2) d\t
+C_{\k,l}\d^{-4}(1+Q^2(t))\int_0^t(1+ \|(\vr_m, \h_m)\|_{L^2_l(\O)}^2)d\t.
\end{aligned}
\end{equation*}
On the other hand, we applying the Moser type inequality \eqref{ineq-moser} to get
\begin{equation*}\label{3324}
\int_0^t(\|f_\rho\|_{L^2_l(\O)}^2+\| f_\psi\|_{L^2_{l-1}(\O)}^2) d\tau
\le C_{m,l} Q(t)\int_0^t (\|(\vr, \u, \h)\|_{\H^{m}_l}^2+\|\p_y \vr\|_{\H^{m-1}_l}^2) d\tau,
\end{equation*}
which along with the estimate \eqref{b22} completes the proof of lemma.
\end{proof}

Next, we establish the estimate for the quantities $\u_m$ and $\h_m$.
Indeed, it is easy to check that
\begin{equation*}\label{3b11}
\begin{aligned}
&-(\h+1)\p_x Z_\t^{\a_1} \h-\g \p_y Z_\t^{\a_1} \h- Z_\t^{\a_1} \g \p_y \h\\
=
&-(\h+1)\p_x \h_m-\g \p_y \h_m-(\h+1)\p_x \eta_h Z_\t^{\a_1} \ps
-\g \p_y \eta_h Z_\t^{\a_1} \ps-\g \eta_h Z_\t^{\a_1} \h,
\end{aligned}
\end{equation*}
and
\begin{equation*}\label{3b12}
\begin{aligned}
&-(\h+1)\p_x Z_\t^{\a_1}\u-\g \p_y Z_\t^{\a_1} \u-Z_\t^{\a_1}\g \p_y(\u-e^{-y})\\
=
&-(\h+1)\p_x \u_m-\g \p_y \u_m-(\h+1)\p_x \eta_u Z_\t^{\a_1} \ps
-\g \p_y \eta_u Z_\t^{\a_1} \ps-\g \eta_u Z_\t^{\a_1} \h,
\end{aligned}
\end{equation*}
which were first observed in \cite{Liu-Xie-Yang}.
Then, we will have the following estimates:

\begin{lemma}\label{lemma34}
Let $(\vr, \u, \v, \h, \g)$ be sufficiently smooth solution, defined on $[0, T^\es]$,
to the equations \eqref{eq5}.
Under the assumption of conditions \eqref{a2} and \eqref{a1}, it holds on
\begin{equation*}\label{341}
\begin{aligned}
&\underset{0\le \t \le t}{\sup}\|(\u_m, \h_m)(\t)\|_{L^2_l(\O)}^2
+\int_0^t \|\sqrt{\es}\p_x (\u_m, \h_m)\|_{L^2_l(\O)}^2 d\t
+\int_0^t \|\p_y (\sqrt{\mu} \u_m, \sqrt{\k} \h_m)\|_{L^2_l(\O)}^2 d\t\\
\le
&C\|(\u_m, \h_m)(0)\|_{L^2_l(\O)}^2
+C\int_0^t \|(r_u, r_h)\|_{\H^{m}_{l, tan}}^4d\t
 +C_{\mu, \k, m, l} \d^{-6}(1+Q^3(t))\int_0^t \mathcal{E}_{m,l}(\t) d\t.\\
& +C_{\mu, \k, m, l}
   \d^{-6}(1+Q^3(t))\int_0^t (\|(\vr_m, \u_m, \h_m)\|_{L^2_l(\O)}^2+\|(\p_y \u, \p_y \h)\|_{\H^{m-1}_l}^2)d\t.
\end{aligned}
\end{equation*}
\end{lemma}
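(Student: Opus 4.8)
### Proof plan for Lemma \ref{lemma34}

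\textbf{Overall strategy.} The plan is to run a weighted $L^2_l$ energy estimate simultaneously on the two evolution equations \eqref{equm} (for $\u_m$) and \eqref{eqhm} (for $\h_m$), exploiting the cancellation between the magnetic coupling terms. Concretely, I would multiply \eqref{equm} by $\ya\,\u_m$ and \eqref{eqhm} by $\ya\,\h_m$, integrate over $\O$, and add. The crucial point, already singled out in the excerpt, is that after rewriting the coupling terms via the two displayed identities (expressing $-(\h+1)\p_x Z_\t^{\a_1}\h-\g\p_y Z_\t^{\a_1}\h-Z_\t^{\a_1}\g\,\p_y\h$ in terms of $\h_m$ and lower-order pieces, and symmetrically for the $\u$ equation), the leading terms $\int \ya(\h+1)\p_x\u_m\,\h_m$ and $-\int\ya(\h+1)\p_x\h_m\,\u_m$ cancel after one integration by parts in $x$ (up to a harmless term carrying $\p_x\h$), and similarly the $\g\p_y$ terms cancel up to terms with $\p_y\g=-\p_x\h$ and the weight derivative $2l\yb\g$. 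This is precisely the nonlinear cancellation mechanism of \cite{Liu-Xie-Yang}; the density weight $\r$ in front of $\p_t\u_m$ contributes only the extra factor handled by \eqref{low-bou} and by transferring $\p_t\r$, $\p_x\r$, $\p_y\r$ onto $|\u_m|^2$ (bounded using $Q(t)$).

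\textbf{Key steps in order.} First I would record the parabolic gain: integrating by parts in $y$ on $-\mu\int\p_y^2\u_m\cdot\ya\u_m$ and $-\k\int\p_y^2\h_m\cdot\ya\h_m$ produces $\tfrac{\mu}{2}\|\p_y\u_m\|_{L^2_l}^2+\tfrac\k2\|\p_y\h_m\|_{L^2_l}^2$ plus commutator-with-weight terms absorbed as in \eqref{3211}; the boundary terms vanish because $\u_m|_{y=0}=0$ and $\p_y\eta_h|_{y=0}=0$, $\ps|_{y=0}=0$ give the needed vanishing (one must check $\p_y\h_m|_{y=0}=0$ using $\p_y\h|_{y=0}=0$). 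The $\es\p_x^2$ terms give the $\es\|\p_x\u_m\|^2+\es\|\p_x\h_m\|^2$ dissipation. Second, I would estimate the transport terms $\int(\u+1-e^{-y})\p_x\u_m\cdot\ya\u_m$ etc. by integrating by parts in $x$, leaving only $\|\p_x\u\|_{L^\infty}$-type factors times $\|\u_m\|_{L^2_l}^2$, all controlled by $1+Q(t)$; the term $l\int\yb\r\v|\u_m|^2$ is bounded by $\|\v/\la y\ra\|_{L^\infty_0}\|\u_m\|_{L^2_l}^2\lesssim Q(t)\|\u_m\|_{L^2_l}^2$. Third — the genuinely new part relative to \cite{Liu-Xie-Yang} — I would bound the whole right-hand side of \eqref{equm} and \eqref{eqhm}: the commutator forcing terms $f_u,f_h,f_\psi$ via the Moser inequality \eqref{ineq-moser} as $\lesssim Q(t)(\e_{m,l}+\|\p_y(\u,\h)\|_{\H^{m-1}_l}^2)$; the terms involving $\eta_u,\eta_h$ and their derivatives $\p_x\eta_u=\tfrac{1}{\h+1}(\p_{xy}^2\u-\tfrac{\p_y\u\,\p_x\h}{\h+1})$, $\p_y\eta_u$, $\p_t\eta_u$, $\p_x^2\eta_u$, using repeatedly $\h+1\ge\d$ (hence powers $\d^{-1},\dots,\d^{-6}$ appear) together with the auxiliary estimates \eqref{b11}--\eqref{b14} that bound weighted norms of $Z_\t^{\a_1}\ps/(\h+1)$ and $\tfrac1\varphi Z_\t^{\a_1}\ps/(\h+1)$ by $\d^{-1}\|\h_m\|_{L^2_l}$; and the remainder/approximation terms $\es Z_\t^{\a_1}\p_x r_u$, $\es\r\eta_u\p_y^{-1}Z_\t^{\a_1}\p_x r_h$, which produce the $\int_0^t\|(r_u,r_h)\|_{\H^m_{l,tan}}^4\,d\t$ contribution after Cauchy--Schwarz and absorbing a fraction of the $\es$-dissipation. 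The terms $\es\p_x^2\eta_u\,Z_\t^{\a_1}\ps$ and $\mu\p_y^2(\eta_u Z_\t^{\a_1}\ps)$ require one more integration by parts (in $x$, resp. $y$) to shift a derivative onto $\u_m$ or $\p_y\u_m$, then Cauchy--Schwarz with a small fraction of the dissipation; the boundary term from the $y$-integration vanishes since $\p_y(\eta_u Z_\t^{\a_1}\ps)|_{y=0}=0$. Fourth, I would add the resulting inequality for $(\u_m,\h_m)$ to the one already obtained in Lemma \ref{lemma33} for $\vr_m$ (choosing $\d_1$ small enough that the $\d_1(\es\|\p_x\h_m\|^2+\k\|\p_y\h_m\|^2)$ term there is absorbed by the $\h_m$-dissipation produced here), integrate in time, and rewrite everything in terms of $\e_{m,l}$, $\|(\vr_m,\u_m,\h_m)\|_{L^2_l}^2$ and $\|\p_y(\u,\h)\|_{\H^{m-1}_l}^2$, which yields exactly the claimed bound.

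\textbf{Main obstacle.} The hard part will be controlling the terms in \eqref{equm}--\eqref{eqhm} that contain two $y$-derivatives landing on $Z_\t^{\a_1}\ps$ while only $\h_m$ (not $\p_y Z_\t^{\a_1}\h$ directly) is available to absorb them — i.e.\ the terms $\k\r\eta_u\p_y^2 Z_\t^{\a_1}\ps$, $\k\eta_\rho\p_y^2 Z_\t^{\a_1}\ps$, $2\k\p_y\eta_h\p_y Z_\t^{\a_1}\ps$ and their $\u_m$-analogues. The resolution is the algebraic observation underlying \eqref{equi-h}: since $\h_m=Z_\t^{\a_1}\h-\eta_h Z_\t^{\a_1}\ps$ and $\p_y Z_\t^{\a_1}\ps=Z_\t^{\a_1}\h$, one has $\p_y\h_m=\p_y Z_\t^{\a_1}\h-\p_y\eta_h\,Z_\t^{\a_1}\ps-\eta_h Z_\t^{\a_1}\h$, so $\p_y Z_\t^{\a_1}\h$ can be replaced by $\p_y\h_m$ plus lower-order terms, and the second $y$-derivative of $Z_\t^{\a_1}\ps$ is thereby traded for $\p_y\h_m$ (which is in the dissipation) plus terms multiplied by $\eta_h,\p_y\eta_h$ that are bounded in $L^\infty$ by $\d^{-1}(1+Q(t))$ via \eqref{b12}--\eqref{b14}. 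Getting all the $\d$-powers and $Q$-powers to come out as $\d^{-6}(1+Q^3(t))$ is bookkeeping, but one must be careful that the weight mismatch between $Z_2=\varphi\p_y$ and $\p_y$ (which forced $l\ge2$) does not cost an uncontrolled weight: the estimates \eqref{b11}--\eqref{b14} are designed exactly so that the $L^2_{l-1}$ norms of $Z_\t^{\a_1}\ps/(\h+1)$ convert back to $L^2_l$ norms of $\h_m$, and I would lean on those throughout.
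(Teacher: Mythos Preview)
Your overall strategy matches the paper's proof: multiply \eqref{equm} by $\ya\u_m$, multiply \eqref{eqhm} by $\ya\h_m$, add, and exploit the cancellation of the coupling terms $(\h+1)\p_x\u_m\cdot\h_m+\g\p_y\u_m\cdot\h_m$. Your treatment of the transport, weight-commutator, Moser, and $\eta$-derivative terms is also essentially what the paper does.

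However, there is a genuine gap. You never single out the term
\[
I_8\;=\;\es\int_\O (1-\r)\,\eta_u\,\p_x^2 Z_\t^{\a_1}\ps\cdot\ya\u_m\,dxdy
\]
on the right-hand side of \eqref{equm}, and this is precisely the term that forces the hypothesis \eqref{a1} and is singled out in the paper as the crux (see Remark~\ref{remark-condition}). The point is that after one integration by parts in $x$, the worst piece is
\[
I_{82}=\es\int_\O\ya\,\vr\,\eta_u\,\p_x\u_m\cdot\p_x Z_\t^{\a_1}\ps\,dxdy,
\]
and applying \eqref{b13} to $\p_x Z_\t^{\a_1}\ps/(\h+1)$ converts this to a term of size
\[
\frac{2\es\d^{-1}}{2l-1}\,\|\vr\|_{L^\infty_0}\,\|\p_y(\u-e^{-y})\|_{L^\infty_1}\,\|\p_x\u_m\|_{L^2_l}\,\|\p_x\h_m\|_{L^2_l}.
\]
This carries the \emph{same} factor $\es$ as the dissipation and links $\p_x\u_m$ to $\p_x\h_m$, so it cannot be absorbed by a small-parameter trick or by Young's inequality with a free constant. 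Only the smallness assumption $\|\vr\|_{L^\infty_0}\le\tfrac{2l-1}{2}\d^2$ together with $\|\p_y(\u-e^{-y})\|_{L^\infty_1}\le\d^{-1}$ from \eqref{a1} makes the coefficient exactly $\es$, giving $I_{82}\le\tfrac12\es\|\p_x\u_m\|_{L^2_l}^2+(\tfrac12+\d_2)\es\|\p_x\h_m\|_{L^2_l}^2$. This consumes half of each $\es$-dissipation term, which is why the final inequality for $\u_m$ alone has only $\tfrac12\es\|\p_x\u_m\|^2$ left and borrows $(\tfrac12+3\d_2)\es\|\p_x\h_m\|^2$ from the $\h_m$ estimate. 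Your proposal treats only $\es\p_x^2\eta_u\,Z_\t^{\a_1}\ps$ (derivatives on $\eta_u$, which is $I_7$), not $\es(\r-1)\eta_u\,\p_x^2 Z_\t^{\a_1}\ps$ (derivatives on $Z_\t^{\a_1}\ps$, which is $I_8$).

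Relatedly, your identified ``main obstacle'' --- the $\k\r\eta_u\,\p_y^2 Z_\t^{\a_1}\ps$ type terms --- is not the real difficulty. Since $\p_y^2 Z_\t^{\a_1}\ps=\p_y Z_\t^{\a_1}\h$, estimate \eqref{b14} converts these directly into $\d_2\k\|\p_y\h_m\|_{L^2_l}^2$ plus lower order, with $\d_2$ as small as you like; no structural constraint like \eqref{a1} is needed there. The delicate balance is entirely in $I_8$, and your plan should isolate it and invoke \eqref{a1} explicitly.
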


\begin{proof}
Multiplying the equation \eqref{equm} by $\ya \u_m$, integrating over $\O$ and
integrating by part, we find
\begin{equation}\label{342}
\begin{aligned}
&\frac{1}{2}\frac{d}{dt}\int_\O \ya \r |\u_m|^2 dxdy
+\es \int_\O \ya |\p_x \u_m|^2 dxdy+\mu \int_\O \ya |\p_y \u_m|^2 dxdy\\
&+\int_\O \ya (\h+1)\p_x \u_m \cdot \h_m dxdy
 +\int_\O \ya \g \p_y \u_m \cdot \h_m dxdy=\sum_{i=1}^9 I_i,
\end{aligned}
\end{equation}
where $I_i(i=1,...,9)$ are defined by
\begin{equation*}
\begin{aligned}
&I_1=\frac{1}{2}\int_\O \ya |\u_m|^2(\p_t \r+(\u+1-e^{-y})\p_x \r+\v \p_y \r)dxdy,\\
&I_2=l \int_\O \yb \r \v |\u_m|^2 dxdy, \quad I_3=-2 l \int_\O \yb (\mu\p_y \u_m +\g \h_m)\cdot \u_m dxdy,\\
&I_4=\int_\O (-(\h+1)\p_x \eta_h Z_\t^{\a_1} \ps
-\g \p_y \eta_h Z_\t^{\a_1} \ps-\g \eta_h Z_\t^{\a_1} \h )\cdot \ya \u_m dxdy,\\
&I_5=\int_\O[-Z_\t^{\a_1} \ps (\r \p_t+\r(\u+1-e^{-y})\p_x+\r\v \p_y)\eta_u]\cdot \ya \u_m dxdy,\\
&I_6=\int_\O(f_u-\r \eta_u f_\psi
-\k \r \eta_u \p_y^2 Z_\t^{\a_1} \ps+2\es \p_x \eta_u \p_x Z_\t^{\a_1} \ps)\cdot \ya \u_m dxdy,\\
&I_7=\int_\O(\es \p_x^2 \eta_u Z_\t^{\a_1} \ps
+\mu \p_y^2(  \eta_u Z_\t^{\a_1} \ps )\cdot \ya \u_m dxdy, \quad
I_8=\es\int_\O (1-\r )\eta_u \p_x^2 Z_\t^{\a_1}\ps \cdot \ya \u_m dxdy,\\
&I_9=\int_\O(-\es Z_\t^{\a_1} \p_x r_u+\es \rho^\es \eta_\rho \p_y^{-1} Z_\t^{\a_1} \p_x r_h)\cdot \ya \u_m dxdy.
\end{aligned}
\end{equation*}
By routine checking, we may show that
\begin{equation*}\label{343}
|I_1|+|I_2|\le C_l (1+\|(\u, \v, \p_t \vr, \p_x \vr, \p_y \vr) \|_{L^\infty_0(\O)}^2)\|\u_m\|_{L^2_l(\O)}^2.
\end{equation*}
By virtue of the Holder and Cauchy inequalities, we find
\begin{equation*}\label{344}
|I_3|\le \frac{1}{8} \mu \|\p_y \u_m\|_{L^2_l(\O)}^2
+C_{\mu, l}(1+\|\g\|_{L^\infty_{-1}(\O)}^2)(\| \u_m\|_{L^2_l(\O)}^2+\|\h_m\|_{L^2_l(\O)}^2).
\end{equation*}
Deal with the term $I_4$.
By virtue of  $\h+1\ge \d$ and estimate \eqref{b12}, we apply the H\"{o}lder inequality to get
\begin{equation}\label{345}
\begin{aligned}
&|\int_\O \g \eta_h Z_\t^{\a_1} \h  \cdot \ya \u_m dxdy|\\
\le
&\d^{-1}\|\g\|_{L^\infty_{-1}(\O)}\|\p_y \h\|_{L^\infty_1(\O)}
\|Z_\t^{\a_1} \h\|_{L^2_l(\O)}\|\u_m\|_{L^2_l(\O)}\\
\le
&C_l\d^{-2}(\|\g\|_{L^\infty_{-1}(\O)}^2+\|\p_y \h\|_{L^\infty_1(\O)}^2)
(\|\u_m\|_{L^2_l(\O)}^2+\|\h_m\|_{L^2_l(\O)}^2).
\end{aligned}
\end{equation}
Due to the fact
$(\h+1)\p_x \eta_h=\p_{xy}\h-\frac{\p_y \h \p_x \h}{\h+1}$,
we apply the estimate \eqref{b11} and H\"{o}lder inequality to get
\begin{equation}\label{346}
\begin{aligned}
&|\int_\O (\h+1)\p_x \eta_h Z_{\t}^{\a_1}\ps\cdot \ya \u_m dxdy|\\
\le
&(1+\|\h\|_{L^\infty_0(\O)})\|(\h+1)\p_x \eta_h\|_{L^\infty_1(\O)}
\|\frac{ Z_{\t}^{\a_1}\ps}{\h+1}\|_{L^2_{l-1}(\O)}\|\u_m\|_{L^2_l(\O)}\\
\le
&C_l \d^{-2}(1+Q^2(t))\|(\u_m, \h_m)\|_{L^2_l(\O)}^2.
\end{aligned}
\end{equation}
Similarly, we also have
\begin{equation*}\label{347}
|\int_\O \g \p_y \eta_h Z_\t^{\a_1} \ps \cdot \ya \u_m dxdy|
\le C_l \d^{-2}(1+Q^2(t))\|(\u_m, \h_m)\|_{L^2_l(\O)}^2.
\end{equation*}
This, along with inequalities \eqref{345} and \eqref{346}, yields directly
\begin{equation*}\label{348}
|I_4|\le C_l \d^{-2}(1+Q^2(t)) \|(\u_m, \h_m)\|_{L^2_l(\O)}^2,
\end{equation*}
Similarly, we also get that
\begin{equation*}\label{349}
|I_5|\le C_l \d^{-2}(1+Q^2(t)) \|(\u_m, \h_m)\|_{L^2_l(\O)}^2.
\end{equation*}
Deal with the term $I_6$.
By virtue of the H\"{o}lder and Cauchy inequalities, we get
\begin{equation}\label{3410}
|\int_\O (f_u-\rho^\es \eta_u f_\psi) \cdot \ya \u_m dxdy|
\le  C(\|f_u\|_{L^2_l(\O)}^2+ \| f_\psi\|_{L^2_{l-1}(\O)}^2)
+C(1+\|\eta_u\|_{L^\infty_1(\O)}^2)\|\u_m\|_{L^2_l(\O)}^2.
\end{equation}
Similar to the estimate \eqref{345}, it is easy to check that
\begin{equation}\label{3411}
\begin{aligned}
&|\k \int_\O \r \eta_u \p_y^2 Z_\t^{\a_1} \ps \cdot \ya \u_m dxdy|\\
\le
&C\k\d^{-1}\|\p_y \u\|_{L^\infty_0(\O)}\|\p_y Z_\t^{\a_1} \h\|_{L^2_l(\O)}\|\u_m\|_{L^2_l(\O)}\\
\le
&\d_2 \k \|\p_y \h_m\|_{L^2_l(\O)}^2
  +C_{\k, l}\d^{-2}(\| Z_2 \p_y \h \|_{L^\infty_1(\O)}^2+\|(\p_y \u, \p_y \h)\|_{L^\infty_0(\O)}^2)
  \|(\u_m, \h_m)\|_{L^2_l(\O)}^2,
\end{aligned}
\end{equation}
where we have used the estimate \eqref{b14} in the last inequality.
Similarly, we also have
\begin{equation*}\label{3412}
\begin{aligned}
&|2\es \int_\O  \p_x \eta_u \p_x Z_\t^{\a_1} \ps  \cdot \ya \u_m dxdy|\\
\le
&2\es (\|\p_{xy}^2 \u\|_{L^\infty_1(\O)}+\|\p_y \u\|_{L^\infty_1(\O)}\|\p_x \h\|_{L^\infty_0(\O) })
\|\frac{\p_x Z_\t^{\a_1} \ps}{\h+1}\|_{L^2_{l-1}(\O)}\|\u_m\|_{L^2_l(\O)}\\
\le
&\d_1 \es \|\p_x \h_m\|_{L^2_l(\O)}^2
  +C_l \d^{-4}(1+\|(\p_{xy}\u, \p_y \u)\|_{L^\infty_1(\O)}^4+\|\p_x \h\|_{L^\infty_0(\O)}^4)\|(\u_m, \h_m)\|_{L^2_l(\O)}^2.
\end{aligned}
\end{equation*}
This, along with inequalities \eqref{3410} and  \eqref{3411}, yields directly
\begin{equation*}\label{3413}
\begin{aligned}
|I_6|
\le
&\d_2 \k \|\p_x \h_m\|_{L^2_l(\O)}^2+\d_2 \es \|\p_y \h_m\|_{L^2_l(\O)}^2
          + C(\|f_u\|_{L^2_l(\O)}^2+ \| f_\psi\|_{L^2_{l-1}(\O)}^2)\\
&+C_{\k, l}\d^{-4}(1+Q^2(t))\|(\u_m, \h_m)\|_{L^2_l(\O)}^2.
\end{aligned}
\end{equation*}
Now, we give the estimate for the term $I_7$.
Similar to the estimates \eqref{3315} and \eqref{3320}, we can obtain
\begin{equation*}\label{3414}
|\es \int_\O \p_x^2 \eta_u Z_\t^{\a_1} \ps \cdot \ya \u_m dxdy|
\le \frac{1}{8} \es \|\p_x \u_m\|_{L^2_l(\O)}^2
  +C_l \d^{-4}(1+Q^2(t))\|(\u_m, \h_m)\|_{L^2_{l}(\O)}^2,
\end{equation*}
and
\begin{equation*}\label{3415}
|\es \int_\O \p_y^2 (\eta_u Z_\t^{\a_1} \ps)\cdot \ya \u_m dxdy|
\le \frac{1}{8} \mu \|\p_y \u_m\|_{L^2_l(\O)}^2+C_{\mu, l}\d^{-4}(1+Q^2(t))\|(\u_m, \h_m)\|_{L^2_{l}(\O)}^2,
\end{equation*}
and hence, it follows
\begin{equation*}\label{3416}
|I_7|\le
\frac{1}{8}(\es\|\p_x \u_m\|_{L^2_l(\O)}^2+\mu\|\p_y \u_m\|_{L^2_l(\O)}^2)
+C_{\mu, l}\d^{-4} (1+Q^2(t))\|(\u_m, \h_m)\|_{L^2_l(\O)}^2.
\end{equation*}
Finally, we deal with the term $I_8$. Indeed, the integration by part with respect to $x$ yields directly
\begin{equation}\label{3417}
\begin{aligned}
I_8
=&\es \int_\O \ya \p_x  \vr \eta_u \u_m \cdot \p_x Z_\t^{\a_1}\ps  dxdy\\
&+\es \int_\O \ya   \vr \eta_u \p_x \u_m \cdot \p_x Z_\t^{\a_1}\ps  dxdy\\
&+\es \int_\O \ya   \vr \p_x \eta_u  \u_m \cdot \p_x Z_\t^{\a_1}\ps  dxdy\\
=&I_{81}+I_{82}+I_{83}.
\end{aligned}
\end{equation}
By virtue of the estimate \eqref{b13}, Holder and Cauchy equalities, we find
\begin{equation}\label{3418}
\begin{aligned}
I_{81}
&\le \es \| \p_x \vr \|_{L^\infty_0(\O)}\| \p_y (\u-e^{-y}) \|_{L^\infty_1(\O)}
      \|\u_m\|_{L^2_l(\O)} \|\frac{\p_x Z^{\a_1}_\t \ps}{h+1}\|_{L^2_{l-1}(\O)}\\
&\le \d_2 \es \|\p_x \h_m\|_{L^2_l(\O)}^2
     +C_l\d^{-4}(1+\|\p_y \u\|_{L^\infty_1(\O)}^4+\|(\p_x \vr, \p_x \h)\|_{L^\infty_0(\O)}^4)
      \|(\u_m, \h_m)\|_{L^2_l(\O)}^2.
\end{aligned}
\end{equation}
Similar to the estimate \eqref{3412}, it is easy to justify
\begin{equation}\label{3419}
I_{83}\le
\d_2 \es \|\p_x \h_m\|_{L^2_l(\O)}^2+C_l \d^{-4}(1+\|(\p_{xy}^2 \u, \p_y \u)\|_{L^\infty_1(\O)}^4
     +\|\p_x \h\|_{L^\infty_0(\O)}^4)\|(\u_m,\h_m)\|_{L^2_l(\O)}^2.
\end{equation}
Using the H\"{o}lder inequality and the estimate \eqref{b13}, it follows
\begin{equation*}\label{3420}
\begin{aligned}
I_{82}
\le
& \es \|\vr \|_{L^\infty_0(\O)}\|\p_y (\u-e^{-y})\|_{L^\infty_1(\O)}
     \|\p_x \u_m\|_{L^2_l(\O)} \|\frac{\p_x Z^{\a_1}_\t \ps}{\h+1}\|_{L^2_{l-1}(\O)}\\
\le
& \frac{4\es\d^{-1}}{2l-1} \|\vr \|_{L^\infty_0(\O)}\| \p_x \h\|_{L^\infty_0(\O)} \|\p_y (\u-e^{-y})\|_{L^\infty_1(\O)}
     \|\p_x \u_m\|_{L^2_l(\O)} \|\h_m\|_{L^2_l(\O)}^2\\
     &+ \frac{2\es\d^{-1}}{2l-1} \|\vr \|_{L^\infty_0(\O)}\|\p_y (\u-e^{-y})\|_{L^\infty_1(\O)}
     \|\p_x \u_m\|_{L^2_l(\O)} \|\p_x \h_m\|_{L^2_l(\O)}\\
\le
& 2\es\| \p_x \h\|_{L^\infty_0(\O)}
     \|\p_x \u_m\|_{L^2_l(\O)} \|\h_m\|_{L^2_l(\O)}+ \es\|\p_x \u_m\|_{L^2_l(\O)} \|\p_x \h_m\|_{L^2_l(\O)},
\end{aligned}
\end{equation*}
where we have used the condition {$\|\p_y (\u-e^{-y})\|_{L^\infty_1(\O)}\le \d^{-1}$
and $\|\vr \|_{L^\infty_0(\O)} \le (l-\frac{1}{2})\d^2$} in the last inequality.
and hence, it follows
\begin{equation}\label{3421}
I_{82}\le
     (\frac{1}{2}+\d_2)\es\|\p_x \h_m\|_{L^2_l(\O)}^2
      +\frac{1}{2}\es \|\p_x \u_m\|_{L^2_l(\O)}^2
      +C \| \p_x \h\|_{L^\infty_0(\O)}^2 \|\h_m\|_{L^2_l(\O)}^2,
\end{equation}
Then, substituting the estimates \eqref{3418}, \eqref{3419} and \eqref{3421} into \eqref{3417}, we get
\begin{equation*}\label{3422}
|I_{8}|\le
      (\frac{1}{2}+3\d_2)\es\|\p_x \h_m\|_{L^2_l(\O)}^2
      +\frac{1}{2}\es \|\p_x \u_m\|_{L^2_l(\O)}^2
      +C_l \d^{-4}(1+Q^2(t))\|(\u_m, \h_m)\|_{L^2_l(\O)}^2.
\end{equation*}
Finally, integrating by part and applying the Cauchy inequality, we get
\begin{equation*}
|I_9|\le \frac{\es}{8}\|\p_x \u_m\|_{L^2_l(\O)}^2
+\|Z_\t^{\a_1}(r_u, r_h)\|_{L^2_l(\O)}^4+C_l \d^{-4}(1+Q^2(t))\|\u_m\|_{L^2_l(\O)}^2.
\end{equation*}
Then, substituting the estimates of $I_1$ through $I_9$
into the equality \eqref{342}, and integrating the resulting inequality over $[0, t]$, we get that
\begin{equation}\label{3423}
\begin{aligned}
& \|\sqrt{\vr}\u_m(t)\|_{L^2_l(\O)}^2  +\frac{\es}{2} \int_0^t \|\p_x \u_m\|_{L^2_l(\O)}^2 d\t
+\frac{\mu}{2}  \int_0^t \| \p_y \u_m\|_{L^2_l(\O)}^2 d\t\\
&+\int_0^t \int_\O \ya [(\h+1)\p_x \u_m \cdot \h_m+ \g \p_y \u_m \cdot \h_m] dxdy d\t\\
\le
&\|(\sqrt{\vr}\u_m)(0)\|_{L^2_l(\O)}^2
+(\frac{1}{2}+3\d_2) \es \int_0^t \|\p_x \h_m\|_{L^2_l(\O)}^2 d\t
+\d_2 \k\int_0^t \|\p_y \h_m\|_{L^2_l(\O)}^2 d\t\\
&+C\int_0^t \|Z_\t^{\a_1}(r_u, r_h)\|_{L^2_l(\O)}^4d\t
+C\int_0^t (\|f_u\|_{L^2_l(\O)}^2+ \| f_\psi\|_{L^2_{l-1}(\O)}^2) d\t\\
&
+C_{\mu, \k, l} \d^{-4}(1+Q^2(t))\int_0^t \|(\u_m, \h_m)\|_{L^2_l(\O)}^2 d\t.
\end{aligned}
\end{equation}
Applying the Moser type inequality \eqref{ineq-moser}, it is easy to justify
\begin{equation}\label{3424}
 \int_0^t\!  \|f_u\|_{L^2_l(\O)}^2 d\t
 \le C_{m, l}(1+Q^2(t))\! \int_0^t \!(\|(\vr, \u, \h)\|_{\H^m_l}^2\!+\|(\p_y \u, \p_y \h)\|_{\H^{m-1}_l}^2)d\t.
\end{equation}
Similarly, thanks to the equation \eqref{eqhm}, it is easy to obtain the estimate
\begin{equation}\label{3425}
\begin{aligned}
&\|\h_m (t)\|_{L^2_l(\O)}^2+\frac{3}{4}\es \int_0^t \|\p_x \h_m\|_{L^2_l(\O)}^2 d\t
+\frac{3}{4} \k\int_0^t \|\p_y \h_m\|_{L^2_l(\O)}^2 d\t\\
&-\int_0^t \int_\O \ya [(\h+1)\p_x \u_m \cdot \h_m+ \g \p_y \u_m \cdot \h_m] dxdy d\t\\
\le
&\|\h_m(0)\|_{L^2_l(\O)}^2
+C\int_0^t \| Z_\t^{\a_1}r_h\|_{L^2_l(\O)}^4 d\t
+C_{\k, m, l} \d^{-6}(1+Q^3(t))\int_0^t \mathcal{E}_{m,l}(\t) d\t\\
&+C_{\k, m, l} \d^{-6}(1+Q^3(t))\int_0^t (\|(\u_m, \h_m)\|_{L^2_l(\O)}^2+\|(\p_y \u, \p_y \h)\|_{\H^{m-1}_l}^2)d\t.
\end{aligned}
\end{equation}
Therefore, combining the estimates \eqref{3423}-\eqref{3425} with \eqref{b22},
and choosing $\d_2$ small enough, we complete the proof of lemma.
\end{proof}

Therefore, combining the estimates in Lemmas \ref{lemma33} and \ref{lemma34}
and choosing the constant $\d_1$ small enough, then we complete the proof of Proposition \ref{Tanential-estimate}.

\begin{remark}\label{remark-condition}
To deal with the term $\es\int_\O (1-\r )\eta_u \p_x^2 Z_\t^{\a_1}\ps \cdot \ya \u_m dxdy$
(i.e., the term $I_8$ on the right handside of equality \eqref{342}),
we require the assumption of condition \eqref{a1}.
In other words, the condition \eqref{a1} is not needed for the homogeneous flow($\r \equiv 1$)
since this difficult term will disappear.
\end{remark}

\subsection{Weighted $\H^{m-1}_l-$Estimates for Normal Derivative}

In this subsection, we shall provide an estimate for
$\|(\p_y \vr, \p_y \u, \p_y \h)\|_{\H^{m-1}_l}$,
which will be given as follows:

\begin{proposition}\label{Normal-estimate}
Let $(\vr, \u, \v, \h, \g)$ be sufficiently smooth solution, defined on $[0, T^\es]$,
to the equations \eqref{eq5}-\eqref{bc5}. Under the assumption of condition \eqref{a2}, it holds on
\begin{equation*}
\begin{aligned}
&\sup_{0\le \t \le t}\|(\p_y \vr, \p_y \u, \p_y \h)(\t)\|_{\H^{m-1}_l}^2
+\es \int_0^t \|\p_x(\p_y \vr, \p_y \u, \p_y \h) \|_{\H^{m-1}_l}^2 d\t\\
&+\int_0^t (\es \|\p_y^2 \vr\|_{\H^{m-1}_l}^2+\mu \|\p_y^2 \u\|_{\H^{m-1}_l}^2
            +\k \|\p_y^2 \h\|_{\H^{m-1}_l}^2) d\t\\
\le
&C\|(\p_y \vr_0, \p_y \u_0, \p_y \h_0)\|_{\H^{m-1}_l}^2
+C_{\mu, \k} t \|(\rho_0, u_{10}, h_{10})\|_{\widehat{\mathcal{B}}^m_l}
+C_{\mu, \k, m, l}\d^{-2}(1+Q^3(t))\int_0^t X_{m,l}(\t) d\t.
\end{aligned}
\end{equation*}
\end{proposition}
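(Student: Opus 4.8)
The plan is to derive the evolution equation for the normal derivative $\p_y(\vr,\u,\h)$ directly from the system \eqref{eq5}, and then apply conormal tangential operators $\z^\a$ with $|\a|\le m-1$ before carrying out the weighted $L^2_l$ energy estimate. The key structural point is that differentiating \eqref{eq5} in $y$ does \emph{not} produce a worse loss of $x$-derivative than was already present at the tangential level: the bad term that would arise is $\p_y \v \, \p_y(\cdot) = -\p_x\u\,\p_y(\cdot)$ by the divergence-free condition \eqref{eq5}$_4$, which involves only one $x$-derivative of $\u$ and is therefore controlled by $Q(t)$ (through $\|\p_x\u\|_{L^\infty}$, which is part of $\|(\u,\h)\|_{\H^{1,\infty}_{0,tan}}$) times lower-order conormal norms. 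So unlike the purely tangential estimate of Section 3.2, here \emph{no} cancellation via the stream function $\ps$ is needed; the standard energy method works, which is exactly why this estimate is placed after Proposition \ref{Tanential-estimate} rather than intertwined with it.

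Concretely, I would proceed as follows. First, set $b:=\p_y\vr$, $w:=\p_y\u$, $k:=\p_y\h$ (schematically) and write down the equations they satisfy: applying $\p_y$ to \eqref{eq5}$_1$, \eqref{eq5}$_2$, \eqref{eq5}$_3$ gives transport-diffusion equations for $b,w,k$ with source terms of the form $-(\p_y\u+\text{l.o.t.})\p_x(\cdot)$, $\p_y$-derivatives of the pressure-free nonlinearities, and the extra zeroth-order contributions coming from $e^{-y}$ and from $\p_y$ hitting the variable coefficient $\r$ in \eqref{eq5}$_2$. Note the boundary behaviour: $w|_{y=0}$ is \emph{not} zero (indeed $\p_y\u|_{y=0}\neq 0$ since $\u=u_1^\es-1+e^{-y}$), $k|_{y=0}=\p_y h^\es|_{y=0}=0$, and $b|_{y=0}=\p_y\vr|_{y=0}=0$ by \eqref{bc5}; the nonzero trace of $w$ forces a boundary term in the integration by parts for the $\mu\p_y^2$ term, which must be handled by a trace estimate (controlling $\|\p_y\u|_{y=0}\|$ via the equation \eqref{eq5}$_2$ restricted to $y=0$, where $\r\p_t\u+\cdots=\mu\p_y^2\u+\cdots$, trading the normal second derivative at the boundary for tangential quantities already estimated in $\e_{m,l}$). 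Second, apply $\z^\a$ with $|\a|\le m-1$, multiply by $\ya\z^\a w$ (and analogously for $b$, $k$), integrate over $\O$, and sum; the commutators $[\z^\a,\p_y]$ are handled exactly as in \eqref{3212}--\eqref{3215}, and the convection/nonlinear commutators exactly as in the proof of \eqref{ulc1}, producing $C_{m,l}(1+Q^2(t))\int_0^t(\|(\vr,\u,\h)\|_{\H^m_l}^2+\|\p_y(\vr,\u,\h)\|_{\H^{m-1}_l}^2)\,d\t$, which is absorbed into $\int_0^t X_{m,l}(\t)\,d\t$ since $X_{m,l}$ dominates all of these. Third, collect the diffusion terms $\es\|\p_y^2\vr\|_{\H^{m-1}_l}^2$, $\mu\|\p_y^2\u\|_{\H^{m-1}_l}^2$, $\k\|\p_y^2\h\|_{\H^{m-1}_l}^2$ with good sign on the left, use \eqref{low-bou} to replace $\sqrt{\r}$-weighted norms by plain ones, and the $\es\p_x$ terms similarly; the linear-in-$t$ term $C_{\mu,\k}\,t\,\|(\rho_0,u_{10},h_{10})\|_{\widehat{\mathcal B}^m_l}$ comes from the $\es\p_x r_\cdot$ and $\mu e^{-y}$ forcing, estimated via the bound on $\|(r_1,r_2,r_u,r_h)\|_{\H^m_l}$ stated right after \eqref{id-relation}.

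The main obstacle I anticipate is the $\d^{-2}$ factor and the appearance of $Q^3(t)$: these enter because, to close the $\p_y^2\h$ (and $\p_y^2\u$) estimate, one inevitably differentiates the quantity $\eta_h=\p_y\h/(\h+1)$ or uses the relation between $\p_y^2\h$ and lower-order data through the $\h_m$-type substitution, and the denominator $\h+1\ge\d$ (from \eqref{a2}) produces negative powers of $\d$, while the cubic $Q$ arises from products of three $L^\infty$-controlled first-order quantities (as in the $I_6$, $I_8$ estimates of Lemma \ref{lemma34}). One must be careful that the worst term $-\es\int(1-\r)\eta_u\p_x^2 Z_\t^{\a_1}\ps\cdot\ya\u_m$-analogue does not reappear here with a normal derivative on it; it does not, because at the $\p_y$-level we never need the stream function and the factor $(1-\r)$ multiplying a second $x$-derivative is absent — this is consistent with the placement of this proposition after the tangential one. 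A secondary technical point is the $L^\infty_1$ control of $\p_y\vr$ (the term $\|\p_y\vr\|_{\H^{1,\infty}_1}^2$ inside $X_{m,l}$): whenever $\p_y\vr$ appears in $L^\infty$ it must be bounded by $Q(t)$, which is legitimate since $Q(t)$ explicitly contains $\|\p_y\vr\|_{\H^{1,\infty}_1}^2$. Assembling the three estimates and using Cauchy--Schwarz to absorb all half-powers of the diffusion terms into the left-hand side completes the proof.
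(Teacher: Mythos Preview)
Your overall plan---differentiate in $y$, apply $\z^\a$ with $|\a|\le m-1$, and run a weighted energy estimate without any stream-function cancellation---is correct and matches the paper for the density piece (the paper's Lemma~\ref{lemma37} does exactly this, using $\p_y\vr|_{y=0}=0$). There are, however, two points where your proposal diverges from the paper's proof.

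\emph{The velocity boundary term.} For $\p_y\u$, the paper does \emph{not} differentiate \eqref{eq5}$_2$ in $y$. Instead it applies $\z^\a$ (with $|\a|\le m-1$) directly to \eqref{eq5}$_2$ and multiplies by $-\ya\,\p_y\z^\a\p_y\u$ (Lemma~\ref{lemma38}). Because $\z^\a\u|_{y=0}=0$, every boundary integral that arises from integrating $\p_y$ by parts vanishes automatically; the positive coefficient $\mu$ then turns the diffusion into a good term $\mu\|\p_y\z^\a\p_y\u\|_{L^2_l}^2$. Your route---first take $\p_y$, then test against $\ya\z^\a(\p_y\u)$---forces you to confront the nonzero trace $Z_\t^{\a_1}\p_y\u|_{y=0}$ and $Z_\t^{\a_1}\p_y^2\u|_{y=0}$. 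Substituting the equation at $y=0$ as you suggest gives $\mu\p_y^2\u|_{y=0}=\mu-(\h+1)\p_x\h|_{y=0}+\es\p_x r_u|_{y=0}$, but the trace of $Z_\t^{\a_1}[(\h+1)\p_x\h]$ with $|\a_1|\le m-1$ then requires control of $\|\p_y\h\|_{\H^m_{l,tan}}$, which is only available under the time integral; closing this would force a delicate coupling with the $\h$-estimate and extra absorptions. The paper's test-function trick bypasses all of this cleanly.

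\emph{The origin of $\d^{-2}(1+Q^3)$.} You attribute these factors to ``differentiating $\eta_h=\p_y\h/(\h+1)$ or the $\h_m$-type substitution'' inside the normal-derivative estimate. That is not what happens: no $\eta_h$, no $\ps$, and no $(\vr_m,\u_m,\h_m)$ appear anywhere in Lemmas~\ref{lemma37}--\ref{lemma39}. Those lemmas produce right-hand sides of the form $C_{\mu,\k,m,l}(1+Q^2(t))\int_0^t\|(\vr,\u,\h)\|_{\H^m_l}^2\,d\t$, which contains the \emph{full} $m$-th tangential piece $\|Z_\t^{\a_1}(\vr,\u,\h)\|_{L^2_l}^2$ with $|\a_1|=m$. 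Since $X_{m,l}$ carries $(\vr_m,\u_m,\h_m)$ instead, the paper invokes the equivalence \eqref{b22},
\[
\|Z_\t^{\a_1}(\vr,\u,\h)\|_{L^2_l}^2\le C_l\,\d^{-2}\bigl(1+\|\p_y(\vr,\u,\h)\|_{L^\infty_1}^2\bigr)\|(\vr_m,\u_m,\h_m)\|_{L^2_l}^2\le C_l\,\d^{-2}(1+Q(t))\,X_{m,l}(t),
\]
and the product $(1+Q^2)\cdot\d^{-2}(1+Q)$ is exactly the $\d^{-2}(1+Q^3)$ you see in the proposition. So the $\d$-loss and the cubic $Q$ are purely an artefact of converting $\|\cdot\|_{\H^m_l}$ to $X_{m,l}$, not of the normal-derivative energy estimate itself.
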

First of all, we establish the estimate for the quantity $\p_y \vr$ in $\H^{m-1}_l$ norm.
To this end, differentiating the density equation \eqref{eq5}$_1$ with respect to $y$ variable,
we get the evolution equation for $\p_y \vr$:
\begin{equation}\label{3c1}
(\p_t +(\u+1-e^{-y})\p_x +\v \p_y -\es \p_x^2-\es \p_y^2)\p_y \vr
=f_1,
\end{equation}
where the function $f_1$ is defined by
\begin{equation*}\label{3c2}
f_1:=-\es \p_y (\p_x r_1+\p_y r_2)-(\p_y \u+e^{-y})\p_x \vr+\p_x \u \p_y \vr.
\end{equation*}

\begin{lemma}\label{lemma37}
For smooth solution $(\vr, \u, \v, \h,\g)$ of the equations \eqref{eq5}-\eqref{bc5}, then it holds on
\begin{equation*}\label{371}
\begin{aligned}
&\sup_{\tau \in [0 ,t]}\|\p_y \vr(\t)\|_{\H^{m-1}_l}^2
+\es\int_0^t (\|\p_{xy} \vr\|_{\H^{m-1}_l}^2+\|\p_y^2 \vr\|_{\H^{m-1}_l}^2) d\tau\\
\le
&\|\p_y \vr_0\|_{\H^{m-1}_l}^2\!+\!\!\int_0^t\!\! \|\p_y (r_1, r_2)\|_{\H^{m-1}_l}^2 d\tau
+C_{m,l}(1+Q(t))\!\!\int_0^t\!\!(1+\|(\vr, \u)\|_{\H^m_l}^2+\|\p_y(\vr, \u)\|_{\H^{m-1}_l}^2)d\tau.
\end{aligned}
\end{equation*}
\end{lemma}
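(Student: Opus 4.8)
The plan is to run, for every conormal multi-index $\a$ with $|\a|\le m-1$, a weighted $L^2_l$ energy estimate on the equation $\eqref{3c1}$ satisfied by $\p_y\vr$, arranged so that the commutators produce at most the quantities appearing on the right-hand side --- $\|(\vr,\u)\|_{\H^m_l}$, $\|\p_y(\vr,\u)\|_{\H^{m-1}_l}$, the forcing norm $\|\p_y(r_1,r_2)\|_{\H^{m-1}_l}$, and the $L^\infty$ norms collected in $Q(t)$ (see $\eqref{Q}$) --- so that in particular neither an extra $x$-derivative nor a second normal derivative of $\vr$ survives. First I would apply $\z^\a$ to $\eqref{3c1}$, which gives an evolution equation for $\z^\a\p_y\vr$ whose left-hand side reproduces the operator $\p_t+(\u+1-e^{-y})\p_x+\v\p_y-\es\p_x^2-\es\p_y^2$ and whose right-hand side consists of $\z^\a f_1$ together with the commutators $[\z^\a,(\u+1-e^{-y})\p_x]\p_y\vr$, $[\z^\a,\v\p_y]\p_y\vr$, and $\es[\z^\a,\p_y^2]\p_y\vr$. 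Multiplying by $\y\,\z^\a\p_y\vr$ and integrating over $\O$, the transport part contributes --- after integration by parts and the divergence-free relation $\p_x\u+\p_y\v=0$, exactly as in $\eqref{3221}$ --- only the weight terms, bounded by $C(1+Q(t))\|\z^\a\p_y\vr\|_{L^2_l}^2$; the two regularizing operators yield the good dissipation $\es\|\p_x\z^\a\p_y\vr\|_{L^2_l}^2+\es\|\p_y\z^\a\p_y\vr\|_{L^2_l}^2$ together with weight cross terms and the $[\z^\a,\p_y]$ pieces, which are handled as in $\eqref{327}$ and absorbed into a fraction of the dissipation. All boundary integrals at $y=0$ vanish: when $\a_2=0$ because of the boundary condition $\p_y\vr|_{y=0}=0$ in $\eqref{bc5}$, and when $\a_2\neq0$ because $\varphi(0)=0$.

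For the forcing $\z^\a f_1$, with $f_1=-\es\p_y(\p_x r_1+\p_y r_2)-(\p_y\u+e^{-y})\p_x\vr+\p_x\u\,\p_y\vr$, I would integrate the $r_1,r_2$ contribution by parts in $y$ once (the boundary term vanishing as above) and estimate by Cauchy--Schwarz; absorbing an $\es$-small multiple of the dissipation (recall $\es\le1$) leaves the term $\int_0^t\|\p_y(r_1,r_2)\|_{\H^{m-1}_l}^2\,d\t$. For the two nonlinear terms, each is a product of a factor carrying a single derivative of $\u$ or $\vr$ --- namely $\p_y\u+e^{-y}$, $\p_x\u$, $\p_x\vr$ or $\p_y\vr$, all bounded in the relevant $L^\infty$ norm by $1+Q(t)^{1/2}$ --- and a factor $\p_x\vr$ or $\p_y\vr$; applying the Moser-type inequality $\eqref{ineq-moser}$ to $\z^\a$ of the product and distributing the derivatives bounds these by $C_{m,l}(1+Q(t))(1+\|(\vr,\u)\|_{\H^m_l}^2+\|\p_y(\vr,\u)\|_{\H^{m-1}_l}^2)$, using $\|\p_x\vr\|_{\H^{m-1}_l}\le\|\vr\|_{\H^m_l}$ and $\|\p_x\u\|_{\H^{m-1}_l}\le\|\u\|_{\H^m_l}$. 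No loss of $x$-derivative occurs here because, in contrast to the terms treated in the preceding subsections, $f_1$ carries no $\v\p_y$ structure.

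The main difficulty is the commutator $[\z^\a,\v\p_y]\p_y\vr$, which by the Leibniz rule equals $\v[\z^\a,\p_y]\p_y\vr+\sum_{\b+\ga=\a,\ \b\neq0}C_{\b,\ga}\,\z^\b\v\cdot\z^\ga\p_y(\p_y\vr)$; in the worst case $|\b|=1$, $|\ga|=m-2$ the factor $\z^\ga\p_y(\p_y\vr)$ carries two normal derivatives on $\vr$, i.e.\ a quantity of the type $\|\p_y^2\vr\|_{\H^{m-2}_l}$, which is controlled only weakly (with an $\es$-weight) and is not admissible on the right-hand side. The crucial point --- this is precisely where the non-constant density forces an extra idea compared with the constant-density situation of Lemma $\ref{lemma32}$ --- is that $\z^\b\v$ with $|\b|\ge1$ vanishes at $y=0$: either $\z^\b$ contains a $Z_2=\varphi\p_y$, so $\z^\b\v=\varphi\p_y(\cdots)$, or $\z^\b=Z_\t^{\b_1}$ commutes with $\p_y^{-1}$ and $\z^\b\v=-\p_y^{-1}Z_\t^{\b_1}\p_x\u$. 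In either case $\z^\b\v$ carries a factor $\varphi(y)$ (up to the Hardy inequality, which is why the weight $l\ge2$ is used), and this $\varphi$ converts a normal derivative into a conormal one: writing $\z^\b\v\cdot\z^\ga\p_y^2\vr=\big(\z^\b\v/\varphi\big)\cdot\varphi\,\z^\ga\p_y^2\vr$ and repeatedly using $\varphi\p_y=Z_2$ together with $\eqref{3212}$ and $\eqref{3213}$, one rewrites $\varphi\,\z^\ga\p_y^2\vr$ as $Z_2\z^\ga(\p_y\vr)$ plus terms that are conormal of order $\le m-1$ in $\p_y\vr$; hence the whole expression is controlled in $L^2_l$ by $Q$-type $L^\infty$ norms times $\|\p_y\vr\|_{\H^{m-1}_l}$, the $\v$-type and $\p_y\vr$-type factors being balanced by the Moser and Hardy inequalities and the divergence-free relation, in the manner of the treatment of $\C_{13}^\a$ and $\C_{22}^\a$ in the proof of Lemma $\ref{lemma32}$. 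The residual piece $\v[\z^\a,\p_y]\p_y\vr$ is handled the same way through $\eqref{3212}$ and $\eqref{3213}$, and the tangential commutator $[\z^\a,(\u+1-e^{-y})\p_x]\p_y\vr$ by the Moser inequality as for $\C_{11}^\a,\C_{12}^\a$. Collecting all the estimates, summing over $|\a|\le m-1$ and applying Gr\"onwall's inequality yields the assertion; I expect the verification that the $\varphi$-factor carried by $\z^\b\v$ cleanly absorbs the extra normal derivative --- so that $[\z^\a,\v\p_y]\p_y\vr$ closes in $\H^{m-1}_l$ with no $x$-loss and no $\es^{-1}$ --- to be the most delicate point.
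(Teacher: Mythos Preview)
Your approach is essentially the paper's: apply $\z^\a$ to \eqref{3c1}, run a weighted $L^2_l$ energy estimate, and treat the dangerous commutator $[\z^\a,\v\p_y]\p_y\vr$ by borrowing a factor $\varphi$ from $\v$ so that $\z^\b\v\cdot\z^\ga\p_y^2\vr=\big(\z^\b(\v/\varphi)\big)\cdot\z^\ga Z_2\p_y\vr+\text{l.o.t.}$ --- this is exactly the mechanism the paper uses in \eqref{3711}--\eqref{3716}.

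One technical point you gloss over: the diffusion commutator $\es[\z^\a,\p_y^2]\p_y\vr$ leaves, after the manipulations of \eqref{327}--\eqref{3215} applied with $f=\p_y\vr$, a residual term of size $C_{m,l}\,\es\!\int_0^t\|\p_y^2\vr\|_{\H^{m-2}_l}^2\,d\tau$. This cannot be ``absorbed into a fraction of the dissipation'' by summing over $|\a|\le m-1$, because the constant $C_{m,l}$ is not small and the dissipation itself carries the same $\es$. The paper handles this by induction on $m$: the estimate at level $m-2$ supplies precisely the control of $\es\!\int_0^t\|\p_y^2\vr\|_{\H^{m-2}_l}^2\,d\tau$ needed to close at level $m-1$. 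You should either run the induction explicitly or, equivalently, weight the levels $|\a|=0,1,\dots,m-1$ so that the lower-order dissipation dominates the commutator error. Also, no Gr\"onwall is needed at the end --- the inequality is already in time-integrated form; and for the $r_1$ piece of $f_1$ one integrates by parts in $x$ (not $y$) to land on $\|\p_y r_1\|_{\H^{m-1}_l}$ as stated.
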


\begin{proof}
We will give the proof of the estimate \eqref{371} by induction.
First of all, multiplying \eqref{3c1} by $\ya \p_y \vr$, integrating over $\O$ and
integrating by part with respect to $x$ variable, we find
\begin{equation*}\label{373}
\begin{aligned}
&\frac{1}{2}\frac{d}{dt}\int_{\O} \ya |\p_y \vr|^2 dxdy
+\es \int_{\O} \ya |\p_x \p_y \vr|^2 dxdy\\
=
&\es \int_{\O} \p_y^3 \vr \cdot \ya \p_y \vr dxdy
+l \int_{\O} \yb \v |\p_y \vr|^2 dxdy
+\int_{\O} f_1 \cdot \ya \p_y \vr dxdy.
\end{aligned}
\end{equation*}
Integrating by part and applying the boundary condition $\p_y \vr|_{y=0}=0$, we get
\begin{equation*}\label{374}
\begin{aligned}
&\es \int_\O \p_y^3 \vr \cdot \ya \p_y \vr dxdy\\
=
&\es \int_\T \p_y^2 \vr \cdot \p_y \vr|_{y=0} dx
-\es \int \p_y (\ya \p_y \vr) \cdot \p_y^2 \vr dxdy\\
=
&-\es \int_\O \ya |\p_y^2 \vr|^2 dxdy
-2l \es \int_\O \yb \p_y \vr \cdot \p_y^2 \vr dxdy\\
\le
&-\frac{1}{2}\es \int \ya |\p_y^2 \vr|^2 dxdy
+C_l \|\p_y \vr\|_{L^2_{l-1}(\Omega)}^2,
\end{aligned}
\end{equation*}
where we have used the H\"{o}lder and Cauchy inequalities in the last inequality.
Thus we get after integrating the resulting inequality over $[0, t]$ with time variable
\begin{equation*}
\begin{aligned}
& \int_{\O} \ya |\p_y \vr|^2 dxdy
+\es \int_0^t\int_{\O} \ya (|\p_x \p_y \vr|^2+|\p_y^2 \vr|^2) dxdyd\t\\
\le
&\int_{\O} \ya |\p_y \vr_0|^2 dxdy
+2l \int_0^t \int_{\O} \yb |\v| |\p_y \vr|^2 dxdyd\t\\
&
+2\int_0^t \int_{\O}  \ya |f_1| |\p_y \vr| dxdyd\t
+C_l \int_0^t \|\p_y \vr\|_{L^2_{l-1}(\Omega)}^2 d\t,
\end{aligned}
\end{equation*}
which implies directly
\begin{equation*}\label{375}
\begin{aligned}
& \int_{\O} \ya |\p_y \vr|^2 dxdy
+\es \int_0^t(\|\p_x \p_y \vr\|_{L^2_l(\O)}^2+\|\p_y^2 \vr\|_{L^2_l(\O)}^2) d\t\\
\le
&\int_{\O} \ya |\p_y \vr_0|^2 dxdy
+\int_0^t \|\p_y(r_1, r_2)\|_{L^2_l(\O)}^2 d\t
+C_l(1+Q(t)) \int_0^t  (\|\vr\|_{\H^1_l}^2+\|\p_y \vr\|_{L^2_l(\O)}^2)d\t.
\end{aligned}
\end{equation*}
Obviously, this inequality implies the estimate \eqref{371} holds on
for $m=1$. To prove the general case, assume that \eqref{371}
is proven for $k \le m-2$, we need to prove it holds on also for $k=m-1$.
Applying the operator $\z^\a (|\a|= m-1)$ to the equation \eqref{3c1}, we get
\begin{equation}\label{376}
(\p_t+(\u+1-e^{-y})\p_x +\v \p_y)\z^\a \p_y \vr
-\es \z^\a \p_x^2 \p_y \vr-\es \z^\a \p_y^3 \vr
=\z^\a f_1+\C_{41}+\C_{42},
\end{equation}
where $\C_{4i}(i=1,2)$ are defined by
$$
\C_{41}=-[\z^\a, (\u+1-e^{-y})\p_x]\p_y \vr, \quad
\C_{42}=-[\z^\a, \v \p_y]\p_y \vr.
$$
Multiplying the equation \eqref{376} by $\ya \z^\a \p_y \vr$,
integrating over $\O \times [0, t]$,
and integrating by part with  respect to $x$ variable, we find
\begin{equation}\label{377}
\begin{aligned}
& \frac{1}{2}\int_\O \ya |\z^\a \p_y \vr|^2 dxdy
+\es \int_0^t \int_\O \ya |\p_x \z^\a \p_y \vr|^2 dxdy d\tau\\
=
&\frac{1}{2}\int_\O \ya |\z^\a \p_y \vr_0|^2 dxdy+I_{21}+I_{22}+I_{23}+I_{24}+I_{25},
\end{aligned}
\end{equation}
where the term $I_{2i}(i=1,...,5)$ are defined by
\begin{equation*}
\begin{aligned}
&I_{21}=\es \int_0^t\int_\O \z^\a \p_y^3 \vr \cdot \ya \z^\a \p_y \vr dxdyd\tau,\quad
 I_{22}=\int_0^t \int_\O \z^\a f_1 \cdot \ya \z^\a \p_y \vr dxdyd\tau,\\
&I_{23}=l \int_0^t \int_\O \yb \v |\z^\a \p_y \vr|^2 dxdyd\tau,\quad
 I_{24}=\int_0^t \int_\O  \C_{41} \cdot \ya \z^\a \p_y \vr dxdyd\tau,\\
&I_{25}=\int_0^t \int_\O  \C_{42} \cdot \ya \z^\a \p_y \vr dxdyd\tau.
\end{aligned}
\end{equation*}
Similar to the estimate \eqref{3225}, we can get
\begin{equation*}\label{378}
I_{21}
\le -\frac{1}{2}\es \int_0^t \int_\O \ya |\p_y \z^\a \p_y \vr|^2 dxdyd\tau
+C_{m,l}\int_0^t (\es\|\p_y \vr\|_{\H^{m-1}_l}^2+\es\|\p_y^2 \vr\|_{\H^{m-2}_l}^2)d\tau.
\end{equation*}
It is easy to justify
\begin{equation*}\label{379}
|I_{23}|\le C_l \|\v \|_{L^\infty_0(\O)}^2 \int_0^t \|\p_y \vr\|_{\H^{m-1}_l}^2 d\tau
\end{equation*}
Applying the Moser type inequality \eqref{ineq-moser}, we conclude
\begin{equation*}\label{3710}
\begin{aligned}
|I_{22}|
\le
&\frac{1}{4}\es \int_0^t(\|\p_x \z^\a \p_y \vr\|_{L^2_l(\O)}^2+\|\p_y \z^\a \p_y \vr\|_{L^2_l(\O)}^2)d\t
+\int_0^t \|\p_y (r_1, r_2)\|_{\H^{m-1}_l}^2 d\tau\\
&+C_{m,l}(1+Q(t))\int_0^t (1+\|(\vr, \u)\|_{\H^m_l}^2+\|(\p_y \vr, \p_y \u)\|_{\H^{m-1}_l}^2) d\tau,
\end{aligned}
\end{equation*}
and
\begin{equation*}\label{3710}
|I_{24}| \le C_m(1+Q(t))\int_0^t (1+\|\u \|_{\H^m_l}^2+\|\p_y \vr\|_{\H^{m-1}_l}^2)d\tau.
\end{equation*}
Finally, we deal with the term $I_{25}$.
It follows from the H\"{o}lder inequality that
\begin{equation}\label{3711}
|I_{25}|\le \int_0^t \|\C_{42} \|_{L^2_l(\Omega)} \|\z^\a \p_y \vr\|_{L^2_l(\Omega)} d\tau.
\end{equation}
It is easy to check that
$$
[\z^\a, \v \p_y]\vr=[\z^\a, \v]\p_y^2 \vr+\v [\z^\a, \p_y]\p_y \vr.
$$
Since the coefficient $\es$
of the quantity $\p_y^2 \vr$ in \eqref{eq5}$_1$ is sufficiently small,
it is not expected to establish a estimate which is uniform in $\es$
for $\|\p_y^2 \vr\|_{L^\infty_0(\O)}$ or $\|\p_y^2 \vr\|_{\H^{m-1}_l}$.
Hence, we first write
$$
[Z_2^{\a_2}, \p_y]\p_y \vr=\sum_{\b_2 \neq 0, \b_2+\ga_2=\a_2}
   C_{\b_2, \ga_2}Z_2^{\b_2}(\frac{1}{\varphi})Z_2^{\ga_2+1}\p_y \vr,
$$
and get
\begin{equation}\label{3712}
\begin{aligned}
\int_0^t \|\v Z_2^{\b_2}(\frac{1}{\varphi})Z_2^{\ga_2+1}Z_\tau^{\a_1 }\p_y \vr\|_{L^2_l(\O)}^2 d\tau
\le
C\|\frac{\v}{\varphi}\|_{L^\infty_0(\O)}^2 \int_0^t \|\p_y \vr\|_{\H^{m-1}_l}^2 d\tau,
\end{aligned}
\end{equation}
where we have used the estimate \eqref{3213} in the last inequality.
Similarly, we have
$$
[\z^\a, \p_y] \p_y^2 \vr=\sum_{|\b+\ga|\le m-1, |\ga|\le m-2}
C_{\b, \ga, \varphi}\z^\b (\frac{\v}{\varphi})\z^\ga (Z_2 \p_y \vr)
$$
If $\b =0$, it is easy to verify
\begin{equation*}\label{3713}
\int_0^t \|\frac{\v}{\varphi}\z^\ga (Z_2 \p_y \vr)\|_{L^2_l}^2 d\tau
\le \|\frac{\v}{\varphi}\|_{L^\infty_0(\O)}^2\int_0^t \|\p_y \vr\|_{\H^{m-1}_l}^2 d\tau.
\end{equation*}
If $\b \neq 0$, the application of Moser type inequality \eqref{ineq-moser} yields directly
\begin{equation}\label{3714}
\begin{aligned}
&\int_0^t \|\z^\b (\frac{\v}{\varphi})\z^\ga (Z_2 \p_y \vr)\|_{L^2_l(\O)}^2 d\tau\\
\le
&C\|\z^{E_i} (\frac{\v}{\varphi})\|_{L^\infty_1(\Omega)}^2
 \int_0^t \|Z_2 \p_y \vr\|_{\H^{m-2}_{l-1}}^2 d\tau\\
& +C\|Z_2 \p_y \vr\|_{L^\infty_1(\Omega)}^2
 \int_0^t \|\z^{E_i} (\frac{\v}{\varphi}) \|_{\H^{m-2}_{l-1}}^2 d\tau.
\end{aligned}
\end{equation}
Using the Hardy inequality and divergence-free condition
of velocity in \eqref{eq5}$_4$, we get
\begin{equation*}\label{3715}
\begin{aligned}
\|\z^{E_i} (\frac{\v}{\varphi}) \|_{\H^{m-2}_{l-1}}^2
\le \|\frac{\v}{y} \|_{\H^{m-1}_{l}}^2
\le C_l\|\p_x \u \|_{\H^{m-1}_{l}}^2
\le C_l\| \u \|_{\H^{m}_{l}}^2,
\end{aligned}
\end{equation*}
which, together with \eqref{3714}, yields directly
\begin{equation*}\label{3716}
\begin{aligned}
\int_0^t \|\z^\b (\frac{\v}{\varphi})\z^\ga (Z_2 \p_y \vr)\|_{L^2_l(\O)}^2 d\tau
\le
C_{l} \|(\z^{E_i} (\frac{\v}{\varphi}),Z_2 \p_y \vr\|_{L^\infty_1(\Omega)}^2
 \int_0^t (\| \u \|_{\H^{m}_{l}}^2+\|\p_y \vr\|_{\H^{m-1}_l}^2) d\tau.
\end{aligned}
\end{equation*}
This and inequality \eqref{3712} give directly
\begin{equation*}\label{3717}
|I_{25}|\le C_{m,l} Q(t)\int_0^t (\| \u \|_{\H^{m}_{l}}^2+\|\p_y \vr\|_{\H^{m-1}_l}^2) d\tau.
\end{equation*}
Therefore, substituting the estimate of $I_{21}$ through $I_{25}$ into \eqref{377}
and using the induction assumption to eliminate the term $\es \int_0^t  \|\p_y^2 \vr\|_{\H^{m-2}_l}^2d\tau$,
then the proof of this lemma is completed.
\end{proof}

Next, we establish the estimate for $\|\p_y \u\|_{\H^{m-1}_l}$.
Although $\p_y \u$ does not vanish on the boundary, we can take $-\p_y^2 \u$
as the text function thanks to the coefficient $\mu>0$ in \eqref{eq5}$_2$.

\begin{lemma}\label{lemma38}
For smooth solution $(\vr, \u, \v, \h,\g)$ of the equations \eqref{eq5}-\eqref{bc5}, then it holds on
\begin{equation*}\label{381}
\begin{aligned}
&\sup_{0\le \tau \le t}\|\p_y \u(\t)\|_{\H^{m-1}_l}^2
+\int_0^t( \es\|\p_{xy} \u\|_{\H^{m-1}_l}^2+\mu \|\p_y^2 \u\|_{\H^{m-1}_l}^2) d\tau\\
&\le \|\p_y \u_0\|_{\H^{m-1}_l}^2
\!+C_\mu \!\!\int_0^t\!\! \|\p_y r_u\|_{\H^{m-1}_l}^2 d\tau
\!+\!C_{\mu, m, l}(1+Q^2(t))\!\!\int_0^t\!\! (1\!+\|(\vr, \u ,\h)\|_{\H^m_l}^2\!+\|(\p_y \u, \p_y \h)\|_{\H^{m-1}_l}^2)d\tau.
\end{aligned}
\end{equation*}
\end{lemma}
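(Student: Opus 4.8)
The plan is to derive the evolution equation for $\z^\a \p_y \u$ with $|\a| \le m-1$ from the second equation of \eqref{eq5}, and then to use $-\ya \z^\a \p_y^2 \u$ as the test function (equivalently, differentiate \eqref{eq5}$_2$ once in $y$, apply $\z^\a$, and multiply by $\ya \z^\a \p_y \u$), exactly in the spirit of Lemma \ref{lemma37} but exploiting the parabolic coefficient $\mu>0$. First I would differentiate \eqref{eq5}$_2$ in $y$, writing the result as
\begin{equation*}
\r(\p_t+(\u+1-e^{-y})\p_x+\v\p_y)\p_y\u-\es\p_x^2\p_y\u-\mu\p_y^2\p_y\u=(\h+1)\p_x\p_y\h+\g\p_y\p_y\h+f_u^{(1)},
\end{equation*}
where $f_u^{(1)}$ collects the commutator terms coming from differentiating the density and convection coefficients, the source $-\es\p_x r_u-\mu e^{-y}$, and the lower-order magnetic contributions; then apply $\z^\a$ to obtain an equation for $\z^\a\p_y\u$ with an enlarged forcing $\z^\a f_u^{(1)}+\C^{\a}$, the new commutators $\C^{\a}$ being of the same type $[\z^\a,\r\p_t]$, $[\z^\a,\r(\u+1-e^{-y})\p_x]$, $[\z^\a,\r\v\p_y]$ already handled in Lemma \ref{lemma32}.

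Next I would carry out the weighted $L^2_l$ energy estimate: multiply by $\ya\z^\a\p_y\u$, integrate over $\O$, and integrate by parts in $y$ in the term $-\mu\p_y^2(\z^\a\p_y\u)\cdot\ya\z^\a\p_y\u$. Here the key structural point is that $\p_y\u$ does \emph{not} vanish at $y=0$, so the boundary term $\mu\,\z^\a\p_y\p_y\u\cdot\ya\z^\a\p_y\u|_{y=0}$ must be controlled; as the remark before the lemma indicates, the way to avoid this is precisely to use $-\p_y^2\u$ as a test function rather than integrating by parts naively, so that the boundary contribution is either absent or reduces to a trace that can be absorbed using the equation \eqref{eq5}$_2$ evaluated at $y=0$ (where $\u|_{y=0}=\v|_{y=0}=0$, giving $\mu\p_y^2\u|_{y=0}=\es\p_x r_u|_{y=0}+\mu$, a controlled quantity). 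The commutators $[\z^\a,\p_y]$ and $[\z^\a,\p_y]\p_y$ are treated exactly as in \eqref{3210}--\eqref{3215}, costing $C_m\|\p_y\u\|_{\H^{m-1}_l}^2$ and a small multiple of $\mu\|\p_y^2\z^\a\u\|_{L^2_l}^2$. The magnetic terms $(\h+1)\p_x\p_y\h$ and $\g\p_y\p_y\h$ are moved to the $\H^{m-1}_l$ norm of $\p_y\h$ and controlled by $Q(t)$ and $\|\p_y\h\|_{\H^{m-1}_l}$ after one integration by parts (the $\p_x$ falls onto the test function, the $\p_y^2\h$ is paired against $\p_y\z^\a\p_y\u$ with a Cauchy--Schwarz split, borrowing from the good $\mu\|\p_y^2\u\|$ and $\k\|\p_y^2\h\|$ dissipation). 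Finally the forcing $\z^\a f_u^{(1)}$ is estimated by the Moser-type inequality \eqref{ineq-moser}, producing $C_{\mu,m,l}(1+Q^2(t))\int_0^t(1+\|(\vr,\u,\h)\|_{\H^m_l}^2+\|(\p_y\u,\p_y\h)\|_{\H^{m-1}_l}^2)d\tau$, and the $\p_y r_u$ contribution gives the stated $C_\mu\int_0^t\|\p_y r_u\|_{\H^{m-1}_l}^2 d\tau$ term; the argument is then closed by an induction on $|\a|$ from $m=1$ up to $m-1$, exactly paralleling Lemma \ref{lemma37}, using the lower-order case to absorb $\mu\int_0^t\|\p_y^2\u\|_{\H^{m-2}_l}^2 d\tau$ arising from the commutator terms.

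The main obstacle I expect is the boundary term at $y=0$ produced by the normal derivative $\p_y$ not being compatible with the Dirichlet-type structure: unlike $\u$, $\v$, $\p_y h_1$, the quantity $\p_y\u$ has no homogeneous boundary condition, so integrating by parts in $y$ against $-\p_y^2(\z^\a\p_y\u)$ generates $\mu\,\z^\a\p_y^2\u\cdot\ya\,\z^\a\p_y\u|_{y=0}$ and the trace of $\z^\a\p_y\u$ is genuinely nonzero when $\a_2=0$. Handling this requires either substituting the value of $\p_y^2\u$ at the boundary from the equation (trading the trace for $\es\p_x r_u+\mu$ plus lower-order terms in tangential derivatives, whose traces are controlled by the embedding $\|f\|_{L^\infty}\lesssim\|f\|_{\H^1_{l,tan}}$ built into $Q(t)$), or a careful choice of test function that makes the boundary term manifestly of lower order; this is exactly the subtlety flagged in the sentence preceding the lemma, and it is where the weight $\ya$ and the smallness of $\es$ must be used with care so that no $\es$-dependent constant survives. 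A secondary technical point is ensuring that the $\p_y^2\h$ terms coming from the Lorentz force are absorbed into the $\k$-dissipation of $h_1$ rather than left uncontrolled, which forces the Cauchy--Schwarz constants to be tuned against both $\mu$ and $\k$ simultaneously.
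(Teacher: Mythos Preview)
Your opening sentence names the right test function, but the parenthetical ``equivalently, differentiate \eqref{eq5}$_2$ once in $y$, apply $\z^\a$, and multiply by $\ya\z^\a\p_y\u$'' is \emph{not} equivalent at the boundary, and the rest of your proposal follows that wrong branch. The paper does exactly what your first clause suggests: it applies $\z^\a$ (with $|\a|\le m-1$) to the undifferentiated equation \eqref{eq5}$_2$ and multiplies by $-\ya\,\p_y\z^\a\p_y\u$ (which agrees with your $-\ya\z^\a\p_y^2\u$ up to a harmless commutator). The point you miss is that with this test function the boundary contributions vanish \emph{identically}: integrating the terms $-\r\p_t\z^\a\u$ and $\es\p_x^2\z^\a\u$ once by parts in $y$ produces the traces $\r\p_t\z^\a\u\cdot\z^\a\p_y\u\big|_{y=0}$ and $\es\p_x^2\z^\a\u\cdot\z^\a\p_y\u\big|_{y=0}$, and since $\z^\a\u|_{y=0}=0$ (either $\a_2\ge1$ so $\varphi(0)=0$, or $\a_2=0$ and the Dirichlet condition on $\u$ applies), both $\p_t\z^\a\u$ and $\p_x^2\z^\a\u$ vanish at $y=0$. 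The diffusion contribution $\mu\,\z^\a\p_y^2\u\cdot\ya\p_y\z^\a\p_y\u$ is rewritten as $\mu\ya|\p_y\z^\a\p_y\u|^2$ plus the commutator $[\z^\a,\p_y]\p_y\u$ and requires no integration by parts at all. So there is no boundary trace to substitute from the equation, and no $\es$-dependent constant to track.

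The same choice removes your secondary concern. Because \eqref{eq5}$_2$ is kept undifferentiated in $y$, the convection and magnetic terms $-\r(\u+1-e^{-y})\p_x\u-\r\v\p_y\u-\r\v e^{-y}+(\h+1)\p_x\h+\g\p_y\h$ are grouped into a source $f_2$ carrying at most one normal derivative of $\h$. Paired with the test function and estimated by Cauchy--Schwarz against the $\mu$-dissipation, $\|\z^\a f_2\|_{L^2_l}^2$ is then bounded via \eqref{ineq-moser} by $(1+Q^2(t))(\|(\vr,\u,\h)\|_{\H^m_l}^2+\|(\p_y\u,\p_y\h)\|_{\H^{m-1}_l}^2)$; no $\p_y^2\h$ ever appears, so there is nothing to borrow from the $\k$-dissipation of Lemma~\ref{lemma39}. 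Your differentiated route would generate both a nonvanishing boundary trace and a $\g\p_y^2\h$ term, and closing the latter would require $\|\p_y\h\|_{\H^m_l}$-type control that is not available at this stage of the argument.
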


\begin{proof}
First of all, multiplying the equation \eqref{eq5}$_2$ by $-\ya \p_y^2 \u$
and integrating over $\Omega$, we find
\begin{equation*}\label{383}
\begin{aligned}
&\int_\O (-\r \p_t \u +\es \p_x^2 \u+\mu \p_y^2 \u)\cdot \ya \p_y^2 \u dxdy\\
=
&\int_\O (\es \p_x r_u+\mu e^{-y})\cdot \ya \p_y^2 \u dxdy-\int_\O f_2 \cdot \ya \p_y^2 \u dxdy,
\end{aligned}
\end{equation*}
where $f_2 $ is defined by
\begin{equation*}\label{384}
f_2:=-\r (\u+1-e^{-y})\p_x \u-\r \v \p_y \u-\r \v e^{-y}+(\h+1)\p_x \h+\g \p_y \h.
\end{equation*}
Integrating by part and applying the boundary condition $\u|_{y=0}=0$, we get
\begin{equation*}\label{385}
\begin{aligned}
&\es \int_\O \p_x^2 \u \cdot \ya  \p_y^2 \u dxdy\\
=
&\es \int_{\T} \p_x^2 \u \cdot   \p_y \u|_{y=0} dx
-\es \int_\O \p_x^2 \p_y \u \cdot \ya  \p_y \u dxdy\\
&-2l\es \int_\O \p_x^2 \u \cdot \yb  \p_y \u dxdy\\
=
&\es \int_\O \ya  |\p_x\p_y \u|^2 dxdy
+2l\es \int_\O \yb \p_x \u \cdot \p_x \p_y  \u dxdy.
\end{aligned}
\end{equation*}
Similarly, we get that
\begin{equation*}\label{386}
\begin{aligned}
-\int \r \p_t \u \cdot \ya \p_y^2 \u dxdy
=
&\frac{1}{2}\frac{d}{dt}\int \ya \r |\p_y \u|^2 dxdy
-\frac{1}{2}\int \ya \p_t \r |\p_y \u|^2 dxdy\\
&+\int \ya \p_y \r \p_t \u \cdot \p_y \u dxdy
+2l \int \yb \r \p_t \u \cdot \p_y \u dxdy.
\end{aligned}
\end{equation*}
Based on the above estimates, we can conclude that
\begin{equation*}\label{387}
\begin{aligned}
&\frac{1}{2}\frac{d}{dt}\int_\O \ya \r |\p_y \u|^2 dxdy
+\es \int_\O \ya  |\p_x\p_y \u|^2 dxdy
+\mu \int_\O \ya  |\p_y^2 \u |^2 dxdy\\
=
&-\int_\O f_2 \cdot \ya \p_y^2 \u dxdy
+\frac{1}{2}\int_\O \ya \p_t \r |\p_y \u|^2 dxdy
-\int_\O \ya \p_y \r \p_t \u \cdot \p_y \u dxdy\\
&-2l \int_\O \yb \r \p_t \u \cdot \p_y \u dxdy
-2l\es \int_\O \yb \p_x \u \cdot \p_x \p_y  \u dxdy,
\end{aligned}
\end{equation*}
which, integrating over $[0, t]$, yields directly
\begin{equation*}\label{388}
\begin{aligned}
&\sup_{\t \in [0, t]}\|\sqrt{\vr}\p_y \u(\t)\|_{L^2_l(\O)}^2
+\es \int_0^t \|\p_{xy}\u\|_{L^2_l(\O)}^2 d\t
+\mu \int_0^t \|\p_{y}^2 \u\|_{L^2_l(\O)}^2 d\t\\
\le
&\|\sqrt{\vr_0}\p_y \u_0\|_{L^2_l(\O)}^2
\!+C_\mu \!\int_0^t \!\|\p_y r_u\|_{L^2_l(\O)}^2 d\t
\!+\!C_{\mu, l}(1+Q(t))\!\int_0^t\!(1+\|(\u, \h)\|_{\H^1_l(\O)}^2\!+\!\|(\p_y \u, \p_y \h)\|_{L^2_l(\O)}^2)d\t.
\end{aligned}
\end{equation*}
This implies the estimate \eqref{381} holds on for $m=1$. To prove the general case, let us assume that \eqref{381}
is proven for $k \le m-2$, we need to prove it also holds on for $k=m-1$.
Applying $\z^\a(|\a|=m-1)$ operator to the second equation of \eqref{eq5}, multiplying the resulting equation
by $-\ya \p_y \z^\a \p_y\u $ and integrating over $\Omega$, we find
\begin{equation}\label{389}
\begin{aligned}
&\int_\O(- \r \p_t \z^\a \u +\es \p_x^2 \z^\a \u+\mu \z^\a \p_y^2 \u)\cdot \ya \p_y \z^\a \p_y \u dxdy\\
&=\int_\O [\z^\a, \rho]\p_t\u \cdot \ya \p_y \z^\a \p_y \u dxdy
+\int_\O \z^\a (-f_2+\es \p_x r_u+\mu e^{-y})\cdot \ya \p_y \z^\a \p_y \u dxdy.
\end{aligned}
\end{equation}
In view of the boundary condition $\u|_{y=0}=0$ and the definition of $\varphi(y)$,
we can justify that $\z^\a \u|_{y=0}=0$.
Then, integrating by part and applying the fact
$\z^\a \u|_{y=0}=0$, one arrives at
\begin{equation*}\label{3810}
\begin{aligned}
&\es \int_\O \p_x^2 \z^\a \u \cdot \ya \p_y \z^\a \p_y \u dxdy\\
=
&\es \int_{\mathbb{T}} \p_x^2 \z^\a \u \cdot \z^\a \p_y \u|_{y=0} dx
-\es \int_\O \ya \p_y \p_x^2 \z^\a \u \cdot \z^\a \p_y \u dxdy\\
&-2l \es \int_\O \yb \p_x^2 \z^\a \u \cdot \z^\a \p_y \u dxdy\\
=
&
\es \int_\O \ya |\p_x \z^\a \p_y \u|^2dxdy
+\es \int_\O \ya [\z^\a, \p_y]\p_x^2 \u \cdot \z^\a \p_y \u dxdy\\
&+2l \es \int_\O \yb \p_x \z^\a \u \cdot \p_x \z^\a \p_y \u dxdy\\
\ge
&\frac{1}{2}\es \int_\O \ya |\p_x \z^\a \p_y \u|^2dxdy
-C(\es \| \p_x \p_y \u\|_{\H^{m-2}_l}^2+ \| \z^\a \p_x \u\|_{L^2_l(\Omega)}^2).
\end{aligned}
\end{equation*}
By virtue of the Cauchy-Schwarz inequality, it is easy to justify that
\begin{equation*}\label{3811}
\begin{aligned}
&\mu \int_\O \z^\a \p_y^2 \u \cdot \ya \p_y \z^\a \p_y \u dxdy\\
=
&\mu \int_\O \ya |\p_y \z^\a \p_y \u|^2 dxdy
+\mu \int_\O \ya [\z^\a, \p_y]\p_y \u \cdot \p_y \z^\a \p_y \u dxdy\\
\ge
&\frac{1}{2}\mu \int_\O \ya |\p_y \z^\a \p_y \u|^2 dxdy
-C\|[\z^\a, \p_y]\p_y \u\|_{L^2_l(\Omega)}^2\\
\ge
&\frac{1}{2}\mu \int_\O \ya |\p_y \z^\a \p_y \u|^2 dxdy
-C\|\p_y^2 \u\|_{\H^{m-2}_l}^2.
\end{aligned}
\end{equation*}
Integrating by part and applying the boundary condition $\z^\a \u|_{y=0}=0$, we get
\begin{equation*}\label{3812}
-\int_\O \r \p_t \z^\a \u \cdot \ya \p_y \z^\a \p_y \u dxdy
=
\frac{1}{2}\frac{d}{dt}\int_\O \ya \r |\z^\a \p_y \u|^2 dxdy
+II.
\end{equation*}
where $II$ is defined by
\begin{equation*}\label{3813}
\begin{aligned}
II=
&-\frac{1}{2} \int_\O \ya \p_t \r  |\z^\a \p_y \u|^2 dxdy
-\int_\O \ya \r  [\z^\a, \p_y]\p_t \u \cdot  \z^\a \p_y \u dxdy\\
&+\int_\O \ya \p_y \r \p_t \z^\a \u \cdot  \z^\a \p_y \u dx dy
+2l \int_\O \yb \r \p_t \z^\a \u \cdot  \z^\a \p_y \u dx dy,
\end{aligned}
\end{equation*}
which can be estimated as follows
\begin{equation*}\label{3814}
|II|\le C_l \|(\p_t \r, \p_y \r)\|_{L^\infty_0(\O)}(\|\u\|_{\H^{m}_l}^2+\|\p_y \u\|_{\H^{m-1}_l}^2).
\end{equation*}
Using the H\"{o}lder and Cauchy inequalities, it follows
\begin{equation*}\label{3815}
\begin{aligned}
&| \int_\O [\z^\a, \r ]\p_t\u \cdot \ya \p_y \z^\a \p_y \u dxdy|\\
&\le
\frac{\mu}{4}\|\ya \p_y \z^\a \p_y \u \|_{L^2_l(\Omega)}^2
+C_\mu\|[\z^\a, \r ]\p_t\u\|_{L^2_l(\Omega)}^2,
\end{aligned}
\end{equation*}
and
\begin{equation*}\label{3816}
\begin{aligned}
&|\int_\O \z^\a (f_2+\es \p_x r_u+\mu e^{-y})\cdot \ya \p_y \z^\a \p_y \u dxdy|\\
\le
&\frac{\mu}{4}\|\ya \p_y \z^\a \p_y \u \|_{L^2_l(\Omega)}^2
+C_\mu( 1+\|\z^\a r_u\|_{L^2_l(\Omega)}^2+\|\z^\a f_2\|_{L^2_l(\Omega)}^2).
\end{aligned}
\end{equation*}
Substituting the above estimates into \eqref{389},
and integrating the inequality over $[0, t]$, we get
\begin{equation*}\label{3817}
\begin{aligned}
&\int_\O \ya \r |\z^\a \p_y \u|^2 dxdy
+\int_0^t \int_\O \ya (\es|\p_x \z^\a \p_y \u|^2+\mu |\p_y \z^\a \p_y \u|^2)dxdy d\tau\\
&\le \int_\O \ya \r_0 |\z^\a \p_y \u_0|^2 dxdy
+C \es \int_0^t  \| \p_x \p_y \u\|_{\H^{m-2}_l}^2 d\tau
+C \mu \int_0^t \| \p_y^2 \u\|_{\H^{m-2}_l}^2 d\tau\\
&
+C_\mu \int_0^t \|[\z^\a, \r ]\p_t\u \|_{L^2_l(\Omega)}^2 d\tau
+C_\mu \int_0^t ( 1+\|\z^\a f_2\|_{L^2_l(\Omega)}^2+\|\z^\a r_u\|_{L^2_l(\Omega)}^2)d\tau\\
&+C_l (1+\|(\p_t \r, \p_y \r)\|_{L^\infty_0(\O)})
\int_0^t (\|\u\|_{\H^{m}_l}^2+\|\partial_y \u\|_{\H^{m-1}_l}^2)d\tau.
\end{aligned}
\end{equation*}
By virtue of the Cauchy and Morse type inequality \eqref{ineq-moser}, we get
\begin{equation*}\label{3818}
\begin{aligned}
&|\int_0^t \int_\O [\z^\a, \r ]\p_t\u \cdot \ya \p_y \z^\a \p_y \u dxdy d\tau|\\
\le
& \frac{\mu}{4}\int_0^t \|\p_y \z^\a \p_y \u\|_{L^2_l(\Omega)}^2d\tau
+C_\mu \int_0^t \|[\z^\a, \r ]\p_t\u\|_{L^2_l(\Omega)}^2d\tau\\
\le
& \frac{\mu}{4}\int_0^t \|\p_y \z^\a \p_y \u\|_{L^2_l(\Omega)}^2d\tau
+C_\mu \|(\z^{E_i}\vr, \p_t \u)\|_{L^\infty_0(\O)}^2
\int_0^t \|(\vr, \u)\|_{\H^m_l}^2 d\tau.
\end{aligned}
\end{equation*}
Similarly, by routine checking, we may show
\begin{equation*}\label{3819}
\int_0^t \|\z^\a f_2\|_{L^2_l(\Omega)}^2 d\tau
\le C(1+Q^2(t))\int_0^t (\|(\vr, \u ,\h)\|_{\H^m_l}^2
   +\|(\p_y \u, \p_y \h)\|_{\H^{m-1}_l}^2)d\tau.
\end{equation*}
Thus, it is easy to justify the following estimate for $|\a|=m-1$
\begin{equation}\label{3820}
\begin{aligned}
&\sup_{0\le \tau \le t}\int_\O \ya \r |\z^\a \p_y \u|^2 dxdy
+\int_0^t \int_\O \ya (\es|\p_x \z^\a \p_y \u|^2+\mu |\p_y \z^\a \p_y \u|^2)dxdy d\tau\\
&\le \int_\O \ya \r_0 |\z^\a \p_y \u_0|^2 dxdy
+C \es \int_0^t  \| \p_x \p_y \u\|_{\H^{m-2}_l}^2 d\tau
+C \mu \int_0^t \| \p_y^2 \u\|_{\H^{m-2}_l}^2 d\tau\\
&
+C_\mu \int_0^t \|\p_y r_u\|_{\H^{m-1}_l}^2 d\tau
+C_{\mu, m, l}(1+Q^2(t))\int_0^t (\|(\vr, \u ,\h)\|_{\H^m_l}^2
   +\|(\p_y \u, \p_y \h)\|_{\H^{m-1}_l}^2)d\tau.
\end{aligned}
\end{equation}
Since the terms $\es \int_0^t  \| \p_x \p_y \u\|_{\H^{m-2}_l}^2 d\tau$
and $\mu \int_0^t \| \p_y^2 \u\|_{\H^{m-2}_l}^2 d\tau$ in \eqref{3820}
can be obtained by induction, we complete the proof of this lemma.
\end{proof}

Similarly, we can obtain the following estimates for the quantity $\|\p_y \h\|_{\H^{m-1}_l}$.

\begin{lemma}\label{lemma39}
For smooth solution $(\vr, \u, \v, \h,\g)$ of the equations \eqref{eq5}-\eqref{bc5}, then it holds on
\begin{equation*}\label{391}
\begin{aligned}
&\sup_{0 \le \tau \le t}\|\p_y \h\|_{\H^{m-1}_l}^2
+\es \int_0^t \|\p_{xy} \h\|_{\H^{m-1}_l}^2 d\tau
+\k \int_0^t \|\p_y^2 \h\|_{\H^{m-1}_l}^2 d\tau\\
&\le \|\p_y \h_0\|_{\H^{m-1}_l}^2
 +C_\k \int_0^t \|\p_y r_h\|_{\H^{m-1}_l}^2 d\tau
 +C_{\k, m, l}(1+Q(t))\int_0^t (\|(\u, \h)\|_{\H^m_l}^2
  +\|(\p_y \u, \p_y \h)\|_{\H^{m-1}_l}^2)d\tau.
\end{aligned}
\end{equation*}
\end{lemma}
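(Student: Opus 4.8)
The plan is to repeat, with only cosmetic changes, the argument used for $\|\p_y\u\|_{\H^{m-1}_l}$ in Lemma~\ref{lemma38}, with the magnetic field $\h$ and the resistivity $\k>0$ playing the roles of $\u$ and $\mu>0$. Two structural features make the magnetic case slightly easier: there is no density factor $\r$ in front of $\p_t\h$ in \eqref{eq5}$_3$, so the lower-order terms ($II$ in Lemma~\ref{lemma38}) are simpler; and the boundary condition $\p_y\h|_{y=0}=0$, together with $\varphi(0)=0$, gives $\z^\a\p_y\h|_{y=0}=0$ for every multi-index $\a$, so all boundary contributions produced by integration by parts in $y$ vanish at once. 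I would \emph{not} differentiate \eqref{eq5}$_3$ in $y$ first (as was done for the density in Lemma~\ref{lemma37}), since that would create a term $\g\,\p_y^2(\u-e^{-y})$ whose $\z^\a$-derivative is not controlled by the quantities permitted on the right-hand side; instead I keep \eqref{eq5}$_3$ in the form
$$\p_t\h-\es\p_x^2\h-\k\p_y^2\h=f_3-\es\p_x r_h,\qquad f_3:=-(\u+1-e^{-y})\p_x\h-\v\p_y\h+(\h+1)\p_x\u+\g\,\p_y(\u-e^{-y}),$$
apply $\z^\a$ with $|\a|=m-1$, multiply by $-\ya\,\p_y\z^\a\p_y\h$, and integrate over $\O$.

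For the base case $m=1$ I would test \eqref{eq5}$_3$ with $-\ya\,\p_y^2\h$: integration by parts in $y$ (boundary terms dropping by $\p_y\h|_{y=0}=0$) produces $\tfrac12\frac{d}{dt}\|\p_y\h\|_{L^2_l}^2$ and the dissipations $\es\|\p_{xy}\h\|_{L^2_l}^2$, $\k\|\p_y^2\h\|_{L^2_l}^2$ on the left, and on the right the transport, forcing and coupling terms, all estimated by H\"older's and Cauchy's inequalities, the divergence-free relations $\v=-\p_y^{-1}\p_x\u$, $\g=-\p_y^{-1}\p_x\h$ and Hardy's inequality, exactly as in the $m=0$ cases of Lemmas~\ref{lemma31} and~\ref{lemma37}. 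For the inductive step, after applying $\z^\a$ and testing with $-\ya\,\p_y\z^\a\p_y\h$: the $-\p_t\z^\a\h$ term gives $\tfrac12\frac{d}{dt}\|\z^\a\p_y\h\|_{L^2_l}^2$ plus pieces bounded by $\|\h\|_{\H^m_l}^2+\|\p_y\h\|_{\H^{m-1}_l}^2$; the $\es\p_x^2\z^\a\h$ term gives, after integration by parts in $x$ and $y$ and the commutator identity \eqref{3212} for $[\z^\a,\p_y]$, at least $\tfrac12\es\|\p_x\z^\a\p_y\h\|_{L^2_l}^2$ minus a term of size $\es\|\p_{xy}\h\|_{\H^{m-2}_l}^2$; and $\k\z^\a\p_y^2\h=\k\big(\p_y(\z^\a\p_y\h)+[\z^\a,\p_y]\p_y\h\big)$ gives at least $\tfrac12\k\|\p_y\z^\a\p_y\h\|_{L^2_l}^2$ minus a term of size $\k\|\p_y^2\h\|_{\H^{m-2}_l}^2$. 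Both $\H^{m-2}$-level remainders are absorbed by the induction hypothesis, precisely as $\es\int_0^t\|\p_{xy}\u\|_{\H^{m-2}_l}^2$ and $\mu\int_0^t\|\p_y^2\u\|_{\H^{m-2}_l}^2$ were absorbed in \eqref{3820}; here it matters that the $\es$-independent coefficient $\k$ multiplies $\p_y^2\h$, so that $\k\|\p_y^2\h\|_{\H^{m-1}_l}^2$ genuinely appears on the left after summation over $|\a|\le m-1$.

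It remains to estimate $\int_0^t\|\z^\a f_3\|_{L^2_l}^2\,d\t$ and the contribution of $-\es\p_x r_h$; the latter, after one integration by parts in $y$ and Cauchy's inequality (using $\es\le1$), is bounded by $\tfrac14$ of the dissipation plus $C_\k\int_0^t\|\p_y r_h\|_{\H^{m-1}_l}^2\,d\t$. For $f_3$ I would invoke the Moser-type inequality \eqref{ineq-moser}: $(\u+1-e^{-y})\p_x\h$ is bilinear, with one factor estimated in $L^\infty$ by $Q^{1/2}$ (see \eqref{Q}) and the other in $\H^m_l$; $\v\p_y\h$ is treated exactly as $\C_{13}^\a$ in Lemma~\ref{lemma32}, splitting $\z^\b\v$ into the cases $\b_2=0$ and $\b_2\ge1$ and using Hardy's inequality together with $\p_y\v=-\p_x\u$ to trade the lost normal derivative of $\v$ for a tangential derivative of $\u$; and the coupling terms $(\h+1)\p_x\u$ and $\g\,\p_y(\u-e^{-y})$ are handled the same way, using Hardy's inequality and $\p_y\g=-\p_x\h$ to bound $\g$ and its conormal derivatives by those of $\p_x\h$, so that these land in $\|\u\|_{\H^m_l}^2+\|\h\|_{\H^m_l}^2+\|\p_y\u\|_{\H^{m-1}_l}^2$. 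Since every summand of $f_3$ is already controlled by conormal derivatives of $\u$ and $\h$ — in particular $\g\,\p_y(\u-e^{-y})$ leaves no residual constant once $\g=-\p_y^{-1}\p_x\h$ is used — no extra ``$1$'' is needed inside the time integral, and one gets $\int_0^t\|\z^\a f_3\|_{L^2_l}^2\,d\t\le C_{\k,m,l}(1+Q(t))\int_0^t(\|(\u,\h)\|_{\H^m_l}^2+\|(\p_y\u,\p_y\h)\|_{\H^{m-1}_l}^2)\,d\t$. Collecting the above, summing over $|\a|\le m-1$ and integrating in $t$ closes the estimate. The only genuine hazard — hence the main obstacle — is the weighted $[\z^\a,\p_y]$-commutator bookkeeping (as $Z_2=\varphi\p_y$ does not commute with $\p_y$) and verifying that the $\H^{m-2}$-level dissipative remainders it produces close under the induction; this is the same mechanism already carried out in Lemmas~\ref{lemma37} and~\ref{lemma38}.
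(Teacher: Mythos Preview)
Your proposal is correct and follows essentially the same approach as the paper, which simply states that the estimate is obtained ``similarly'' to Lemma~\ref{lemma38}. You have correctly identified the two simplifications over the velocity case (no density factor in front of $\p_t\h$, and the Neumann condition $\p_y\h|_{y=0}=0$ killing boundary terms via $\z^\a\p_y\h|_{y=0}=0$), and your observation that the absence of a constant source term like $-\mu e^{-y}$ explains why no ``$1$'' appears inside the time integral is on point.
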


Finally, we give the proof for the estimate in  Proposition \ref{Normal-estimate}.
Indeed, we recall the estimate(see \eqref{b22} in appendix \ref{appendixB}) as follows
\begin{equation*}
\|Z_\t^{\a_1}(\vr, \u, \h)\|_{L^2_l(\O)}^2
\le C_l \d^{-2}(1+\|\p_y (\vr, \u, \h)\|_{L^\infty_1(\O)}^2)\|(\vr_m, \u_m, \h_m)\|_{L^2_l(\O)}^2,
\end{equation*}
which, together with the estimates in Lemmas \ref{lemma37}, \ref{lemma38} and \ref{lemma39},
completes the proof of estimate in  Proposition \ref{Normal-estimate}.

\subsection{$L^\infty-$Estimates}

To close the estimate, we need to control the $L^\infty-$norm of $(\r, \u, \v, \h, \g)$ in $Q(t)$.
Then, we have

\begin{proposition}\label{Infinity-estimate}
Let $(\vr, \u, \v, \h, \g)$ be sufficiently smooth solution, defined on $[0, T^\es]$,
to the equations \eqref{eq5}-\eqref{bc5}, then we have the following estimates:
\begin{equation*}
Q(t)\le C(1+\|(\rho_0, u_{10}, h_{10})\|_{\overline{\mathcal{B}}^m_l}
           +t \|(\rho_0, u_{10}, h_{10})\|_{\widehat{\mathcal{B}}^m_l}+X_{m,l}^3(t)),
\end{equation*}
and
\begin{equation*}
\begin{aligned}
\|\p_y \vr(t)\|_{\H^{1,\infty}_1}^2
\le
&C(\|\p_y \vr_0\|_{\H^{1,\infty}_1}^2+\|(\vr_0, \u_0, \h_0)\|_{\H^3_0}^2)
 +\!C t \|(\rho_0, u_{10}, h_{10})\|_{\widehat{\mathcal{B}}^m_l}^2
 +\! C(1+Q(t))\!\int_0^t\! X_{m,l}^6(\t) d\t,
\end{aligned}
\end{equation*}
for $m \ge 5, l \ge 2$.
\end{proposition}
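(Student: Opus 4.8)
\textbf{Proof strategy for Proposition \ref{Infinity-estimate}.}
The plan is to obtain the $L^\infty$ bounds collected in $Q(t)$ by combining the Sobolev embedding $H^{2}_{co}\hookrightarrow L^\infty$ in the conormal setting with the equivalence $X_{m,l}(t)\sim Y_{m,l}(t)$ alluded to in Section \ref{sa}, and then control each quantity appearing in the definition \eqref{Q} individually. First I would handle the genuinely tangential pieces: for $\|Z_\t\vr\|_{L^\infty_0}$, $\|(\u,\h)\|_{\H^{1,\infty}_{0,tan}}$, $\|(\v,\g)\|_{\H^{1,\infty}_{1,tan}}$, the anisotropic Sobolev/Agmon inequality in Appendix \ref{appendixA} bounds an $L^\infty_l$ norm of a function by a product of $L^2_l$ norms of tangential derivatives and one normal derivative, so these terms are dominated by $\e_{m,l}(t)^{1/2}$ together with $\|\p_y(\cdot)\|_{\H^{m-1}_l}^{1/2}$ (using $m\ge 5$ so that enough conormal derivatives are available), which in turn is controlled by $X_{m,l}(t)$. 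For $\v$ and $\g$ one first uses the divergence-free relations $\v=-\p_y^{-1}\p_x\u$, $\g=-\p_y^{-1}\p_x\h$ and the Hardy inequality with weight $\la y\ra$ to trade $\v/\varphi$ and $\g/\varphi$ for $\p_x\u$, $\p_x\h$ in weighted norms, after which the same embedding applies.

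Next I would treat the normal-derivative $L^\infty$ quantities $\|(\p_y\vr,\p_y\u,\p_y\h,\v/\varphi)\|_{\H^{1,\infty}_1}$. For $\p_y\u$ and $\p_y\h$, the point is that $\|\p_y(\u,\h)\|_{\H^{1,\infty}_1}$ embeds into conormal Sobolev norms of $\p_y(\u,\h)$ of order $\le 3\le m-1$ plus one further normal derivative $\p_y^2(\u,\h)$; the latter is exactly what the integrated-in-time terms of $\ta_{m,l}$ (hence of $X_{m,l}$ through Propositions \ref{Tanential-estimate} and \ref{Normal-estimate}) control, though here we need the pointwise-in-time bound, so instead I would embed $\|\p_y(\u,\h)(t)\|_{\H^{1,\infty}_1}$ into $\|\p_y(\u,\h)(t)\|_{\H^{3}_l}$ directly (valid since $l\ge 2> 1$ and $m-1\ge 4$), which is part of $X_{m,l}(t)$. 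The term $\|\v/\varphi\|_{\H^{1,\infty}_1}$ is again reduced to $\p_x\u$ via Hardy and then to $X_{m,l}$. This yields the first claimed inequality $Q(t)\le C(1+\|(\rho_0,u_{10},h_{10})\|_{\overline{\mathcal B}^m_l}+t\|(\rho_0,u_{10},h_{10})\|_{\widehat{\mathcal B}^m_l}+X_{m,l}^3(t))$; the cubic power absorbs the products of several $L^\infty$ factors that arise when one of the embeddings is itself applied to a nonlinear quantity such as $\v/\varphi$ or $\eta_\rho$, and the $t\|\cdot\|_{\widehat{\mathcal B}^m_l}$ term records the contribution of the forcing $(r_1,r_2,r_u,r_h)$ and of the initial-layer corrector $e^{-y}$ through the data.

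For the second estimate, on $\|\p_y\vr(t)\|_{\H^{1,\infty}_1}$, the diffusion coefficient in front of $\p_y^2\vr$ in \eqref{eq5}$_1$ is the small parameter $\es$, so no parabolic smoothing is available and one cannot use $\p_y^2\vr$ as a test function uniformly in $\es$. Instead I would work directly with the transport-type equation \eqref{3c1} (and its $\p_x$- and $Z_2$-differentiated versions, giving evolution equations for $\p_x\p_y\vr$, $Z_2\p_y\vr$, i.e. all conormal derivatives of $\p_y\vr$ up to order $1$ in the $\H^{1,\infty}_1$ sense) and run a pointwise/$L^\infty$ characteristic estimate: along the flow of $(\u+1-e^{-y},\v)$ the weighted quantity $\la y\ra\,\z^\a\p_y\vr$ ($|\a|\le 1$) satisfies an ODE inequality $\tfrac{d}{dt}\|\cdot\|_{L^\infty}\le (\text{commutator and forcing terms in }L^\infty)$, the small $\es\p_y^2$ and $\es\p_x^2$ terms being handled by discarding the good sign or by a Gronwall-type bound after controlling $\es\p_y^2\vr$ in lower conormal norms from Lemma \ref{lemma37}. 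The right-hand side involves $\p_x\u$, $\p_y\u+e^{-y}$, $\p_x\vr$ and, crucially after differentiation, $\p_{xy}^2\vr$ and $\p_y^2\vr$ in $L^\infty$; these third-order-in-$\vr$ or second-order-in-$\u$ quantities are estimated by the conormal Sobolev embedding in terms of $X_{m,l}(t)$ (this is where the sixth power $X_{m,l}^6$ enters, since the first inequality already gives $Q\lesssim X^3$ and it reappears multiplied by another such factor), plus the $H^3_0$ norm of the initial data and the $\p_t^i$-forcing contributions $t\|(\rho_0,u_{10},h_{10})\|_{\widehat{\mathcal B}^m_l}^2$. Integrating the differential inequality in time and invoking the a priori lower bound $\h+1\ge\d$ from \eqref{a2} where needed gives the stated bound.

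\textbf{Main obstacle.} The hard part is the $\|\p_y\vr\|_{\H^{1,\infty}_1}$ estimate: because the density equation is only transported (with $O(\es)$ artificial diffusion), one must propagate an $L^\infty$ bound on up to two derivatives of $\vr$ without any gain of regularity, so the commutator terms $[\z^\a,\v\p_y]\p_y\vr$ and $[\z^\a,\p_y]\p_y^2\vr$ — which a priori cost a derivative one does not control in $L^\infty$ — have to be rearranged using $\v=-\p_y^{-1}\p_x\u$, the structure of $[Z_2^{k},\p_y]$ from \eqref{3212}--\eqref{3213}, and the Hardy inequality, exactly as in the treatment of $I_{25}$ in Lemma \ref{lemma37}, but now at the $L^\infty$ level; closing this requires carefully matching which factor is measured in $L^\infty$ (controlled by $X_{m,l}$) and which in $L^2_l$, and is the source of the high polynomial power $X_{m,l}^6$.
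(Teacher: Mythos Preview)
Your proposal contains two genuine gaps.

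\textbf{First gap: the bound on $\|\p_y(\u,\h)\|_{\H^{1,\infty}_1}$.} You propose to ``embed $\|\p_y(\u,\h)(t)\|_{\H^{1,\infty}_1}$ into $\|\p_y(\u,\h)(t)\|_{\H^{3}_l}$ directly.'' This embedding is false: the Sobolev inequality \eqref{sobolev} requires a genuine normal derivative $\p_y$, not a conormal one, and $\H^{3}_l$ contains only $Z_2^k\p_y(\u,\h)=\varphi^k\p_y^k(\p_y\u,\p_y\h)$, which vanishes at the boundary and cannot control $\p_y^2(\u,\h)$ there. You yourself note two lines earlier that one needs $\p_y^2(\u,\h)$ pointwise in time, and that the integrated-in-time dissipation is not enough; the fix you then offer does not work. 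The paper's mechanism is different: since $\mu,\k>0$, one uses the equations \eqref{eq5}$_2$--\eqref{eq5}$_3$ to solve for $\p_y^2\u$ and $\p_y^2\h$ algebraically in terms of lower-order quantities (time and tangential derivatives, first normal derivatives, and the forcings $r_u,r_h$), all of which live in $\e_{m,l}$ and $\|\p_y(\cdot)\|_{\H^{m-1}_l}$. This is exactly where the initial-data contributions $\|(\rho_0,u_{10},h_{10})\|_{\overline{\mathcal B}^m_l}$ and the cubic power $X_{m,l}^3$ originate; see \eqref{31012-1}--\eqref{31012}.

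\textbf{Second gap: the bound on $\|\p_y\vr\|_{\H^{1,\infty}_1}$.} Your plan is to run a characteristic $L^\infty$ estimate on $\z^\a\p_y\vr$, $|\a|\le 1$, and ``discard the good sign'' of $\es\p_y^2$. This works for $\a=0$ and for $\a=Z_\t^{e_i}$ (the paper does exactly this via the maximum principle, cf.\ \eqref{31104}, \eqref{31108}), but it fails for $\a=Z_2$: the commutator $[Z_2,\es\p_y^2]\p_y\vr$ produces $-2\es\varphi'\p_y^3\vr-\es\varphi''\p_y^2\vr$, and neither $\p_y^2\vr$ nor $\p_y^3\vr$ is available in $L^\infty$ uniformly in $\es$ (the density equation has only $O(\es)$ diffusion). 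Rewriting via \eqref{3212}--\eqref{3213} does not help here, because at the $L^\infty$ level there is no integration by parts to absorb the extra normal derivative. The paper circumvents this by a boundary/interior decomposition $\p_y\vr=\chi\p_y\vr+(1-\chi)\p_y\vr$: away from the boundary conormal and standard Sobolev norms coincide, giving \eqref{31109}; near the boundary one applies the weighted heat-kernel estimate of Lemma~\ref{A-heat}, which bounds $\|y\p_y F\|_{L^\infty}$ for solutions of $(\p_t-\es\p_y^2)F=G$ uniformly in $\es$, to the equation \eqref{311010}. The key structural input making the source terms in \eqref{311010} controllable is that $\v|_{y=0}=\p_y\v|_{y=0}=0$, so Taylor expansion gives $\v=O(y^2)$ and the dangerous term $\chi\v\,y\p_y^3\vr$ becomes $\sim y^3\p_y^3\vr\sim Z_2^2\p_y\vr$, which \emph{is} conormal; see \eqref{311022}. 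This heat-equation step is the missing idea in your outline.
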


We point out that the Proposition will be proved in Lemmas \ref{Lemma310}
and \ref{Lemma311}.
First of all, due to the coefficients $\mu>0$ and $\k>0$,
we can apply the Sobolev inequality, and equations \eqref{eq5} to establish
the estimates as follows.\\

\begin{lemma}\label{Lemma310}
Let $(\vr, \u, \v, \h, \g)$ be sufficiently smooth solution, defined on $[0, T^\es]$,
to the equations \eqref{eq5}-\eqref{bc5}, then we have the following estimates:
\begin{equation}\label{3101}
\|Z_\t \vr(t)\|_{L^\infty_0(\O)} +\|(\u, \h)(t)\|_{\H^{1,\infty}_{0, tan}}
\le C(\e_{3,0}^{\frac{1}{2}}(t)+\|\p_y(\vr, \u, \h)(t)\|_{\H^2_0}),
\end{equation}
\begin{equation}\label{3102}
\|(\v, \g)(t)\|_{\H^{1, \infty}_{1, tan}} \le C \e_{4,2}^\frac{1}{2}(t),
\end{equation}
\begin{equation}\label{3104}
\|(\frac{\v}{\varphi})(t)\|_{\H^{1,\infty}_1}
\le C(\e_{4,2}^{\frac{1}{2}}(t)+\|\p_y \u(t)\|_{\H^3_2}),
\end{equation}
\begin{equation}\label{3103}
\begin{aligned}
\|(\p_y \u, \p_y \h)(t)\|_{\H^{1,\infty}_1}
\le
&C(1+\|(\rho_0, u_{10}, h_{10})\|_{\overline{\mathcal{B}}^m_l}^{\frac{1}{2}}
         +t \|(\rho_0, u_{10}, h_{10})\|_{\widehat{\mathcal{B}}^m_l}^{\frac{1}{2}})\\
&        +C(\e_{5,1}^{\frac{3}{2}}(t)+\|\p_y(\vr, \u, \h)(t)\|_{\H^3_1}^3), \quad m \ge 5,\ l \ge 1.
\end{aligned}
\end{equation}

\end{lemma}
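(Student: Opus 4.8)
The plan is to derive \eqref{3101}--\eqref{3103} from the anisotropic Sobolev and Hardy inequalities of Appendix~\ref{appendixA}, supplemented, for those normal derivatives that cannot be controlled pointwise in time, by the evolution equations \eqref{eq5} themselves. The basic ingredient is the conormal embedding on $\O=\mathbb{T}\times\mathbb{R}^+$: for $\gamma\in\mathbb{R}$ and $f$ with the appropriate decay/trace behaviour at $y=0,+\infty$,
\[
\|f\|_{L^\infty_\gamma(\O)}^2\lesssim\big(\|f\|_{L^2_\gamma(\O)}+\|\p_x f\|_{L^2_\gamma(\O)}\big)\big(\|f\|_{L^2_\gamma(\O)}+\|\p_x f\|_{L^2_\gamma(\O)}+\|\p_y f\|_{L^2_\gamma(\O)}+\|\p_x\p_y f\|_{L^2_\gamma(\O)}\big),
\]
so that each weighted sup–norm costs one tangential plus one normal derivative, or two tangential derivatives; by \eqref{transf}--\eqref{bc5} the quantities $\vr,\u,\h$ (and, after the Hardy step below, $\v,\g$) satisfy the required hypotheses. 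For \eqref{3101} we apply this with $\gamma=0$ to each of $Z_\t\vr,\ \u,\ \p_t\u,\ \p_x\u$ and the analogues for $\h$ (together with $\p_t\vr,\p_x\vr$). The $L^2_0$–factors built only from $Z_\t$– and $Z_2$–derivatives are absorbed into $\e_{3,0}(t)$, those carrying a genuine $\p_y$ into $\|\p_y(\vr,\u,\h)(t)\|_{\H^2_0}^2$, and Young's inequality turns the product bound into the sum in \eqref{3101} after taking square roots.

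For \eqref{3102} and \eqref{3104} we first use the divergence–free relations in \eqref{eq5}$_4$ together with $\v|_{y=0}=\g|_{y=0}=0$ to write $\v=-\p_y^{-1}\p_x\u$ and $\g=-\p_y^{-1}\p_x\h$, and then the Hardy inequality of Appendix~\ref{appendixA} to trade every weighted $L^2$–norm of $\v$ (resp. $\g$) for the corresponding norm of $\p_x\u$ (resp. $\p_x\h$) with one more power of $\langle y\rangle$. Applying the embedding with $\gamma=1$ to $\langle y\rangle Z_\t^{\a_1}\v$ and $\langle y\rangle Z_\t^{\a_1}\g$ with $|\a_1|\le1$ then leaves only $L^2_2$–norms of $\u,\h$ carrying at most four conormal derivatives, i.e. $\e_{4,2}^{1/2}(t)$; this gives \eqref{3102}. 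Estimate \eqref{3104} follows by the same scheme after writing $\v/\varphi=(1+y)\,\v/y$ and using $\v|_{y=0}=0$ so that $\v/y$ is controlled by $\p_y\v=\p_x\u$; the extra normal derivative that the embedding then places on $\v/\varphi$ produces $\p_y^2\v=-\p_x\p_y\u$, which is the source of the additional term $\|\p_y\u(t)\|_{\H^3_2}$.

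The main obstacle is \eqref{3103}: controlling $\langle y\rangle\p_y\u$ and $\langle y\rangle\p_y\h$ through the embedding requires $\p_y^2\u$, $\p_y^2\h$ pointwise in $t$, whereas the parabolic regularisation only delivers time–integrated bounds weighted by $\mu$ and $\k$. We get around this by substituting \eqref{eq5}$_2$ and \eqref{eq5}$_3$ to express $\mu\p_y^2\u$ and $\k\p_y^2\h$ in terms of $\p_t\u,\p_t\h$, the transport terms, the magnetic coupling terms and the forcing, none of which carries more than one normal derivative; hence $\|\p_y^2\u\|_{\H^{m-2}_1}$ and $\|\p_y^2\h\|_{\H^{m-2}_1}$ are bounded by $\mu^{-1}$ (resp. $\k^{-1}$) times conormal norms of $(\vr,\u,\h)$ of order $\le m$, one normal derivative of $(\u,\h)$, and nonlinear products treated by the Moser inequality \eqref{ineq-moser} in combination with the $L^\infty$ bounds \eqref{3101}--\eqref{3104} already obtained. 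Bounding the remaining $L^\infty$–norms of $\p_y\u,\p_y\h$ appearing in those products requires one further round of the embedding–plus–equation argument, which is precisely what raises the derivative count to $m\ge5$, the weight to $l\ge1$, and the nonlinear powers to cubic, producing the terms $\e_{5,1}^{3/2}(t)$ and $\|\p_y(\vr,\u,\h)(t)\|_{\H^3_1}^3$. Finally, the forcing terms $\es\p_x r_u$, $\es\p_x r_h$, $\mu e^{-y}$, together with the definition \eqref{rdef} of $(r_1,r_2,r_u,r_h)$ as a truncated Taylor polynomial in $t$ of the initial data, account for the constant $1$, the $\|(\rho_0,u_{10},h_{10})\|_{\overline{\mathcal{B}}^m_l}^{1/2}$ contribution and the $t\,\|(\rho_0,u_{10},h_{10})\|_{\widehat{\mathcal{B}}^m_l}^{1/2}$ contribution; collecting the estimates over $|\a_1|\le1$ and over the last component $Z_2\p_y=\varphi\,\p_y^2$ of $\H^{1,\infty}_1$ closes the lemma.
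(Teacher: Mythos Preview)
Your outline is correct and follows the paper's proof closely: the anisotropic Sobolev embedding of Appendix~\ref{appendixA} together with Hardy and the divergence-free relations for \eqref{3101}, \eqref{3102}, \eqref{3104}, and the substitution of \eqref{eq5}$_2$--\eqref{eq5}$_3$ to convert the needed $L^2_1$ control of $\p_y^2\u,\p_y^2\h$ (and their $\p_x$-derivatives) into conormal norms for \eqref{3103}. One small correction: the cubic exponent in \eqref{3103} already appears in the $L^2_1$ bound for $\p_{xyy}^3\u$ through the triple products $\r(\u+1-e^{-y})\p_x\u$ combined with the $L^\infty$ bounds \eqref{3101}--\eqref{3102}, rather than from a further round requiring $\|\p_y\u\|_{L^\infty}$ itself; and the Moser inequality \eqref{ineq-moser} as stated is time-integrated, so here you simply use pointwise H\"older splits as the paper does.
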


\begin{proof}
By virtue of the Sobolev inequality \eqref{sobolev}
and the definition of $\mathcal{E}_{m, l}(t)$(see \eqref{eml}), then we get
\begin{equation*}\label{3105}
\|\u\|_{L^\infty_0(\O)}
\le C(\|\u \|_{L^2_0(\O)} +\|\p_x \u \|_{L^2_0(\O)}
+\|\p_y \u \|_{L^2_0(\O)} +\|\p_{xy} \u\|_{L^2_0(\O)})
\le C(\e_{2,0}^{\frac{1}{2}}(t) +\|\p_y \u(t)\|_{\H^1_0}).
\end{equation*}
Similarly, it is easy to justify
\begin{equation*}\label{3106}
\|\h\|_{L^\infty_0(\O)} \le C(\e_{2,0}^{\frac{1}{2}}(t)+\|\p_y \h\|_{\H^1_0}),
\quad
\|Z_\t^{e_i}(\vr, \u, \h)\|_{L^\infty_0(\O)}
\le C(\e_{3,0}^{\frac{1}{2}}(t)+\|\p_y(\vr, \u, \h)\|_{\H^2_0}).
\end{equation*}
Thus we obtain the estimate \eqref{3101}.
Using the Hardy inequality and Sobolev inequality \eqref{sobolev}, we get
\begin{equation}\label{3107}
\begin{aligned}
\|\v\|_{L^\infty_1(\O)}
&\le C(\|\v \|_{L^2_1(\O)}+\|\p_x \v \|_{L^2_1(\O)}
+\|\p_y \v \|_{L^2_1(\O)}+\|\p_{xy} \v\|_{L^2_1(\O)})\\
&\le C(\|\p_y \v \|_{L^2_2(\O)}+\|\p_{xy} \v \|_{L^2_2(\O)}
+\|\p_{x} \u \|_{L^2_1(\O)}+\|\p_{xx} \u\|_{L^2_1(\O)})\\
&\le C\e_{3,2}^\frac{1}{2}(t),
\end{aligned}
\end{equation}
where we have used the divergence-free condition in the last inequality.
Similarly, we obtain
\begin{equation}\label{3108}
\|\g\|_{L^\infty_1(\O)}^2  \le C \e_{3,2}^\frac{1}{2}(t),
\quad
\|Z_\t^{e_i}\v\|_{L^\infty_1(\O)}
+\|Z_\t^{e_i}\g\|_{L^\infty_1(\O)}
\le C\e_{4,2}^\frac{1}{2}(t).
\end{equation}
Then, the combination of estimates \eqref{3107} and \eqref{3108} yields the estimate \eqref{3102}.
By virtue of the Sobolev inequality \eqref{sobolev}, we get
\begin{equation*}\label{3109}
\|\p_y \u\|_{L^\infty_1(\O)}
\le C(\|\p_y \u\|_{L^2_1(\O)}+\|\p_{xy} \u\|_{L^2_1(\O)}
         +\|\p_{yy} \u\|_{L^2_1(\O)}+\|\p_{xyy} \u\|_{L^2_1(\O)}).
\end{equation*}
In view of the equation \eqref{eq5}$_2$ and the estimate \eqref{3108}
with the weight $0$ instead of $1$, we find
\begin{equation*}\label{31010}
\begin{aligned}
\|\p_y^2 \u\|_{L^2_1(\O)} \le
& C(1+\|\p_x r_u\|_{L^2_1(\O)} +\|\es \p_x^2 \u\|_{L^2_1(\O)})
+C(1+\|(\u, \h)\|_{L^\infty_0(\O)} )\|(\p_x \u, \p_x \h)\|_{L^2_1(\O)} \\
&+C(\|\p_t \u\|_{L^2_1(\O)} +\|(\v, \g)\|_{L^\infty_0(\O)}) (1+\|(\p_y \u, \p_y \h)\|_{L^2_1(\O)})\\
\le
&C(1+ \|\p_x r_u\|_{L^2_1(\O)} +\e_{3,1}(t)+\|\p_y(\u, \h)\|_{\H^1_1}^2).
\end{aligned}
\end{equation*}
Similarly, it is easy to justify
\begin{equation*}\label{31011}
\|\p_{xyy} \u\|_{L^2_1(\O)}
\le C(1+ \|\p_x^2 r_u\|_{L^2_1(\O)}+\e_{4,1}^{\frac{3}{2}}(t)+\|\p_y(\vr, \u, \h)\|_{\H^2_1}^3).
\end{equation*}
Thus we can conclude the estimate
\begin{equation}\label{31012-1}
\|\p_y \u\|_{L^\infty_1(\O)} \le C(1+\|(\p_x  r_u, \p_x^2 r_u)\|_{L^2_1(\O)}
+\e_{4,1}^{\frac{3}{2}}(t)+\|\p_y(\vr, \u, \h)\|_{\H^2_1}^3).
\end{equation}
By virtue of the definition of $r_u$ in \eqref{rdef}, we get for $m\ge 4, l \ge 1$ that
\begin{equation*}
\|(\p_x r_u, \p_x^2 r_u)\|_{L^2_1(\O)}
\le \|(\rho_0, u_{10}, h_{10})\|_{\mathcal{\mathcal{B}}^m_l}^{\frac{1}{2}}
+C t \|(\rho_0, u_{10}, h_{10})\|_{\widehat{\mathcal{B}}^m_l}^{\frac{1}{2}},
\end{equation*}
which, together with the estimate \eqref{31012-1}, yields directly
\begin{equation}\label{31012a}
\|\p_y \u\|_{L^\infty_1(\O)}
 \le C(1+\|(\rho_0, u_{10}, h_{10})\|_{\overline{\mathcal{B}}^m_l}^{\frac{1}{2}}
 +C t \|(\rho_0, u_{10}, h_{10})\|_{\widehat{\mathcal{B}}^m_l}^{\frac{1}{2}}
 +\e_{4,1}^{\frac{3}{2}}(t)+\|\p_y(\vr, \u, \h)\|_{\H^2_1}^3).
\end{equation}
Similarly, we get for $m \ge 4, l \ge 1$
\begin{equation}\label{31012-a}
\begin{aligned}
\|\p_y \h\|_{L^\infty_1(\O)} \le
 C(1+\|(\rho_0, u_{10}, h_{10})\|_{\overline{\mathcal{B}}^m_l}^{\frac{1}{2}}
 +C t \|(\rho_0, u_{10}, h_{10})\|_{\widehat{\mathcal{B}}^m_l}^{\frac{1}{2}}
 +\e_{4,1}(t)+\|\p_y(\u, \h)\|_{\H^2_1}^2),
\end{aligned}
\end{equation}
and for $m \ge 5, l \ge 1$
\begin{equation}\label{31012}
\begin{aligned}
&\!\|\z^{E_i} \p_y \u\|_{L^\infty_1(\O)}
  \le\! C(1+\|(\rho_0, u_{10}, h_{10})\|_{\overline{\mathcal{B}}^m_l}^{\frac{1}{2}}
       \!+C t \|(\rho_0, u_{10}, h_{10})\|_{\widehat{\mathcal{B}}^m_l}^{\frac{1}{2}}
       \!+\e_{5,1}^{\frac{3}{2}}(t)\!+\!\|\p_y(\vr, \u, \h)\|_{\H^3_1}^3),\\
& \!\|\z^{E_i} \p_y \h\|_{L^\infty_1(\O)}
  \!\le C(1+\|(\rho_0, u_{10}, h_{10})\|_{\overline{\mathcal{B}}^m_l}^{\frac{1}{2}}
  \!+C t \|(\rho_0, u_{10}, h_{10})\|_{\widehat{\mathcal{B}}^m_l}^{\frac{1}{2}}
  \!+\e_{5,1}(t)\!+\|\p_y(\u, \h)\|_{\H^3_1}^2).
\end{aligned}
\end{equation}
Then, the combination of estimates \eqref{31012a}, \eqref{31012-a} and \eqref{31012} yields \eqref{3103}.

Finally, we give the estimate for the quantity $\|\frac{\v}{\varphi}\|_{\H^{1,\infty}_1}$.
Since vertical velocity $\v$ vanishes on the boundary(i.e., $\v|_{y=0}=0$), we get
$
\la y \ra^2|\v|\le C y(\|\v\|_{L^\infty_1(\O)}+\|\p_y \v\|_{L^\infty_2(\O)}).
$
Using Sobolev inequality \eqref{sobolev} and divergence-free condition \eqref{eq5}$_4$, it follows
\begin{equation}\label{31013}
\|\frac{\v}{\varphi}\|_{L^\infty_1(\O)}
\le C(\|\v\|_{L^\infty_1(\O)} +\|\p_y \v\|_{L^\infty_2(\O)})
\le C(\e_{3,2}^{\frac{1}{2}}(t)+\|\p_y \u\|_{\H^2_2}).
\end{equation}
Similarly, we also get that
\begin{equation}\label{31014}
\|Z_\t^{e_i}(\frac{\v}{\varphi})\|_{L^\infty_1(\O)}^2 \le C(\e_{4,2}^{\frac{1}{2}}(t)+\|\p_y \u\|_{\H^3_2}).
\end{equation}
By virtue of the fact $\p_y(\frac{1}{\varphi})=-\frac{1}{y^2}$,
we get after using the divergence-free condition \eqref{eq5}$_4$
\begin{equation*}\label{31015}
\begin{aligned}
\|Z_2 (\frac{\v}{\varphi})\|_{L^\infty_1(\O)}
&\le C(\|y \p_y(\frac{1}{\varphi})\v\|_{L^\infty_0(\O)}+\|\p_y \v\|_{L^\infty_1(\O)})\\
&\le C(\|\frac{\v}{y}\|_{L^\infty_0(\O)}+\|\p_x \u\|_{L^\infty_1(\O)})
 \le C\|\p_x \u\|_{L^\infty_1(\O)},
\end{aligned}
\end{equation*}
where we have used the fact $|\v|\le y \|\p_y \v\|_{L^\infty_0(\O)}$ in the last inequality.
Using the above inequality and Sobolev inequality \eqref{sobolev}, we conclude
\begin{equation*}
\|\la y \ra Z_2 (\frac{\v}{\varphi})\|_{L^\infty_0(\O)} \le C(\e_{3, 1}^{\frac{1}{2}}(t)+\|\p_y \u\|_{\H^2_1}),
\end{equation*}
which, together with the estimates \eqref{31013} and \eqref{31014}, yields directly
\begin{equation*}
\|\frac{\v}{\varphi}\|_{\H^{1,\infty}_1}\le C(\e_{4,2}^{\frac{1}{2}}(t)+\|\p_y \u\|_{\H^3_2}).
\end{equation*}
Therefore, we complete the proof of Lemma \ref{Lemma310}.
\end{proof}

By virtue of the estimates \eqref{3101}-\eqref{3104} in Lemma \ref{Lemma310},
then $Q(t)$ can be controlled as follows:
\begin{equation*}
Q(t)\le C(1+\|(\rho_0, u_{10}, h_{10})\|_{\overline{\mathcal{B}}^m_l}
          + t \|(\rho_0, u_{10}, h_{10})\|_{\widehat{\mathcal{B}}^m_l}+X_{m,l}^3(t))
\end{equation*}
for $m \ge 5, l \ge 1$.
To close the estimate, we still need to establish the estimate for the quantity $\|\p_y \vr(t)\|_{\H^{1,\infty}_1}^2$.
Since the quantity $y\p_y^2 \vr$ does not communicate with the diffusive term, this prevents
us to apply the maximum principle of transport-diffusion equation.
We should point out that Masmoudi and Rousset \cite{Masmoudi-Rousset} have applied
some estimates of one dimensional Fokker-Planck type equation to achieve this target.
However, we can only apply the $L^\infty-$estimate of heat equation(cf. \ref{eheat} in Lemma \ref{A-heat})
to achieve this goal since $\p_y \v$ vanishes on the boundary due to $\u|_{y=0}=0$ and divergence-free condition.

\begin{lemma}\label{Lemma311}
Let $(\vr, \u, \v, \h, \g)$ be sufficiently smooth solution, defined on $[0, T^\es]$,
to the equations \eqref{eq5}. Then, it holds on
\begin{equation*}\label{31101}
\begin{aligned}
\|\p_y \vr(t)\|_{\H^{1,\infty}_1}^2
\le
&C(\|\p_y \vr_0\|_{\H^{1,\infty}_1}^2+\|(\vr_0, \u_0, \h_0)\|_{\H^3_0}^2)
 +C t \|(\rho_0, u_{10}, h_{10})\|_{\widehat{\mathcal{B}}^m_l}^2
 +C(1+Q(t))\int_0^t X_{m,l}^6(\t) d\t,
\end{aligned}
\end{equation*}
for $m \ge 5, l \ge 2$.
\end{lemma}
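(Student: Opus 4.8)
The plan is to estimate $\|\p_y\vr\|_{\H^{1,\infty}_1}$ and $\|\z^{E_i}\p_y\vr\|_{L^\infty_1}$ by viewing the evolution equation for $\p_y\vr$ (equation \eqref{3c1}, and the one obtained from it by applying $\z^{E_i}$ with $|E_i|=1$) as a forced linear transport--diffusion equation
$$
(\p_t+(\u+1-e^{-y})\p_x+\v\p_y-\es\p_x^2-\es\p_y^2)\,w=F,
$$
where $w=\p_y\vr$ or $w=\z^{E_i}\p_y\vr$, and $F$ collects $f_1$ (resp.\ $\z^{E_i}f_1$) together with the commutator terms $[\z^{E_i},\,(\u+1-e^{-y})\p_x+\v\p_y]\p_y\vr$. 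The key point, already emphasised in the text preceding the lemma, is that $\p_y\v=-\p_x\u$ does not degenerate at $y=0$ and, crucially, $\v|_{y=0}=0$ together with the Dirichlet condition $\u|_{y=0}=0$ means the transport coefficient $\v\p_y$ acts nicely near the boundary; this lets us apply the $L^\infty$ maximum-principle-type estimate for the heat/transport--diffusion equation recorded as \eqref{eheat} in Lemma \ref{A-heat}. Concretely, I would multiply $w$ by the weight $\la y\ra$ and derive an equation for $\la y\ra w$, absorbing the extra terms produced by commuting $\la y\ra$ past $\v\p_y$ and $\es\p_y^2$ into the forcing (these produce factors like $\p_y\v=-\p_x\u$ and lower-order $\es\p_y w$ terms, all controllable).

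First I would set up the weighted quantities $\la y\ra\p_y\vr$ and $\la y\ra\z^{E_i}\p_y\vr$ and write down their transport--diffusion equations, carefully identifying the full right-hand side. Then I would bound the $L^\infty_0$ norm of each forcing term: $f_1=-\es\p_y(\p_x r_1+\p_y r_2)-(\p_y\u+e^{-y})\p_x\vr+\p_x\u\,\p_y\vr$ and its first $\z$-derivative are estimated using the already-established $L^\infty$ controls from Lemma \ref{Lemma310} (for $\p_y\u$, $\p_x\u$, $\v$, etc.) and the Sobolev embedding that converts $\H^m_l$ and $\H^{m-1}_l$ norms of $\vr,\u,\h$ into $L^\infty$ bounds — these are exactly the quantities packaged into $X_{m,l}(t)$ and $Q(t)$. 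The commutator terms are handled the same way: $[\z^{E_i},\v\p_y]\p_y\vr$ and $[\z^{E_i},(\u+1-e^{-y})\p_x]\p_y\vr$ expand into products of $\z^{E_i}$ of a coefficient times $\p_y(\p_y\vr)$ or $\p_x(\p_y\vr)$, and those are controlled in $L^\infty$ via $X_{m,l}$ provided $m\ge 5$ (so that two more conormal derivatives plus Sobolev embedding fit under $m$). The initial-data contribution gives the $\|\p_y\vr_0\|_{\H^{1,\infty}_1}^2$ term, and the lower-order $\|(\vr_0,\u_0,\h_0)\|_{\H^3_0}^2$ term appears because to start the heat-estimate one needs the $t=0$ trace of $w$ measured in a slightly stronger norm, which is converted through the equation into data norms; the $t\|(\rho_0,u_{10},h_{10})\|_{\widehat{\mathcal B}^m_l}^2$ term comes from the $r_i$ source terms via their definition \eqref{rdef}.

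Assembling, the heat-type estimate \eqref{eheat} yields
$$
\|w(t)\|_{L^\infty_0}\le C\|w(0)\|_{L^\infty_0}+C\int_0^t\|F(\t)\|_{L^\infty_0}\,d\t,
$$
and plugging in the bounds $\|F\|_{L^\infty_0}\le C(1+Q(t))X_{m,l}^{?}(t)+\es$-source$\le C(1+Q(t))X_{m,l}^6(t)+Ct\|\cdots\|_{\widehat{\mathcal B}^m_l}^2$ (the power $6$ being generous — it accommodates products such as $\|\p_y\u\|_{L^\infty_1}\|\p_y\vr\|_{\H^{1,\infty}_1}$ where each factor is already an $X_{m,l}$-power up to cubes), after summing over $w\in\{\p_y\vr,\z^{E_i}\p_y\vr\}$ gives precisely the claimed estimate. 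The main obstacle I anticipate is the handling of the $-\es\p_y^2$ diffusion together with the weight: when one commutes $\la y\ra$ (or worse, a conormal derivative) past $\es\p_y^2$, one generates terms like $\es\p_y w$ and $\es w$ which are \emph{not} small uniformly times something bounded — but since the diffusion coefficient $\es\le 1$ multiplies them, and the heat estimate \eqref{eheat} is designed precisely to handle a fixed positive diffusion coefficient, one must be careful to keep the $\es\p_y^2$ term on the left as the good diffusion and treat only genuinely lower-order remainders as forcing; getting the bookkeeping right so that no $\es^{-1}$ factor sneaks in (which would destroy uniformity) is the delicate step. A secondary technical point is verifying that $\z^{E_i}$ of the transport coefficients ($\u$, $\v$) are bounded in $L^\infty$ with the correct weights — this is where the restriction $m\ge5$, $l\ge2$ is genuinely used, via Lemma \ref{Lemma310} and the conormal Sobolev embedding.
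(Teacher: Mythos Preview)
Your approach works for the tangential pieces $\|\p_y\vr\|_{L^\infty_1}$ and $\|Z_\t^{e_i}\p_y\vr\|_{L^\infty_1}$ --- indeed the paper uses exactly the maximum principle for the transport--diffusion equation there. The gap is in the $Z_2$ component, i.e.\ controlling $\|Z_2\p_y\vr\|_{L^\infty_1}\sim\|y\p_y^2\vr\|_{L^\infty_0}$. If you apply $Z_2$ to equation \eqref{3c1} as you propose, the commutator with the diffusion is
\[
\es[\p_y^2,Z_2]\p_y\vr=\es\bigl(\varphi''\p_y^2\vr+2\varphi'\p_y^3\vr\bigr),
\]
which is \emph{not} of the benign form ``$\es\p_y w+\es w$'' you anticipate: expressed in terms of $w=Z_2\p_y\vr=\varphi\p_y^2\vr$ one has $2\es\varphi'\p_y^3\vr=2\es\varphi'(\p_y w)/\varphi-2\es(\varphi')^2w/\varphi^2$, with coefficients $\sim 1/y$ and $\sim 1/y^2$ near the boundary. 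So your forcing is unbounded in $L^\infty$; equivalently, you are trying to control a third normal derivative $\p_y^3\vr$ in $L^\infty$ uniformly in $\es$, which is hopeless here.

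The paper avoids this commutator altogether. It splits $\p_y\vr=\chi\p_y\vr+(1-\chi)\p_y\vr$ with $\chi$ a cutoff supported near $y=0$; the interior piece is handled by ordinary Sobolev embedding. For $\varrho^b=\chi\p_y\vr$ one writes $(\p_t-\es\p_y^2)\varrho^b=G$, moving the tangential diffusion \emph{and} the transport to the right-hand side, and then applies Lemma~\ref{A-heat}. That lemma is not a maximum principle for $w$ but a heat-kernel estimate that delivers $\|y\p_y F\|_{L^\infty}$ in terms of $\|G\|_{L^\infty}+\|y\p_y G\|_{L^\infty}$ \emph{without} differentiating the equation in $y$. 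The dangerous piece of $y\p_y G$ is $\chi\v\,y\p_y^3\vr$, and here the observation you half-stated becomes decisive: since $\v|_{y=0}=0$ \emph{and} $\p_y\v|_{y=0}=-\p_x\u|_{y=0}=0$, Taylor's formula gives $\v=O(y^2)$ near the boundary, so $\chi\v\,y\p_y^3\vr\sim\chi y^3\p_y^3\vr\sim\varphi(y)Z_2^2\p_y\vr$, which is controlled in $L^\infty$ by Sobolev from the $L^2$ energy. Your proposal misses both the need to bypass the $Z_2$--diffusion commutator and the specific structure of Lemma~\ref{A-heat} that makes this possible.
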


\begin{proof}
By virtue of the Sobolev inequality \eqref{sobolev}, it is easy to justify that
\begin{equation}\label{31102}
\begin{aligned}
\|\p_y \vr\|_{\H^{1,\infty}_1}^2
\le
&\|\p_y \vr\|_{L^\infty_0(\O)}^2+\|Z_2 \vr\|_{L^\infty_1(\O)}^2
+\|Z_\t^{e_i}\p_y \vr\|_{L^\infty_0(\O)}^2+\|y\p_{yy}\vr\|_{L^\infty_0(\O)}^2\\
\le
&C(\|(\p_y \vr, Z_\t^{e_i}\p_y \vr, y\p_{yy}\vr)\|_{L^\infty_0(\O)}^2
   +\e_{3,1}(t)+\|\p_y \vr(t)\|_{\H^3_1}^2).
\end{aligned}
\end{equation}

First of all, since the quantity $\p_y \vr$ satisfies the evolution equation
\eqref{3c1}, we may apply the maximum principle of transport-diffusion equation
\eqref{3c1} to get
\begin{equation*}\label{31103}
\|\p_y \vr\|_{L^\infty_0(\O)}
\le \|\p_y \vr_0\|_{L^\infty_0(\O)}+\int_0^t \|f_1\|_{L^\infty_0(\O)}d\t,
\end{equation*}
where $f_1$ is defined in \eqref{3c2}.
Thus we apply the Cauchy-Schwarz inequality to get
\begin{equation}\label{31104}
\begin{aligned}
\|\p_y \vr\|_{L^\infty_0(\O)}^2
\le
& 2\|\p_y \vr_0\|_{L^\infty_0(\O)}^2+2t \int_0^t \|f_1\|_{L^\infty_0(\O)}^2d\t\\
\le
&2\|\p_y \vr_0\|_{L^\infty_0(\O)}^2+2 t\int_0^t \|\p_y(\p_x r_1, \p_y r_2)\|_{L^\infty_0(\O)}^2 d\t\\
&+Ct\int_0^t (1+\e_{4,0}^2(\t)+\|\p_y(\vr, \u, \h)\|_{\H^2_0}^4+\|\p_y \vr\|_{L^\infty_0(\O)}^4) d\t.
\end{aligned}
\end{equation}

Next, Applying the tangential differential derivatives $Z_\t^{e_i}(i=1,2, e_1=(1, 0), e_2=(0, 1))$
on the evolution equation \eqref{3c1}, we get the evolution for $Z_\t^{e_i}\p_y \vr$:
\begin{equation*}\label{31105}
(\p_t +(\u+1-e^{-y})\p_x +\v \p_y -\es \p_x^2-\es \p_y^2)Z_\t^{e_i}\p_y \vr
=Z_\t^{e_i}f_1-Z_\t^{e_i}\u \p_{xy}\vr-Z_\t^{e_i} \v \p_y^2 \vr,
\end{equation*}
and hence it follows from the maximum principle and Cauchy-Schwarz inequality that
\begin{equation*}\label{31106}
\|Z_\t^{e_i}\p_y \vr\|_{L^\infty_0(\O)}^2 \le 2\|Z_\t^{e_i}\p_y \vr_0\|_{L^\infty_0(\O)}^2
+2t\int_0^t \|(Z_\t^{e_i} f_1, Z_\t^{e_i}\u \p_{xy}\vr, Z_\t^{e_i} \v \p_y^2 \vr)\|_{L^\infty_0(\O)}^2 d\t.
\end{equation*}
Since the vertical velocity $\v$ vanishes on the boundary(i.e., $\v|_{y=0}=0$), we conclude
\begin{equation*}\label{31107}
\begin{aligned}
&\|Z_\t^{e_i}\u \p_{xy}\vr\|_{L^\infty_0(\O)}^2+\|Z_\t^{e_i} \v \p_y^2 \vr\|_{L^\infty_0(\O)}^2\\
\le
&\|Z_\t^{e_i}\u\|_{L^\infty_0(\O)}^2 \|\p_{xy}\vr\|_{L^\infty_0(\O)}^2
    +\|\frac{Z_\t^{e_i} \v}{\varphi}\|_{L^\infty_0(\O)}^2\|Z_2 \p_y \vr\|_{L^\infty_0(\O)}^2\\
\le
&C(\e_{4,1}^2(t)+\|\p_y \u\|_{\H^3_1}^4+\|\p_y \vr\|_{\H^{1,\infty}_0}^4).
\end{aligned}
\end{equation*}
Thus we obtain the estimate
\begin{equation}\label{31108}
\begin{aligned}
\|Z_\t^{e_i}\p_y \vr\|_{L^\infty_0(\O)}^2
\le
&2\|Z_\t^{e_i}\p_y \vr_0\|_{L^\infty_0(\O)}^2
 +2t\int_0^t(\|Z_\t^{e_i} \p_y(\p_x r_1, \p_y r_2)\|_{L^\infty_0(\O)}^2+\|r_u\|_{\H^3_0}^4)d\t\\
&+C t\int_0^t (1+\e_{5,1}^2(\t)+\|\p_y(\vr, \u, \h)\|_{\H^3_1}^4+\|\p_y \vr\|_{\H^{1,\infty}_0}^4)d\t.
\end{aligned}
\end{equation}

Finally, we deal with the term $\|y \p_y^2 \vr\|_{L^\infty_0(\O)}$.
The main difficulty is the estimate of $y \p_y^2 \vr$,
since the communicator of this quantity with the Laplacian involves
two derivatives in the normal variable.
Let $\chi(y)$ be a smooth compactly supported function which takes
the value one in the vicinity of $0$ and is supported in $[0, 1]$,
and hence, we get
\begin{equation*}
\p_y \vr=\chi(y)\p_y \vr+(1-\chi(y))\p_y \vr \triangleq \varrho^b+\varrho^{int},
\end{equation*}
where $\varrho^b$ is compactly supported in $y$ and $\varrho^{int}$ is supported away from the boundary.

Since $H^m_{co}$ norm is equivalent to the usual $H^m$ norm if
the function is support away from the boundary, we apply
the Sobolev inequality \eqref{sobolev} to get
\begin{equation}\label{31109}
\|y \p_y \varrho^{int}\|_{L^\infty_0(\O)}\le C\|\p_y \vr\|_{\H^3_1}.
\end{equation}
On the other hand, due to the equation \eqref{3c1}, we can get the evolution equation for $\varrho^b$:
\begin{equation}\label{311010}
(\p_t-\es \p_y^2)\varrho^b=\es \chi \p_x^2\vr  +\chi f_1+R_1+R_2
\end{equation}
where $R_i(i=1,2)$ are defined by
\begin{equation*}
R_1=-\es \chi'' \p_y \vr-2\es \chi' \p_y^2 \vr,
\quad
R_2=-\chi(\u+1-e^{-y})\p_{xy} \vr-\chi \v \p_y^2 \vr.
\end{equation*}
Applying the estimate \eqref{eheat} to the equation \eqref{311010}, it follows
\begin{equation}\label{311011}
\begin{aligned}
\|y\p_y \varrho^b\|_{L^\infty_0(\O)}^2
\le
& C(\|\varrho^b_0\|_{L^\infty_0(\O)}^2+\|y\p_y \varrho^b_0\|_{L^\infty_0(\O)}^2)
    +C \es^2\int_0^t \|(\chi \p_x^2\vr, y \p_y(\chi\p_x^2\vr))\|_{L^\infty_0(\O)}^2 d\t\\
&   +C\int_0^t \|(\chi f_1, y\p_y(\chi f_1), R_1, y\p_y R_1, R_2, y\p_y R_2) \|_{L^\infty_0(\O)}^2 d\t.
\end{aligned}
\end{equation}
In view of the definition of $\chi$ and the Sobolev inequality, we find
\begin{equation}\label{311012}
|\es^2\int_0^t \|(\chi \p_x^2\vr, y \p_y(\chi\p_x^2\vr))\|_{L^\infty_0(\O)}^2 d\t|
\le C\es^2 \int_0^t \|\p_y \vr\|_{\H^4_0}^2 d\t+C\es^2 \int_0^t \|\vr\|_{\H^4_0}^2 d\t,
\end{equation}
and
\begin{equation}\label{311014}
\|R_1\|_{L^\infty_0(\O)}^2+\|y\p_y R_1\|_{L^\infty_0(\O)}^2\le C \|\vr\|_{\H^5_0}^2.
\end{equation}
Using the $L^\infty-$estimates in Lemma \ref{Lemma310}, we conclude
\begin{equation}\label{311013}
\begin{aligned}
\|(\chi f_1,y\p_y(\chi f_1))\|_{L^\infty_0(\O)}^2
\le
&\|(\p_{xy} r_1, \p_{y}^2 r_2, Z_2 \p_{xy} r_1, Z_2 \p_{y}^2 r_2)\|_{L^\infty_0(\O)}^2
 +\|r_u\|_{\H^3_0}^4\\
&+C(1+\e_{5,0}^2+\|\p_y(\vr, \u, \h)\|_{\H^3_0}^4+\|\p_y \vr\|_{\H^{1,\infty}_0}^4).
\end{aligned}
\end{equation}
By virtue of $\v|_{y=0}=0$, we may apply the Taylor formula
and divergence-free condition \eqref{eq5}$_4$ to get
\begin{equation*}\label{311015}
\|\chi \v \p_{y}^2 \vr\|_{L^\infty_0(\O)}
\le  \|\p_y  \v\|_{L^\infty_0(\O)} \|\chi y \p_{y}^2 \vr\|_{L^\infty_0(\O)}
\le C \|\p_x  \u\|_{L^\infty_0(\O)} \|Z_2 \p_y \vr\|_{L^\infty_0(\O)},
\end{equation*}
and along with the Sobolev inequality \eqref{sobolev} yields directly
\begin{equation}\label{311016}
\begin{aligned}
\|R_2\|_{L^\infty_0(\O)}^2
&\le C(1+\|\u\|_{L^\infty_0(\O)}^2)\|\p_{xy} \vr\|_{L^\infty_0(\O)}^2
+ C \|\p_x  \u\|_{L^\infty_0(\O)}^2 \|Z_2 \p_y \vr\|_{L^\infty_0(\O)}^2\\
&\le C(1+\e_{3,0}^2(t)+\|\p_y \u\|_{\H^2_0}^4+\|\p_y \vr\|_{\H^{1,\infty}_0}^4).
\end{aligned}
\end{equation}
By routine checking, we may check that
\begin{equation*}\label{311017}
\begin{aligned}
y\p_y R_2
&=\chi' y [(\u+1-e^{-y})\p_{xy} \vr+ \v \p_y^2 \vr]
+\chi y[\p_y(\u+1-e^{-y})\p_{xy} \vr+ \p_y \v \p_y^2 \vr]\\
&\quad+\chi (\u+1-e^{-y})y\p_{xyy} \vr+ \chi \v y \p_y^3 \vr.
\end{aligned}
\end{equation*}
By virtue of the definition $\chi$, it follows
\begin{equation}\label{311018}
\begin{aligned}
\|\chi' y [(\u+1-e^{-y})\p_{xy} \vr+ \v \p_y^2 \vr]\|_{L^\infty_0(\O)}^2
\le
C(1+\e_{3,1}^2(t)+\|\p_y \u\|_{\H^1_0}^4+\|\p_y \vr\|_{\H^{1,\infty}_0}^4),
\end{aligned}
\end{equation}
and
\begin{equation}\label{311019}
\begin{aligned}
&\|\chi y[\p_y(\u+1-e^{-y})\p_{xy} \vr+ \p_y \v \p_y^2 \vr]\|_{L^\infty_0(\O)}^2\\
&\le C(1+\es^4\|r_u\|_{\H^{2}_0}^4+\e^6_{4,0}(t)+\|\p_y(\vr, \u, \h)\|_{\H^2_0}^{12}
+\|\p_y \vr\|_{\H^{1,\infty}_0}^4).
\end{aligned}
\end{equation}
Since the velocity $\u$ vanishes on the boundary, the application of Taylor formula yields immediately
\begin{equation}\label{311020}
\begin{aligned}
\|\chi (\u+1-e^{-y})y\p_{xyy} \vr\|_{L^\infty_0(\O)}^2
&\le (1+\|\p_y  \u\|_{L^\infty_0(\O)}^2)\|\chi y^2 \p_{xyy} \vr\|_{L^\infty_0(\O)}^2\\
&\le C(1+\|\p_y  \u\|_{L^\infty_0(\O)}^2)\|\varphi (y)Z_2 \p_{xy} \vr\|_{L^\infty_0(\O)}^2,
\end{aligned}
\end{equation}
where we have used the fact that $y$ is equivalent to $\frac{y}{1+y}$
if $y\in [0, c_0]$.
Using the fact $\u|_{y=0}=0$ and $\p_x \u+\p_y \v=0$, we have $\p_y \v|_{y=0}=0$,
and hence, the Taylor formula implies for $\xi \in [0, y]$
\begin{equation*}\label{311021}
\v(t,x,y)=\v(t,x,0)+y\p_y \v(t,x,0)+\frac{1}{2}y^2 \p_y^2 \v(t,x, \xi)
=\frac{1}{2}y^2 \p_y^2 \v(t,x, \xi),
\end{equation*}
where we have used the fact $\v|_{y=0}=\p_y\v|_{y=0}=0$. Thus, it follows
\begin{equation}\label{311022}
\|\chi \v y \p_y^3 \vr\|_{L^\infty_0(\O)}^2
\le C\|\p_y^2 \v\|_{L^\infty_0(\O)}^2\|\chi y^3 \p_y^3 \vr\|_{L^\infty_0(\O)}^2
\le C\|\p_{xy}\u\|_{L^\infty_0(\O)}^2\|\varphi(y)Z_2^2 \p_y \vr\|_{L^\infty_0(\O)}^2.
\end{equation}
Then the combination of estimates \eqref{311020}, \eqref{311022}
and Sobolev inequality \eqref{sobolev} yields directly
\begin{equation*}
\|\chi (\u+1-e^{-y})y\p_{xyy} \vr+ \chi \v y \p_y^3 \vr\|_{L^\infty_0(\O)}^2
\le C(1+\|r_u\|_{\H^2_0}^4+\e_{5,0}^6(t)+\|\p_y(\vr, \u, \h)\|_{\H^4_0}^{12}),
\end{equation*}
which, together with the estimates \eqref{311018} and \eqref{311019}, yields directly
\begin{equation}\label{311023}
\|y\p_y R_2\|_{L^\infty_0(\O)}^2
\le C(1+\|r_u\|_{\H^3_0}^4+\e_{5,1}^6(t)+\|\p_y(\vr, \u, \h)\|_{\H^4_0}^{12}+\|\p_y \vr\|_{\H^{1,\infty}_0}^4).
\end{equation}
Then, we can get from the estimates \eqref{311012}, \eqref{311013}, \eqref{311014}, \eqref{311016}
and \eqref{311023} that
\begin{equation*}
\begin{aligned}
\|y\p_y \varrho^b\|_{L^\infty_0(\O)}^2
\le
&C(\|\varrho^b_0\|_{L^\infty_0(\O)}^2+\|y\p_y \varrho^b_0\|_{L^\infty_0(\O)}^2)
 +C\int_0^t \|(\p_{xy} r_1, \p_{y}^2 r_2, Z_2 \p_{xy} r_1, Z_2 \p_{y}^2 r_2)\|_{L^\infty_0(\O)}^2 d\t\\
&+C\int_0^t \|r_u\|_{\H^3_0}^4 d\t
 +C\int_0^t(1+\e_{5,1}^6(\t)+\|\p_y(\vr, \u, \h)\|_{\H^4_0}^{12}+\|\p_y \vr\|_{\H^{1,\infty}_0}^4)d\t,
\end{aligned}
\end{equation*}
which, together with the estimate \eqref{31109}, yields directly
\begin{equation}\label{311024}
\begin{aligned}
\|y\p_y^2 \rho\|_{L^\infty_0(\O)}^2
\le
&C(\|\p_y \vr_0\|_{L^\infty_0(\O)}^2+ \|Z_2 \p_y \vr_0\|_{L^\infty_0(\O)}^2)+C\|\p_y \vr\|_{\H^3_1}^2\\
&+C\int_0^t (\|r_u\|_{\H^3_0}^4+
             \|(\p_{xy} r_1, \p_{y}^2 r_2, Z_2 \p_{xy} r_1, Z_2 \p_{y}^2 r_2)\|_{L^\infty_0(\O)}^2)d\t\\
&+C\int_0^t(1+\e_{5,1}^6(\t)+\|\p_y(\vr, \u, \h)\|_{\H^4_0}^{12}+\|\p_y \vr\|_{\H^{1,\infty}_0}^4)d\t.
\end{aligned}
\end{equation}
Therefore, substituting the estimates \eqref{31104}, \eqref{31108} and \eqref{311024} into \eqref{31102},
we complete the proof of lemma.
\end{proof}

\subsection{Proof of Theorem \ref{theo a priori}}

Based on the estimates obtained so far, we can complete the proof of Theorem \ref{theo a priori} in this subsection.
First of all, we give the proof for the estimate \eqref{3a1}.
For two parameters $R$ and $\d$, which will be defined later, we define
\begin{equation*}\label{3d1}
\begin{aligned}
T^\es_* :=&\sup\left\{T\in [0, 1]\ |\ \Theta_{m,l}(t)\le R, \
\|\p_y (\u-e^{-y})(t)\|_{L^\infty_1(\O)}\le \d^{-1}, \right.\\
&\quad \quad \quad \quad \left.
\|\vr(t)\|_{L^\infty_0(\O)}\le \frac{2l-1}{2}\d^2, \
h^\es(t,x,y)+1\ge \d, \ \forall t \ \in [0, T], \ (x, y)\in \O\right\}.
\end{aligned}
\end{equation*}
Now, we write
\begin{equation}\label{3d2}
\begin{aligned}
&\n_{m,l}(t)
:=\sup_{0\le s \le t}\{1+\e_{m,l}(s)+\|(\vr_m,\u_m, \h_m)(s)\|_{L^2_l(\O)}^2
   +\|(\p_y \vr, \p_y \u, \p_y \h)(s)\|_{\H^{m-1}_l}^2+\|\p_y \vr(s)\|_{\H^{1,\infty}_1}^2\}\\
&\quad \quad \
 +\!\int_0^t \!\!\!\|\p_y^2(\sqrt{\es} \vr, \sqrt{\mu} \u, \sqrt{\k} \h)\|_{\H^{m-1}_l}^2d\t
 \!+\es \!\int_0^t\!\!\! \|\p_{xy}(\vr,\u,\h)\|_{\H^{m-1}_l}^2 d\t\!+\!\int_0^t\! (\D_x^{m,l}\!+\!\D_y^{m,l})(\t)d\t,
\end{aligned}
\end{equation}
where $D_x^{m,l}(t)$ and $D_y^{m,l}(t)$ are defined by
\begin{equation}\label{3d3}
\D_x^{m,l}(t)=
\sum_{\substack{0 \le |\alpha| \le m \\ |\a_1| \le m-1}}\es \|\p_x  \z^\a(\vr, \u, \h)(t)\|_{L^2_l(\O)}^2
+\es\|\p_x (\vr_m, \u_m, \h_m)(t)\|_{L^2_l}^2
\end{equation}
and
\begin{equation}\label{3d3-0}
\begin{aligned}
\D_y^{m,l}(t)=
&\sum_{\substack{0 \le |\alpha| \le m \\ |\a_1| \le m-1}}
\|\p_y(\sqrt{\es}\z^\a \vr,\sqrt{\mu}\z^\a \u,\sqrt{\k}\z^\a\h)(t)\|_{L^2_l(\O)}^2\\
&+\|\p_y(\sqrt{\es}\vr_m, \sqrt{\mu}\u_m, \sqrt{\k} \h_m)(t)\|_{L^2_l(\O)}^2.
\end{aligned}
\end{equation}

From the estimates in Propositions \ref{Lower-estimate}, \ref{Tanential-estimate},\ref{Normal-estimate},\ref{Infinity-estimate},
we may conclude for $T_1 \le T^\es_*$ that
\begin{equation}\label{3d4}
\n_{m,l}(t) \le C\d^{-2}(1+\|(\rho_0, u_{10}, h_{10})\|_{\overline{\mathcal{B}}^m_l}^6)
        +C_{\mu, \k}t\|(\rho_0, u_{10}, h_{10})\|_{\widehat{\mathcal{B}}^m_l}^6
        +C_{\mu, \k, m, l}\d^{-12} t \n_{m,l}^{12}(t), t \in [0, T_1].
\end{equation}
On the other hand, recall the almost equivalently relations
(see Lemma \ref{equi-control})
\begin{equation}\label{3d5}
\Theta_{m,l}(t)
\le C\|(\rho_0, u_{10}, h_{10})\|_{\overline{\mathcal{B}}^m_l}^4
     +C t \|(\rho_0, u_{10}, h_{10})\|_{\widehat{\mathcal{B}}^m_l}^4
     +C_l \d^{-8}\n_{m,l}^{12}(t),
\quad \forall t \in [0, T_1],
\end{equation}
and
\begin{equation}\label{3d6}
\n_{m,l}(t)
\le C\|(\rho_0, u_{10}, h_{10})\|_{\overline{\mathcal{B}}^m_l}^4
     +C t \|(\rho_0, u_{10}, h_{10})\|_{\widehat{\mathcal{B}}^m_l}^4
     +C\d^{-8} \Theta_{m,l}^{12}(t), \quad \forall t \in [0, T_1],
\end{equation}
and hence, we may deduce from the estimates \eqref{3d4}, \eqref{3d5} and \eqref{3d6} that
\begin{equation*}\label{3d7}
\begin{aligned}
\Theta_{m,l}(T_1)
\le
&C_{l} \mathcal{P}_0(\d^{-1}, \|(\rho_0, u_{10}, h_{10})\|_{\overline{\mathcal{B}}^m_l})
 +C_{\mu, \k, m, l} T_1 \mathcal{P}_1(\d^{-1}, \|(\rho_0, u_{10}, h_{10})\|_{\overline{\mathcal{B}}^m_l})\\
&+C_{\mu, \k, m, l}T_1 \mathcal{P}_2(\d^{-1}, \|(\rho_0, u_{10}, h_{10})\|_{\widehat{\mathcal{B}}^m_l})
 +C_{\mu, \k, m, l} T_1 \mathcal{P}_3(\d^{-1}, R).
\end{aligned}
\end{equation*}
Choose constant $\d=\frac{\d_0}{2}$ and
$R=4 C_l \mathcal{P}_0(\d_0^{-1}, \|(\rho_0, u_{10}, h_{10})\|_{\overline{\mathcal{B}}^m_l})$, we obtain
\begin{equation*}\label{3d8}
\begin{aligned}
\Theta_{m,l}(T_1)
\le
&C_l \mathcal{P}_0(\d^{-1}_0, \|(\rho_0, u_{10}, h_{10})\|_{\overline{\mathcal{B}}^m_l})
+C_{\mu, \k, m, l}T_1 \mathcal{P}_4(\d^{-1}_0, \|(\rho_0, u_{10}, h_{10})\|_{\overline{\mathcal{B}}^m_l})\\
&+C_{\mu, \k, m, l}T_1\mathcal{P}_5(\d^{-1}_0, \|(\rho_0, u_{10}, h_{10})\|_{\widehat{\mathcal{B}}^m_l})
+C_{\mu, \k, m, l}T_1\mathcal{P}_6(\d^{-1}_0, \|(\rho_0, u_{10}, h_{10})\|_{\overline{\mathcal{B}}^m_l}).
\end{aligned}
\end{equation*}
Choose the time
$T_1=\min\{
           \frac{\overline{C}_{0}}
                {\mathcal{P}_4(\d^{-1}_0, \|(\rho_0, u_{10}, h_{10})\|_{\overline{\mathcal{B}}^m_l})},
           \frac{\overline{C}_{0}}
                {\mathcal{P}_5(\d^{-1}_0, \|(\rho_0, u_{10}, h_{10})\|_{\widehat{\mathcal{B}}^m_l})},
           \frac{\overline{C}_{0}}
                {\mathcal{P}_6(\d^{-1}_0, \|(\rho_0, u_{10}, h_{10})\|_{\overline{\mathcal{B}}^m_l})},\}$,
and hence, it follows
\begin{equation*}\label{3d9}
\Theta_{m,l}(T_1)\le 2 C_l \mathcal{P}_0(\d^{-1}_0, \|(\rho_0, u_{10}, h_{10})\|_{\overline{\mathcal{B}}^m_l})=\frac{R}{2}.
\end{equation*}
Here the constant $\overline{C}_{0}:=\frac{\mathcal{P}_0(\d^{-1}_0, \|(\rho_0, u_{10},h_{10})\|_{\overline{\mathcal{B}}^m_l})}{3C_{\mu, \k, m, l}}$.
For any smooth function $W(t, x, y)$, it is easy to justify
\begin{equation}\label{relation}
W(t, x, y)=W(0, x, y)+\int_0^t \p_s W(s, x, y)ds,
\end{equation}
Using the relation \eqref{relation} and the Sobolev inequality \eqref{sobolev}, we get
\begin{equation*}
\begin{aligned}
\h(t, x, y)+1
&\ge \h_0(x, y)+1 -Ct\sup_{0\le s \le t}\|(\h, \p_y \h)\|_{\H^2_0} \\
&\ge 2\d_0-2C_l t \sqrt{\mathcal{P}_0(\d_0^{-1}, \|(\rho_0, u_{10}, h_{10})\|_{\overline{\mathcal{B}}^m_l})}.
\end{aligned}
\end{equation*}
Choose $T_2=\min\{T_1, \frac{\d_0}
{2 C_l \sqrt{\mathcal{P}_0(\d_0^{-1}, \|(\rho_0, u_{10}, h_{10})\|_{\overline{\mathcal{B}}^m_l})}}\}$,
it follows
\begin{equation*}\label{3d10}
\h(t,x,y)+1 \ge \d_0=2\d,\quad \text{for~all}~(t, x, y)\in [0, T_2]\times \O.
\end{equation*}

Similarly, we get from the relation \eqref{relation} and the estimate \eqref{31012}$_2$ that
\begin{equation*}
\begin{aligned}
\|\p_y(\u-e^{-y})(t)\|_{L^\infty_1(\O)}
&\le \|\p_y(\u_0-e^{-y})\|_{L^\infty_1(\O)}
+t\sup_{0\le s \le t}\|\p_y \p_s \u(s)\|_{L^\infty_1(\O)}\\
&\le (2\d_0)^{-1}
   +Ct(1+\|(\rho_0, u_{10}, h_{10})\|_{\widehat{\mathcal{B}}^m_l}
      +(\mathcal{P}_0(\d_0^{-1}, \|(\rho_0, u_{10}, h_{10})\|_{\overline{\mathcal{B}}^m_l}))^{\frac{3}{2}}).
\end{aligned}
\end{equation*}
Choosing $T_3=\min\{T_2, \frac{1}{2 \d_0 C(1+\|(\rho_0, u_{10}, h_{10})\|_{\widehat{\mathcal{B}}^m_l}
+(\mathcal{P}_0(\d_0^{-1}, \|(\rho_0, u_{10}, h_{10})\|_{\overline{\mathcal{B}}^m_l}))^{\frac{3}{2}})} \}$,
and hence
\begin{equation*}\label{3d11}
\begin{aligned}
\|\p_y(\u-e^{-y})(t)\|_{L^\infty_1(\O)}
\le \d_0^{-1}=(2\d)^{-1},\quad \text{for~all}~t\in [0, T_3].
\end{aligned}
\end{equation*}

Finally, from the relation  \eqref{relation} and the Sobolev inequality \eqref{sobolev}, we find
\begin{equation*}
\begin{aligned}
\|\vr(t)\|_{L^\infty_0(\O)}
&\le \|\vr_0\|_{L^\infty_0(\O)}+Ct\sup_{0\le s \le t}\|(\vr, \p_y \vr)(s)\|_{\H^2_0}\\
&\le \frac{2l-1}{16}\delta^2_0+2 C_l t\sqrt{\mathcal{P}_0(\d_0^{-1}, \|(\rho_0, u_{10}, h_{10})\|_{\overline{\mathcal{B}}^m_l})}.
\end{aligned}
\end{equation*}
Choose
$T_4=\min\{T_3, \frac{2l-1}{64C_l \sqrt{\mathcal{P}_0(\d_0^{-1},
\|(\rho_0, u_{10},h_{10})\|_{\overline{\mathcal{B}}^m_l})}}\d_0^2\}$, we obtain
\begin{equation*}\label{3d12}
\|\vr(t)\|_{L^\infty_0(\O)}\le \frac{3(2l-1)}{32}\delta^2_0=\frac{3(2l-1)}{8}\delta^2,
\end{equation*}
for all $t\in [0, T_4]$.
Obviously, we conclude that there exists $T_4>0$ depending only on
$\mu, \k, m, l, \d_0$ and the initial data
(hence independent of parameter $\es$) such that for $T\le \min\{T_4, T^\es\}$,
the estimates \eqref{3a1} and \eqref{3a2} hold on. Of course, it holds that $T_4\le T^\es_*$.
Indeed otherwise, our criterion about the continuation of
the solution would contradict the definition of $T^\es_*$.
Then, taking $T_a=T_4$, we obtain the estimate \eqref{3a2} and closes the a priori assumptions
\eqref{a2} and \eqref{a1}. Therefore, the proof of Theorem \ref{theo a priori} is completed.

\section{Local-in-time Existence and Uniqueness}\label{local-in-times}

In this section, we will establish the local-in-times existence and uniqueness of solutions
to the inhomogeneous incompressible MHD boundary layer equations \eqref{eq3}-\eqref{bc3}.

\subsection{Existence for the MHD Boundary Layer System}

We shall use the a priori estimates obtained thus far to prove local in time
existence result. For $m \ge 5$ and $l \ge 2$, consider initial data
such that $ \|(\rho_0, u_{10}, h_{10})\|_{\mathcal{B}^m_l}\le C_0<+\infty$.
For such initial data, we are not aware of a local well-posedness result for
the equations \eqref{eq5}-\eqref{bc5}. Since $(\rho_0, u_{10}, h_{10}) \in \mathcal{B}^{m,l}_{BL}$, there
exists a sequence of smooth approximate initial data
$(\rho_0^{\sigma}, u_{10}^{\sigma}, h_{10}^{\sigma}) \in \mathcal{B}^{m,l}_{BL, ap}$
($\sigma$ being a regularization parameter), which have enough spatial regularity
so that the time derivatives at the initial time can be defined by the equation \eqref{eq3}
and boundary compatibility condition are satisfied.
Then, it follows to get a positive time $T^{\es, \sigma}>0$($T^{\es, \sigma}$
depends on $\es, \sigma$, and the initial data) for which a solution
$(\varrho^{\es, \sigma}, u^{\es, \sigma}, h^{\es, \sigma})$ exists in Sobolev spaces $H^{4m}_l(\O)$
and $(v^{\es, \sigma}, g^{\es, \sigma})$ exists in Sobolev spaces $H^{4m}_{l-1}(\O)$ respectively.
Applying the a priori estimates given in Theorem \ref{theo a priori} to
$(\varrho^{\es, \sigma}, u^{\es, \sigma}, h^{\es, \sigma})$,
we obtain a uniform time $T_a>0$ and a constant $C_1$(independent of $\es$ and $\sigma$),
such that it holds on
\begin{equation}\label{ext1}
\ta_{m,l}(\varrho^{\es, \sigma}, u^{\es, \sigma}, h^{\es, \sigma})(t) \le C_1, \quad
\|\varrho^{\es, \sigma}(t)\|_{L^\infty_0(\O)}\le \frac{2l-1}{2}\delta^2_0,\quad
\| \p_y (u^{\es, \sigma}-e^{-y})(t)\|_{L^\infty_1(\O)}\le \delta_0^{-1},
\end{equation}
and
\begin{equation}\label{ext2}
h^{\es, \sigma}(t,x,y)+1\ge \delta_0,
\end{equation}
where $t \in [0, T_0], T_0:=\min(T_a, T^{\es, \sigma})$.
Based on the uniform estimates for $(\varrho^{\es, \sigma}, u^{\es, \sigma}, h^{\es, \sigma})$, one can pass the limit
$\es \rightarrow 0^+$ and $\sigma \rightarrow 0^+$
to get a strong solution $(\varrho, u, h)$ satisfying \eqref{eq3}
by using a strong compactness arguments.
Indeed, it follows from \eqref{ext1} that $(\varrho^{\es, \sigma}, u^{\es, \sigma}, h^{\es, \sigma})$
is bounded uniformly in $L^\infty([0, T_2]; H^m_{co})$,
while $\p_y(\varrho^{\es, \sigma}, u^{\es, \sigma}, h^{\es, \sigma})$
is bounded uniformly in $L^\infty([0, T_0]; H^{m-1}_{co})$,
and $\p_t (\varrho^{\es, \sigma}, u^{\es, \sigma}, h^{\es, \sigma})$
is bounded uniformly in $L^\infty([0, T_0]; H^{m-1}_{co})$.
Then, it follows from a strong compactness argument that $(\varrho^{\es, \sigma}, u^{\es, \sigma}, h^{\es, \sigma})$
is compact in $\mathcal{C}([0, T_0]; H^{m-1}_{co,loc})$.
Due to $\kappa>0$, it is easy to check that
$h^{\es, \sigma}$ is compact in $\mathcal{C}([0, T_0]; H^{2}_{loc})$.
In particular, there exists a sequence $\es_n, \sigma_n \rightarrow 0^+$
and $(\varrho, u, h)\in \mathcal{C}([0, T_0]; H^{m-1}_{co,loc})$ such that
$$
(\varrho^{\es_n, \sigma_n}, u^{\es_n, \sigma_n}, h^{\es_n, \sigma_n}) \rightarrow (\varrho, u, h)~~ {\rm in}~ ~
\mathcal{C}([0, T_0]; H^{m-1}_{co,loc})~ ~{\rm as} ~~\es^n, \sigma^n \rightarrow 0^+,
$$
and
$$
h^{\es_n, \sigma_n} \rightarrow  h ~~ {\rm in}~ ~
\mathcal{C}([0, T_0]; H^{2}_{loc})~ ~{\rm as} ~~\es^n, \sigma^n \rightarrow 0^+,
$$
Furthermore, we apply the Sobolev inequality to get
\begin{equation*}
\begin{aligned}
&\underset{0\le \t \le t}{\sup}\|(\p_y^{-1} \p_x u^{\es_n, \sigma_n}-\p_y^{-1} \p_x u)(\t)\|_{L^\infty_{0, co}(\O)}\\
&\le C\underset{0\le \t \le t}{\sup} \|\p_y^{-1}\p_x (u^{\es_n, \sigma_n}-u)(\t)\|_{H^{1}_{co, loc}}^{\frac{1}{2}}
        \|\p_x (u^{\es_n, \sigma_n}-u)(\t)\|_{H^{1}_{co, loc}}^{\frac{1}{2}}\\
&\le C\underset{0\le \t \le t}{\sup} \|(u^{\es_n, \sigma_n}-u)(\t)\|_{H^{2}_{co, loc}}
      \rightarrow 0, ~~{\rm as}~~\es^n, \sigma^n \rightarrow 0^+.
\end{aligned}
\end{equation*}
Hence we denote $v(t, x, y)=-\int_0^y \p_x u(t, x, \xi)d\xi$,
which satisfies the divergence-free condition $\p_x u+\p_y v=0$.
Similarly, we denote $g(t, x, y)=-\int_0^y h(t, x, \xi)d\xi$, which satisfies
\begin{equation*}
\underset{0\le \t \le t}{\sup}\|(g^{\es_n, \sigma_n}- g)(\t)\|_{L^\infty_{0, co}(\O)}
\le C\underset{0\le \t \le t}{\sup} \|(h^{\es_n, \sigma_n}-h)(\t)\|_{H^{1}_{co, loc}}
      \rightarrow 0, ~~{\rm as}~~\es^n, \sigma^n \rightarrow 0^+.
\end{equation*}
By routine checking, we may show that $(\rho, u_1, u_2, h_1, h_2):=(\varrho+1, u+1-e^{-y}, v, h+1, g)$
is a solution of the original MHD boundary layer system \eqref{eq3}.
Finally, applying the lower semicontinuity of norms to the bound \eqref{ext1},
one obtains the estimate \eqref{main-estimate} for the solution $(\rho, u_1, h_1)$.
Since $h^{\es_n, \sigma_n}$ converges uniformly to $h$, then we can get $h_1\ge \d$ from \eqref{ext2}.

\subsection{Uniqueness for the MHD Boundary Layer System}

In this subsection, we will show the uniqueness of solution to the MHD boundary
layer equations \eqref{eq3}-\eqref{bc3}.
Let $(\rho_1, u_1, v_1, h_1, g_1)$ and $(\rho_2, u_2, v_2, h_2, g_2)$ be two solutions
in the existence time $[0, T_a]$, constructed in the previous subsection,
with respect to the initial data $(\rho_0^1, u_0^1, h_0^1)$ and $(\rho_0^2, u_0^2, h_0^2)$ respectively.

Let us set
$$
(\overline{\rho}, \overline{u}, \overline{v}, \overline{h}, \overline{g})
=(\rho_1-\rho_2, u_1-u_2, v_1-v_2, h_1-h_2, g_1-g_2),
$$
then they satisfy the following evolution
\begin{equation}\label{eq7}
\left\{
\begin{aligned}
&\p_t \orho+u_1 \p_x \orho+v_1 \p_y \orho+\ou \p_x \rho_2+ \ov \p_y \rho_2=0,\\
&\rho_1\p_t \ou+\rho_1 u_1 \p_x \ou+\rho_1 v_1 \p_y \ou+\rho_1 \ou \p_x u_2+\rho_1 \ov \p_y u_2-\mu \p_y^2 \ou \\
&\quad =-\orho \p_t u_2-\orho u_2 \p_x u_2-\orho v_2 \p_y u_2
        +\oh \p_x h_1+h_2 \p_x \oh+g_1 \p_y \oh+\og \p_y h_2,\\
&\p_t \oh+u_1 \p_x \oh+v_1 \p_y \oh+\ou \p_x h_2+\ov \p_y h_2-\k \p_y^2 \oh
 =\oh \p_x u_1+h_2 \p_x \ou+g_1 \p_y \ou+\og \p_y u_2,\\
&\p_x \ou+\p_y \ov=0,\quad \p_x \oh+\p_y \og=0,\\
\end{aligned}
\right.
\end{equation}
with the boundary condition and initial data
\begin{equation*}
(\ou, \ov, \p_y \oh, \og)|_{y=0}=\mathbf{0},
\quad \lim_{y \rightarrow +\infty}(\orho, \ou, \oh)=\mathbf{0}, \quad
(\orho, \ou, \oh)|_{t=0}=\mathbf{0}.
\end{equation*}
Here we assume the two solutions $(\rho_1, u_1, v_1, h_1, g_1)$ and $(\rho_2, u_2, v_2, h_2, g_2)$
have the same initial data $(\rho_0^1, u_0^1, h_0^1)=(\rho_0^2, u_0^2, h_0^2)$.
Denote by $\ophi:=\p_y^{-1}\oh=\p_y^{-1}(h_1-h_2)$, it follows
\begin{equation*}
\p_t \ophi+u_1 \p_x \ophi+ v_1 \p_y \ophi-\ou g_2+\ov h_2-\k \p_y^2 \ophi=0.
\end{equation*}
Define
$
\eta_1:=\frac{\p_y \rho_2}{h_2}, \
\eta_2:=\frac{\p_y u_2}{h_2}, \
\eta_3:=\frac{\p_y h_2}{h_2},
$
and introduce the new quantities:
\begin{equation}\label{eqi-quantity}
\irho:=\orho-\eta_1 \ophi,\quad  \iu:=\ou-\eta_2 \ophi,\quad \ih:=\oh-\eta_3 \ophi.
\end{equation}

Next, we can obtain that through direct calculation, $(\irho, \iu, \ih)$ satisfies the following
initial boundary value problem:
\begin{equation}\label{uniq-eq}
\left\{
\begin{aligned}
&\p_t \irho+u_1 \p_x \irho+v_1 \p_y \irho=-\k \eta_1 \p_y \ih
   -a_{11}\ou- a_{12}\oh-a_{13} \ophi,\\
&\rho_1\p_t \iu+\rho_1 u_1 \p_x \iu+\rho_1 v_1 \p_y \iu
-h_2 \p_x \ih-g_1 \p_y \ih-\mu \p_y^2 \iu,\\
&\quad =-\k \rho_1 \eta_2 \p_y \ih+\mu \p_y(\p_y \eta_2 \ophi+\eta_2 \oh)
        -a_{21}\orho-a_{22}\ou- a_{23}\oh-a_{24} \ophi,\\
&\p_t \ih+u_1 \p_x \ih+v_1 \p_y \ih-h_2 \p_x \iu-g_1 \p_y \iu-\k \p_y^2 \ih
 =-\k \eta_3 \p_y \ih-a_{31}\ou- a_{32}\oh-a_{33} \ophi,
\end{aligned}
\right.
\end{equation}
where
\begin{equation*}
\left\{
\begin{aligned}
&a_{11}=\p_x \rho_2+\eta_1 g_2, \quad a_{12}=\k \eta_1 \eta_3,
\quad a_{13}=\p_t \eta_1+u_1 \p_x \eta_1+v_1 \p_y \eta_1+\k \eta_1 \p_y \eta_3,\\
&a_{21}=\p_t u_2 +u_2 \p_x u_2+ v_2 \p_y u_2,\quad
 a_{22}=\rho_1 \p_x u_2+\rho_1 \eta_2 g_2,\quad
 a_{23}=\k \rho_1 \eta_2 \eta_3-\p_x h_1-g_1 \eta_3,\\
&
 a_{24}=\rho_1 \p_t \eta_2+\rho_1 u_1 \p_x \eta_2+\rho_1 v_1 \p_y \eta_2
        +\k \rho_1 \eta_2 \p_y \eta_3-h_2 \p_x \eta_3-g_1 \p_y \eta_3,\\
&a_{31}=\eta_3 g_2+\p_x h_2,\quad
 a_{32}=\k \eta_3^2+2\k \p_y \eta_3-g_1 \eta_2-\p_x u_1,\\
& a_{33}=\p_t \eta_3+u_1 \p_x \eta_3+v_1 \p_y \eta_3+\k \eta_3 \p_y \eta_3
             -h_2 \p_x \eta_2-g_1 \p_y \eta_2.
\end{aligned}
\right.
\end{equation*}

By virtue of the relation
$\ih:=\oh-\frac{\p_y h_2}{h_2}\ophi =h_2\p_y(\frac{\ophi}{h_2})$, it is easy to justify
$\frac{\ophi}{h_2}=\p_y^{-1}(\frac{\ih}{h_2})$,
and hence, we can apply the Hardy inequality to obtain the estimate:
$$
\|\frac{\ophi}{h_2}\|_{L^2_{-1}(\O)}\le C\|\frac{1}{h_2}\|_{L^\infty_0(\O)}\|\ih\|_{L^2_0(\O)}.
$$
Thus we can apply the standard energy method for the equation \eqref{uniq-eq}
to establish the following estimate, which we omit the proof for brevity of presentation.

\begin{proposition}\label{uniqueness-energy}
Let $(\rho_1, u_1, v_1, h_1, g_1)$ and $(\rho_2, u_2, v_2, h_2, g_2)$
be two solutions of MHD boundary layer equations \eqref{eq3}-\eqref{bc3} with the same initial data,
and satisfying the estimate \eqref{main-estimate} respectively.
Then, there exists a positive constant
$$
C=C(T_a, \d_0, \|(\rho_1, u_1, h_1)(t)\|_{\overline{\mathcal{B}}^m_l},
\|(\rho_2, u_2, h_2)(t)\|_{\overline{\mathcal{B}}^m_l})>0,
$$
such that the quantity $(\irho, \iu, \ih)$ given by \eqref{eqi-quantity} satisfies
\begin{equation}\label{unique-estimate}
\|(\irho, \iu, \ih)(t)\|_{L^2}^2
+\int_0^t \|\p_y(\sqrt{\mu}\ \iu, \sqrt{\k}\ \ih)(\t)\|_{L^2}^2 d\t
\le C\int_0^t \|(\irho, \iu, \ih)(\t)\|_{L^2}^2 d\t.
\end{equation}
\end{proposition}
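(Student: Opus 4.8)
The plan is to run a direct weighted-free $L^{2}(\O)$ energy estimate on the symmetrized system \eqref{uniq-eq}, whose whole purpose is that the dangerous terms $\ov\p_y\rho_2$, $\rho_1\ov\p_y u_2$ and $\ov\p_y h_2$ — which lose one $x$-derivative through $\ov=-\p_y^{-1}\p_x\ou$ and $\og=-\p_y^{-1}\p_x\oh$ — have been absorbed into the transport operator and into the lower-order coefficient terms $a_{ij}$ after passing from $(\orho,\ou,\oh)$ to $(\irho,\iu,\ih)$. First I would record the boundary behaviour of the new unknowns: since $\ou|_{y=0}=0$ and $\ophi=\p_y^{-1}\oh$ vanishes at $y=0$, one has $\iu|_{y=0}=0$; since $\p_y\oh|_{y=0}=0$ and $\eta_3|_{y=0}=\p_y h_2/h_2|_{y=0}=0$ (because $\p_y h_2|_{y=0}=0$), one gets $\p_y\ih|_{y=0}=0$; and $\irho$ solves a pure transport equation without normal diffusion. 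These are precisely the boundary conditions needed so that the integrations by parts below produce no boundary contributions. I would also record $\tfrac12\le\rho_1\le\tfrac32$ so that $\|\sqrt{\rho_1}\,\iu\|_{L^2}$ is equivalent to $\|\iu\|_{L^2}$.

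Next I would multiply the three equations of \eqref{uniq-eq} by $\irho$, $\iu$, $\ih$ respectively, integrate over $\O$ and sum. The convective operators $u_1\p_x+v_1\p_y$ (and, for the second equation, $\rho_1 u_1\p_x+\rho_1 v_1\p_y$) integrate by parts to harmless zeroth-order terms using $\p_x u_1+\p_y v_1=0$ and $v_1|_{y=0}=0$; the magnetic cross terms recombine by the standard cancellation, $-\int h_2\p_x\ih\,\iu-\int h_2\p_x\iu\,\ih=\int\p_x h_2\,\iu\ih$ and $-\int g_1\p_y\ih\,\iu-\int g_1\p_y\iu\,\ih=\int\p_y g_1\,\iu\ih$ (the $y=0$ contribution vanishing since $g_1|_{y=0}=0$). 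The dissipative terms $-\mu\p_y^2\iu$ and $-\k\p_y^2\ih$ then produce exactly $\mu\|\p_y\iu\|_{L^2}^2+\k\|\p_y\ih\|_{L^2}^2$ with no boundary term, by the previous step.

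It then remains to show every term on the right-hand side of \eqref{uniq-eq} is controlled by $\tfrac14$ of the dissipation plus $C\|(\irho,\iu,\ih)\|_{L^2}^2$. The terms $-\k\eta_1\p_y\ih$, $-\k\rho_1\eta_2\p_y\ih$, $-\k\eta_3\p_y\ih$, and the term $\mu\p_y(\p_y\eta_2\ophi+\eta_2\oh)$ (after one integration by parts moving $\p_y$ onto $\iu$, whose boundary term vanishes since $\iu|_{y=0}=0$) are absorbed into $\mu\|\p_y\iu\|_{L^2}^2+\k\|\p_y\ih\|_{L^2}^2$ via Young's inequality. For the zeroth-order terms $a_{ij}\ou$, $a_{ij}\oh$, $a_{ij}\ophi$ I would substitute $\ou=\iu+\eta_2\ophi$ and $\oh=\ih+\eta_3\ophi$, and control $\ophi$ by $\ih$ through the Hardy inequality applied to the identity $\ophi/h_2=\p_y^{-1}(\ih/h_2)$ (established from $\ih=h_2\p_y(\ophi/h_2)$), which yields $\|\la y\ra^{-1}\ophi\|_{L^2(\O)}\le C\d_0^{-1}\|\ih\|_{L^2(\O)}$. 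Each $a_{ij}$, each $\eta_k$, and each $\p_y\eta_k$ is a smooth function of $\rho_k,u_k,h_k$, their tangential and time derivatives up to order controlled by $m$, their first normal derivative (decaying because of the weights in $\overline{\mathcal{B}}^m_l$), the second normal derivatives $\p_y^2 u_2,\p_y^2 h_2$ (which one rewrites via the equations \eqref{eq3} as bounded, decaying expressions in first-order quantities), and $1/h_2$; hence by \eqref{main-estimate}, $h_2\ge\d_0$, and conormal Sobolev embedding (legitimate since $m\ge5$, $l\ge2$) they are all in $L^\infty(\O)$ with a constant of the claimed form. Collecting everything and integrating over $[0,t]$ yields \eqref{unique-estimate}; Gr\"onwall with zero initial data then forces $(\irho,\iu,\ih)\equiv0$, whence $\ophi\equiv0$ and $(\orho,\ou,\oh)\equiv0$, i.e.\ uniqueness. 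The main obstacle I anticipate is precisely this last coefficient bookkeeping — in particular making sure the worst terms $\p_y\eta_2,\p_y\eta_3$ (hence $\p_y^2 u_2,\p_y^2 h_2$) are pointwise bounded and decay like $\la y\ra^{-1}$ so they can be paired against $\la y\ra^{-1}\ophi$ — together with verifying the weighted Hardy estimate is used with the correct weight throughout.
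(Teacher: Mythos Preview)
Your proposal is correct and matches the paper's intended approach: the paper in fact omits the proof entirely, stating only that one ``can apply the standard energy method for the equation \eqref{uniq-eq},'' and what you have written is precisely that standard energy argument, including the key Hardy-type control $\|\la y\ra^{-1}\ophi\|_{L^2}\le C\d_0^{-1}\|\ih\|_{L^2}$ that the paper records just before the proposition. Your bookkeeping of boundary conditions ($\iu|_{y=0}=0$, $\p_y\ih|_{y=0}=0$), the cancellation of the magnetic cross terms, and the treatment of $\p_y^2 u_2,\p_y^2 h_2$ via the equations (and of $v_1\p_y^2\rho_2$ via the vanishing of $v_1$ at $y=0$ together with $\|Z_2\p_y\rho_2\|_{L^\infty_1}\subset\|\p_y\rho\|_{\H^{1,\infty}_1}$) are exactly the ingredients needed.
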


Then, we can prove the uniqueness of the solutions to \eqref{eq3}-\eqref{bc3} as follows.

\begin{proof}[\textbf{Proof of Uniqueness.}]
Applying Gronwall's lemma to the estimate \eqref{unique-estimate}, we obtain $ (\irho, \iu, \ih)\equiv 0 $.
Then, we substitute $\ih \equiv 0$ into equality $\frac{\ophi}{h_2}=\p_y^{-1}(\frac{\ih}{h_2})$ to get $\ophi \equiv 0$.
From the definition \eqref{eqi-quantity}  we get $(\rho_1, u_1, h_1)\equiv (\rho_2, u_2, h_2)$ due to the fact
$ (\irho, \iu, \ih)\equiv 0 $ and $\ophi \equiv 0$.
Finally, it follows from the divergence-free condition and the boundary condition $\ov|_{y=0}=0$ that
$\ov=-\p_y^{-1} \p_x \ou=0$, which implies the fact $v_1 \equiv v_2$.
Similarly, it holds on $g_1 \equiv g_2$.
Therefore, we complete the proof of uniqueness of the solution for the
MHD boundary layer equations \eqref{eq3}-\eqref{bc3} completely.
\end{proof}

\appendix

\section{Calculus Inequalities}\label{appendixA}

In this appendix, we will introduce some basic inequalities that be used frequently in this paper.
First of all, we introduce the following Hardy type inequality, which can refer to \cite{Masmoudi}.

\begin{lemma}\label{Hardy}
Let the proper function $f:\mathbb{T}\times \mathbb{R}^+ \rightarrow \mathbb{R}$,
and satisfies $f(x, y)|_{y=0}=0$ and $ \underset{{y \to +\infty}}{\lim} f(x,y) = 0$.
If $\lambda > - \frac{1}{2}$, then it holds on
\begin{equation} \label{Hardy}
    \|f\|_{L^2_\lambda (\mathbb{T}\times \mathbb{R}^+)} \le \frac{2}{2\lambda +1}
    \|\partial_y f\|_{L^2_{\lambda+1} (\mathbb{T}\times \mathbb{R}^+)}.
\end{equation}
\end{lemma}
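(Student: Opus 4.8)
The final statement to prove is the Hardy-type inequality in Lemma~\ref{Hardy}: for $f$ vanishing at $y=0$ and decaying as $y\to+\infty$, and for $\lambda>-\tfrac12$, one has $\|f\|_{L^2_\lambda}\le\frac{2}{2\lambda+1}\|\partial_y f\|_{L^2_{\lambda+1}}$.

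\medskip

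\textbf{Approach.} The plan is to run the classical one-dimensional Hardy inequality in the $y$-variable, uniformly in $x$, and then integrate in $x$. Since the weight $\langle y\rangle=1+y$ is comparable to the Muckenhoupt-type weights used in the standard Hardy inequality, I will argue directly by an integration-by-parts identity rather than invoking an abstract theorem. Fix $x\in\mathbb{T}$ and work with the function $y\mapsto f(x,y)$. Write $\langle y\rangle^{2\lambda}=\frac{1}{2\lambda+1}\,\partial_y\bigl(\langle y\rangle^{2\lambda+1}\bigr)$, which is valid precisely because $2\lambda+1\neq 0$, i.e. $\lambda>-\tfrac12$ (indeed $2\lambda+1>0$). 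Then
\begin{equation*}
\int_0^\infty \langle y\rangle^{2\lambda}|f|^2\,dy
=\frac{1}{2\lambda+1}\int_0^\infty \partial_y\bigl(\langle y\rangle^{2\lambda+1}\bigr)|f|^2\,dy.
\end{equation*}
Integrating by parts, the boundary term at $y=0$ vanishes because $f(x,0)=0$, and the boundary term at $y=+\infty$ vanishes because $f(x,y)\to 0$ there and (after justification) $\langle y\rangle^{2\lambda+1}|f|^2\to 0$; this yields
\begin{equation*}
\int_0^\infty \langle y\rangle^{2\lambda}|f|^2\,dy
=-\frac{2}{2\lambda+1}\int_0^\infty \langle y\rangle^{2\lambda+1} f\,\partial_y f\,dy.
\end{equation*}

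\textbf{Key steps.} After this identity I would apply the Cauchy--Schwarz inequality to the right-hand side, splitting the weight as $\langle y\rangle^{2\lambda+1}=\langle y\rangle^{\lambda}\cdot\langle y\rangle^{\lambda+1}$:
\begin{equation*}
\int_0^\infty \langle y\rangle^{2\lambda}|f|^2\,dy
\le \frac{2}{2\lambda+1}\Bigl(\int_0^\infty \langle y\rangle^{2\lambda}|f|^2\,dy\Bigr)^{1/2}
\Bigl(\int_0^\infty \langle y\rangle^{2\lambda+2}|\partial_y f|^2\,dy\Bigr)^{1/2}.
\end{equation*}
Dividing through by $\bigl(\int_0^\infty \langle y\rangle^{2\lambda}|f|^2\,dy\bigr)^{1/2}$ (which is finite and, if zero, makes the claim trivial) gives the pointwise-in-$x$ estimate
\begin{equation*}
\Bigl(\int_0^\infty \langle y\rangle^{2\lambda}|f(x,y)|^2\,dy\Bigr)^{1/2}
\le\frac{2}{2\lambda+1}\Bigl(\int_0^\infty \langle y\rangle^{2\lambda+2}|\partial_y f(x,y)|^2\,dy\Bigr)^{1/2}.
\end{equation*}
Squaring and integrating over $x\in\mathbb{T}$, then taking square roots, produces exactly \eqref{Hardy}. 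To make the division and the finiteness step rigorous I would first carry out the argument on a truncated interval $[0,N]$ with the weight regularized if necessary, obtaining a bound independent of $N$, and then pass to the limit $N\to\infty$ by monotone convergence; this also furnishes the decay of the boundary term at infinity as a byproduct (if $\partial_y f\in L^2_{\lambda+1}$ the truncated bounds force $f\in L^2_\lambda$, hence $\langle y\rangle^{2\lambda+1}|f|^2$ is integrable and a subsequence of values tends to $0$).

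\medskip

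\textbf{Main obstacle.} The only genuinely delicate point is the justification of the vanishing of the boundary term at $y=+\infty$ and the a priori finiteness of $\|f\|_{L^2_\lambda}$: a priori we only know $f\to 0$ pointwise, not that the weighted norm is finite. The truncation-and-limit procedure described above handles this cleanly, so there is no real difficulty beyond careful bookkeeping; the heart of the proof is the one-line integration-by-parts identity combined with Cauchy--Schwarz. Since this is a standard lemma, I expect the write-up to be short, and I would reference \cite{Masmoudi} for the analogous statement.
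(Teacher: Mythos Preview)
Your argument is correct and is the standard proof of this weighted Hardy inequality. The paper does not actually prove the lemma: it merely states it and refers the reader to \cite{Masmoudi}, so there is no ``paper's own proof'' to compare against. One minor tightening: the cleanest way to dispose of the boundary term at infinity is to use the hypothesis $f(x,\cdot)\to 0$ to write $f(x,y)=-\int_y^\infty \partial_y f(x,s)\,ds$ and apply Cauchy--Schwarz to obtain
\[
\langle y\rangle^{2\lambda+1}|f(x,y)|^2\le \frac{1}{2\lambda+1}\int_y^\infty \langle s\rangle^{2\lambda+2}|\partial_y f(x,s)|^2\,ds\longrightarrow 0,
\]
which both shows the boundary term vanishes and, via the truncated identity, gives the a~priori finiteness of $\|f\|_{L^2_\lambda}$ in one stroke; your parenthetical bootstrap (``the truncated bounds force $f\in L^2_\lambda$'') is slightly circular without this input.
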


Next, we will state the following Sobolev-type inequality.

\begin{lemma}
Let the proper function $f:\mathbb{T}\times \mathbb{R}^+ \rightarrow \mathbb{R}$,
and satisfies $ \underset{{y \to +\infty}}{\lim} f(x,y) = 0$.
Then there exists a universal constant $C>0$ such that
\begin{equation}\label{sobolev-1}
\|f\|_{L^\infty_0(\mathbb{T}\times \mathbb{R}^+)}
\le C(\|\p_y f\|_{L^2_0(\mathbb{T}\times \mathbb{R}^+)}+\|\p_{xy}^2 f\|_{L^2_0(\mathbb{T}\times \mathbb{R}^+)})^{\frac{1}{2}}
     (\|f\|_{L^2_0(\mathbb{T}\times \mathbb{R}^+)}+\|\p_x f\|_{L^2_0(\mathbb{T}\times \mathbb{R}^+)})^{\frac{1}{2}},
\end{equation}
or equivalently
\begin{equation}\label{sobolev}
\|f\|_{L^\infty_0(\mathbb{T}\times \mathbb{R}^+)}
\le C(\|f\|_{L^2_0(\mathbb{T}\times \mathbb{R}^+)}+\|\partial_x  f\|_{L^2_0(\mathbb{T}\times \mathbb{R}^+)}
+\|\partial_y  f\|_{L^2_0(\mathbb{T}\times \mathbb{R}^+)}+\|\partial_{xy}^2 f\|_{L^2_0(\mathbb{T}\times \mathbb{R}^+)}).
\end{equation}
\end{lemma}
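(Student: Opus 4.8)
The plan is to deduce \eqref{sobolev} from \eqref{sobolev-1} by the elementary inequality $a^{\frac12}b^{\frac12}\le \frac12(a+b)\le a+b$ applied with $a=\|\p_y f\|_{L^2_0}+\|\p_{xy}^2 f\|_{L^2_0}$ and $b=\|f\|_{L^2_0}+\|\p_x f\|_{L^2_0}$, so the whole task reduces to proving \eqref{sobolev-1}. For that I would combine a one–dimensional Agmon–type estimate in the vertical variable $y\in\mathbb{R}^+$ with a one–dimensional Sobolev embedding on the compact torus $\T$ in the horizontal variable $x$.

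\emph{Step 1 (control of horizontal slices).} For a.e.\ fixed $x$, the map $y\mapsto\|f(\cdot,y)\|_{L^2_x(\T)}^2$ is absolutely continuous, belongs to $L^1(\mathbb{R}^+)$, and (using the hypothesis $f(x,y)\to0$ as $y\to+\infty$ together with the $L^2$ bounds) tends to $0$ along a sequence $y\to+\infty$; hence
\[
\|f(\cdot,y)\|_{L^2_x(\T)}^2=-2\int_y^{+\infty}\!\!\int_{\T} f\,\p_y f\,dx\,dz
\le 2\,\|f\|_{L^2(\O)}\,\|\p_y f\|_{L^2(\O)} .
\]
Taking the supremum over $y>0$ and repeating the computation with $\p_x f$ in place of $f$ yields
\[
\sup_{y>0}\|f(\cdot,y)\|_{L^2_x(\T)}^2\le 2\|f\|_{L^2(\O)}\|\p_y f\|_{L^2(\O)},\qquad
\sup_{y>0}\|\p_x f(\cdot,y)\|_{L^2_x(\T)}^2\le 2\|\p_x f\|_{L^2(\O)}\|\p_{xy}^2 f\|_{L^2(\O)} .
\]

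\emph{Step 2 (horizontal Sobolev embedding and assembly).} For each fixed $y$, writing $f(x,y)^2$ as its $x$–average over $\T$ plus $\int\p_x(f^2)\,dx$ and using periodicity gives the standard bound $\|f(\cdot,y)\|_{L^\infty_x(\T)}^2\le C\,\|f(\cdot,y)\|_{L^2_x(\T)}\big(\|f(\cdot,y)\|_{L^2_x(\T)}+\|\p_x f(\cdot,y)\|_{L^2_x(\T)}\big)$. Taking the supremum over $y$, inserting the two estimates of Step 1, and using $\alpha^{\frac12}\beta^{\frac12}\le\alpha+\beta$ together with the identity $(\|f\|\|\p_y f\|)^{\frac12}(\|\p_x f\|\|\p_{xy}^2 f\|)^{\frac12}=(\|f\|\|\p_x f\|)^{\frac12}(\|\p_y f\|\|\p_{xy}^2 f\|)^{\frac12}\le(\|f\|+\|\p_x f\|)(\|\p_y f\|+\|\p_{xy}^2 f\|)$ (all norms $L^2(\O)$), one arrives exactly at \eqref{sobolev-1}.

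I do not expect a genuine obstacle here: the argument is classical one–dimensional analysis in each variable. The only point requiring a little care is the justification at $y=+\infty$ in Step 1, i.e.\ converting the pointwise decay hypothesis on $f$ plus the finiteness of $\|f\|_{L^2(\O)}$ and $\|\p_y f\|_{L^2(\O)}$ into the integration–by–parts identity for the slice norms; this is handled by noting that $y\mapsto\|f(\cdot,y)\|_{L^2_x(\T)}^2$ is integrable on $\mathbb{R}^+$, so it must vanish along some sequence, after which the fundamental theorem of calculus and Cauchy–Schwarz close the estimate.
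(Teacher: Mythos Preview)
Your proof is correct and follows essentially the same strategy as the paper: both combine a one–dimensional Agmon-type estimate in $y$ with the one–dimensional Sobolev embedding on $\T$ in $x$. The only cosmetic difference is the order of the two steps—the paper first applies Agmon in $y$ pointwise in $x$ and then controls the resulting $x$-dependent $L^2_y$ integrals via Sobolev in $x$, whereas you first take Sobolev in $x$ at fixed $y$ and then apply Agmon in $y$ to the slice norms; the resulting bounds are identical.
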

\begin{proof}
Indeed, the estimate \eqref{sobolev} follows directly from estimate
\eqref{sobolev-1} and the Cauchy-Schwartz inequality. Hence, we only give the proof
for the estimate \eqref{sobolev-1}.
On one hand, thanks to the one-dimensional Sobolev
inequality for the $y-$variable, we get
\begin{equation}\label{a21}
|f(x, y)|^2 \le C(\int_0^\infty |\p_\xi f(x, \xi)|^2d\xi)^{\frac{1}{2}}
              (\int_0^\infty |f(x, \xi)|^2d\xi)^{\frac{1}{2}}.
\end{equation}
On the other hand, we apply the following one-dimensional Sobolev
inequality for $x-$variable to get
\begin{equation}\label{a22}
|f(x, y)|^2\le C(\|f(y)\|_{L^2(\mathbb{T})}^2+\|\p_x f(y)\|_{L^2(\mathbb{T})}^2),
\quad
|\p_y f(x, y)|^2\le C(\|\p_y f(y)\|_{L^2(\mathbb{T})}^2+\|\p_{xy} f(y)\|_{L^2(\mathbb{T})}^2).
\end{equation}
Therefore, substituting the estimate \eqref{a22} into \eqref{a21}, we complete
the proof of estimate \eqref{sobolev-1}.
\end{proof}

Now we will state the Moser type inequality as follow:

\begin{lemma}\label{moser}
Denote $\O:=\mathbb{T}\times \mathbb{R}^+$, let the proper
functions $f(t, x, y): \mathbb{R}^+\times \O \rightarrow \mathbb{R}$
and $g(t, x, y): \mathbb{R}^+ \times \O   \rightarrow \mathbb{R}$.
Then, there exists a constant $C_m>0$ such that
\begin{equation}\label{ineq-moser}
\int_0^t \|(\z^{\b} f \z^{\ga}g)(\t)\|_{L^2_l(\O)}^2d\t
\le C_m(\|\la y \ra^{l_1} f\|_{L^\infty_{x,y,t}}^2\int_0^t \|g\|_{\H^{m}_{l_2}}^2 d\t
       +\|\la y \ra^{l_1} g\|_{L^\infty_{x,y,t}}^2\int_0^t \|f\|_{\H^{m}_{l_2}}^2 d\t),
\end{equation}
where $|\b+\ga|=m$ and $l_1+l_2=l$.
\end{lemma}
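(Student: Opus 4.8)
\emph{Plan.} I would first observe that all three quantities in \eqref{ineq-moser} are norms over the space--time slab $\O_t:=(0,t)\times\T\times\mathbb{R}^+$: the left--hand side is $\|\z^\b f\,\z^\ga g\|_{L^2_l(\O_t)}^2$, the factor $\int_0^t\|g\|_{\H^m_{l_2}}^2\,d\tau$ equals $\sum_{|\a|\le m}\|\z^\a g\|_{L^2_{l_2}(\O_t)}^2$, and $\|\la y\ra^{l_1}f\|_{L^\infty_{x,y,t}}$ is the weighted $L^\infty(\O_t)$ norm; here $\p_t$ is simply one more tangential direction on $\O_t$, alongside $\p_x$, while $Z_2=\varphi(y)\p_y$ is the single degenerate field. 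Hence \eqref{ineq-moser} is exactly (the square of) the conormal Moser product inequality on $\O_t$,
\begin{equation}\label{moser-sp}
\|\z^\b f\,\z^\ga g\|_{L^2_l(\O_t)}\le C_m\big(\|\la y\ra^{l_1}f\|_{L^\infty(\O_t)}\|g\|_{\H^m_{l_2}(\O_t)}+\|\la y\ra^{l_1}g\|_{L^\infty(\O_t)}\|f\|_{\H^m_{l_2}(\O_t)}\big),\qquad|\b|+|\ga|=m,
\end{equation}
with the $\H$--notation extended in the obvious way to the slab. If $\b=0$ (or $\ga=0$) this is immediate from H\"{o}lder's inequality after the split $\la y\ra^{l}=\la y\ra^{l_1}\la y\ra^{l_2}$, so only the range $1\le|\b|,|\ga|\le m-1$ requires work.

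\emph{Main case.} Set $\th:=|\b|/m\in(0,1)$, so $|\ga|/m=1-\th$ and the exponents $\tfrac{2m}{|\b|}=\tfrac2\th$, $\tfrac{2m}{|\ga|}=\tfrac2{1-\th}$ are H\"{o}lder--conjugate for $L^2$. Splitting $\la y\ra^{l}=\la y\ra^{(1-\th)l_1+\th l_2}\,\la y\ra^{\th l_1+(1-\th)l_2}$ and applying H\"{o}lder, then the weighted conormal Gagliardo--Nirenberg--Moser inequality
\begin{equation}\label{wgn}
\big\|\la y\ra^{(1-s)a+sb}\,\z^\nu u\big\|_{L^{2/s}(\O_t)}\le C_m\,\|\la y\ra^{a}u\|_{L^\infty(\O_t)}^{1-s}\,\|u\|_{\H^m_{b}(\O_t)}^{s},\qquad s=\tfrac{|\nu|}{m}\in(0,1),
\end{equation}
used with $(a,b,s)=(l_1,l_2,\th)$ for $u=f$ (with $\nu=\b$) and with $(a,b,s)=(l_1,l_2,1-\th)$ for $u=g$ (with $\nu=\ga$), and finally Young's inequality $A^{1-\th}B^{\th}\le A+B$, I would obtain
\begin{equation*}
\|\z^\b f\,\z^\ga g\|_{L^2_l}\le C_m\big(\|\la y\ra^{l_1}f\|_{L^\infty}\|g\|_{\H^m_{l_2}}\big)^{1-\th}\big(\|\la y\ra^{l_1}g\|_{L^\infty}\|f\|_{\H^m_{l_2}}\big)^{\th}\le C_m\big(\|\la y\ra^{l_1}f\|_{L^\infty}\|g\|_{\H^m_{l_2}}+\|\la y\ra^{l_1}g\|_{L^\infty}\|f\|_{\H^m_{l_2}}\big);
\end{equation*}
the weight exponents produced by \eqref{wgn} are $(1-\th)l_1+\th l_2$ and $\th l_1+(1-\th)l_2$, whose sum is $l$, so the bookkeeping closes. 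Squaring and recalling that everything lives on $\O_t$ yields \eqref{ineq-moser}.

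\emph{The hard part.} The technical core is \eqref{wgn}. My plan is to straighten the degenerate field: the substitution $z=y+\log y$ maps $(0,\infty)$ diffeomorphically onto $\mathbb{R}$ and, because $z'(y)=1+\tfrac1y=\tfrac1{\varphi(y)}$, it turns $Z_2=\varphi(y)\p_y$ into $\p_z$ while fixing $\p_t,\p_x$; on the straightened slab $\p_t,\p_x,\p_z$ are honest coordinate derivatives. The weight is \emph{conormal--constant}: $Z_2\la y\ra^{r}=r\,y\,\la y\ra^{r-2}$ and $\p_t\la y\ra^{r}=\p_x\la y\ra^{r}=0$, so $|\z^\mu\la y\ra^{r}|\le C_{|\mu|,r}\la y\ra^{r}$ for every multi-index $\mu$ and every $r\in\mathbb{R}$. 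Consequently, by the Leibniz rule and up to lower--order conormal terms (handled by induction on $m$, i.e.\ by \eqref{wgn} with fewer derivatives), $\la y\ra^{(1-s)a+sb}\z^\nu u$ is controlled by $\z^\nu\!\big(\la y\ra^{(1-s)a+sb}u\big)$, to which I would apply the classical Euclidean Gagliardo--Nirenberg--Moser interpolation on the straightened slab, in the scaling form $\|D^{j}v\|_{L^{2m/j}}\le C(\|v\|_{L^\infty}^{1-j/m}\|D^{m}v\|_{L^2}^{j/m}+\|v\|_{L^\infty})$, valid in any space dimension, together with the elementary H\"{o}lder interpolation of the weights on the $L^\infty$ and $L^2$ sides. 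I expect the only genuinely delicate point to be this interplay between the straightening and the polynomial weight near $y=0$, where $\varphi(y)\sim y$ and $Z_2$ degenerates to the dilation field $y\p_y$, together with the careful tracking of the weight exponents so that the $L^\infty$ factors end up carrying $l_1$ and the $\H^m$ factors carry $l_2$; the remaining steps (the $m=1$ base case, Leibniz expansions, absorption of lower--order terms) are routine.
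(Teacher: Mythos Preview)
Your high-level plan is right and matches the paper: after disposing of the trivial endpoints $\b=0$ or $\ga=0$, split $\la y\ra^{l}=\la y\ra^{(1-\th)l_1+\th l_2}\cdot\la y\ra^{\th l_1+(1-\th)l_2}$, apply H\"older with exponents $2m/|\b|$ and $2m/|\ga|$, invoke a weighted conormal Gagliardo--Nirenberg interpolation for each factor, and conclude by Young. The paper follows exactly this scheme; the only difference is how the weighted interpolation step is proved.

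The paper does \emph{not} straighten. It runs Nirenberg's one-step trick directly for each conormal field $Z_i$, using the identity $|Z_if|^p=Z_i\big(fZ_if|Z_if|^{p-2}\big)-(p-1)fZ_i^2f|Z_if|^{p-2}$ together with an integration by parts against the weight $\la y\ra^{\th lp}$; for $Z_2$ the boundary term vanishes because $\varphi(0)=0$, and $\p_y\big(\la y\ra^{\th lp}\varphi\big)$ is bounded by $C\la y\ra^{\th lp-1}$, a harmless loss. Iterating this one-step estimate (with the same argument for $\p_t,\p_x$) produces the weighted interpolation with the weight exponents interpolating linearly, exactly the inequality you call (wgn). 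Your substitution $z=y+\log y$ does turn $Z_2$ into $\p_z$, but it also turns $dy$ into $\varphi(y)\,dz$. Since the norms in the lemma are $L^p(dy)$-based, a black-box Euclidean GN in $(t,x,z)$ would produce $\|D^mv\|_{L^2(dz)}$ on the right-hand side, and this is \emph{not} controlled by $\|u\|_{\H^m_{b}}$, which is an $L^2(dy)$ quantity: the discrepancy is a factor $\varphi^{-1/2}\sim y^{-1/2}$ near the boundary, and it is not absorbed by the polynomial weight $\la y\ra^{b}$ or by the Leibniz/lower-order induction you outline. The repair is to redo Nirenberg's IBP in the $z$-variable with the weight $\la y\ra^{\cdot}\varphi$ built in (this goes through because $|Z_2\varphi|\le\varphi$ and $|Z_2\la y\ra^{r}|\le C\la y\ra^{r}$), but at that point you are carrying out precisely the paper's computation in different coordinates. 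In short: the straightening is a fine heuristic, but it does not let you import a ready-made Euclidean GN; the actual content is the weighted Nirenberg trick.
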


\begin{proof}
For any $p\ge 2$, due to the relation $|Z_2 f|^p=Z_2(f Z_2 f |Z_2 f|^{p-2})-(p-1)f Z_2^2 f|Z_2 f|^{p-2}$,
we find
\begin{equation*}
\int_{\mathbb{R}^+} \la y \ra^{\theta l p}|Z_2 f|^p dy
=\int_{\mathbb{R}^+}\la y \ra^{\theta l p} Z_2(f Z_2 f |Z_2 f|^{p-2}) dy
 -(p-1) \int_{\mathbb{R}^+}\la y \ra^{\theta l p}f Z_2^2 f|Z_2 f|^{p-2}dy.
\end{equation*}
Integrating by part and applying the H\"{o}lder inequality, we find for $0 \le \theta \le 1$
and  $0\le \theta_1 \le \frac{\theta}{2}$ that
\begin{equation*}
\begin{aligned}
\|\la y \ra^{\theta l} Z_2 f\|_{L_0^p(\mathbb{R}^+)}^p
&\le C_p \int_{\mathbb{R}^+}\la y \ra^{\theta l p-1} |f|(|Z_2 f|+|Z_2^2 f|)|Z_2 f|^{p-2} dy\\
&\le C_p\|\la y \ra^{\theta l} Z_2 f\|_{L_0^p(\mathbb{R}^+)}^{p-2}
     \|\la y \ra^{\theta_1 l} f\|_{L_0^q(\mathbb{R}^+)}
     \|\la y \ra^{(2\theta-\theta_1) l}(|Z_2 f|,|Z_2^2 f|)\|_{L_0^r(\mathbb{R}^+)},
\end{aligned}
\end{equation*}
and hence, it follows
\begin{equation*}
\|\la y \ra^{\theta l} Z_2 f\|_{L_0^p(\mathbb{R}^+)}^2
\le C_p \|\la y \ra^{\theta_1 l} f\|_{L_0^q(\mathbb{R}^+)}
     \sum_{1\le k \le 2}\|\la y \ra^{(2\theta-\theta_1) l}Z_2^k f\|_{L_0^r(\mathbb{R}^+)}.
\end{equation*}
Here $\frac{1}{q}+\frac{1}{r}=\frac{2}{p}$.
Integrating with $t$ and $x$ variables, and applying H\"{o}lder inequality, we get
\begin{equation*}
\|\la y \ra^{\theta l} Z_2 f\|_{L^p(Q_T)}^2
\le C_p \|\la y \ra^{\theta_1 l} f\|_{L^q(Q_T)}
     \sum_{1\le k \le 2}\|\la y \ra^{(2\theta-\theta_1) l}Z_2^k f\|_{L^r(Q_T)}.
\end{equation*}
Here $Q_T=\O \times [0, T]$.
Similarly, it is easy to justify for $i=0,1$,
\begin{equation*}
\|\la y \ra^{\theta l} Z_i f\|_{L^p_0(Q_T)}^2
\le C_p \|\la y \ra^{\theta_1 l} f\|_{L^q_0(Q_T)}
     \sum_{1\le k \le 2}\|\la y \ra^{(2\theta-\theta_1) l}Z_i^k f\|_{L^r_0(Q_T)}.
\end{equation*}
Here $\frac{1}{q}+\frac{1}{r}=\frac{2}{p}$.
By multiple application of the above inequality, we get(proof by induction)
\begin{equation*}
\|\la y \ra^{(|\a|\theta-(|\a|-1)\theta_1)l} \z^\a f\|_{L^{p_1}_0(Q_T)}
\le C_p \|\la y \ra^{\theta_1 l} f\|_{L^{q_1}_0(Q_T)}^{1-\frac{|\a|}{m}}
     \sum_{1\le |\b| \le m}\|\la y \ra^{(m\theta-(m-1)\theta_1) l} \z^{\b} f\|_{L^{r_1}_0(Q_T)}^{{\frac{|\a|}{m}}},
\end{equation*}
where $\frac{1}{p_1}=\frac{1}{q_1}(1-\frac{|\a|}{m})+\frac{|\a|}{r_1 m}$ and $1\le |\a| \le m-1$.
Then, we get for $|\beta|+|\gamma|=m$ that
\begin{equation*}
\begin{aligned}
\|\la y \ra^{l}\z^{\b}f \z^{\ga} g\|_{L^2_0(Q_T)}^2
&\le C \|\la y \ra^{\frac{|\b|}{m}l+(1-\frac{2|\b|}{m})l_1} \z^{\b}f \|_{L^{\frac{2m}{|\b|}}_0(Q_T)}^2
          \|\la y \ra^{\frac{|\ga|}{m}l+(\frac{2|\b|}{m}-1)l_1}\z^{\ga} f\|_{L^{\frac{2m}{|\ga|}}_0(Q_T)}^2\\
&\le C_m \|\la y \ra^{l_1}f\|_{L^\infty_0(Q_T)}^{2(1-\frac{|\b|}{m})}
          \sum_{1\le |\b| \le m}\|\la y \ra^{l-l_1}\z^{\b}f\|_{L^2_0(Q_T)}^{\frac{2|\b|}{m}}\\
&\quad \quad \times
          \|\la y \ra^{l_1} g\|_{L^\infty_0(Q_T)}^{2(1- \frac{|\ga|}{m})}
          \sum_{1\le |\ga| \le m}\|\la y \ra^{l-l_1}\z^{\ga} g\|_{L^2_0(Q_T)}^{\frac{2|\ga|}{m}}\\
&\le C_m \|f\|_{L^\infty_{l_1}(Q_T)}^2\sum_{1\le |\b| \le m}\|\z^\b g\|_{L^2_{l-l_1}(Q_T)}^2
          +C_m \|g\|_{L^\infty_{l_1}(Q_T)}^2\sum_{1\le |\b| \le m}\|\z^\b f\|_{L^2_{l-l_1}(Q_T)}^2.
\end{aligned}
\end{equation*}
Therefore, we complete the proof of this lemma.
\end{proof}

Finally, we establish the following $L^\infty-$estimate with weight for the heat equation.

\begin{lemma}\label{A-heat}
For the heat equation $\p_t F(t, x)-\es \p_x^2 F(t, x)=G(t, x), \ (t, x) \in \mathbb{R}^+\times \mathbb{R}^+$;
with the boundary condition $F(t, x)|_{x=0}=0$ and initial data $F(t, x)|_{t=0}=F_0$.
Then, it holds on
\begin{equation}\label{eheat}
\|x\p_x F\|_{L^\infty_0(\mathbb{R}^+)}\le  C(\|F_0\|_{L^\infty_0(\mathbb{R}^+)}+\|x\p_x F_0\|_{L^\infty_0(\mathbb{R}^+)})
+C\int_0^t (\|G\|_{L^\infty_0(\mathbb{R}^+)}+\|x\p_x G\|_{L^\infty_0(\mathbb{R}^+)}) d\t,
\end{equation}
where $C$ is a constant independent of the parameter $\es$.
\end{lemma}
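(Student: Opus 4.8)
The plan is to reduce to the case $\es=1$ by a parabolic rescaling, and then to read the estimate off the explicit Dirichlet heat kernel on the half-line, using only two elementary $L^1$-bounds for Gaussian moments. First I would rescale: setting $\widetilde F(t,x):=F(t,\sqrt{\es}\,x)$, the function $\widetilde F$ solves $\p_t\widetilde F-\p_x^2\widetilde F=\widetilde G$ with $\widetilde G(t,x):=G(t,\sqrt{\es}\,x)$, the homogeneous Dirichlet condition $\widetilde F|_{x=0}=0$ and initial datum $\widetilde F_0(x):=F_0(\sqrt{\es}\,x)$. Since $x\p_x\widetilde F(t,x)=(\xi\p_\xi F)(t,\sqrt{\es}\,x)$, and likewise for $\widetilde F_0$ and $\widetilde G$, every $L^\infty_0$-norm occurring in \eqref{eheat} is invariant under this change of variables; hence it suffices to prove \eqref{eheat} for $\es=1$, and the resulting constant is automatically $\es$-independent.

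For $\es=1$ I would use Duhamel's formula with the half-line Dirichlet kernel obtained by odd reflection, $G_1(t,x,z):=\Gamma_t(x-z)-\Gamma_t(x+z)$, where $\Gamma_t(x)=(4\pi t)^{-1/2}e^{-x^2/4t}$:
\[
F(t,x)=\int_0^\infty G_1(t,x,z)F_0(z)\,dz+\int_0^t\!\!\int_0^\infty G_1(t-s,x,z)G(s,z)\,dz\,ds ,
\]
which indeed satisfies $F|_{x=0}=0$ because $G_1(t,0,z)=0$. The whole estimate then follows from a single uniform bound for the solution operator $S(t)\phi(x):=\int_0^\infty G_1(t,x,z)\phi(z)\,dz$, namely
\[
\bigl\|x\p_x\bigl(S(t)\phi\bigr)\bigr\|_{L^\infty_0(\mathbb{R}^+)}\le C\bigl(\|\phi\|_{L^\infty_0(\mathbb{R}^+)}+\|x\p_x\phi\|_{L^\infty_0(\mathbb{R}^+)}\bigr),\qquad t>0 ,
\]
applied with $\phi=F_0$ at time $t$, and with $\phi=G(s,\cdot)$ at time $t-s$ followed by integration over $s\in[0,t]$.

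To prove this operator bound I would split, inside the integral, $x\p_x=(x-z)\p_x+z\p_x$. For the first piece the Gaussian identity $\p_x\Gamma_t(\xi)=-\frac{\xi}{2t}\Gamma_t(\xi)$ gives $\int_{\mathbb{R}}|\xi\,\p_x\Gamma_t(\xi)|\,d\xi=1$, and using $|x-z|\le x+z$ on the quarter-plane one checks that $\int_0^\infty|(x-z)\,\p_x G_1(t,x,z)|\,dz\le C$ uniformly in $(t,x)$, so this piece is controlled by $\|\phi\|_{L^\infty_0}$. For the second piece the key identity is $\p_x G_1=-\p_z H_1$ with $H_1(t,x,z):=\Gamma_t(x-z)+\Gamma_t(x+z)$ the Neumann kernel, for which $\int_0^\infty H_1(t,x,z)\,dz=1$; integrating by parts in $z$, the boundary term at $z=0$ vanishes because of the factor $z$ while the one at $z=+\infty$ vanishes by Gaussian decay, giving $\int_0^\infty z\,\p_x G_1\,\phi\,dz=\int_0^\infty H_1\,(\phi+z\p_z\phi)\,dz$, which is bounded by $\|\phi\|_{L^\infty_0}+\|z\p_z\phi\|_{L^\infty_0}$. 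Adding the two contributions yields the operator bound.

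I expect the main obstacle to be bookkeeping rather than conceptual depth: one must verify that all the kernel $L^1_z$-norms are genuinely uniform in $t$ — which is exactly what the Gaussian normalizations $\int\Gamma_t\,d\xi=1$ and $\int\xi^2\Gamma_t\,d\xi=2t$ provide — and that the integrations by parts are legitimate, i.e.\ that the boundary contributions at $z=0$ and $z=+\infty$ really drop. It is essential that the weight in $x\p_x$ supplies the vanishing factor $z$ at $z=0$; this is precisely why no analogous $\es$-uniform $L^\infty$ bound holds for a bare second normal derivative such as $\|\p_y^2\vr\|_{L^\infty_0(\O)}$, which had to be circumvented in the body of the paper.
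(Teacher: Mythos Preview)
Your proof is correct and follows essentially the same route as the paper: represent the solution via the heat kernel (the paper uses odd extension to the whole line, you use the equivalent half-line Dirichlet kernel $G_1=\Gamma_t(x-z)-\Gamma_t(x+z)$), split $x\p_x=(x-z)\p_x+z\p_x$, bound the first piece by the Gaussian moment $\int|\xi\,\p_x\Gamma_t(\xi)|\,d\xi=1$, handle the second by integration by parts (your identity $\p_x G_1=-\p_z H_1$ is exactly the half-line incarnation of the paper's $\p_x S=-\p_\xi S$), and finish with Duhamel. Your preliminary rescaling to $\es=1$ is a nice touch that makes the $\es$-independence of $C$ transparent, but is otherwise the only addition; the arguments are the same in substance.
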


\begin{proof}
First of all, let us consider the heat equation
\begin{equation}\label{a41}
\p_t H(t, x)-\es \p_x^2 H(t, x)=0,\quad (t, x) \in \mathbb{R}^+\times \mathbb{R}^+;
\end{equation}
with the initial data and boundary condition
\begin{equation*}
H(t, x)|_{t=0}=H_0(x),\quad x \in \mathbb{R}^+;
\quad H(t, x)|_{x=0}=0,\quad  t \in \mathbb{R}^+.
\end{equation*}
In order to transform the problem \eqref{a41} into a problem in the whole space,
let us define $\widetilde{H}(t, x)$ by
\begin{equation*}\label{a42}
\widetilde{H}(t, x)= H(t, x), \ x>0;\quad
\widetilde{H}(t, x)= -H(t, -x), \ x<0,
\end{equation*}
and define the initial data $\widetilde{H}_0(x)$ by
\begin{equation*}
\widetilde{H}_0(x)= H_0(x), \ x>0;\quad
\widetilde{H}_0(x)= -H_0(-x), \ x<0.
\end{equation*}
It is easy to justify that the function $\widetilde{H}(t, x)$ solves the following evolution equation
\begin{equation}\label{a43}
\p_t \widetilde{H}(t, x)-\es \p_x^2 \widetilde{H}(t, x)=0,\quad (t, x) \in \mathbb{R}^+\times \mathbb{R};
\quad \widetilde{H}(t, x)|_{t=0}=\widetilde{H}_0(x), \quad x \in \mathbb{R}.
\end{equation}
Define $S(t, x)=\frac{1}{\sqrt{4\pi \es t}}e^{-\frac{|x|^2}{\sqrt{4\es t}}}$,
then the solution of evolution \eqref{a43} can be expressed as
\begin{equation}\label{seq}
\widetilde{H}(t, x)=\int_{\mathbb{R}} \widetilde{H}_0(\xi)S(t, x-\xi)d\xi,
\end{equation}
which implies directly
\begin{equation*}\label{a44}
x \p_x \widetilde{H}(t, x)=\int_{\mathbb{R}} \widetilde{H}_0(\xi)x \p_x S(t, x-\xi)d\xi.
\end{equation*}
In view of the relation
$x \p_x S(t, x-\xi)=(x-\xi)\p_x S(t, x-\xi)+\xi\p_x S(t, x-\xi)$, we get
\begin{equation*}\label{a45}
x \p_x \widetilde{H}(t, x)=\int_{\mathbb{R}} \widetilde{H}_0(\xi)(x-\xi) \p_x S(t, x-\xi)d\xi
+\int_{\mathbb{R}} \widetilde{H}_0(\xi)\xi \p_x S(t, x-\xi)d\xi.
\end{equation*}
Due to $\int_{\mathbb{R}}|(x-\xi) \p_x S(t, x-\xi)|d\xi \le C$, it follows
\begin{equation*}\label{a46}
|\int_{\mathbb{R}} \widetilde{H}_0(\xi)(x-\xi) \p_x S(t, x-\xi)d\xi|
\le C\|\widetilde{H}_0\|_{L^\infty_0(\mathbb{R})}.
\end{equation*}
Using the equality $\p_x S(t, x-\xi)=-\p_{\xi} S(t, x-\xi)$, the integration by part yields directly
\begin{equation*}\label{a47}
|\int_{\mathbb{R}}\widetilde{H}_0(\xi)\xi \p_x S(t, x-\xi)d\xi|
\le C(\|\widetilde{H}_0\|_{L^\infty_0(\mathbb{R})}
+\|x \p_x \widetilde{H}_0\|_{L^\infty_0(\mathbb{R})}),
\end{equation*}
and hence, we get
\begin{equation*}\label{a48}
\begin{aligned}
\|x \p_x H (t, x)\|_{L^\infty_0(\mathbb{R}^+)} \le
\|x \p_x \widetilde{H}(t, x)\|_{L^\infty_0(\mathbb{R})}
\le C \|(\widetilde{H}_0, x \p_x \widetilde{H}_0)\|_{L^\infty_0(\mathbb{R})})
\le C (\|{H}_0\|_{L^\infty_0(\mathbb{R}^+)}
        +\|x \p_x {H}_0\|_{L^\infty_0(\mathbb{R}^+)}).
\end{aligned}
\end{equation*}
This, along with representation \eqref{seq} and the
well-known Duhamel formula, we complete the proof of this lemma.
\end{proof}

\section{Almost Equivalence of Weighted Norms}\label{appendixB}

In this subsection we will use the quantity $\h_m$ in weighted norm,
and $\h$ and its derivatives in $L^\infty$ norm to control the quantities $Z_\t^{\a_1}\h$ and $Z_\t^{\a_1}\ps$
in weighted norm. To derive these estimates, we shall apply the Lemma \ref{Hardy},
which was introduced previously in Section \ref{appendixA}.

\begin{lemma}\label{equivalent}
Let the stream function $\ps(t, x, y)$ satisfies
$\p_y \ps=\h, \p_x \ps = -\g, \ps|_{y=0}=0$.
There exists a constant $\d\in (0, 1)$, such that $\h(t, x, y)+1\ge \d, \forall(t, x, y)\in [0, T]\times \O$.
Then, for $l \ge 1$ and $|\a_1|=m$, we have the following estimates:
\begin{equation}\label{b11}
\|\frac{Z^{\a_1}_\t \ps}{\h+1}\|_{L^2_{l-1}(\O)}\le \frac{2 \d^{-1}}{2l-1} \| \h_m \|_{L^2_l(\O)},
\end{equation}
\begin{equation}\label{b12}
\|Z^{\a_1}_\t \h\|_{L^2_l}(\O)  \le \|\h_m\|_{L^2_l(\O)}
+\frac{2 \d^{-1}}{2l-1} \|\p_y \h\|_{L^\infty_1(\O)} \| \h_m \|_{L^2_l(\O)},
\end{equation}
\begin{equation}\label{b13}
\|\frac{\p_x Z^{\a_1}_\t \ps}{\h+1}\|_{L^2_{l-1}(\O)}
\le \frac{2\d^{-1}}{2l-1} \|\p_x \h_m\|_{L^2_{l}(\O)}
    +\frac{4\d^{-2}}{2l-1} \|\p_x \h\|_{L^\infty_0(\O)}\|\h_m\|_{L^2_{l}(\O)},
\end{equation}
\begin{equation}\label{b14}
\|\p_y Z^{\a_1}_\t \h\|_{L^2_l(\O)}
\le \|\p_y \h_m \|_{L^2_l(\O)}+C_l \d^{-1}(\|\p_y \h \|_{L^\infty_0(\O)}
    +\|Z_2 \p_y  \h\|_{L^\infty_1(\O)})\| \h_m\|_{L^2_l(\O)},
\end{equation}
where the constant $C_l$ depends only on $l$.
\end{lemma}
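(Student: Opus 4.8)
The plan is to reduce all four estimates to a single algebraic identity combined with the Hardy inequality of Lemma~\ref{Hardy}. Set $F := \dfrac{Z_\t^{\a_1}\ps}{\h+1}$. Since $Z_\t=(\p_t,\p_x)$ commutes with $\p_y$ and $\p_y\ps=\h$, one has $\p_y(Z_\t^{\a_1}\ps)=Z_\t^{\a_1}\h$, so that, recalling $\h_m=Z_\t^{\a_1}\h-\eta_h Z_\t^{\a_1}\ps$ with $\eta_h=\p_y\h/(\h+1)$,
\begin{equation*}
\p_y F=\frac{Z_\t^{\a_1}\h}{\h+1}-\frac{\p_y\h}{(\h+1)^2}Z_\t^{\a_1}\ps=\frac{\h_m}{\h+1}.
\end{equation*}
Moreover $F|_{y=0}=0$ because $\ps|_{y=0}=0$ and $Z_\t$ is tangential, and $F$ decays as $y\to\infty$ since $\|\p_y F\|_{L^2_l}\le\d^{-1}\|\h_m\|_{L^2_l}$ is finite and $F(\cdot,0)=0$; hence $F$ is admissible for Lemma~\ref{Hardy}. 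Applying that lemma with $\lambda=l-1\ge0$ and using $\h+1\ge\d$ gives \eqref{b11} directly:
\begin{equation*}
\|F\|_{L^2_{l-1}(\O)}\le\frac{2}{2l-1}\|\p_y F\|_{L^2_l(\O)}=\frac{2}{2l-1}\Big\|\frac{\h_m}{\h+1}\Big\|_{L^2_l(\O)}\le\frac{2\d^{-1}}{2l-1}\|\h_m\|_{L^2_l(\O)}.
\end{equation*}

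Estimate \eqref{b12} then follows from $Z_\t^{\a_1}\h=\h_m+(\p_y\h)F$ by splitting $\la y\ra^{l}=\la y\ra\,\la y\ra^{l-1}$, bounding $\|(\p_y\h)F\|_{L^2_l}\le\|\p_y\h\|_{L^\infty_1}\|F\|_{L^2_{l-1}}$, and inserting \eqref{b11}. For \eqref{b13}, I differentiate the identity $\p_y F=\h_m/(\h+1)$ in $x$ to obtain $\p_y(\p_x F)=\dfrac{\p_x\h_m}{\h+1}-\dfrac{\h_m\,\p_x\h}{(\h+1)^2}$ with $\p_x F|_{y=0}=0$; applying Lemma~\ref{Hardy} to $\p_x F$ and using $\h+1\ge\d$ bounds $\|\p_x F\|_{L^2_{l-1}}$ by $\frac{2}{2l-1}\big(\d^{-1}\|\p_x\h_m\|_{L^2_l}+\d^{-2}\|\p_x\h\|_{L^\infty_0}\|\h_m\|_{L^2_l}\big)$. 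Then converting back through $\dfrac{\p_x Z_\t^{\a_1}\ps}{\h+1}=\p_x F+\dfrac{\p_x\h}{\h+1}F$ and estimating the last summand by $\d^{-1}\|\p_x\h\|_{L^\infty_0}\|F\|_{L^2_{l-1}}$ together with \eqref{b11} yields exactly \eqref{b13}.

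The only genuinely delicate estimate is \eqref{b14}. Differentiating $Z_\t^{\a_1}\h=\h_m+(\p_y\h)F$ in $y$ and using $\p_y F=\h_m/(\h+1)$ gives
\begin{equation*}
\p_y Z_\t^{\a_1}\h=\p_y\h_m+(\p_y^2\h)\,F+\frac{\p_y\h}{\h+1}\,\h_m,
\end{equation*}
where the last term is harmless, bounded by $\d^{-1}\|\p_y\h\|_{L^\infty_0}\|\h_m\|_{L^2_l}$. The term $(\p_y^2\h)F$ is the obstacle, since $\p_y^2\h$ is not controlled uniformly in $\es$ and is not part of the conormal norms. I would trade the normal derivative for a conormal one via $\p_y^2\h=\varphi^{-1}Z_2\p_y\h$, so that $(\p_y^2\h)F=(Z_2\p_y\h)\,\varphi^{-1}F$, and absorb the $\varphi^{-1}$ singularity against the vanishing of $F$ at $y=0$: writing $F(y)=\int_0^y\p_y F$, the quotient $\varphi^{-1}F=(1+y^{-1})F$ is an averaging-type operator applied to $\p_y F=\h_m/(\h+1)$, so a Hardy-type inequality yields the auxiliary bound $\|\varphi^{-1}F\|_{L^2_{l-1}(\O)}\le C_l\d^{-1}\|\h_m\|_{L^2_l(\O)}$ (the same bound that enters the proof of Lemma~\ref{lemma33}); then $\|(Z_2\p_y\h)\varphi^{-1}F\|_{L^2_l}\le\|Z_2\p_y\h\|_{L^\infty_1}\|\varphi^{-1}F\|_{L^2_{l-1}}$ produces the $\|Z_2\p_y\h\|_{L^\infty_1}$ contribution in \eqref{b14}. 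I expect this last Hardy-type bound on $\varphi^{-1}F$ to be the main technical point; everything else is bookkeeping with the weight $\la y\ra$ and the lower bound $\h+1\ge\d$.
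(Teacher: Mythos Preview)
Your proof is correct and follows essentially the same route as the paper. The paper introduces the same identity $\p_y\!\left(\dfrac{Z_\t^{\a_1}\ps}{\h+1}\right)=\dfrac{\h_m}{\h+1}$ (written there in integral form $\dfrac{Z_\t^{\a_1}\ps}{\h+1}=\int_0^y\dfrac{\h_m}{\h+1}\,d\xi$), applies Lemma~\ref{Hardy} for \eqref{b11}--\eqref{b13}, and for \eqref{b14} makes exactly your substitution $\p_y^2\h=\varphi^{-1}Z_2\p_y\h$ together with the auxiliary Hardy bound $\bigl\|\varphi^{-1}\tfrac{Z_\t^{\a_1}\ps}{\h+1}\bigr\|_{L^2_{l-1}}\le C_l\d^{-1}\|\h_m\|_{L^2_l}$; your slight variant for \eqref{b13} (differentiating $\p_y F$ rather than $(\h+1)F$) is an equivalent bookkeeping choice.
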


\begin{proof}
(i) By virtue of the definition $\h_m=Z_\t^{\a_1}\h-\frac{\p_y \h}{\h+1} Z_\t^{\a_1} \ps$,
it is easy to obtain $\h_m=(\h+1)\p_y(\frac{Z_\t^{\a_1} \ps}{\h+1})$.
Integrating over $[0, y]$ and applying the boundary condition $\ps|_{y=0}=0$, we have
\begin{equation}\label{b15}
\frac{Z_\t^{\a_1} \ps}{\h+1}= \int_0^y \frac{\h_m}{\h+1}d\xi,
\end{equation}
and along with the Hardy inequality \eqref{Hardy}, yields directly
\begin{equation}\label{b16}
\| \frac{Z_\t^{\a_1} \ps}{\h+1} \|_{L^2_{l-1}(\O)}
\le \| \int_0^y \frac{\h_m}{\h+1}d\xi\|_{L^2_{l-1}(\O)}
\le  \frac{2}{2l-1}\|\frac{\h_m}{\h+1} \|_{L^2_l(\O)}
\le  \frac{2 \d^{-1}}{2l-1} \| \h_m \|_{L^2_l(\O)},
\end{equation}
where we have used the fact $\h+1\ge \d$ in the last inequality.

(ii)In view of the relation $Z_\t^{\a_1}\h=\h_m+\frac{\p_y \h}{\h+1} Z_\t^{\a_1} \ps$, we get
\begin{equation*}
\|Z^{\a_1}_\t \h\|_{L^2_l(\O)}
\le \|\h_m\|_{L^2_l(\O)}+\|\p_y \h\|_{L^\infty_1(\O)}
    \|\frac{ Z^{\a_1}_\t \ps}{\h+1}\|_{L^2_{l-1}(\O)},
\end{equation*}
which, together with estimate \eqref{b16}, yields directly
\begin{equation}\label{b17}
\|Z^{\a_1}_\t \h\|_{L^2_l(\O)}
\le \|\h_m\|_{L^2_l(\O)}+\frac{2 \d^{-1}}{2l-1} \|\p_y \h\|_{L^\infty_1(\O)} \| \h_m \|_{L^2_l(\O)}.
\end{equation}

(iii)Differentiating the equality ${Z_\t^{\a_1} \ps}=(\h+1)\int_0^y \frac{\h_m}{\h+1}d\xi$
with $x$ variable, we find
\begin{equation*}
\p_x Z_\t^{\a_1} \ps
=\p_x \h \int_0^y \frac{\h_m}{\h+1} d\xi
+(\h+1)\int_0^y \frac{\p_x \h_m}{\h+1}d\xi
-(\h+1)\int_0^y \frac{\h_m \p_x \h}{(\h+1)^2}d\xi,
\end{equation*}
which, implies that
\begin{equation*}
\frac{\p_x Z_\t^{\a_1} \ps}{\h+1}
=\frac{\p_x \h}{\h+1}\int_0^y \frac{\h_m}{\h+1} d\xi
+\int_0^y \frac{\p_x \h_m}{\h+1}d\xi
-\int_0^y \frac{\h_m \p_x \h}{(\h+1)^2}d\xi,
\end{equation*}
and hence, we apply the Hardy inequality \eqref{Hardy} and $\h+1 \ge \d$ to get
\begin{equation}\label{b18}
\begin{aligned}
\|\frac{\p_x Z^{\a_1}_\t \ps}{\h+1}\|_{L^2_{l-1}(\O)}
&\le  \frac{4}{2l-1}\|\frac{\p_x \h}{\h+1}\|_{L^\infty_0(\O)}\|\frac{\h_m}{\h+1}\|_{L^2_{l}(\O)}
      +\frac{2}{2l-1} \| \frac{\p_x \h_m}{\h+1} \|_{L^2_{l}(\O)}\\
&\le  \frac{2\d^{-1}}{2l-1} \|\p_x \h_m\|_{L^2_{l}(\O)}
      +\frac{4\d^{-2}}{2l-1} \|\p_x \h\|_{L^\infty_0(\O)}\|\h_m\|_{L^2_{l}(\O)}.
\end{aligned}
\end{equation}
(iv)Differentiating the equality $Z_\t^{\a_1}\h=\h_m+\frac{\p_y \h}{\h+1} Z_\t^{\a_1} \ps$
with the $y$ variable, it follows
\begin{equation*}
\p_y Z^{\a_1}_\t \h
=\p_y \h_m +\p_y^2 \h \frac{Z^{\a_1}_\t \ps}{\h+1}+\p_y \h \p_y ( \frac{Z^{\a_1}_\t \ps}{\h+1} ),
\end{equation*}
which, together with the relation \eqref{b15}, yields
\begin{equation*}
\begin{aligned}
\p_y Z^{\a_1}_\t \h
&=\p_y \h_m +\p_y^2 \h \int_0^y \frac{\h_m}{\h+1}d\xi+\eta_h \h_m\\
&=\p_y \h_m +Z_2 \p_y \h \frac{1}{\varphi(y)}\int_0^y \frac{\h_m}{\h+1}d\xi+\eta_h \h_m,
\end{aligned}
\end{equation*}
where $\varphi(y)=\frac{y}{1+y}$ and  $\eta_h=\frac{\p_y \h}{\h+1}$.
The application of Hardy inequality \eqref{Hardy} and $\h+1\ge \d$ yields
\begin{equation}\label{b19}
\begin{aligned}
\|\p_y Z^{\a_1}_\t \h\|_{L^2_l(\O)}
\le&  \|\p_y \h_m \|_{L^2_l(\O)}+\frac{2\d^{-1}}{2l-1}\|Z_2 \p_y  \h\|_{L^\infty_1(\O)}
      \| \h_m \|_{L^2_{l}(\O)}\\
   &  +\d^{-1}\|\p_y \h \|_{L^\infty_0(\O)}\| \h_m\|_{L^2_l(\O)}\\
\le&  \|\p_y \h_m \|_{L^2_l(\O)}
      +\frac{(2l+1) \d^{-1}}{2l-1}(\|\p_y \h \|_{L^\infty_0(\O)}
      +\|Z_2 \p_y \h\|_{L^\infty_1(\O)})\| \h_m\|_{L^2_l(\O)}.
\end{aligned}
\end{equation}
Therefore, the estimates \eqref{b16}-\eqref{b19} imply the estimates \eqref{b11}-\eqref{b14}.
\end{proof}

Let us define
\begin{equation}\label{xdef}
Y_{m,l}(t):=1+{{\|(\vr, \u, \h)(t)\|_{\H^m_l}^2}}+\|\p_y(\vr, \u, \h)(t)\|_{\H^{m-1}_l}^2.
+\| \p_y \vr (t)\|_{\H^{1,\infty}_1}^2,
\end{equation}
and hence we will establish the following almost equivalent relation.

\begin{lemma}\label{equi-control}
Let $(\vr, \u, \v, \h, \g)$ be sufficiently smooth solution, defined on $[0, T^\es]$,
to the regularized MHD boundary layer equations \eqref{eq5}-\eqref{bc5}.
There exists a constant $\d\in (0, 1)$, such that $\h(t, x, y)+1\ge \d, \forall(t, x, y)\in [0, T]\times \O$.
Then, for $m \ge 4$ and $l \ge 1$, it holds on
\begin{equation}\label{ab1}
\Theta_{m,l}(t)
\le C\|(\rho_0, u_{10}, h_{10})\|_{\overline{\mathcal{B}}^m_l}^4
     +C t \|(\rho_0, u_{10}, h_{10})\|_{\widehat{\mathcal{B}}^m_l}^4
     +C_l \d^{-8}\n_{m,l}^{12}(t),
\end{equation}
and
\begin{equation}\label{ab2}
\n_{m,l}(t)
\le C\|(\rho_0, u_{10}, h_{10})\|_{\overline{\mathcal{B}}^m_l}^4
     +C t \|(\rho_0, u_{10}, h_{10})\|_{\widehat{\mathcal{B}}^m_l}^4
+C\d^{-8} \Theta_{m,l}^{12}(t),
\end{equation}
where $\Theta_{m,l}(t)$ and $\n_{m,l}(t)$ are defined in \eqref{3a1} and \eqref{3d2} respectively.
\end{lemma}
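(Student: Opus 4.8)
\textbf{Proof proposal for Lemma \ref{equi-control}.} The plan is to establish the two-sided almost-equivalence between $\Theta_{m,l}(t)$ (equivalently $\n_{m,l}(t)$, whose time-integral parts coincide) and the functional $Y_{m,l}(t)$ defined in \eqref{xdef}, and then close the loop using the relation $X_{m,l}(t)\sim Y_{m,l}(t)$ already built from the quantities $(\vr_m,\u_m,\h_m)$. The only genuine difference between $\n_{m,l}(t)$ and $\Theta_{m,l}(t)$ is that the former carries the tangential quantities $\|(\vr_m,\u_m,\h_m)\|_{L^2_l}^2$ in place of (part of) the top-order conormal norm $\|(\vr,\u,\h)\|_{\H^m_l}^2$; the low-order term $\e_{m,l}(t)$, the normal-derivative term $\|\p_y(\vr,\u,\h)\|_{\H^{m-1}_l}^2$, the $L^\infty$ term $\|\p_y\vr\|_{\H^{1,\infty}_1}^2$, and all the dissipation integrals are literally the same in both. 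So the task reduces to controlling $\|Z_\t^{\a_1}(\vr,\u,\h)\|_{L^2_l}$ ($|\a_1|=m$) by $\|(\vr_m,\u_m,\h_m)\|_{L^2_l}$ plus lower-order data, and conversely.

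First I would prove \eqref{ab2} (bounding $\n_{m,l}$ by $\Theta_{m,l}$). The definitions \eqref{equi-r}, \eqref{eqi-u}, \eqref{equi-h} give $\vr_m=Z_\t^{\a_1}\vr-\eta_\rho Z_\t^{\a_1}\ps$, etc., with $\eta_\rho=\p_y\vr/(\h+1)$, $\eta_u=\p_y(\u-e^{-y})/(\h+1)$, $\eta_h=\p_y\h/(\h+1)$. Since $\h+1\ge\d$, each $\eta$ is bounded in $L^\infty_1$ by $\d^{-1}$ times a $\p_y$-$L^\infty$ quantity controlled by $Q(t)$ (hence, via Lemma \ref{Lemma310} and the definition of $X_{m,l}$, by a power of $\Theta_{m,l}$). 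Using estimate \eqref{b11} to control $\|Z_\t^{\a_1}\ps/(\h+1)\|_{L^2_{l-1}}\le C_l\d^{-1}\|\h_m\|_{L^2_l}$, one gets $\|\vr_m\|_{L^2_l}\le \|Z_\t^{\a_1}\vr\|_{L^2_l}+C_l\d^{-1}\|\p_y\vr\|_{L^\infty_1}\|\h_m\|_{L^2_l}$, and rearranging (the $\|\h_m\|$ term being absorbable once $\|\h_m\|\le\|Z_\t^{\a_1}\h\|_{L^2_l}+\cdots$ is used, or simply by treating the three quantities as a linear triangular system) yields $\|(\vr_m,\u_m,\h_m)\|_{L^2_l}\le C\d^{-1}(1+\|\p_y(\vr,\u,\h)\|_{L^\infty_1})\|(\vr,\u,\h)\|_{\H^m_l}$. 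The $L^\infty$ factor is a power of $\Theta_{m,l}$ by Proposition \ref{Infinity-estimate}/Lemma \ref{Lemma310}; the $\p_y$-normal norms and $\e_{m,l}$ transfer verbatim; collecting the powers and invoking the data bounds on $r_u,r_h,r_1,r_2$ (which come from $\|(\rho_0,u_{10},h_{10})\|_{\widehat{\mathcal B}^m_l}$ with the $t$-factor from \eqref{rdef}) gives the stated polynomial $C\|\cdot\|_{\overline{\mathcal B}^m_l}^4+Ct\|\cdot\|_{\widehat{\mathcal B}^m_l}^4+C\d^{-8}\Theta_{m,l}^{12}$.

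For the reverse inequality \eqref{ab1}, I would invert the relations: from $Z_\t^{\a_1}\vr=\vr_m+\eta_\rho Z_\t^{\a_1}\ps$ (and likewise for $\u,\h$), estimate \eqref{b12} controls $\|Z_\t^{\a_1}\h\|_{L^2_l}$ by $\|\h_m\|_{L^2_l}$ and $\|\p_y\h\|_{L^\infty_1}$, while $\|\eta_\rho Z_\t^{\a_1}\ps\|_{L^2_l}\le\|\p_y\vr\|_{L^\infty_1}\|Z_\t^{\a_1}\ps/(\h+1)\|_{L^2_{l-1}}\le C_l\d^{-1}\|\p_y\vr\|_{L^\infty_1}\|\h_m\|_{L^2_l}$ by \eqref{b11}; similarly for $\u_m$. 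This recovers $\|(\vr,\u,\h)\|_{\H^m_l}^2$ (hence all of $\Theta_{m,l}$'s sup-part except the pieces already shared) up to $\d^{-}$-powers and $L^\infty$ factors. The dissipation-integral parts of $\Theta_{m,l}$ — $\es\int\|\p_x\z^\a(\vr,\u,\h)\|^2$, $\int\|\p_y(\sqrt\es\vr,\sqrt\mu\u,\sqrt\k\h)\|^2$ and their $\H^{m-1}_l$ $\p_{xy},\p_y^2$ analogues — are identical to the corresponding parts of $\n_{m,l}$, so no work is needed there. One then bounds the $L^\infty$ factors by powers of $\n_{m,l}$ via Lemma \ref{Lemma310}, raises to the required power, and absorbs the initial-data contributions, producing the claimed $C\|\cdot\|_{\overline{\mathcal B}^m_l}^4+Ct\|\cdot\|_{\widehat{\mathcal B}^m_l}^4+C_l\d^{-8}\n_{m,l}^{12}$.

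The main obstacle is bookkeeping rather than a deep estimate: one must be careful that the $L^\infty$ quantities appearing as multipliers ($\|\p_y(\vr,\u,\h)\|_{L^\infty_1}$, $\|\p_x\h\|_{L^\infty_0}$, $Z_2\p_y\h$, etc.) are themselves controlled — through Lemma \ref{Lemma310} and Proposition \ref{Infinity-estimate} — by a fixed power of the \emph{other} side of the inequality, so that the final exponents ($4$ on the data norms, $12$ on $\n_{m,l}$ resp. $\Theta_{m,l}$) are consistent and no circular absorption is attempted. A secondary point is that $\ps$ itself is only defined through $\p_y\ps=\h$, $\ps|_{y=0}=0$; all $\ps$-estimates must be routed through the Hardy-type bounds of Lemma \ref{equivalent} (i.e. \eqref{b11}--\eqref{b14}) using $\h+1\ge\d$, which is exactly why the $\d^{-}$ powers proliferate and why one needs the a priori lower bound on $\h+1$ from \eqref{a2}. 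Once these multiplier bounds are organized, both \eqref{ab1} and \eqref{ab2} follow by linear algebra on the triangular system relating $(Z_\t^{\a_1}\vr,Z_\t^{\a_1}\u,Z_\t^{\a_1}\h)$ and $(\vr_m,\u_m,\h_m)$ plus the already-established $L^\infty$ and data estimates.
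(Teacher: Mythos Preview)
Your overall strategy matches the paper's: use Lemma \ref{equivalent} (estimates \eqref{b11}--\eqref{b14}) to pass back and forth between $Z_\t^{\a_1}(\vr,\u,\h)$ and $(\vr_m,\u_m,\h_m)$, with the $L^\infty$ multipliers controlled by Lemma \ref{Lemma310}. The sup-in-time part of your argument is correct and is essentially what the paper does in \eqref{b22}--\eqref{requi-E2}.

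However, there is a genuine gap. You assert that ``the dissipation-integral parts of $\Theta_{m,l}$ \ldots\ are identical to the corresponding parts of $\n_{m,l}$, so no work is needed there.'' This is false at the top tangential level. Compare the definitions: $\Theta_{m,l}$ contains $\es\int_0^t\|\p_x(\vr,\u,\h)\|_{\H^m_l}^2d\tau$ and $\int_0^t\|\p_y(\sqrt{\es}\vr,\sqrt{\mu}\u,\sqrt{\k}\h)\|_{\H^m_l}^2d\tau$, and $\H^m_l$ here includes the pure-tangential derivatives $Z_\t^{\a_1}$ with $|\a_1|=m$. In $\n_{m,l}$, the top-tangential contribution is instead carried by $\es\|\p_x(\vr_m,\u_m,\h_m)\|_{L^2_l}^2$ and $\|\p_y(\sqrt{\es}\vr_m,\sqrt{\mu}\u_m,\sqrt{\k}\h_m)\|_{L^2_l}^2$ inside $\D_x^{m,l}$ and $\D_y^{m,l}$ (see \eqref{3d3}, \eqref{3d3-0}); the sum over $|\a_1|\le m-1$ there deliberately omits $|\a_1|=m$. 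Only the $\H^{m-1}_l$ integrals of $\p_{xy}$ and $\p_y^2$ are literally shared.

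To bridge this, the paper differentiates the relations $Z_\t^{\a_1}(\vr,\u,\h)=(\vr_m,\u_m,\h_m)+\eta_{\rho,u,h}Z_\t^{\a_1}\ps$ in $x$ and $y$ and invokes \eqref{b13} and \eqref{b14} (not just \eqref{b11}, \eqref{b12}) to control $\|\p_x Z_\t^{\a_1}(\vr,\u,\h)\|_{L^2_l}$ by $\|\p_x(\vr_m,\u_m,\h_m)\|_{L^2_l}$ plus lower-order terms times $\|\h_m\|_{L^2_l}$, and similarly for $\p_y$. This produces the two-sided relations \eqref{b29}, \eqref{b212} for the $x$-dissipation and \eqref{b213}, \eqref{b214} for the $y$-dissipation. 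The extra $L^\infty$ factors that appear (e.g.\ $\|\p_{xy}\vr\|_{L^\infty_1}$, $\|Z_2\p_y\h\|_{L^\infty_1}$) are again handled by Lemma \ref{Lemma310}, and this is in fact where part of the exponents $12$ and the $\d^{-8}$ come from. Your proposal as written would not produce these bounds, so you need to add this step.
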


\begin{proof}
By virtue of the definition $\vr_m=Z_\t^{\a_1} \vr-\frac{\p_y \vr}{\h+1}Z_\t^{\a_1}\ps$
and the estimate \eqref{b11}, we find
\begin{equation*}\label{b21}
\begin{aligned}
\|Z_\t^{\a_1} \vr\|_{L^2_l(\O)}^2
&\le \|\vr_m\|_{L^2_l(\O)}^2+\|\p_y \vr\|_{L^\infty_1(\O)}^2\|\frac{Z_\t^{\a_1 }\ps}{\h+1}\|_{L^2_{l-1}(\O)}^2\\
&\le \|\vr_m\|_{L^2_l(\O)}^2+C_l \d^{-2}\|\p_y \vr\|_{L^\infty_1(\O)}^2\|\h_m\|_{L^2_{l}(\O)}^2.
\end{aligned}
\end{equation*}
Similarly, we can obtain for $|\a_1|=m$ that
\begin{equation*}\label{b22}
\|Z_\t^{\a_1}(\u, \h)\|_{L^2_l(\O)}^2
\le \|(\u_m, \h_m)\|_{L^2_l(\O)}^2+C_l \d^{-2}(1+\|\p_y (\u, \h)\|_{L^\infty_1(\O)}^2)\|\h_m\|_{L^2_{l}(\O)}^2.
\end{equation*}
The combination of the above two estimates yields directly
\begin{equation}\label{b22}
\|Z_\t^{\a_1}(\vr, \u, \h)\|_{L^2_l(\O)}^2
\le C_l \d^{-2}(1+\|\p_y (\vr, \u, \h)\|_{L^\infty_1(\O)}^2)\|(\vr_m, \u_m, \h_m)\|_{L^2_l(\O)}^2,
\end{equation}
and hence, we have for $m \ge 4, l \ge 1$
\begin{equation}\label{b23}
\begin{aligned}
\|Z_\t^{\a_1}(\vr, \u, \h)\|_{L^2_l(\O)}^2
&\le C\|\p_y(\u, \h)\|_{L^\infty_1(\O)}^4
     +C\d^{-4}(1+\|(\vr_m, \u_m, \h_m)\|_{L^2_l(\O)}^4+\|\p_y \vr\|_{L^\infty_1(\O)}^4)\\
&\le C\|(\rho_0, u_{10}, h_{10})\|_{\overline{\mathcal{B}}^m_l}^2
     +C t \|(\rho_0, u_{10}, h_{10})\|_{\widehat{\mathcal{B}}^m_l}^2+C_l \d^{-4}(1+X_{m,l}^6(t)),
\end{aligned}
\end{equation}
Due to the definition of
$X_{m,l}(t)$ and $Y_{m,l}(t)$ in \eqref{ydef} and \eqref{xdef} respectively, we get from \eqref{b23} that
\begin{equation}\label{requi-E1}
Y_{m,l}(t)\le C\|(\rho_0, u_{10}, h_{10})\|_{\overline{\mathcal{B}}^m_l}^2
     +C t \|(\rho_0, u_{10}, h_{10})\|_{\widehat{\mathcal{B}}^m_l}^2+C_l \d^{-4}(1+X_{m,l}^6(t)).
\end{equation}

On the other hand, by virtue of the definition of $\vr_m(t)$ and the estimate \eqref{b11}, we find
\begin{equation*}\label{b24}
\begin{aligned}
\|\vr_m(t)\|_{L^2_l(\O)}^2
&\le \|Z_\t^{\a_1} \vr(t)\|_{L^2_l(\O)}^2
  +\|\p_y \vr(t)\|_{L^\infty_1(\O)}^2\|\frac{Z_\t^{\a_1 }\ps}{\h+1}(t)\|_{L^2_{l-1}(\O)}^2\\
&\le \|Z_\t^{\a_1} \vr(t)\|_{L^2_l(\O)}^2+C_l\d^{-2}\|\p_y \vr(t)\|_{L^\infty_1(\O)}^2\|Z_\t^{\a_1}\h(t)\|_{L^2_{l}(\O)}^2,
\end{aligned}
\end{equation*}
and hence, we also have
\begin{equation*}\label{b25}
\begin{aligned}
\|(\u_m, \h_m)(t)\|_{L^2_l(\O)}^2
\le \|Z_\t^{\a_1}(\u, \h)(t)\|_{L^2_l(\O)}^2
    +C\d^{-2}(1+\|\p_y (\u, \h)(t)\|_{L^\infty_1(\O)}^2)\|Z_\t^{\a_1}\h(t)\|_{L^2_{l}(\O)}^2.
\end{aligned}
\end{equation*}
Then, the combination of the above estimates yields directly
\begin{equation} \label{b26}
\|(\vr_m, \u_m, \h_m)(t)\|_{L^2_l}^2
\le C\|(\rho_0, u_{10}, h_{10})\|_{\overline{\mathcal{B}}^m_l}^2
    +C t \|(\rho_0, u_{10}, h_{10})\|_{\widehat{\mathcal{B}}^m_l}^2
    +C_l \d^{-4}(1+Y_{m,l}^6(t)),
\end{equation}
where $m \ge 4, l \ge 1$. According to the definition of $X_{m,l}(t)$ and $Y_{m,l}(t)$,
we get from \eqref{b26} that
\begin{equation}\label{requi-E2}
X_{m,l}(t)\le C\|(\rho_0, u_{10}, h_{10})\|_{\overline{\mathcal{B}}^m_l}^2
     +C t \|(\rho_0, u_{10}, h_{10})\|_{\widehat{\mathcal{B}}^m_l}^2+C_l \d^{-4}(1+Y_{m,l}^6(t)).
\end{equation}

Next, by virtue of the definition $\vr_m(t)$ and estimate \eqref{3103}, we find
\begin{equation*}\label{b27}
\begin{aligned}
\|\p_x Z_\t^{\a_1} \vr\|_{L^2_l(\O)}^2
\le
&\|\p_x \vr_m\|_{L^2_l(\O)}^2+C_l \d^{-2}\|\p_y \vr\|_{L^\infty_1(\O)}^2\|\p_x \h_m\|_{L^2_l(\O)}^2\\
&+C_l \d^{-4}(\|\p_{xy} \vr\|_{L^\infty_1(\O)}^2
  +\|\p_{y} \vr\|_{L^\infty_1(\O)}^2\|\p_{x} \h\|_{L^\infty_1(\O)}^2)\|\h_m\|_{L^2_l(\O)}^2\\
\le
&\|\p_x \vr_m\|_{L^2_l(\O)}^2+C_l \d^{-2} X_{m,l}(t) \|\p_x \h_m\|_{L^2_l(\O)}^2
+C_l \d^{-4}(1+ X_{m,l}^3(t)).
\end{aligned}
\end{equation*}
Similarly, by routine checking, we may conclude that
\begin{equation*}\label{b28}
\begin{aligned}
\|\p_x Z_\t^{\a_1} (\u, \h)\|_{L^2_l(\O)}^2
\le
&C(\|(\rho_0, u_{10}, h_{10})\|_{\overline{\mathcal{B}}^m_l}^2
       +t \|(\rho_0, u_{10}, h_{10})\|_{\widehat{\mathcal{B}}^m_l}^2)
     +C_l\d^{-8}(1+X_{m,l}^6(t))\\
&\!+\!C_l \d^{-2}(1+\|(\rho_0, u_{10}, h_{10})\|_{\overline{\mathcal{B}}^m_l}
     +t \|(\rho_0, u_{10}, h_{10})\|_{\widehat{\mathcal{B}}^m_l}+X_{m,l}^3(t) )\|\p_x \h_m\|_{L^2_l(\O)}^2\\
&+\|\p_x (\u_m, \h_m)\|_{L^2_l(\O)}^2.
\end{aligned}
\end{equation*}
for $m\ge 5, l \ge 1$, and hence it follows
\begin{equation}\label{b29}
\begin{aligned}
\es\|\p_x(\vr, \u, \h)\|_{\H^m_l}^2
&\le C_l \d^{-2}(1+\|(\rho_0, u_{10}, h_{10})\|_{\overline{\mathcal{B}}^m_l}
     +t \|(\rho_0, u_{10}, h_{10})\|_{\widehat{\mathcal{B}}^m_l}+X_{m,l}^3(t) )D_x^{m,l}(t)\\
&+C(\|(\rho_0, u_{10}, h_{10})\|_{\overline{\mathcal{B}}^m_l}^2
       +t \|(\rho_0, u_{10}, h_{10})\|_{\widehat{\mathcal{B}}^m_l}^2)+C_l \d^{-8}(1+X_{m,l}^6(t)),
\end{aligned}
\end{equation}
where $D_x^{m,l}(t)$ is defined in \eqref{3d3}.
By virtue of the definition $\vr_m(t)$ and estimate \eqref{3101}, we get
\begin{equation*}\label{b210}
\begin{aligned}
\|\p_x \vr_m\|_{L^2_l(\O)}^2
\le
&\|\p_x Z_\t^{\a_1} \vr\|_{L^2_l(\O)}^2
  +C\d^{-2}\|\p_y \vr\|_{L^\infty_1(\O)}^2\|\p_x Z_\t^{\a_1}\h\|_{L^2_{l}(\O)}^2\\
&+C\d^{-4}(\|\p_{xy}\vr\|_{L^\infty_1(\O)}^2
  +\|\p_y \vr\|_{L^\infty_1(\O)}^2\|\p_x \h\|_{L^\infty_0(\O)}^2)\|Z_\t^{\a_1}\h\|_{L^2_l(\O)}\\
\le
&\|\p_x Z_\t^{\a_1} \vr\|_{L^2_l(\O)}^2
 +C\d^{-2} Y_{m,l}(t) \|\p_x Z_\t^{\a_1}\h\|_{L^2_{l}}^2+C\d^{-4}(1+ Y_{m,l}^3(t)).
\end{aligned}
\end{equation*}
Similarly, by routine checking, we may conclude that
\begin{equation*}\label{b211}
\begin{aligned}
\|\p_x (\u_m, \h_m)\|_{L^2_l(\O)}^2
\le
&C\d^{-2}(1+\|(\rho_0, u_{10}, h_{10})\|_{\overline{\mathcal{B}}^m_l}
     +t \|(\rho_0, u_{10}, h_{10})\|_{\widehat{\mathcal{B}}^m_l}+Y_{m,l}^3(t) )
      \|\p_x Z_\t^{\a_1}\h\|_{L^2_{l}(\O)}^2\\
&+C(\|(\rho_0, u_{10}, h_{10})\|_{\overline{\mathcal{B}}^m_l}^2
       +t \|(\rho_0, u_{10}, h_{10})\|_{\widehat{\mathcal{B}}^m_l}^2)+C\d^{-8}(1+Y_{m,l}^6(t))\\
&+\|\p_x Z_\t^{\a_1} (\u, \h)\|_{L^2_l(\O)}^2,
\end{aligned}
\end{equation*}
and hence, it follows
\begin{equation}\label{b212}
\begin{aligned}
D_x^{m,l}(t)
\le
&C\d^{-2}(1+\|(\rho_0, u_{10}, h_{10})\|_{\overline{\mathcal{B}}^m_l}
     +t \|(\rho_0, u_{10}, h_{10})\|_{\widehat{\mathcal{B}}^m_l}+ Y_{m,l}^3(t) )\es\|\p_x(\vr, \u, \h)\|_{\H^m_l}^2\\
&+C(\|(\rho_0, u_{10}, h_{10})\|_{\overline{\mathcal{B}}^m_l}^2
       +t \|(\rho_0, u_{10}, h_{10})\|_{\widehat{\mathcal{B}}^m_l}^2)+C\d^{-8}(1+Y_{m,l}^6(t)).
\end{aligned}
\end{equation}
where $D_x^{m,l}(t)$ is defined in \eqref{3d3}.
Similarly, we can justify the estimates
\begin{equation}\label{b213}
\begin{aligned}
\|\p_y(\sqrt{\es} \vr, \sqrt{\mu} \u, \sqrt{\k} \h)\|_{\H^m_l}^2
\le &C(\|(\rho_0, u_{10}, h_{10})\|_{\overline{\mathcal{B}}^m_l}^4
       +t \|(\rho_0, u_{10}, h_{10})\|_{\widehat{\mathcal{B}}^m_l}^4)\\
&+D_y^{m,l}(t)+C_l \d^{-8}(1+X_{m,l}^{12}(t)),
\end{aligned}
\end{equation}
and
\begin{equation}\label{b214}
\begin{aligned}
D_y^{m,l}(t)
\le
&C(\|(\rho_0, u_{10}, h_{10})\|_{\overline{\mathcal{B}}^m_l}^4
       +t \|(\rho_0, u_{10}, h_{10})\|_{\widehat{\mathcal{B}}^m_l}^4)\\
&+\|\p_y(\sqrt{\es} \vr, \sqrt{\mu} \u, \sqrt{\k} \h)\|_{\H^m_l}^2+C\d^{-8}(1+Y_{m,l}^{12}(t)).
\end{aligned}
\end{equation}
Therefore, the combination of estimates \eqref{requi-E1}, \eqref{requi-E2},
\eqref{b29}, \eqref{b212}, \eqref{b213} and \eqref{b214} can establish the
estimates \eqref{ab1} and \eqref{ab2}.
\end{proof}


\section*{Acknowledgements}
Jincheng Gao's research was partially supported by
Fundamental Research Funds for the Central Universities(Grants No.18lgpy66)
and NNSF of China(Grants No.11801586).
Daiwen Huang's research was partially supported by the NNSF of China(Grants No.11631008)
and National Basic Research Program of China 973 Program(Grants No.2007CB814800).
Zheng-an Yao's research was partially supported by NNSF of China(Grant No.11431015).


\end{document}